\providecommand\@dotsep{5}
\def\listtodoname{List of Todos}
\def\listoftodos{\@starttoc{tdo}\listtodoname}
\numberwithin{equation}{section}
\newtheorem{theorem}{Theorem}[section]
\newtheorem{proposition}[theorem]{Proposition}
\newtheorem{lemma}[theorem]{Lemma}
\newtheorem{corollary}[theorem]{Corollary}
\newtheorem{remark}[theorem]{Remark}
\newcommand\restr[2]{{
  \left.\kern-\nulldelimiterspace 
  #1 
  \vphantom{\big|} 
  \right|_{#2} 
  }}
\title[Critical points with prescribed energy for a class of functionals]
{Critical points with prescribed energy for a class of functionals depending on a parameter: existence, multiplicity and bifurcation results}
\author[H. Ramos Quoirin]{Humberto Ramos Quoirin}
\author[G. Siciliano]{Gaetano Siciliano}
\author[K. Silva]{Kaye Silva}
\address[H. Ramos Quoirin]{\newline \indent CIEM-FaMAF \newline \indent Universidad Nacional de C\'{o}rdoba, 
(5000) C\'{o}rdoba, Argentina}
\email{\href{mailto:humbertorq@gmail.com}{ humbertorq@gmail.com}}
\address[G. Siciliano]{\newline\indent
	Departamento de Matem\'atica - Instituto de Matem\'atica e Estat\'istica
	\newline\indent 
	Universidade de S\~ao Paulo
	\newline\indent
	Rua do Mat\~ao 1010,  05508-090  S\~ao Paulo, SP,  Brazil}
\email{\href{mailto:sicilian@ime.usp.br}{sicilian@ime.usp.br}}
\address[K. Silva]{\newline\indent
	Instituto de Matem\'atica e Estat\'istica.   
	\newline\indent 
	Universidade Federal de Goi\'as,
	\newline\indent
Rua Samambaia, 74001-970, Goi\^ania, GO, Brazil}
\email{\href{mailto:kayesilva@ufg.br}{kayesilva@ufg.br}}
\subjclass[2010]{Primary  
35A15, 
58E07.  	
}
\keywords{critical points, nonlinear generalized Rayleigh quotient, Ljusternik-Schnirelman theory, bifurcation, prescribed energy level}
\begin{document}

\begin{abstract}
We look for critical points with prescribed energy for the family of even functionals $\Phi_\mu=I_1-\mu I_2$, where $I_1,I_2$ are $C^1$ functionals on a Banach space $X$, and $\mu \in \mathbb{R}$. For several classes of $\Phi_\mu$ we prove the existence of infinitely many couples $(\mu_{n,c}, u_{n,c})$ such that $$\Phi'_{\mu_{n,c}}(\pm u_{n,c}) = 0 \quad \mbox{and} \quad \Phi_{\mu_{n,c}}( \pm u_{n,c}) = c \quad \forall n \in \mathbb{N}.$$ More generally, we analyze the structure of the solution set of the problem $$\Phi_\mu'(u)=0, \quad \Phi_{\mu}(u)=c$$  with respect to $\mu$ and $c$. In particular, we show that the maps $c \mapsto \mu_{n,c}$ are continuous, which gives rise to a family of {\it energy curves} for this problem. The analysis of these curves provide us with several bifurcation and multiplicity type results, which are then applied to some elliptic problems. Our approach is based on the {\it nonlinear generalized Rayleigh quotient} method developed in \cite{I1}.
\end{abstract}

\bigskip

\maketitle
\begin{center}
\begin{minipage}{12cm}
\tableofcontents
\end{minipage}
\end{center}

\bigskip

\section{Introduction}

A wide and diverse range of problems in nonlinear pdes (among other fields) can be formulated as a critical point equation
\begin{equation}\label{e}\Phi'(u)=0,\end{equation} where $\Phi'$ is the Fr\'echet derivative of a functional $\Phi$ (defined on a suitable function space $X$), the so-called {\it energy functional} of the associated Euler-Lagrange equation. Critical point theory and variational methods have been successfully developed to solve this
kind of equation, which is often coupled to some additional constraint on $u$  (e.g. a {\it sign constraint} $u>0$ or a {\it mass constraint} $\|u\|=m$) and a huge bibliography is available on this subject.

\medskip

 This paper  is devoted to the investigation of \eqref{e}
under  a different constraint, namely, a level (or energy) constraint. 
Motivated by several nonlinear elliptic problems (see below), we shall consider a class of functionals depending on a real parameter, viz. \begin{equation*}
	\Phi_\mu:=I_1-\mu I_2,
\end{equation*}
where $\mu \in \mathbb{R}$, $I_1,I_2 \in C^1(X)$, and $X$ is Banach space  which is assumed to be infinite-dimensional, uniformly convex, and
equipped with $\|\cdot \| \in C^1(X \setminus \{0\})$. 

For a given $c \in \mathbb{R}$ we consider the problem
\begin{equation}\label{ef}
\Phi_\mu'(u)=0, \quad \Phi_{\mu}(u)=c,
\end{equation}
 i.e.  we look for couples $(\mu,u) \in \mathbb{R} \times X \setminus \{0\}$ solving this system.  To this end, we shall follow the {\it nonlinear generalized Rayleigh quotient} method, introduced by Y. Ilyasov \cite{I1}. We intend to show that this method  is suitable to solve \eqref{ef}, and more generally to investigate the structure of the set
 $$\mathcal{S}:=\{(\mu,c) \in \mathbb{R}^2: \Phi_\mu \mbox{ has a critical point at the level } c  \}.$$
  Let us note that some preliminary results on \eqref{ef} can be found in \cite{I3}, as well as in \cite{QSS}, where the case $c=0$ has been treated.

 Under some conditions on the family of functionals $\Phi_\mu$  we shall see that there exist infinitely many pairs $(\mu_{n,c},\pm u_{n,c})$ solving \eqref{ef}. This result, which is proved via the Ljusternik-Schnirelman theory, will be established not only for a single value of $c$, but for any $c$ in an open interval $\mathcal{I}\subset \mathbb{R}$. Thus it makes sense to study  the behaviour  of $\mu_{n,c}$ and $u_{n,c}$ with respect to $c \in \mathcal{I}$. In many cases (including the pdes considered in this work) the values $\mu_{n,c}$ depend continuously on $c$, so that letting $c$ vary  we shall obtain a family of {\it energy curves} $\{(\mu_{n,c},c); c \in \mathcal{I}\}_{n \in \mathbb{N}}$. The properties of these {\it energy curves} shall then be compared with the bifurcation analysis of the unconstrained problem $\Phi_\mu'(u)=0$. In particular, this procedure shall allow us to deduce several bifurcation and multiplicity results for this problem.

A simple but enlightening functional where our abstract framework applies
is 
$$\Phi_\mu(u)=\frac{1}{2}\|\nabla u\|_2^2 - \frac{\mu}{r} \|u\|_r^r - \frac{1}{q}\|u\|_q^q, \quad u \in H_0^1(\Omega),$$
which is associated to the semilinear boundary value problem 
\begin{equation}\label{semi}
\begin{cases}
-\Delta u=\mu |u|^{q-2}u +|u|^{r-2}u &\mbox{ in } \Omega,\\
u=0 &\mbox{ on } \partial \Omega.
\end{cases}
\end{equation}
Here $\Omega \subset \mathbb{R}^N$ ($N \geq 1$) is a bounded domain, $\mu \in \mathbb{R}$, and $1<q \le 2<r<2^*$.

 More generally we consider the $p$-Laplacian problem
 \begin{equation}
 \label{quasi}
 \begin{cases}
 -\Delta_p u=\mu |u|^{q-2}u +|u|^{r-2}u &\mbox{ in } \Omega, \\
 u=0 &\mbox{ on } \partial \Omega,
 \end{cases}
 \end{equation}
 where now $1<q \le p<p^*$ and $1<r<p^{*}, r\neq p$.
  It should be noted that classical bifurcation methods (see e.g. \cite{R2}) are a very powerful tool to treat \eqref{semi} with $q=2$, providing the existence of branches of solutions  bifurcating from $(\mu,u)=(\mu_k,0)$ for any eigenvalue $\mu_k$ of the Dirichlet Laplacian. However, these methods do not apply to \eqref{semi} for $q<2$ (see \cite{BM} for a discussion on this issue), neither to \eqref{quasi}, in both cases $q=p$ and $q<p$. We shall see that our results cover all these variants with the same efficiency, and even though we are not able to obtain continua of solutions, we  establish some  bifurcation results which are consistent with previous results on these problems \cite{ABC,AGP,BM,BW,dPM,D,GP,GT,GV,I2,R1,R2}.
 
 A second example motivating this work is the  Schr\"odinger-Poisson (or Schr\"odinger-Bopp-Podolsky) problem
\begin{equation}
\label{SP}
\left\{
\begin{array}
[c]{lll}%
-\Delta u+\omega u+\mu \phi u=|u|^{p-2}u & \mathrm{in} & \mathbb{R}^3,\\
-\Delta\phi+a^2\Delta ^2u=4\pi u^2  & \mathrm{in} & \mathbb{R}^3,
\end{array}
\right. 
\end{equation}
  where $p\in(2,3)$, $\omega>0$, and $a\ge 0$. We look for radial solutions 
 $u\in H_r^1(\mathbb{R}^3)$ of \eqref{SP}. 
 Let $\|\cdot\|$ be given by $\|u\|^2=\|\nabla u\|_2^2+\omega \|u\|_2^2$, and 
  \begin{equation*}
  	\mathcal{D}_r=\left\{\phi\in D_r^{1,2}(\mathbb{R}^3):\Delta\phi\in L_r^2(\mathbb{R}^3)\right\}.
  \end{equation*}
  It is known that for every $u\in H_r^1(\mathbb{R}^3)$, there exists an unique $\phi_u\in \mathcal{D}_r$ solving the second equation in \eqref{SP}, see \cite{SS}. Moreover, critical points of the functional $\Phi_\mu:H_r^1(\mathbb{R}^3)\to \mathbb{R}$  defined by
  \begin{equation*}
  	\Phi_\mu(u)=\frac{1}{2}\int_{\mathbb{R}^3} |\nabla u|^2+\frac{\omega}{2}\int_{\mathbb{R}^3} |u|^2+\frac{\mu}{4}\int_{\mathbb{R}^3} \phi_uu^2-\frac{1}{p}\int_{\mathbb{R}^3} |u|^p, 
  \end{equation*}
  are classical solutions of \eqref{SP}. 
  
  Although  \eqref{semi} with $q<2$ and \eqref{SP} are very different in nature, their variational structures are somehow symmetric to each other, as one can note in the geometry of their fibering maps.  We shall investigate the structure of \eqref{ef} for this functional and deduce, as a consequence, several multiplicity results extending previous results on this problem contained in \cite{AR,BF,DS,L,Ru,Ru1,S1}.

 \subsection{Main abstract results}
 Before stating our results let us describe how the {\it nonlinear generalized Rayleigh quotient} method applies to
the system \eqref{ef}.
Assume that $I_2(u)\neq 0$ for every $u\in X\setminus\{0\}$ (which is the case in many elliptic problems), so that one can solve the level constraint explicitely in $\mu$:
\begin{equation}\label{mu}
	\Phi_\mu(u)=c \quad \Longleftrightarrow \quad \mu=\mu(c,u):=\frac{I_1(u)-c}{I_2(u)}.
\end{equation}
The key point in this approach is to note that for any $c$ the functional $u \mapsto \mu(c,u)$ satisfies the following relation, which can be easily checked (see also \cite{I3}):
$$\frac{\partial \mu}{\partial u}(c,u)=\frac{\Phi_{\mu(c,u)}'(u)}{I_2(u)}, \quad \forall u \in X  \setminus \{0\}.$$
Here $\frac{\partial \mu}{\partial u}(c,u)$ denotes the Fr\'echet derivative of the functional $u \mapsto \mu(c,u)$.
The above relation provides us with the following equivalence:
\begin{equation*}
\Phi_\mu'(u)=0, \quad \Phi_{\mu}(u)=c \quad \Longleftrightarrow \quad \mu=\mu(c,u),\quad \frac{\partial \mu}{\partial u}(c,u)=0,
\end{equation*}
i.e. \eqref{ef} can be completely solved by looking for critical points of
 the functional $u \mapsto \mu(c,u)$. In other words, \eqref{ef} is solvable if and only if $\mu$ is a critical value (and $u$ an associated critical point) of the latter functional. Thus, denoting by $\mathcal{K}(c)$ the set of critical values of the functional  $u \mapsto \mu(c,u)$, we  immediately find a sufficient and necessary condition for the solvability of \eqref{ef}:
 
\begin{theorem}\label{THM0}
For a given $c \in \mathbb{R}$ the problem \eqref{ef} has a solution 
$(\mu, u)$ if, and only if, $\mu \in \mathcal{K}(c)$ and $u$ is the associated critical point. In particular, if $u \mapsto \mu(c,u)$ has a ground state (or least energy) level $GS(c)$ then $\eqref{ef}$ has no solution for $\mu<GS(c)$.

\end{theorem}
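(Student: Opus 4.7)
The plan is to show that the theorem is essentially a direct consequence of the Rayleigh quotient identity
$$\frac{\partial \mu}{\partial u}(c,u)=\frac{\Phi_{\mu(c,u)}'(u)}{I_2(u)}, \qquad u \in X \setminus \{0\},$$
already displayed in the excerpt, so my first step is to verify this identity explicitly. Writing $\mu(c,u) = (I_1(u)-c)/I_2(u)$ and applying the quotient rule for Fr\'echet derivatives gives
$$\frac{\partial \mu}{\partial u}(c,u) = \frac{I_1'(u)\, I_2(u) - (I_1(u)-c)\, I_2'(u)}{I_2(u)^2} = \frac{I_1'(u) - \mu(c,u)\, I_2'(u)}{I_2(u)} = \frac{\Phi'_{\mu(c,u)}(u)}{I_2(u)},$$
where the standing assumption $I_2(u) \neq 0$ on $X \setminus \{0\}$ makes every quotient meaningful.

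For the equivalence, in the forward direction I take $(\mu,u) \in \mathbb{R} \times X \setminus \{0\}$ with $\Phi_\mu'(u) = 0$ and $\Phi_\mu(u) = c$. The level constraint, solved for $\mu$ via \eqref{mu}, forces $\mu = \mu(c,u)$; then the displayed identity together with $I_2(u) \neq 0$ turns $\Phi'_\mu(u) = 0$ into $\frac{\partial \mu}{\partial u}(c,u) = 0$. Hence $u$ is a critical point of $u \mapsto \mu(c,u)$ and $\mu \in \mathcal{K}(c)$ is its critical value. Conversely, if $u \in X \setminus \{0\}$ is a critical point with corresponding critical value $\mu = \mu(c,u) \in \mathcal{K}(c)$, then by \eqref{mu} $\Phi_\mu(u) = c$, and the identity once more yields $\Phi'_\mu(u) = 0$.

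For the final statement, by definition $GS(c) = \inf_{u \in X \setminus \{0\}} \mu(c,u)$, so every critical value of $u \mapsto \mu(c,u)$ satisfies $\mu \ge GS(c)$; thus $\mathcal{K}(c) \subset [GS(c), +\infty)$. Combined with the equivalence above, this forces \eqref{ef} to be unsolvable for every $\mu < GS(c)$.

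There is really no hard step here: the argument is a bookkeeping consequence of the identity, and the whole point of the theorem is to record this reformulation, which is the basis for all the subsequent existence and multiplicity results. The only point requiring care is to keep $u \neq 0$ throughout (so that $\mu(c,u)$ and its derivative are defined) and to use $I_2(u) \neq 0$ precisely where the identity is inverted, which is why the statement is phrased on $X \setminus \{0\}$.
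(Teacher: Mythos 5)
Your proposal is correct and follows exactly the route the paper takes: the paper does not give a separate proof of Theorem \ref{THM0} but derives it ``immediately'' from the Rayleigh quotient identity $\frac{\partial \mu}{\partial u}(c,u)=\Phi_{\mu(c,u)}'(u)/I_2(u)$ and the resulting equivalence stated just before the theorem, which is precisely what you verify and exploit. Your explicit quotient-rule check of the identity and the observation that $\mathcal{K}(c)\subset[GS(c),+\infty)$ fill in the details the paper leaves to the reader, and nothing is missing.
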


Let us focus now on finding points in $\mathcal{K}(c)$ by following the nonlinear generalized Rayleigh quotient method developed in \cite{I1,I3}. We consider the fibering map associated to the functional $u \mapsto \mu(c,u)$, namely, the real-valued function $\psi_{c,u}$ given by 
\begin{equation}\label{mut}
\psi_{c,u}(t):=\mu(c,tu)=\frac{I_1(tu)-c}{I_2(tu)}, \quad t>0
\end{equation} 
for any fixed $(c,u) \in \mathbb{R} \times X\setminus\{0\}$. 
Let us assume the following condition:\\

\begin{enumerate}[label=(H\arabic*),ref=H\arabic*,start=1]
	\item \label{H1}  There exists an open set $\mathcal{I} \subset \mathbb{R}$ such that:
	\begin{enumerate}
\item the map  $(c,u,t) \mapsto \psi'_{c,u}(t)$ belongs to $C^1(\mathcal{I} \times X \setminus\{0\} \times (0,\infty))$;
\item for every $(c,u) \in \mathcal{I} \times X \setminus\{0\}$ the map $\psi_{c,u}$ has exactly one local minimizer $t^+(c,u)>0$ of Morse type or for every $(c,u) \in \mathcal{I} \times X \setminus\{0\}$ the map $\psi_{c,u}$ has exactly one local maximizer $t^-(c,u)>0$ of Morse type. \\
\end{enumerate}
\end{enumerate}

We shall see  some applications where both possibilities in (b) will occur.

For the sake of simplicity, let us assume for the moment that the first possibility occurs (the case where the second one holds is similar), and set $t(c,u):=t^+(c,u)$.
We introduce the reduced functional  $\Lambda \in C^1(\mathcal{I} \times X \setminus \{0\})$ given by
\begin{equation}\label{eq:Lambda}
	\Lambda(c,u):=\psi_{c,u}(t(c,u))=\mu(c,t(c,u)u).
\end{equation} 
For any $c \in \mathcal{I}$ the functional $u \mapsto \Lambda(c,u)$ turns out to be $0$-homogeneous, and one may then look for critical points of this functional by dealing with $\widetilde{\Lambda}$, the restriction of $\Lambda$ to $\mathcal{I}\times S$, where $S$ is the unit sphere in $X$.

Let us assume that $I_1,I_2$ are even, so that $\mu(c,\cdot)$, $t(c,\cdot)$ and $\Lambda(c,\cdot)$ are even as well. Our aim is to apply the Ljusternick-Schnirelmann theory to the functional $u \mapsto \widetilde{\Lambda}(c,u)$. To this end, let us recall that
given a nonempty symmetric and closed set $F \subset S $, the Krasnoselskii genus of $F$ is given by $$\gamma(F):=\inf\{n \in \mathbb{N}: \exists h:F\to  \mathbb{R}^n \setminus \{0\} \mbox{ odd and continuous} \}.$$   For every $n \in \mathbb{N}$ we set
\begin{equation}\label{dfn}
\mathcal{F}_n:=\{F\subset S: F \mbox{ is compact, symmetric, and } \gamma(F)\ge n\}. 
\end{equation}
We shall assume the following condition, which contains two alternatives in accordance with the behavior of $u \mapsto \widetilde{\Lambda}(c,u)$:\\

\begin{enumerate}[label=(H\arabic*),ref=H\arabic*,start=2]
	\item\label{H2} The functional $u \mapsto \widetilde{\Lambda}(c,u)$ is bounded from below (respect. from above) and  satisfies the Palais-Smale condition at the level $\mu_{n,c}:=\displaystyle
	\inf_{F\in \mathcal{F}_n}\sup_{u\in F}\Lambda(c,u)$ (respect. at the level $\mu_{n,c}:=\displaystyle
	\sup_{F\in \mathcal{F}_n}\inf_{u\in F}\Lambda(c,u)$) for every $n \in \mathbb{N}$.\\
\end{enumerate}

Recall that  the functional $u \mapsto \widetilde{\Lambda}(c,u)$ satisfies the Palais-Smale condition at the level $\mu$ if any sequence $\{u_k\} \subset S$ such that $\widetilde{\Lambda}(c,u_k) \to \mu$ and $\frac{\partial \widetilde{\Lambda}}{\partial u} (c,u_k) \to 0$ has a convergent subsequence.

The assumptions \eqref{H1} and \eqref{H2} will be checked for the functionals associated to \eqref{quasi} and \eqref{SP}. Let us anticipate that for \eqref{quasi} the first case in \eqref{H2} holds, whereas the second one occurs for \eqref{SP}.

We are now in position to state our main abstract result. This one extends  \cite[Theorem 1.1]{QSS}, which deals with the case $c=0$, and provides the following informations on the sequence $\{\mu_{n,c}\}$:

\begin{theorem}\label{THM1} 
Suppose that \eqref{H1} holds and  let $c \in \mathcal{I}$.
\begin{enumerate}[label=(\arabic{*}), ref=\arabic{*}]

\item \label{THM1-2} 
Assume that $t(c,u)$ is the only critical point of $\psi_{c,u}$, for every $u \in X \setminus \{0\}$.  If $u \mapsto \Lambda(c,u)$ is bounded from below (respect. above) and $\mu<\mu_{1,c}= \displaystyle \inf_{u \in X \setminus \{0\}} \Lambda(c,u)$ (respect. $\mu>\mu_{1,c}= \displaystyle \sup_{u \in X \setminus \{0\}} \Lambda(c,u)$) then there exists no $u \in X \setminus \{0\}$ such that  
 $$\Phi'_\mu(u)=0\qquad\text{and}\qquad\Phi_\mu(u)=c.$$

\item\label{THM1-1} If  \eqref{H2} holds then there exist infinitely many  $u_{n,c} \in X\setminus\{0\}$ such that $$\Phi'_{\mu_{n,c}}(\pm u_{n,c}) = 0 \quad \mbox{and} \quad \Phi_{\mu_{n,c}}( \pm u_{n,c}) = c \quad \forall n \in \mathbb{N}.$$ If, in addition, $\Lambda(c,w_n) \to +\infty$ whenever $w_n \rightharpoonup 0$ in $X$, then $\mu_{n,c}\to + \infty$ as $n \to+ \infty$.\\

\end{enumerate}
\end{theorem}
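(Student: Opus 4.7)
The plan is to combine Theorem \ref{THM0} with the uniqueness hypothesis on the fiber. If $(\mu,u)$ solves \eqref{ef}, then $u$ is a critical point of $v \mapsto \mu(c,v)$, so evaluating the Fr\'echet derivative in the direction $u$ itself gives $\psi'_{c,u}(1) = \frac{\partial \mu}{\partial u}(c,u)[u] = 0$. By uniqueness this forces $t(c,u) = 1$, whence $\mu = \mu(c,u) = \psi_{c,u}(t(c,u)) = \Lambda(c,u)$. Hence every critical value of $\mu(c,\cdot)$ lies in the range of $\Lambda(c,\cdot)$; if $\mu < \mu_{1,c} = \inf \Lambda(c,\cdot)$ then $\mu \notin \mathcal{K}(c)$ and Theorem \ref{THM0} rules out any solution. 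The ``sup'' alternative is symmetric.

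\textbf{Part (2), existence.} The functional $u \mapsto \Lambda(c,u)$ is $0$-homogeneous, so its critical points on $X \setminus \{0\}$ coincide with those of its restriction $\widetilde{\Lambda}(c,\cdot)$ to the $C^1$ Banach manifold $S$. The implicit function theorem applied to $\psi'_{c,u}(t) = 0$, combined with the Morse non-degeneracy imposed in \eqref{H1}, yields $t(\cdot,\cdot) \in C^1$ and hence $\Lambda \in C^1(\mathcal{I} \times (X \setminus \{0\}))$. Under the first alternative in \eqref{H2}, the even functional $\widetilde{\Lambda}(c,\cdot)$ is bounded from below and satisfies Palais--Smale at each $\mu_{n,c}$, so the Ljusternik--Schnirelmann theorem on the symmetric $C^1$ manifold $S$ produces critical points $u_{n,c} \in S$ at every level $\mu_{n,c}$. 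The link back to \eqref{ef} is obtained by differentiating \eqref{eq:Lambda}: since $\psi'_{c,u}(t(c,u)) = 0$ annihilates the contribution from $\partial_u t(c,u)$, the chain rule gives
\[
\frac{\partial \Lambda}{\partial u}(c,u) \;=\; t(c,u)\,\frac{\partial \mu}{\partial u}\bigl(c,\, t(c,u)\, u\bigr).
\]
Setting $v_{n,c} := t(c,u_{n,c})\,u_{n,c}$ therefore yields $\frac{\partial \mu}{\partial u}(c, v_{n,c}) = 0$ and $\mu(c,v_{n,c}) = \mu_{n,c}$, which by the equivalence displayed just before Theorem \ref{THM0} is the same as $\Phi'_{\mu_{n,c}}(v_{n,c}) = 0$ and $\Phi_{\mu_{n,c}}(v_{n,c}) = c$. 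Evenness of $I_1, I_2$ supplies $-v_{n,c}$ as the companion solution.

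\textbf{Divergence and main obstacle.} For the final claim, suppose by contradiction that $\mu_{n,c} \le M$ along a subsequence, and pick $F_n \in \mathcal{F}_n$ with $\sup_{F_n} \Lambda(c,\cdot) \le M + 1/n$. A standard genus argument (in an infinite-dimensional reflexive space, any symmetric compact set in $S$ of genus $\ge n$ meets the common zero set of any $n-1$ continuous linear functionals) produces $w_n \in F_n$ with $w_n \rightharpoonup 0$, and the hypothesis $\Lambda(c,w_n) \to +\infty$ contradicts the uniform upper bound. The main technical obstacle lies in Part (2): deducing the $C^1$ regularity of $t(c,\cdot)$ (hence of $\Lambda$) from the Morse hypothesis, verifying the displayed chain-rule identity that lifts a critical point of $\widetilde{\Lambda}$ on $S$ to a genuine solution of \eqref{ef} via the rescaling $u \mapsto t(c,u)u$, and applying the Ljusternik--Schnirelmann minimax on the nonlinear sphere $S$ of a general uniformly convex Banach space (where the $C^1$ norm assumed in the preamble is crucial).
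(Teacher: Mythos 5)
Your argument is correct and follows essentially the same route as the paper: part (1) via $\psi'_{c,u}(1)=0$ forcing $t(c,u)=1$ and hence $\mu=\Lambda(c,u)\geq\mu_{1,c}$, and part (2) via Ljusternik--Schnirelmann on $S$ together with the chain-rule identity (the paper's Lemma \ref{l0}\eqref{I0-ii} and Corollary \ref{c1}) and the rescaling $v_{n,c}=t(c,u_{n,c})u_{n,c}$. The only differences are cosmetic: you treat only the first alternative of \eqref{H2} (the paper reduces the second to it by passing to $-\widetilde{\Lambda}$), and for the divergence $\mu_{n,c}\to+\infty$ you spell out the standard genus-versus-weak-null-sequence argument (which needs a countable total family in $X^*$, harmless in the applications) where the paper simply cites \cite[Lemma A.1]{QSS}.
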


	

\begin{remark}\label{infsup}If \eqref{H2} holds with $u \mapsto \Lambda(c,u)$ bounded from below (respect. above)
then $\{\mu_{n,c}\}$ is nondecreasing (respect nonincreasing). Moreover, in the second case we have $\mu_{n,c}=-\displaystyle \inf_{F\in \mathcal{F}_n}\sup_{u\in F}(-\Lambda(c,u))$, 
as well as $\mu_{n,c}=\left(\displaystyle \inf_{F\in \mathcal{F}_n}\sup_{u\in F}(\Lambda(c,u))^{-1}\right)^{-1}$ if, in addition, $\Lambda(c,u)$ is positive on $S$.
These characterizations will provide us at least two possible behaviors for $\{\mu_{n,c}\}$ as $n \to +\infty$: $\mu_{n,c} \to 0$ or $\mu_{n,c} \to -\infty$, cf. Theorem \ref{THMAP3} below.
\end{remark}

  \begin{figure}[h]
	\centering
	
	\begin{tikzpicture}[>=latex]
	\draw[->] (-2,0) -- (4,0);

	\draw [thick] (1,-.1) node[below]{\scalebox{0.8}{$\mu_{1,c}$}} -- (1,0.05); 
	\draw [thick] (1.8,-.1) node[below]{\scalebox{0.8}{$\mu_{k,c} $}} -- (1.8,0.05); 
	\draw [thick] (3,-.1) node[below]{\scalebox{0.8}{$\mu_{n,c} $}} -- (3,0.05); 
	\draw  (0,.1) node[above]{\scalebox{1.5}{$\nexists$}} ; 
	\draw [thick] (2.4,-0.1) node[below]{$\cdots$} -- (2.4,-0.1); 
	\draw [thick] (3.7,-0.1) node[below]{\scalebox{0.8}{$\ \to+ \infty$}} -- (3.7,-0.1); 
	
	\end{tikzpicture}
	\caption{The sequence $\{\mu_{n,c}\}$ provided by Theorem \ref{THM1}.} \label{fig0}
\end{figure}
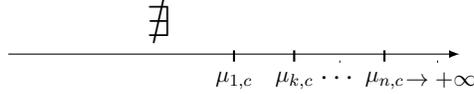

\medskip
\subsection{Energy curves}
A natural question related to Theorem \ref{THM1} is the behavior of $\mu_{n,c}$ with respect to $c$. The following conditions on $\Lambda$ shall provide the continuity of $c \mapsto \mu_{n,c}$, which gives rise to the family of {\it energy curves} $\{(\mu_{n,c},c); c \in \mathcal{I}\}_{n \in \mathbb{N}}$:\\

\begin{enumerate}[label=(H\arabic*),ref=H\arabic*,start=3]
	\item\label{H3} For any $u \in S$ the map $c \mapsto \Lambda(c,u)$ is decreasing (respect. increasing) in $\mathcal{I}$. In addition, $\Lambda$ is bounded from above (respect. below) in any compact set $K \subset \mathcal{I} \times S$.
	 Finally, if $\Lambda$ is bounded in
	  $[a,b]\times S_0 \subset \mathcal{I} \times S$  then $\frac{\partial \Lambda}{\partial c}$ is also bounded and away from zero therein.\\
\end{enumerate}

The following generalized Palais-Smale condition shall be required as well:
\begin{enumerate}[label=(PSG),ref=PSG]
	\item\label{PSG}
			If $c_n\to c\in \mathcal{I}$ and $\{u_{n}\}\subset S$ are such that $\{\Lambda(c_n,u_n)\}$ is bounded and $\frac{\partial \widetilde{\Lambda}}{\partial u}(c_n,u_n)\to 0$,  then $\{u_n\}$ has  a convergent subsequence.\\
\end{enumerate}

We shall see that \eqref{H3} is satisfied if $I_1$ is bounded on bounded sets of $X$, $I_2$ is weakly continuous in $X \setminus \{0\}$ and $t(c,u)$ is bounded and bounded away from zero in $[a,b] \times S_0 \subset \mathcal{I} \times S$, for any $S_0$ away from zero in the weak topology of $X$.

The next result combined with the asymptotics of the maps $c \mapsto \mu_{n,c}$
 provide us with some informations on the structure of the set $$\mathcal{S}:=\{(\mu,c) \in \mathbb{R}^2: \Phi_\mu \mbox{ has a critical point at the level } c  \}.$$


\begin{theorem}\label{THM2}
Assume \eqref{H1}, \eqref{H2}, \eqref{H3} and \eqref{PSG}. Then for every $n \in \mathbb{N}$ the map $c \mapsto \mu_{n,c}$ is decreasing (respect. increasing) and locally Lipschitz continuous in $\mathcal{I}$.
\end{theorem}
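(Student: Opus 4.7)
I work in the first alternative of \eqref{H2} (so $u\mapsto\widetilde\Lambda(c,u)$ is bounded from below, $c\mapsto\Lambda(c,u)$ is decreasing, and $\mu_{n,c}=\inf_{F\in\mathcal{F}_n}\sup_{u\in F}\Lambda(c,u)$); the other case follows by reversing signs. Monotonicity is immediate: for $c_1<c_2$ in $\mathcal{I}$ and every $u\in S$, \eqref{H3} gives $\Lambda(c_1,u)>\Lambda(c_2,u)$, so $\sup_F\Lambda(c_1,\cdot)\ge\sup_F\Lambda(c_2,\cdot)$ for each $F\in\mathcal{F}_n$, and taking the infimum over $F$ yields $\mu_{n,c_1}\ge\mu_{n,c_2}$. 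Strict monotonicity will follow a posteriori from the Lipschitz estimate below, combined with the ``bounded away from zero'' clause of \eqref{H3}: along a near-optimal family for $c_1$ one gets $\mu_{n,c_2}\le \mu_{n,c_1}-\ell(c_2-c_1)$ for the uniform lower bound $\ell>0$ on $|\partial_c\Lambda|$.

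\textbf{Setup for the Lipschitz bound.} Fix $c_0\in\mathcal{I}$ and a compact interval $[a,b]\subset\mathcal{I}$ with $c_0\in(a,b)$ and $b-a$ to be chosen small. For any $n$-dimensional subspace $V\subset X$, the compact set $F_0:=V\cap S$ has genus $n$, so it lies in $\mathcal{F}_n$; the boundedness-on-compacts part of \eqref{H3} gives $M:=\sup_{[a,b]\times F_0}\Lambda<\infty$ and hence $\mu_{n,c}\le M$ for every $c\in[a,b]$. Put
\[
S_0:=\{u\in S:\Lambda(a,u)\le M+1\}.
\]
Monotonicity of $\Lambda$ in $c$ yields $\Lambda(c,u)\le \Lambda(a,u)\le M+1$ on $[a,b]\times S_0$, and the global lower bound from \eqref{H2} supplies the matching lower bound, so $\Lambda$ is bounded on $[a,b]\times S_0$. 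The last clause of \eqref{H3} then provides $L>0$ with $|\partial_c\Lambda(c,u)|\le L$ on $[a,b]\times S_0$, which, after integration in $c$, delivers
\[
|\Lambda(c,u)-\Lambda(c',u)|\le L|c-c'|\qquad\text{for all }c,c'\in[a,b],\ u\in S_0.
\]

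\textbf{Transfer to $\mu_{n,c}$ and the main obstacle.} Given $c,c'\in[a,b]$ and $\epsilon\in(0,1)$, choose $F\in\mathcal{F}_n$ with $\sup_F\Lambda(c',\cdot)<\mu_{n,c'}+\epsilon\le M+\epsilon$. \emph{If} $F\subset S_0$, the previous Lipschitz estimate gives $\sup_F\Lambda(c,\cdot)\le\sup_F\Lambda(c',\cdot)+L|c-c'|$, so $\mu_{n,c}\le\mu_{n,c'}+\epsilon+L|c-c'|$; sending $\epsilon\to 0$ and swapping the roles of $c$ and $c'$ yields $|\mu_{n,c}-\mu_{n,c'}|\le L|c-c'|$. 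The main obstacle is exactly the inclusion $F\subset S_0$: near-optimality only controls $\Lambda(c',\cdot)$ on $F$, whereas $S_0$ is defined through $\Lambda(a,\cdot)\ge\Lambda(c',\cdot)$, so the bound does not transfer by monotonicity. I would close this by a connectedness/bootstrap argument: for fixed $u\in F$ the set $A_u:=\{s\in[a,c']:\Lambda(s,u)\le M+1\}$ is closed, nonempty (it contains $c'$), and on the interval $[s^*,c']$ (where $s^*:=\inf A_u$) the bound $|\partial_c\Lambda|\le L$ from the previous step is available, giving
\[
\Lambda(s^*,u)\le \Lambda(c',u)+L(b-a)\le M+\epsilon+L(b-a).
\]
Choosing $b-a$ small enough that $L(b-a)<1/2$, this is strictly less than $M+1$, so continuity of $\Lambda(\cdot,u)$ forces $s^*=a$, i.e.\ $u\in S_0$. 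The need to quantify the smallness of $b-a$ against $L$ is precisely why the conclusion is \emph{local} Lipschitz continuity around each $c_0\in\mathcal{I}$.
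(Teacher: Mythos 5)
Your overall strategy --- restrict the minimax to a sublevel set of $\Lambda(a,\cdot)$, then transfer the two-sided bound on $\partial_c\Lambda$ coming from \eqref{H3} to $\mu_{n,c}$ via the mean value theorem --- is the same as the paper's, and you correctly isolate the one nontrivial point: why near-optimal sets $F$ for the level $c'$ may be taken inside $S_0=\{u\in S:\Lambda(a,u)\le M+1\}$. But your resolution of that point is circular. The constant $L$ with $|\partial_c\Lambda|\le L$ was obtained by applying the last clause of \eqref{H3} to the product set $[a,b]\times S_0$; in the bootstrap you invoke this same $L$ on $[s^*,c']\times\{u\}$ for a $u\in F$ that is not yet known to lie in $S_0$ --- which is exactly the membership you are trying to establish. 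What \eqref{H3} actually yields on $[s^*,c']\times\{u\}$ (where $\Lambda$ is indeed bounded, above by $M+1$ by monotonicity and below by $\inf_S\Lambda(b,\cdot)$) is a constant $L_u$ depending on that set, hence on $u$; nothing in \eqref{H3} makes $L_u$ uniform over the $u$'s you need, so you cannot choose $b-a$ small against it. The hypothesis is qualitative --- it does not quantify how the bound on $\partial_c\Lambda$ depends on the bound on $\Lambda$ --- so the differential inequality you are implicitly integrating backwards from $c'$ to $a$ may blow up before reaching $a$, no matter how small $b-a$ is taken relative to data you actually control.

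The tell-tale sign is that your proof never uses \eqref{PSG}, which is listed among the hypotheses. The paper closes precisely this gap with it: arguing by contradiction, it extracts a sequence $(c_k,u_k)$ with $\Lambda(c_k,u_k)$ bounded, $\frac{\partial\widetilde\Lambda}{\partial u}(c_k,u_k)\to0$ and $\Lambda(a,u_k)\to+\infty$, uses \eqref{PSG} to get compactness of $\{u_k\}$, and then the ``bounded on compact sets'' clause of \eqref{H3} to reach a contradiction. This produces a single $T$ such that for every $c\in[a,b]$ the level $\mu_{n,c}$ is realized by Palais--Smale sequences inside $S_{a,T}=\{u\in S:\Lambda(a,u)\le T\}$, whence $\mu_{n,c}=\inf_{F\in\widetilde{\mathcal F}_n}\sup_{u\in F}\Lambda(c,u)$ with $\widetilde{\mathcal F}_n=\{F\in\mathcal F_n:F\subset S_{a,T}\}$. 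From there your mean value theorem step goes through verbatim. So: correct skeleton, correct identification of the obstacle, but the step that genuinely requires \eqref{PSG} is replaced by an argument that does not close.
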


\begin{remark}
The statement of Theorem \ref{THM2} has to be understood in the sense that 
\begin{itemize}
\item the map $c \mapsto \mu_{n,c}$ is decreasing if in \eqref{H2} the functional $u\mapsto\widetilde\Lambda(c,u)$
is bounded below and in \eqref{H3}  the map $c \mapsto \Lambda(c,u)$ is decreasing.

\item the map $c \mapsto \mu_{n,c}$ is increasing if in \eqref{H2} the functional $u\mapsto\widetilde\Lambda(c,u)$
is bounded above and in \eqref{H3}  the map $c \mapsto \Lambda(c,u)$ is increasing.

\end{itemize}
\end{remark}

\begin{remark} One may check that the continuity and the decreasing behavior of $c \mapsto \mu_{1,c}$ holds under weaker conditions. Indeed, assume that $u \mapsto \Lambda(c,u)$ is bounded from below and $\mu_{1,c}=\displaystyle \inf_{u \in X \setminus \{0\}} \Lambda(c,u)$ is achieved for every $c \in \mathcal{I}$. If for every $u \neq 0$ the map $c \mapsto \Lambda(c,u)$ is decreasing in $\mathcal{I}$, then $c \mapsto \mu_{1,c}$ is decreasing in $\mathcal{I}$. Moreover, if for every $u \neq 0$ the map $c \mapsto \Lambda(c,u)$ is concave in $\mathcal{I}$, then $c \mapsto \mu_{1,c}$ is concave in $\mathcal{I}$ as well, and therefore continuous. Finally, let us assume that $t(c,u)$ is the only critical point of $\psi_{c,u}$, for every $u \in X \setminus \{0\}$. Then the solution $u_{1,c}$ with $\mu=\mu_{1,c}$ is the ground state solution of the problem $\Phi_\mu'(u)=0$. In other words, whenever achieved,
the ground state level of $\Phi_\mu$ is the value $c$ such that $\mu=\mu_{1,c}$.
\end{remark}

 Finally, we point out that besides \eqref{quasi} and \eqref{SP} our abstract results apply to many other elliptic models.
 In particular, one may deal with systems of equations, fourth-order problems, Kirchhoff type equations, $(p,q)$-Laplacian problems, etc. Furthermore, our variational setting is general enough so as to apply to equations subject to other kind of boundary conditions.
 
\subsection*{Outline of the paper}

 In Section \ref{sec:app} we describe the applications of Theorems \ref{THM0}, \ref{THM1} and \ref{THM2}  respectively to the problems \eqref{quasi} with $q=p$,
 \eqref{quasi} with $q<p$,  and \eqref{SP}. Our results on the set $\mathcal{S}$ for these problems are illustrated in Figures \ref{fig1}, \ref{fig2}, \ref{fig:CC}, and \ref{fig:SP}. 
In Section \ref{sec:proofs} we prove Theorems \ref{THM1} and \ref{THM2}.
 Lastly, in Sections
 \ref{sec:4},  \ref{SubCC} and  \ref{sec:6} we apply our abstract results to three classes of functionals, which include in particular those associated to problems \eqref{quasi} with $q=p$,
 \eqref{quasi} with $q<p$  and  \eqref{SP}. We analyze the asymptotic behaviour of the maps $c \mapsto \mu_{n,c}$.
 A final Appendix includes a simple result frequently used in the paper.

\subsection*{Notation} Throughout the paper, we use the following notations:

\begin{itemize}
	\item unless otherwise stated $\Omega$ denotes a bounded domain of $\mathbb{R}^N$ with $N\geq 1$;
	
	\item given $r>1$, we denote by $\Vert\cdot\Vert_{r}$ the usual norm in
	$L^{r}(\Omega)$, and by $r^*$ the critical Sobolev exponent, i.e. $r^*=\frac{Nr}{N-r}$ if $r<N$ and $r^*=+\infty$ if $r \geq N$;
	
	\item strong and weak convergences are denoted by $\rightarrow$ and
	$\rightharpoonup$, respectively;
	
	\item given $f\in L^{1}(\Omega)$, we set $f^{\pm}:=\max(\pm f,0)$. The integral $\int_{\Omega}f$ is considered
	with respect to the Lebesgue measure. Equalities and inequalities involving
	$f$ shall be understood holding \textit{a.e.};
	
	\item $S$ is the unit sphere in the Banach space $X$;
	\item if $X=W_0^{1,p}(\Omega)$ then $\|u\|=\|\nabla u\|_p$ for any $u \in X$.
	
\end{itemize}
\medskip
\section{Applications to elliptic problems} \label{sec:app}

As already mentioned, we shall apply the abstract theorems 
of the previous secton
in three interesting cases. The proofs of the results of this section are postponed
to Sections \ref{sec:4},  \ref{SubCC}, and  \ref{sec:6} respectively,
since the related functionals are particular cases of abstract functionals, see  Theorems \ref{ts1},  \ref{ts2}
and  \ref{THMSPAbstrac}, respectively.

\subsection{Nonhomogeneous perturbations of an eigenvalue problem}We consider the problem \eqref{quasi} with $q=p$ and $1<r<p^*$, $r \neq p$. The energy functional associated to this problem is given by 
\begin{equation*}
	\Phi_\mu(u)=\frac{1}{p}\left(\|\nabla u\|_p^p-\mu\|u\|_p^p\right)-\frac{1}{r} \|u\|_r^r, \quad u \in W_0^{1,p}(\Omega).
\end{equation*} 

Let us recall that weak solutions of \eqref{quasi} are locally Holder continuous.
We shall consider the superhomogeneous case $p<r$ (or superlinear if $p=2$) as well as the subhomogeneous one $p>r$ (the sublinear case if $p=2$). 

Let us set, for every $n \in \mathbb{N}$, $$\mu_{n}=\inf_{F\in \mathcal{F}_n}\sup_{u\in F}\frac{\|\nabla u\|_p^p}{\|u\|_p^p},$$
where $\mathcal{F}_n$ is given by \eqref{dfn}. It is well-known that $\{\mu_n\}$ is a nondecreasing sequence of eigenvalues of the Dirichlet p-Laplacian, which provides all  eigenvalues of $(-\Delta,H_0^1(\Omega))$ if $p=2$.

Theorems \ref{THM0} and \ref{THM1} yield the following result:

\begin{theorem}\label{THMAP1} Under the previous conditions let $c>0$ (respect. $c<0$) if $p <r$ (respect. $p>r$). Then there exist infinitely many $(\mu_{n,c},u_{n,c}) \in   \mathbb{R} \times W_0^{1,p}(\Omega)\setminus\{0\}$ such that $\Phi_{\mu_{n,c}}(\pm u_{n,c})=c$ and $\Phi'_{\mu_{n,c}}(\pm u_{n,c})=0,$ 
i.e. $\pm u_{n,c}$ are weak solutions of \eqref{quasi} with $\mu=\mu_{n,c}$, having energy $c$, for every $n$. Moreover:
\begin{enumerate}
\item $\{\mu_{n,c} \}$ is non-decreasing and  $\mu_{n,c},\|u_{n,c}\| \to +\infty$ as $n\to +\infty$. 	\smallskip
\item \eqref{quasi} has no weak solution having energy $c$ for $\mu<\mu_{1,c}$. 	\smallskip
\item $u_{n,c}$ is sign-changing for $n$ large enough.
\end{enumerate}
\end{theorem}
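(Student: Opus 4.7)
The plan is to apply Theorems \ref{THM0} and \ref{THM1} with the decomposition $I_1(u)=\frac{1}{p}\|\nabla u\|_p^p-\frac{1}{r}\|u\|_r^r$ and $I_2(u)=\frac{1}{p}\|u\|_p^p$ on $X=W_0^{1,p}(\Omega)$; note $I_2>0$ on $X\setminus\{0\}$. The fibering map reads
\begin{equation*}
\psi_{c,u}(t)=\frac{\|\nabla u\|_p^p}{\|u\|_p^p}-\frac{p\,t^{r-p}\|u\|_r^r}{r\,\|u\|_p^p}-\frac{pc}{t^p\,\|u\|_p^p},
\end{equation*}
and $\psi'_{c,u}(t)=0$ admits the unique positive root $t(c,u)^{r}=\frac{prc}{(r-p)\|u\|_r^r}$, which exists precisely when $(r-p)c>0$, matching the sign hypothesis on $c$. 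Inspecting $\psi_{c,u}$ at $0^+$ and $+\infty$ shows that $t(c,u)$ is a global maximum when $p<r, c>0$ (both ends tend to $-\infty$) and a global minimum when $p>r, c<0$ ($\psi\to+\infty$ at $0$ and $\to\|\nabla u\|_p^p/\|u\|_p^p$ at $+\infty$); a direct computation gives $\psi''_{c,u}(t(c,u))=-p(r-p)t^{r-p-2}\|u\|_r^r/\|u\|_p^p\neq 0$, so the Morse condition holds and \eqref{H1} is verified.

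Substituting $t(c,u)$ back in $\psi_{c,u}$ yields the explicit reduced functional
\begin{equation*}
\Lambda(c,u)=\frac{\|\nabla u\|_p^p}{\|u\|_p^p}-K(c)\,\frac{\|u\|_r^p}{\|u\|_p^p},\qquad K(c)=\left(\frac{prc}{r-p}\right)^{(r-p)/r}>0.
\end{equation*}
Its restriction $\widetilde\Lambda$ to $S$ is bounded below: if $u_n\in S$ with $\|u_n\|_p\to 0$, then interpolation between $L^p$ and $L^{p^*}$ (case $r>p$) or H\"older's inequality (case $r<p$) forces $\|u_n\|_r\to 0$, hence $\Lambda(c,u_n)\to+\infty$; continuity on the rest of $S$ completes the lower bound. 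The Palais--Smale condition for $\widetilde\Lambda$ at each level $\mu_{n,c}$ follows from a standard argument exploiting the compact embeddings $W_0^{1,p}\hookrightarrow L^p,L^r$, so \eqref{H2} holds in its ``bounded below'' form. The same compactness gives $\Lambda(c,w_n)\to+\infty$ whenever $w_n\rightharpoonup 0$ in $S$, which is exactly the supplementary hypothesis required by Theorem \ref{THM1}(\ref{THM1-1}).

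Theorem \ref{THM1} then yields the couples $(\mu_{n,c},\pm u_{n,c})$, the monotonicity of $\{\mu_{n,c}\}$, the divergence $\mu_{n,c}\to+\infty$, and (through Theorem \ref{THM1}(\ref{THM1-2}), since $t(c,u)$ is the only critical point of $\psi_{c,u}$) the non-existence for $\mu<\mu_{1,c}$. The blow-up $\|u_{n,c}\|\to+\infty$ is obtained by combining $\Phi_{\mu_{n,c}}(u_{n,c})=c$ with the Nehari identity $\langle \Phi'_{\mu_{n,c}}(u_{n,c}),u_{n,c}\rangle=0$: subtracting them yields $\bigl(\tfrac{1}{p}-\tfrac{1}{r}\bigr)\|u_{n,c}\|_r^r=c$, so $\|u_{n,c}\|_r^r=\tfrac{prc}{r-p}$ is constant in $n$; plugging this into $\|\nabla u_{n,c}\|_p^p=\mu_{n,c}\|u_{n,c}\|_p^p+\|u_{n,c}\|_r^r$ and using either Gagliardo--Nirenberg interpolation (case $r>p$) or the H\"older lower bound $\|u_{n,c}\|_p\ge c_0\|u_{n,c}\|_r$ (case $r<p$) forces $\|\nabla u_{n,c}\|_p\to+\infty$ once $\mu_{n,c}\to+\infty$. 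Finally, for the sign-changing assertion I would argue by contradiction: if $u_{n,c}$ were of constant sign, testing the PDE against $\phi_1^p/|u_{n,c}|^{p-1}$ (with $\phi_1>0$ the first $p$-Laplacian eigenfunction) and invoking the Picone inequality for $-\Delta_p$ would produce the a priori bound $\mu_{n,c}\le\lambda_1$, contradicting $\mu_{n,c}\to+\infty$. The main technical obstacle is the Palais--Smale verification for the reduced functional $\widetilde\Lambda$ on $S$: although $\Phi_\mu$ satisfies (PS) by classical arguments in the subcritical range $r<p^*$, transferring compactness to PS sequences for $\widetilde\Lambda$ requires tracking the implicit fibering parameter $t(c,\cdot)$ and ruling out concentration at the degenerate locus $\{\|u\|_p=0\}$.
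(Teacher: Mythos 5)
Your proposal is correct and follows essentially the same route as the paper: it instantiates the nonlinear Rayleigh quotient $\mu(c,u)$, verifies \eqref{H1} and \eqref{H2} for the reduced functional (your explicit formulas for $t(c,u)$, $\psi''_{c,u}(t(c,u))$ and $\Lambda(c,u)$ agree with \eqref{tea} and \eqref{eta=alphaex}), applies Theorems \ref{THM0} and \ref{THM1}, derives $\|u_{n,c}\|_r^r=\frac{prc}{r-p}$ from \eqref{epm} to obtain the norm blow-up, and uses Picone's inequality for the sign-changing claim, which is exactly what the paper does through its abstract Theorem \ref{ts1} and the citation of Allegretto--Huang. The only step left at sketch level is the Palais--Smale verification, where compact embeddings alone do not suffice: one also needs the $(S_+)$-type monotonicity of $u\mapsto -\Delta_p u$ (condition \eqref{st-5} in the paper, exploited via Lemma \ref{psg} and Remark \ref{rps}) to upgrade weak convergence of a Palais--Smale sequence on $S$ to strong convergence.
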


Theorem \ref{THM2} and some asymptotic analysis 
made in Theorem \ref{ts1} give the behaviour of the map
$c \mapsto \mu_{n,c}$ and an existence result.

\begin{theorem}\label{EC1}
	Under the assumptions of Theorem \ref{THMAP1}, the following properties hold: for every $n\in \mathbb{N}$ the map $c \mapsto \mu_{n,c}$ is locally Lipschitz continuous and decreasing in $(0,+\infty)$ (respect. $(-\infty,0)$) if $p<r$ (respect. $p>r$). Furthermore:
	\begin{enumerate} 
		\item If $p<r$ then (see Figure \ref{fig1}):
		\begin{enumerate}
			\item  $\mu_{n,c}\to -\infty$ and $\|u_{n,c}\| \to +\infty$ as $c\to+ \infty$, i.e. $(-\infty,+\infty)$ is a bifurcation point.
			\item   $\mu_{n,c}\to \mu_n$ and $u_{n,c} \to 0$ in $W_0^{1,p}(\Omega)$ as $c \to 0^+$, i.e. every $(\mu_n,0)$ is a bifurcation point.
			\item For every $\mu\in \mathbb{R}$ the problem \eqref{quasi} has infinitely many pairs of weak solutions $\pm u_n\in W_0^{1,p}(\Omega)$ with positive energy. Moreover $\Phi_\mu(u_n)\to +\infty$ and $\|u_n\|\to +\infty$ as $n \to +\infty$, so $(\mu,+\infty)$ is a bifurcation point for any  $\mu\in \mathbb{R}$.
		\end{enumerate}
		\item If $p>r$ then (see Figure \ref{fig2}):
		\begin{enumerate}
			\item $\mu_{n,c}\to -\infty$ and $u_{n,c} \to 0$ in $X$ as $c\to 0^-$, i.e. $(-\infty,0)$ is a bifurcation point.
			\item $\mu_{n,c}\to \mu_n$ and $\|u_{n,c}\| \to +\infty$ as $c \to -\infty$, i.e. every $(\mu_n,+\infty)$ is a bifurcation point.
			\item For every $\mu\in \mathbb{R}$ the problem \eqref{quasi} has infinitely many pairs of weak solutions $\pm v_n\in W_0^{1,p}(\Omega)$ with negative energy. Moreover $\Phi_\mu(v_n)\to 0$ and $v_n\to 0$ in $W_0^{1,p}(\Omega)$  as $n \to +\infty$, so $(\mu,0)$ is a bifurcation point for any $\mu \in \mathbb{R}$.
		\end{enumerate}
	\end{enumerate}
\end{theorem}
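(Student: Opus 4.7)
The plan has two parts. First, the local Lipschitz continuity and monotonicity of $c \mapsto \mu_{n,c}$ will follow from a direct application of Theorem \ref{THM2}, for which only \eqref{H3} and \eqref{PSG} need to be verified since \eqref{H1} and \eqref{H2} are already established during the proof of Theorem \ref{THMAP1}. Second, the asymptotic claims in (1a), (1b), (2a), (2b) will come from a closed-form analysis of $\Lambda$, and parts (1c), (2c) will then follow by an intermediate-value argument combined with $\mu_n \to +\infty$.

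Solving $\psi_{c,u}'(t)=0$ is straightforward since
\begin{equation*}
\psi_{c,u}(t)=\frac{\|\nabla u\|_p^p}{\|u\|_p^p}-\frac{p}{r}\,t^{r-p}\frac{\|u\|_r^r}{\|u\|_p^p}-\frac{pc}{t^p\|u\|_p^p};
\end{equation*}
the unique critical point $t(c,u)>0$ satisfies $t(c,u)^r=\frac{pr|c|}{|r-p|\,\|u\|_r^r}$ (valid when $c>0$ and $p<r$, or $c<0$ and $p>r$), and substitution yields, with $\alpha:=(r-p)/r$ and a constant $K=K(p,r)>0$,
\begin{equation*}
\Lambda(c,u)=\frac{\|\nabla u\|_p^p}{\|u\|_p^p}-K\,|c|^{\alpha}\,\frac{\|u\|_r^p}{\|u\|_p^p},
\end{equation*}
where $\alpha>0$ for $p<r$ and $\alpha<0$ for $p>r$. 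Strict monotonicity of $c \mapsto \Lambda(c,u)$ and the boundedness of $\partial_c\Lambda$ away from zero on compact subsets of $\mathcal{I}\times S_0$ (with $S_0$ weakly away from zero) read directly from this formula, using the weak continuity of the $L^p$ and $L^r$ norms via the compact Sobolev embedding $W_0^{1,p}(\Omega)\hookrightarrow L^q(\Omega)$ for $q<p^*$; this gives \eqref{H3}. Condition \eqref{PSG} then follows by standard arguments: a Palais-Smale-type sequence on $S$ is bounded, passes to a weak limit, and converges strongly thanks to the compactness of $W_0^{1,p}\hookrightarrow L^r$ (since $r<p^*$) and the uniform convexity of $W_0^{1,p}$.

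For the asymptotic analysis in the case $p<r$: as $c\to 0^+$ one has $|c|^{\alpha}\to 0$ and the convergence $\Lambda(c,\cdot)\to \|\nabla\cdot\|_p^p/\|\cdot\|_p^p$ is uniform on weakly precompact subsets of $S$, so the minimax levels $\mu_{n,c}$ converge to the Ljusternik-Schnirelman eigenvalues $\mu_n$. As $c\to+\infty$ the subtracted term dominates at every fixed $u$ with $\|u\|_r>0$, forcing $\mu_{n,c}\to -\infty$. From $t(c,v_{n,c})^r=\frac{prc}{(r-p)\|v_{n,c}\|_r^r}$ one reads $\|u_{n,c}\|=t(c,v_{n,c})\to+\infty$, provided $\|v_{n,c}\|_r$ stays bounded away from zero — a fact obtained from the Palais-Smale compactness applied to the approximating critical points $v_{n,c}\in S$ of $\widetilde\Lambda(c,\cdot)$. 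The limit $u_{n,c}\to 0$ as $c\to 0^+$ is the symmetric conclusion. The case $p>r$ is handled analogously: $\alpha<0$ reverses the roles of the endpoints $c\to 0^-$ and $c\to -\infty$, yielding the claimed limits.

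Finally, part (c) in each case is an intermediate-value argument: $c\mapsto \mu_{n,c}$ is continuous and strictly monotone, with range containing $(-\infty,\mu_n)$, so for every $\mu\in\mathbb{R}$ and every $n$ large enough that $\mu_n>\mu$ there is a unique $c_n=c_n(\mu,n)$ with $\mu_{n,c_n}=\mu$; the corresponding $u_n:=u_{n,c_n}$ solves \eqref{quasi} with energy $\Phi_\mu(u_n)=c_n$, and the asymptotics of $(c_n,\|u_n\|)$ as $n\to\infty$ read off from the inverse of each curve (using $\mu_n\to+\infty$ and the endpoint behaviors just established). The main obstacle throughout will be to upgrade the pointwise properties of $\Lambda(c,\cdot)$ to uniform control along sequences $c_k\to c_\infty$ at an endpoint of $\mathcal{I}$; this requires the Palais-Smale compactness in a form uniform in $c$, which is exactly what \eqref{PSG} provides, combined with a careful use of weak continuity of the lower-order terms.
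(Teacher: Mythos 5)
Your proposal follows essentially the same route as the paper: the explicit formulas for $t(c,u)$ and $\Lambda(c,u)$ are exactly the paper's \eqref{tea} and \eqref{eta=alphaex} specialized to $\eta=p$, $\beta=r$; the Lipschitz continuity and monotonicity come from Theorem \ref{THM2} after checking \eqref{H3} and \eqref{PSG} (the paper's Lemma \ref{0lip} and Proposition \ref{0PS}); the endpoint asymptotics are read off the closed form of $\Lambda$ (Lemma \ref{0pl} and Proposition \ref{bif}); and part (c) in each case is the interleaving/intermediate-value argument of Corollary \ref{cbif}. Two small points to tighten: the convergence $\Lambda(c,\cdot)\to \|\nabla \cdot\|_p^p/\|\cdot\|_p^p$ as $c\to 0^+$ is uniform only on subsets of $S$ where $\|u\|_p$ (and, for $p<r$, $\|u\|_r$) is bounded away from zero, not on arbitrary weakly precompact subsets of $S$ (which would include all of $S$), so the localization to the sublevel sets $S_{a,T}$ from the proof of Theorem \ref{THM2} is genuinely needed there; and for $\|u_{n,c}\|=t(c,v_{n,c})\to+\infty$ as $c\to+\infty$ the relevant hypothesis is that $\|v_{n,c}\|_r$ stays bounded \emph{above} (automatic from the Sobolev embedding since $v_{n,c}\in S$), not bounded away from zero.
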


\begin{figure}[h]
	\centering
	\begin{tikzpicture}[>=latex]
	\draw[->] (-1,0) -- (6,0) node[below] {\scalebox{0.8}{$\mu$}};
	\foreach \x in {}
	\draw[shift={(\x,0)}] (0pt,2pt) -- (0pt,-2pt) node[below] {\footnotesize $\x$};
	\draw[->] (0,-.5) -- (0,3) node[left] {\scalebox{0.8}{$c$}};
	\foreach \y in {}
	\draw[shift={(0,\y)}] (2pt,0pt) -- (-2pt,0pt) node[left] {\footnotesize $\y$};
	\draw[blue,thick] (-2.5,1.2) .. controls (0,1) and (1,1) .. (1.5,0);
	\draw[blue,thick] (-2.5,1.7) .. controls (0,1.5) and (1.5,1.5) .. (2,0);
	\draw [thick] (1.5,-.1) node[below]{\scalebox{0.8}{$\mu_{1}$}} -- (1.5,0.05); 
	\draw [thick] (1.8,0) -- (1.8,3);
	
	\draw [thick] (2,-.1) node[below]{\scalebox{0.8}{$\mu_{2}$}} -- (2,0.05); 
	\draw [thick] (2.5,-.1) node[below]{\scalebox{0.8}{$\mu_{3}$}} -- (2.5,0.05); 
	\draw [thick] (3.3,-.1) node[below]{\scalebox{0.8}{$\mu_{n}$}} -- (3.3,0.05); 
	
	\draw [thick] (2.8,0) node[above]{$\cdots$}; 
	\draw[blue,thick] (-2.5,2.2) .. controls (0,2) and (2,2) .. (2.5,0);
	\draw[blue,thick] (-2.5,2.7) .. controls (0,2.5) and (2.5,2.5) .. (3.3,0);
	
	\draw [thick] (-.1,2.3) node[right]{$\vdots$} ;
	\draw (1.8,.45) node{\scalebox{0.8}{$\bullet$}};
	\draw (1.8,1.2) node{\scalebox{0.8}{$\bullet$}};
	\draw (1.8,1.9) node{\scalebox{0.8}{$\bullet$}};
					\draw  (-1,.1) node[above]{\scalebox{1.5}{$\nexists$}}  ; 	
	\draw  (-2.8,1) node[above]{\scalebox{0.8}{$\mu_{1,c}$}} ; 	
	\draw  (-2.8,1.5) node[above]{\scalebox{0.8}{$\mu_{2,c}$}} ; 
	\draw  (-2.8,2) node[above]{\scalebox{0.8}{$\mu_{3,c}$}} ; 
	
	\draw  (-2.8,2.5) node[above]{\scalebox{0.8}{$\mu_{n,c}$}} ; 
	
	\draw  (2.3,2.4) node[above]{\scalebox{0.8}{$\mu=\overline{\mu}$}} ;
	\end{tikzpicture}
	\caption{Energy curves for \eqref{quasi} with $q=p<r$. Here we assume that $ \mu_{n,c}$ is increasing in $n$, for every $c$. No solution exists below the first curve.} \label{fig1}
\end{figure}
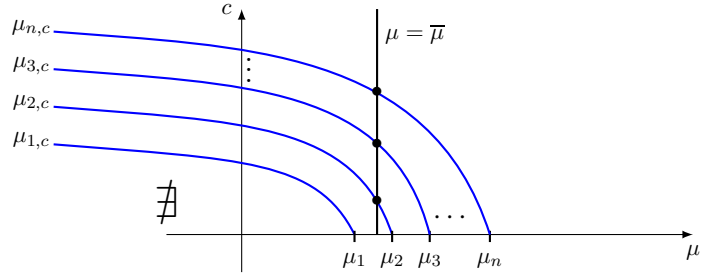

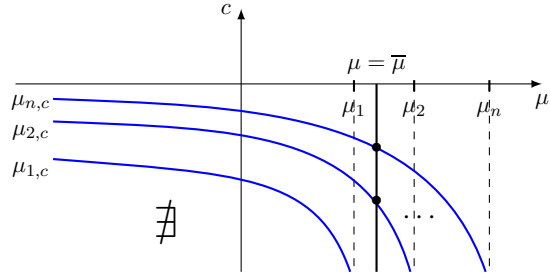
\begin{figure}[h]
	\centering
	\begin{tikzpicture}[>=latex]
	\draw[->] (-3,0) -- (4,0) node[below] {\scalebox{0.8}{$\mu$}};
	\foreach \x in {}
	\draw[shift={(\x,0)}] (0pt,2pt) -- (0pt,-2pt) node[below] {\footnotesize $\x$};
	\draw[->] (0,-2.5) -- (0,1) node[left] {\scalebox{0.8}{$c$}};
	\foreach \y in {}
	\draw[shift={(0,\y)}] (2pt,0pt) -- (-2pt,0pt) nde[left] {\footnotesize $\y$};
	\draw[blue,thick] (-2.5,-1) .. controls (0,-1.2) and (1,-1.2) .. (1.45,-2.5);
	\draw [thick] (1.5,-.1) node[below]{\scalebox{0.8}{$\mu_{1}$}} -- (1.5,0.05); 
	\draw [dashed] (1.5,0) -- (1.5,-2.5);
	\draw [thick] (2.3,-.1) node[below]{\scalebox{0.8}{$\mu_{2}$}} -- (2.3,0.05); 
	\draw [dashed] (2.3,0) -- (2.3,-2.5);
	\draw [thick] (3.3,-.1) node[below]{\scalebox{0.8}{$\mu_{n}$}} -- (3.3,0.05); 
	\draw [dashed] (3.3,0) -- (3.3,-2.5);
	
	\draw [thick] (2.4,-2) node[above]{$\cdots$}; 
	\draw[blue,thick] (-2.5,-.5) .. controls (0,-.6) and (1.7,-.6) .. (2.25,-2.5);
	\draw[blue,thick] (-2.5,-.2) .. controls (0,-.3) and (2.5,-.3) .. (3.25,-2.5);
	
	\draw [thick] (1.8,0) -- (1.8,-2.5);
	
	\draw (1.8,-1.55) node{\scalebox{0.8}{$\bullet$}};
	\draw (1.8,-.85) node{\scalebox{0.8}{$\bullet$}};
							\draw  (-1,-1.5) node[below]{\scalebox{1.5}{$\nexists$}}  ; 	
	\draw  (-2.8,-1.4) node[above]{\scalebox{0.8}{$\mu_{1,c}$}} ; 	
	\draw  (-2.8,-.9) node[above]{\scalebox{0.8}{$\mu_{2,c}$}} ; 
	
	\draw  (-2.8,-.5) node[above]{\scalebox{0.8}{$\mu_{n,c}$}} ; 
	
	\draw  (1.8,0) node[above]{\scalebox{0.8}{$\mu=\overline{\mu}$}} ;
	\end{tikzpicture}
	\caption{Energy curves for \eqref{quasi} with $q=p>r$} \label{fig2}
\end{figure}

The results of Theorems \ref{THMAP1} and \ref{EC1} are consistent with the bifurcation picture known for \eqref{quasi} with $q=p$. In the semilinear case $p=2$, it is known that any $(\mu_n,0)$ is a bifurcation point if $r>2$, cf. \cite{R2}, whereas $(\mu_n,+\infty)$ is a bifurcation point  if $r<2$, see \cite{R1,T,DH}. For the $p$-Laplacian case with $p \neq 2$ we refer to \cite{dPM,D,GV} for some results in the case $N=1$ as well as \cite{dPM,D} and  \cite{GT} for some bifurcation results from $(\mu_1,0)$ for $r>p$, and $(\mu_1,+\infty)$ for $r<p$, respectively. See also \cite{BH} for a bifurcation result on a two parameters problem similar to \eqref{quasi} with $r>p$.

Figure \ref{fig1} depicts the energy curves $\{(\mu_{n,c},c): c>0\}$ for \eqref{quasi} with $q=p<r$. In this case, it is assumed that $\mu_{n,c}$ is increasing in $n$ for every $c>0$, so that the curves do not meet each other. However, we can not exclude a picturen as in Figure \ref{fig:ou}, where  $\mu_{k,c}=\mu_{k+1,c}$ for some $k$ and $c$. Likewise, we can not exclude this kind of phenomenon for \eqref{quasi} with $q=p>r$, as well as for the problems considered below.


\begin{figure}[h]
	\centering

		\begin{tikzpicture}[>=latex]
		\draw[->] (-1,0) -- (4,0) node[below] {\scalebox{0.8}{$\mu$}};
		\foreach \x in {}
		\draw[shift={(\x,0)}] (0pt,2pt) -- (0pt,-2pt) node[below] {\footnotesize $\x$};
		\draw[->] (0,-1) -- (0,2.2) node[left] {\scalebox{0.8}{$c$}};
		\foreach \y in {}
		\draw[shift={(0,\y)}] (2pt,0pt) -- (-2pt,0pt) node[left] {\footnotesize $\y$};
		\draw[blue,thick] (.5,.8) .. controls (1.5,.8) and (2,.4) .. (2,0);
		\draw[blue,thick] (-1,1.6) .. controls (0,1.5) and (1,.4) .. (1,0);
		\draw [thick] (1,-.1) node[below]{\scalebox{0.8}{$\mu_{k}$}} -- (1,0.05); 
		\draw [thick] (2,-.1) node[below]{\scalebox{0.8}{$\mu_{k+1}$}} -- (2,0.05); 

		\end{tikzpicture}
		\caption{Energy curves for \eqref{quasi} with $q=p<r$. Here we assume that $\mu_{k,c}=\mu_{k+1,c}$ for some $k \in \mathbb{N}$ and $c>0$.} \label{fig:ou}
\end{figure}
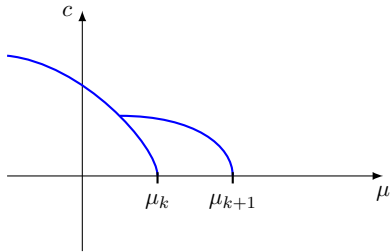

\subsection{A concave-convex problem}
Let us consider now \eqref{quasi} 
with $1<q<p<r<p^*$, and deal with the functional
\begin{equation*}
	\Phi_\mu(u)=\frac{1}{p}\|\nabla u\|_{p}^{p}-\frac{\mu}{q}\|u\|_{q}^q-\frac{1}{r}\|u\|_{r}^r, \quad u \in W_0^{1,p}(\Omega).
\end{equation*}  

Unlike in the case $q=p$, for some values of $c$ the fibering map $\psi_{c,u}$ has now two critical points, which produce then  two sequences of couple of solutions  for these levels:

\begin{theorem}\label{THMAP2} Under the previous conditions there exists $c^*<0$ such that: 
	\begin{enumerate}
		\item For any $c>c^*$ there there exist infinitely many $(\mu_{n,c}^-,u_{n,c}) \in   \mathbb{R} \times W_0^{1,p}(\Omega)\setminus\{0\}$ such that $\Phi_{\mu_{n,c}^-}(\pm u_{n,c})=c$ and $\Phi'_{\mu_{n,c}^-}(\pm u_{n,c})=0$, i.e., $\pm u_{n,c}$ are weak solutions of \eqref{quasi} with $\mu=\mu_{n,c}^-$, having energy $c$, for every $n$.  Moreover:
		\begin{enumerate}
			\item $\{\mu_{n,c}^-\}$ is non-decreasing, and $\mu_{n,c}^-,\|u_{n,c}\| \to +\infty$ as $n\to +\infty$.
			\item $u_{n,c}$ is sign-changing for $n$ large enough.
			\item If $c>0$ and $\mu<\mu_{1,c}^-$ then \eqref{quasi} has no weak solution having energy $c$.\\
		\end{enumerate}

		\item For any $c\in (c^*,0)$  there exist infinitely many $(\mu_{n,c}^+,v_{n,c}) \in   (0,\infty) \times W_0^{1,p}(\Omega)\setminus\{0\}$ such that $\Phi_{\mu_{n,c}^+}(\pm v_{n,c})=c$ and $\Phi'_{\mu_{n,c}^+}(\pm v_{n,c})=0$, i.e., $\pm v_{n,c}$ are weak solutions of \eqref{quasi} with $\mu=\mu_{n,c}^+$, having energy $c$, for every $n$. Moreover:
		\begin{enumerate}
		\item $\{\mu_{n,c}^+\}$ is non-decreasing, and $\mu_{n,c}^+ \to +\infty$ and $v_{n,c} \rightharpoonup 0$ as $n\to +\infty$.
		\item $\mu_{n,c}^+<\mu_{n,c}^-$ for every $n$.
		\item $v_{n,c}$ is sign-changing for $n$ large enough.
		\item If $\mu<\mu_{1,c}^+$ then \eqref{quasi} has no weak solution having energy $c$.
		\end{enumerate}
	\end{enumerate}
\end{theorem}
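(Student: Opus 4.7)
The plan is to apply Theorems \ref{THM0}, \ref{THM1} and \ref{THM2} to the decomposition $\Phi_\mu = I_1 - \mu I_2$ with
\[
I_1(u) = \tfrac{1}{p}\|\nabla u\|_p^p - \tfrac{1}{r}\|u\|_r^r, \qquad I_2(u) = \tfrac{1}{q}\|u\|_q^q,
\]
on $X = W_0^{1,p}(\Omega)$, separately on each of the two branches of critical points of the fibering map. For $u \in S$, multiplying $\psi'_{c,u}(t)=0$ by $t^{q+1}$ reduces the critical point equation to $h_u(t) = -qc$, where
\[
h_u(t) = \tfrac{p-q}{p}\,t^p - \tfrac{r-q}{r}\|u\|_r^r\, t^r.
\]
Since $q<p<r$, $h_u$ vanishes at $0$, has a unique global maximum $M(u) = C\|u\|_r^{-rp/(r-p)}$ with $C>0$ explicit, and then tends to $-\infty$. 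The continuous embedding $X \hookrightarrow L^r$ yields $\inf_{u\in S}M(u) > 0$, and setting
\[
c^* := -\tfrac{1}{q}\inf_{u\in S}M(u) \;<\; 0,
\]
two regimes arise: for every $c>c^*$ the map $\psi_{c,u}$ admits a unique nondegenerate local maximum $t^-(c,u)$, and for every $c\in(c^*,0)$ an additional nondegenerate local minimum $t^+(c,u)<t^-(c,u)$ appears. The implicit function theorem furnishes the smoothness required by \eqref{H1} on each branch, yielding the reduced functionals $\Lambda^\pm(c,u) := \psi_{c,u}(t^\pm(c,u))$.

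Next I would verify that each $\Lambda^\pm$ fits the ``bounded below'' version of \eqref{H2}. Using the relation $h_u(t^\pm)=-qc$ to eliminate $\|u\|_r^r(t^\pm)^r$ yields a closed-form expression for $\Lambda^\pm(c,u)$ from which positivity (when $c\le 0$) and a uniform positive lower bound on $S$ follow. The key structural property, which also yields the unboundedness of $\{\mu_{n,c}^\pm\}$, is the coercivity
\[
\Lambda^\pm(c, w_n) \to +\infty \quad \text{whenever} \quad w_n \rightharpoonup 0 \text{ in } X,
\]
inherited from $\|w_n\|_q\to 0$ (compact embedding $X\hookrightarrow L^q$) combined with $t^-(c,w_n)\to+\infty$ and $t^+(c,w_n)\to \bigl(\tfrac{-qcp}{p-q}\bigr)^{1/p}$, extracted from $h_{w_n}(t^\pm)=-qc$. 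The Palais--Smale and \eqref{PSG} conditions are then handled via uniform convexity of $X$ and the compact embeddings $X\hookrightarrow L^q, L^r$, along the lines of \cite{QSS}. Hypothesis \eqref{H3} follows from $\tfrac{\partial\Lambda^\pm}{\partial c}(c,u) = -I_2(t^\pm(c,u)u)^{-1}<0$, which is locally bounded and bounded away from zero.

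With the abstract machinery validated, Theorem \ref{THM1} yields the two families $(\mu_{n,c}^-, \pm u_{n,c})$ and $(\mu_{n,c}^+, \pm v_{n,c})$, their monotonicity in $n$, and the divergence $\mu_{n,c}^\pm\to+\infty$, while Theorem \ref{THM2} yields local Lipschitz continuity and monotonicity in $c$. The qualitative information $\|u_{n,c}\|\to+\infty$ follows by writing $u_{n,c} = t^-(c,\hat u_n)\hat u_n$ with $\hat u_n\in S$: since $\Lambda^-(c,\hat u_n)=\mu_{n,c}^-\to+\infty$, the boundedness of $\Lambda^-(c,\cdot)$ along any weakly convergent subsequence with nontrivial limit forces $\hat u_n\rightharpoonup 0$, whence $\|\hat u_n\|_r\to 0$ and $t^-(c,\hat u_n)\to+\infty$. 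Analogously $v_{n,c}\rightharpoonup 0$ because $t^+(c,\hat v_n)$ remains bounded while $\hat v_n\rightharpoonup 0$. Nonexistence of solutions for $\mu<\mu_{1,c}^\pm$ is an immediate consequence of Theorem \ref{THM0}.

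The most delicate points are the strict inequality $\mu_{n,c}^+<\mu_{n,c}^-$ and the sign-changing assertions. For the former, the pointwise strict inequality $\Lambda^+(c,u)<\Lambda^-(c,u)$ on $S$ (a consequence of the Morse separation $t^+<t^-$) combined with the weak-type limit analysis described above allows the following argument: were $\mu_{n,c}^+=\mu_{n,c}^-$, one could, via a weakly convergent extraction from an optimizing family for $\inf\sup\Lambda^-$, produce a point where $\Lambda^+=\Lambda^-$, contradicting strict separation. For the sign-changing assertion, the set $A := \{u\in S: u\ge 0\}\cup\{u\in S: u\le 0\}$ has Krasnoselskii genus $1$ (the map $u\mapsto\int_\Omega u$ is odd, continuous, and nonvanishing on $A$), so any $F\in\mathcal{F}_n$ with $n\ge 2$ must contain sign-changing elements; a standard Ljusternik--Schnirelmann deformation argument then shows that both $u_{n,c}$ and $v_{n,c}$ are sign-changing once $n$ is large enough that $\mu_{n,c}^\pm$ strictly separates from the ``constant-sign'' levels. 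Adapting these genus and separation arguments to our concave--convex setting is what I expect to be the main technical hurdle.
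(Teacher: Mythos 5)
Your overall strategy coincides with the paper's: the same splitting $I_1=\frac1p\|\nabla u\|_p^p-\frac1r\|u\|_r^r$, $I_2=\frac1q\|u\|_q^q$, the same reduction of $\psi'_{c,u}(t)=0$ to $h_u(t)=-qc$ (this is \eqref{dp1} with $(\alpha,\eta,\beta)=(q,p,r)$), the same $c^*$, and the same passage through $\Lambda^{\pm}$, \eqref{H1}--\eqref{H2} and Theorems \ref{THM0}--\ref{THM1}; this is Section \ref{SubCC} specialized exactly as in the paper. Two of your concluding steps, however, do not go through as written. The more serious one concerns the sign-changing assertions (1)(b) and (2)(c). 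The genus computation $\gamma\bigl(\{u\in S:u\ge 0\}\cup\{u\in S:u\le 0\}\bigr)=1$ shows that every $F\in\mathcal{F}_n$ with $n\ge 2$ contains sign-changing elements, but it does not prevent the minimax level $\mu_{n,c}^{\pm}$ from being attained at a constant-sign critical point; no standard deformation argument converts ``$F$ contains sign-changing points'' into ``the critical point at level $\mu_{n,c}^{\pm}$ changes sign''. The ``strict separation from constant-sign levels'' that you invoke is precisely the missing ingredient, and it is not topological: the paper obtains it from the Picone-identity result of Allegretto--Huang (\cite[Theorem 2.1]{AH}), namely that \eqref{quasi} has no positive solution for $\mu>\mu_1$. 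Since $\mu_{n,c}^{\pm}\to+\infty$ in $n$, for $n$ large a one-signed solution at the level $\mu_{n,c}^{\pm}$ would be (up to sign and Harnack) a positive solution with $\mu>\mu_1$, which is impossible. Without this PDE input your argument does not close.

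The second issue is the strict inequality $\mu_{n,c}^+<\mu_{n,c}^-$. As stated, your contradiction (``produce a point where $\Lambda^+=\Lambda^-$'') cannot occur, because the pointwise inequality is strict everywhere; what a minimizing extraction actually yields is a sequence $\{u_k\}\subset S$ with $\Lambda^-(c,u_k)-\Lambda^+(c,u_k)\to 0$, and this is what must be excluded. The exclusion is genuinely available --- on the sublevel set $\{u\in S:\Lambda^-(c,u)\le \mu_{n,c}^-+1\}$ the quantities $N,A,B$ are controlled, and since $c>c^*$ the limiting fibering map retains two nondegenerate critical points with a positive gap --- and the paper implements it quantitatively through the uniform bound $\psi''_{c,u}(s)\ge D$ on $[t^+(c,u),t^+(c,u)+\delta]$ (Lemma \ref{secondd} and Corollary \ref{secondd1}), which gives $\mu_{n,c}^+\le\mu_{n,c}^--D\delta^2$. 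Your sketch has the right mechanism in mind but, as written, it only establishes $\mu_{n,c}^+\le\mu_{n,c}^-$; the uniform-gap step must be made explicit.
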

\medskip

The main properties of the maps $c \mapsto \mu_{n,c}^{\pm}$ are gathered in the next result, which include, as a consequence, the existence of infinitely many solutions for \eqref{quasi} with $\mu$ fixed.

\medskip
\begin{theorem}\label{EC2}
	Under the assumptions of Theorem \ref{THMAP2} the following facts hold (see Figure \ref{fig:CC}):
	\begin{enumerate}
		\item For every $n\in \mathbb{N}$ the map $c \mapsto \mu_{n,c}^-$ is locally Lipschitz continuous and decreasing in $(c^*,+\infty)$, and $\displaystyle \lim_{c\to +\infty}\mu_{n,c}^-=-\infty$. 
		\item For every $n\in \mathbb{N}$ the map $c \mapsto \mu_{n,c}^+$ is locally Lipschitz continuous and decreasing in $(c^*,0)$, and $\displaystyle \lim_{c\to 0^-}\mu_{n,c}^+=0$.

		\item For every $\mu\in \mathbb{R}$ the problem \eqref{quasi} has infinitely many pairs of weak solutions $\pm u_n\in W_0^{1,p}(\Omega)$ with positive energy. Moreover $\Phi_\mu(u_n)\to +\infty$ and $\|u_n\|\to +\infty$ as $n \to \infty$, so $(\mu,+\infty)$ is a bifurcation point for any  $\mu\in \mathbb{R}$.
		\item For every $\mu>0$ the problem \eqref{quasi} has infinitely many pairs of weak solutions $\pm v_n\in W_0^{1,p}(\Omega)$ with negative energy. Moreover $\Phi_\mu(v_n)\to 0$ and $v_n\to 0$ in $W_0^{1,p}(\Omega)$  as $n \to +\infty$, so $(\mu,0)$ is a bifurcation point for any $\mu>0$.
	\end{enumerate}	
\end{theorem}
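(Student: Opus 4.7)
The plan is to deduce (1) and (2) from the abstract Theorem \ref{THM2}, and then to obtain (3) and (4) as consequences of the continuity, monotonicity and asymptotic behavior of the maps $c\mapsto\mu_{n,c}^{\pm}$. The concave-convex fibering map
\[
\psi_{c,u}(t)=\frac{\tfrac{t^p}{p}\|\nabla u\|_p^p-\tfrac{t^r}{r}\|u\|_r^r-c}{\tfrac{t^q}{q}\|u\|_q^q}
\]
has, for $c\in(c^*,0)$, exactly two positive critical points --- a local minimizer $t^+(c,u)$ and a local maximizer $t^-(c,u)$ --- while for $c\geq 0$ only $t^-(c,u)$ survives. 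Accordingly I would work with two reduced functionals $\Lambda^{\pm}(c,u):=\psi_{c,u}(t^{\pm}(c,u))$, each of which satisfies on its own range of $c$ the hypotheses of Theorem \ref{THM2}: \eqref{H1} and the Morse-type property come from the strict concave-convex geometry; \eqref{H3} and the monotonicity in $c$ are immediate from the identity $\partial_c\psi_{c,u}(t)=-q/(\|u\|_q^q\,t^q)<0$; and the Palais-Smale conditions in \eqref{H2} and \eqref{PSG} follow from the compactness of the subcritical embeddings $W_0^{1,p}(\Omega)\hookrightarrow L^q(\Omega),L^r(\Omega)$. This verification is the content of Section \ref{SubCC}, where it is subsumed under the abstract Theorem \ref{ts2}. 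Applying Theorem \ref{THM2} to $\Lambda^-$ on $(c^*,+\infty)$ and to $\Lambda^+$ on $(c^*,0)$ then yields the locally Lipschitz, strictly decreasing behavior claimed in (1) and (2).

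To establish the remaining asymptotic limits, I would argue as follows. For $\mu_{n,c}^-\to -\infty$ as $c\to+\infty$, fix any $F\in\mathcal{F}_n$ and any $u\in F$; since the maximum value $\Lambda^-(c,u)$ is lowered by at least $\eta_u\,c$ with $\eta_u>0$ uniformly on the compact set $F$ (which follows by comparing $\psi_{c,u}$ with $\psi_{0,u}$ and exploiting that $t^-(c,u)$ stays in a bounded positive interval), $\sup_{u\in F}\Lambda^-(c,u)\to-\infty$, so passing to the infimum over $\mathcal{F}_n$ gives the limit. For $\mu_{n,c}^+\to 0$ as $c\to 0^-$, one notes that $t^+(c,u)\to 0^+$ uniformly on compact subsets of $S$ (since $\psi_{0,u}(0^+)=0$ and the strict local minimum collapses to the origin), whence $\Lambda^+(c,u)\to 0$ uniformly, and the minimax $\mu_{n,c}^+$ converges to $0$ as well.

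Finally, (3) and (4) follow by inverting these energy curves. Fix $\mu\in\mathbb{R}$; by (1) each $c\mapsto\mu_{n,c}^-$ is a continuous strictly decreasing map from $(c^*,+\infty)$ into $(-\infty,L_n)$ for some $L_n$ that tends to $+\infty$ as $n\to+\infty$ (by Theorem \ref{THMAP2}(1)(a) combined with the monotonicity in $c$). Hence for all $n$ sufficiently large there exists a unique $c_n(\mu)\in(c^*,+\infty)$ with $\mu_{n,c_n(\mu)}^-=\mu$, and the corresponding $u_n$ from Theorem \ref{THMAP2} solves \eqref{quasi} with $\Phi_\mu(u_n)=c_n(\mu)$. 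The monotonicity of $n\mapsto\mu_{n,c}^-$ at fixed $c$ together with $\mu_{n,c}^-\to-\infty$ as $c\to+\infty$ forces $c_n(\mu)\to+\infty$, and $\|u_n\|\to+\infty$ is then inherited from Theorem \ref{THMAP2}(1)(a). Part (4) is proved analogously using $\Lambda^+$ and (2). The main obstacle is the uniform asymptotic analysis in (2): the degeneration of the fibering geometry at $c=0$ has to be controlled across the entire minimax class $\mathcal{F}_n$, so a pointwise estimate on a single test function is not enough, and one must exploit the compactness of genus-$n$ sets in $S$ together with uniform bounds on $t^+(c,u)$ to obtain uniform convergence of $\Lambda^+(c,\cdot)$ to $0$.
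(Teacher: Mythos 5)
Your overall route is the same as the paper's: verify that $N(u)=\|\nabla u\|_p^p$, $A(u)=\|u\|_q^q$, $B(u)=\|u\|_r^r$ fall into the abstract framework of Section \ref{SubCC} with $\alpha=q$, $\eta=p$, $\beta=r$, get (1)--(2) from Theorem \ref{THM2} (via \eqref{H3} and \eqref{PSG}, i.e. Lemmas \ref{ls}, \ref{lip} and Proposition \ref{PS}), and obtain (3)--(4) by inverting the energy curves exactly as in Corollary \ref{ccbif}. Your treatment of $\lim_{c\to+\infty}\mu_{n,c}^-=-\infty$ (comparison of $\psi_{c,u}$ with $\psi_{0,u}$ on a single compact $F\in\mathcal F_n$ where $t^-$ is trapped in a compact subinterval of $(0,\infty)$) matches Lemma \ref{pl}\eqref{pl-iii}, and your argument that $c_n(\mu)\to+\infty$ is forced by $\mu_{n,c}^-\to+\infty$ at fixed $c$ is the same bookkeeping as in Corollary \ref{cbif}/\ref{ccbif}.

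There is, however, one genuine gap: the step ``$t^+(c,u)\to 0^+$ uniformly, \emph{whence} $\Lambda^+(c,u)\to 0$ uniformly'' does not follow. By \eqref{1st-Lambda1},
\begin{equation*}
\Lambda^+(c,u)=\frac{\alpha\, t^+(c,u)^{-\alpha}}{(\beta-\alpha)A(u)}\left(\frac{\beta-\eta}{\eta}N(u)\,t^+(c,u)^{\eta}-\beta c\right),
\end{equation*}
and while the first summand $N(u)\,t^+(c,u)^{\eta-\alpha}\to 0$ (as $\eta>\alpha$), the second contributes $-\beta c\, t^+(c,u)^{-\alpha}$, which is a positive $0\cdot\infty$ indeterminate form as $c\to 0^-$ and $t^+\to 0^+$; knowing only that $t^+\to 0$ tells you nothing about the limit of $c/t^+(c,u)^{\alpha}$, and your proposed remedy (compactness of the genus-$n$ set plus uniform bounds on $t^+$) does not resolve it. The paper closes this in Lemma \ref{pl}\eqref{pl-iv} by going back to the critical-point identity \eqref{dp1}: it yields $C_1 t^+(c,u)^{\eta-\alpha}-C_2 t^+(c,u)^{\beta-\alpha}\ge -\alpha c\, t^+(c,u)^{-\alpha}>0$, whose left-hand side tends to $0$, forcing $c\,t^+(c,u)^{-\alpha}\to 0$ uniformly and hence $\Lambda^+(c,u)\to 0$. (An alternative repair: since $t^-(c,u)$ is bounded away from zero, $\Lambda^+(c,u)\le\psi_{c,u}(s)=\psi_{0,u}(s)-\alpha c\, s^{-\alpha}/A(u)$ for every small fixed $s$, and combining this with the positive lower bound $\Lambda^+(c,u)>0$ from Lemma \ref{pl}\eqref{pl-i} gives the limit; note also that, because of this positivity, a single test set $F\in\mathcal F_n$ suffices for $\mu_{n,c}^+\to 0$ --- one does not need control over the whole minimax class.) Everything else in your proposal is sound.
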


\begin{figure}[h]
	\centering
	\begin{tikzpicture}[>=latex,scale=0.97]
	\draw[->] (-1,0) -- (6,0) node[below] {\scalebox{0.8}{$\mu$}};
	\foreach \x in {}
	\draw[shift={(\x,0)}] (0pt,2pt) -- (0pt,-2pt) node[below] {\footnotesize $\x$};
	\draw[->] (0,-2) -- (0,3) node[left] {\scalebox{0.8}{$c$}};
	\foreach \y in {}
	\draw[shift={(0,\y)}] (2pt,0pt) -- (-2pt,0pt) node[left] {\footnotesize $\y$};
	\draw[blue,thick] (-2.5,1.2) .. controls (0,1) and (1,1) .. (2,0);
	\draw[blue,thick] (2,0) .. controls (3,-1.4) .. (3,-1.4);
	\draw[blue,thick] (-2.5,1.7) .. controls (0,1.5) and (1.5,1.5) .. (3,0);
	\draw[blue,thick] (3,0) .. controls (4,-1.4) .. (4,-1.4);
	
	\draw [thick] (3.3,0) node[above]{$\cdots$} -- (3.3,0); 
	\draw[blue,thick] (-2.5,2.2) .. controls (0,2) and (2,2) .. (4,0);
	\draw[blue,thick] (4,0) .. controls (5,-1.4) .. (5,-1.4);
	\draw[red,thick] (0,0) .. controls (1,-.1) .. (2.5,-1.4);
	\draw[red,thick] (0,0) .. controls (1.5,-.1) .. (3.3,-1.4);
	\draw[red,thick] (0,0) .. controls (2,-0.1) .. (4.3,-1.4);
	\draw [thick] (3.2,-1.3) node[above]{$\vdots$} -- (3.2,-1.3); 
	\draw [thick] (-.1,1.85) node[right]{$\vdots$} -- (-.1,1.85);
	\draw [thick] (1.8,-2) -- (1.8,3);
	\draw (1.8,.18) node{\scalebox{0.8}{$\bullet$}};
	\draw (1.8,.91) node{\scalebox{0.8}{$\bullet$}};
	\draw (1.8,1.54) node{\scalebox{0.8}{$\bullet$}};
	\draw (1.8,-.18) node{\scalebox{0.8}{$\bullet$}};
	\draw (1.8,-.37) node{\scalebox{0.8}{$\bullet$}};
	\draw (1.8,-.82) node{\scalebox{0.8}{$\bullet$}};
	\draw  (-1,0) node[above]{\scalebox{1.5}{$\nexists$}}  ; 	
	\draw  (-2.8,1) node[above]{\scalebox{0.8}{$\mu_{1,c}^-$}} ; 	
	\draw  (-2.8,1.5) node[above]{\scalebox{0.8}{$\mu_{2,c}^-$}} ; 
	\draw  (-2.8,2) node[above]{\scalebox{0.8}{$\mu_{n,c}^-$}} ; 
								\draw  (1,-.5) node[below]{\scalebox{1.5}{$\nexists$}}  ; 	
	\draw  (2.5,-2) node[above]{\scalebox{0.8}{$\mu_{1,c}^+$}} ; 	
	\draw  (3.3,-2) node[above]{\scalebox{0.8}{$\mu_{2,c}^+$}} ; 
	\draw  (4.3,-2) node[above]{\scalebox{0.8}{$\mu_{n,c}^+$}} ; 
	\draw [thick] (-.1,-1.4) node[left]{\scalebox{0.8}{$c^*$}} -- (.1,-1.4); 
	\draw [thick,dashed] (0,-1.4) -- (6,-1.4);
	
	\draw  (2.3,2.4) node[above]{\scalebox{0.8}{$\mu=\overline{\mu}$}} ;
	\end{tikzpicture}
	\caption{Energy curves for \eqref{quasi} with $q<p$. No solution exists for $c>0$ and $\mu<\mu_{1,c}^-$ and for $c \in (c^*,0)$ and $\mu<\mu_{1,c}^+$} \label{fig:CC}
\end{figure}
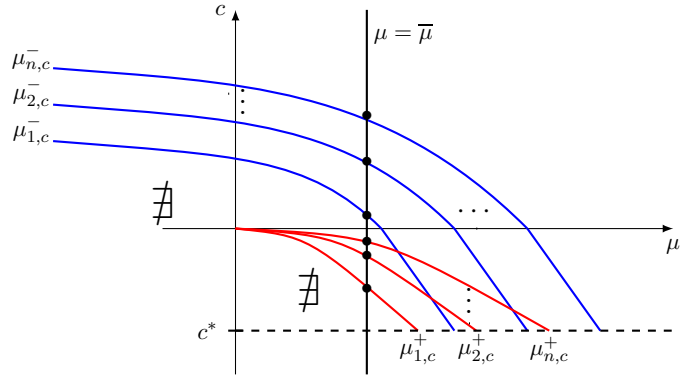

The bifurcation scenario provided by Theorem \ref{THMAP2} and Theorem \ref{EC2} extends some results known for the concave-convex in the semilinear case $p=2$. 
In \cite{BW} (see also \cite{ABC}) the authors proved that for any $\mu>0$  this problem has infinitely many solutions whose energy grows up to infinity as $n \to +\infty$, as well as infinitely many solutions  whose energy is negative and converges to zero as $n \to +\infty$.  A similar result holds for the $p$-Laplacian concave-convex problem with $\mu>0$ small and $r=p^*$, cf. \cite{GP}. Furthermore, the existence of infinitely many continua $\mathcal{C}_n$ of radial solutions bifurcating from $(0,0)$ has been observed for $\Omega$ being a ball \cite{AGP}, and for $p=2$ and $\Omega$ being an annulus \cite{BM}. We suspect that 
the curves $\mu_{n,c}^+$ and $\mu_{n,c}^-$ meet up at some point, so that their union correspond to the continuum $\mathcal{C}_n$ obtained in \cite{AGP,BM}. We refer to \cite{QSS} for  discussion on this issue.

\subsection{A Schr\"odinger-Bopp-Podolsky problem}

Finally we consider the energy functional associated to problem \eqref{SP}, namely,
$\Phi_\mu:H_r^1(\mathbb{R}^3)\to \mathbb{R}$  given by
\begin{equation*}
	\Phi_\mu(u)=\frac{1}{2}\int_{\mathbb{R}^3} |\nabla u|^2+\frac{\omega}{2}\int_{\mathbb{R}^3} |u|^2+\frac{\mu}{4}\int_{\mathbb{R}^3} \phi_uu^2-\frac{1}{p}\int_{\mathbb{R}^3} |u|^p.
\end{equation*}
 
 Recall that $p\in(2,3)$, $\omega>0$, and $a\ge 0$.
As in the previous problem, we shall obtain two sequences of solutions for some values of $c$: 

\begin{theorem}\label{THMAP3} Under the previous conditions there exists $c^*>0$ such that: 
		\begin{enumerate}
		\item For any $c<c^*$ there exist infinitely many $(\mu_{n,c}^-,u_{n,c})  \in (0,\infty) \times H_r^1(\mathbb{R}^3)\setminus\{0\}$ such that $\Phi_{\mu_{n,c}^-}(\pm u_{n,c})=c$ and $\Phi'_{\mu_{n,c}^-}(\pm u_{n,c})=0$, i.e., $\pm u_{n,c}$ are weak solutions of \eqref{SP} with $\mu=\mu_{n,c}^-$, having energy $c$, for every $n$.  Moreover:
		\begin{enumerate}
			\item  $\mu_{n,c}^-$ is non-increasing, $\displaystyle \lim_{n\to +\infty}\mu_{n,c}^-=0$ and $\|u_{n,c}\|\to +\infty$ as $n\to +\infty$, so $(0,+\infty)$ is a bifurcation point.
			\item If $c<0$ and $\mu>\mu_{1,c}^-$ then \eqref{SP} has no radial weak solution having energy $c$.\\
		\end{enumerate}
		
		\item For any $c\in (0,c^*)$  there exist infinitely many $(\mu_{n,c}^+,v_{n,c}) \in \mathbb{R} \times H_r^1(\mathbb{R}^3)\setminus\{0\}$ such that $\Phi_{\mu_{n,c}^+}(\pm v_{n,c})=c$ and $\Phi'_{\mu_{n,c}^+}(\pm v_{n,c})=0$, i.e., $\pm v_{n,c}$ are weak solutions of \eqref{SP} with $\mu=\mu_{n,c}^+$, having energy $c$, for every $n$. Moreover:
		\begin{enumerate}
			\item $\mu_{n,c}^+$ is non-increasing, $\displaystyle \lim_{n\to +\infty}\mu_{n,c}^+=-\infty$ and $v_{n,c} \rightharpoonup 0$ as $n\to +\infty$.
			\item $\mu_{n,c}^+<\mu_{n,c}^-$ for every $n$.
		\end{enumerate}
	\end{enumerate}
\end{theorem}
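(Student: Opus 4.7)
The strategy is to apply Theorems \ref{THM0}, \ref{THM1} and \ref{THM2} with $X=H^1_r(\mathbb{R}^3)$ and the splitting $\Phi_\mu=I_1-\mu I_2$ where
\[I_1(u)=\tfrac12\|u\|^2-\tfrac1p\|u\|_p^p,\qquad I_2(u)=-\tfrac14\int_{\mathbb{R}^3}\phi_u u^2.\]
Using the scaling $\phi_{tu}=t^2\phi_u$, the critical-point equation $\psi'_{c,u}(t)=0$ reduces, after clearing denominators, to
\[g_u(t):=\tfrac14 t^2\|u\|^2-\bigl(\tfrac1p-\tfrac14\bigr)t^p\|u\|_p^p=c.\]
Since $2<p<3<4$, the function $g_u$ vanishes at the origin, increases to a unique maximum $c^*_u>0$ at some $t^*(u)>0$, and then decreases to $-\infty$. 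A lower bound for $c^*_u$ in terms of $\|u\|$ and $\|u\|_p$, combined with the radial Sobolev embedding $H^1_r\hookrightarrow L^p$ for $p\in(2,3)$, gives $c^*:=\inf_{u\in S}c^*_u>0$, which is the threshold appearing in the statement.

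I would treat two regimes separately. \emph{First regime}: $c\in(-\infty,c^*)$. For such $c$, the equation $g_u(t)=c$ has a unique root $t^-(c,u)>t^*(u)$ on the decreasing branch of $g_u$, corresponding to a Morse-type global maximum of $\psi_{c,u}$; this verifies the second alternative in (H1)(b). The reduced functional $\Lambda^-(c,u):=\psi_{c,u}(t^-(c,u))$ is shown to be bounded above on $S$ and to satisfy (PSG) thanks to the compact embedding $H^1_r\hookrightarrow L^p$ and the weak continuity of $u\mapsto\int_{\mathbb{R}^3}\phi_u u^2$ on $H^1_r$. Theorem \ref{THM1}(\ref{THM1-1}) in its bounded-above form then produces the sequence $(\mu_{n,c}^-,\pm u_{n,c})$ with $\mu_{n,c}^-$ non-increasing in $n$ (Remark \ref{infsup}), and Theorem \ref{THM2} yields the local Lipschitz monotonicity in $c$. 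The non-existence statement (1)(b) for $c<0$ and $\mu>\mu_{1,c}^-$ follows from Theorem \ref{THM1}(\ref{THM1-2}), since on $(-\infty,0)$ the point $t^-(c,u)$ is the unique critical point of $\psi_{c,u}$ and $\mu_{1,c}^-=\sup_S\Lambda^-$.

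\emph{Second regime}: $c\in(0,c^*)$. Here $g_u(t)=c$ admits an additional root $t^+(c,u)\in(0,t^*(u))$ on the increasing branch, corresponding to a Morse-type local minimum of $\psi_{c,u}$. Applying Theorem \ref{THM1} to $\Lambda^+(c,u):=\psi_{c,u}(t^+(c,u))$—after verifying that it is bounded above on $S$ and satisfies (PSG)—produces the second sequence $(\mu_{n,c}^+,\pm v_{n,c})$. The strict ordering $\mu_{n,c}^+<\mu_{n,c}^-$ for every $n$ is an immediate consequence of the pointwise inequality $\Lambda^+(c,u)<\Lambda^-(c,u)$ (the local minimum of $\psi_{c,u}$ is strictly smaller than its local maximum) combined with the min-max definitions. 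For the asymptotics $\mu_{n,c}^+\to-\infty$, I would exhibit a sequence $\{w_n\}\subset S$ of arbitrary genus along which $t^+(c,w_n)\to 0$, forcing $\Lambda^+(c,w_n)\to-\infty$; Remark \ref{infsup} then yields $\mu_{n,c}^+\to-\infty$. The weak convergence $v_{n,c}\rightharpoonup 0$ follows from the boundedness of $\{v_{n,c}\}$ in $H^1_r$ together with the behavior of $t^+$ along the Palais-Smale sequences realizing the level $\mu_{n,c}^+$.

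The main obstacle is twofold. First, verifying (H3) and (PSG) requires uniform control of the roots $t^\pm(c,u)$ on compact subsets of $\mathcal{I}\times S$; this is most delicate near the threshold $c=c^*$, where the two roots coalesce and $\psi_{c,u}$ degenerates to an inflection. Second, establishing $\mu_{n,c}^+\to-\infty$ demands a quantitative analysis of $\Lambda^+$ along weakly vanishing sequences: one must exploit the fact that for $p\in(2,3)$ the Coulomb-type nonlocal term scales with higher power in $t$ than the $L^p$ term, which is precisely what allows $t^+(c,w_n)\to 0$ along suitable $w_n\rightharpoonup 0$ and forces the reduced functional to diverge to $-\infty$.
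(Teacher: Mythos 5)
Your overall architecture matches the paper's: the same splitting $I_2(u)=-\frac14\int\phi_uu^2$, the same reduction of $\psi_{c,u}'(t)=0$ to $g_u(t)=c$ (this is exactly the paper's equation \eqref{dp1SP} with $\eta=2$, $\beta=p$, $\alpha=4$), the same threshold $c^*=\inf_S c^*_u>0$, and the same two regimes handled via Theorem \ref{THM1} in its bounded-above form. However, there are three genuine gaps. First, the strict inequality $\mu_{n,c}^+<\mu_{n,c}^-$ is \emph{not} an immediate consequence of the pointwise inequality $\Lambda^+(c,u)<\Lambda^-(c,u)$: a strict pointwise ordering only yields $\mu_{n,c}^+\le\mu_{n,c}^-$ after taking $\sup\inf$. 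The paper must prove a \emph{uniform} gap $\psi_{c,u}(t^-(c,u))-\psi_{c,u}(t^+(c,u))\ge D\delta^2$ on the relevant sublevel sets (Proposition \ref{pdmSP}, via Lemma \ref{seconddSP} and Corollary \ref{secondd1SP}), which rests on a quantitative lower bound for $\psi_{c,u}''$ near $t^+$ that degenerates only as $c\to c^*$. Second, your verification of the Palais--Smale condition for $\Lambda^-$ ``thanks to the compact embedding and the weak continuity of $u\mapsto\int\phi_uu^2$'' omits the key difficulty: one must rule out $t^-(c,u_n)\to+\infty$ along Palais--Smale sequences at a positive level, and since $A(u)=\int\phi_uu^2$ admits no lower bound in terms of $\|u\|$ on $S$ (it vanishes along spreading sequences), this does not follow from embeddings. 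The paper introduces the coercivity hypothesis \eqref{H4} precisely for this (see the proof of Lemma \ref{lSPc}\eqref{lSPc1}) and devotes a separate measure-theoretic lemma, adapted from \cite{SS}, to verifying \eqref{H4} for the Schr\"odinger--Bopp--Podolsky functional; this is the part of the proof that is genuinely specific to \eqref{SP} and it is absent from your proposal.

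Third, your mechanism for $\mu_{n,c}^+\to-\infty$ is logically backwards: exhibiting particular sets of genus $\ge n$ on which $\Lambda^+(c,\cdot)$ is very negative only bounds $\sup_{F\in\mathcal F_n}\inf_F\Lambda^+$ from \emph{below} by something small on those sets; to force the $\sup\inf$ itself to $-\infty$ you need an upper bound valid for \emph{every} $F$ of genus $\ge n$. The correct route (Corollary \ref{ciiSP} together with the last assertion of Theorem \ref{THM1} and \cite[Lemma A.1]{QSS}) is to show that $-\Lambda^+(c,w_n)\to+\infty$ along \emph{every} weakly null sequence $\{w_n\}\subset S$, and then use that high-genus sets must contain almost weakly null points. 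The same device, applied to $(\Lambda^-)^{-1}$, is what gives $\mu_{n,c}^-\to 0$ and $\|u_{n,c}\|\to+\infty$, two assertions of the statement that your sketch does not address.
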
	
	
\medskip
Now the maps $c \mapsto \mu_{n,c}^{\pm}$ enjoy the following behavior:
\medskip

\begin{theorem}\label{EC3}
	Under the conditions of Theorem \ref{THMAP3} the following properties hold for every $n\in \mathbb{N}$ (see Figure \ref{fig:SP}):
	\begin{enumerate}
		\item  The map $c \mapsto \mu_{n,c}^-$ is continuous and non-decreasing in $(-\infty,c^*)$, and $\displaystyle \lim_{c\to -\infty}\mu_{n,c}^-=0$. 
		\item The map $c \mapsto \mu_{n,c}^+$ is continuous and non-decreasing in $(0,c^*)$, and $\displaystyle \lim_{c\to 0^+}\mu_{n,c}^+=-\infty$. 
		\item For every $\mu\in (0,\mu_{n,0}^-)$ the problem \eqref{SP} has at least $n$ pairs of radial solutions with negative energy. 
		\item For every $\mu< \mu_{n,0}^+$ the problem \eqref{SP} has at least $n$ pairs of radial solutions with positive energy. 
	\end{enumerate}
\end{theorem}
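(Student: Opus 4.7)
The strategy is to view Theorem \ref{EC3} as two separate applications of Theorem \ref{THM2} to the SP functional, one for each branch of the fibering map $\psi_{c,u}$. In Section \ref{sec:6} the hypotheses \eqref{H1}--\eqref{H3} and \eqref{PSG} will be verified for $\Phi_\mu$ on each branch: for $c<c^{*}$ one has the local maximizer $t^{-}(c,u)$ of $\psi_{c,u}$, giving a reduced functional $\Lambda^{-}$ and its minimax values $\mu_{n,c}^{-}$, while for $c\in(0,c^{*})$ a local minimizer $t^{+}(c,u)$ appears as well, producing $\Lambda^{+}$ and $\mu_{n,c}^{+}$. Since in the SP setting $I_2(u)=-\tfrac{1}{4}\int\phi_u u^{2}<0$ for $u\neq 0$, the map $c\mapsto\mu(c,u)=(I_1(u)-c)/I_2(u)$ is strictly increasing in $c$, so $\Lambda^{\pm}(\cdot,u)$ is increasing and $\widetilde\Lambda^{\pm}(c,\cdot)$ is bounded above; Theorem \ref{THM2} then applies in its increasing alternative and delivers items (1) and (2) except for the two asymptotic limits.

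To establish those limits I would analyze the critical fibers. As $c\to-\infty$ the local maximizer $t^{-}(c,u)$ is expected to blow up uniformly on compact subsets of $S$, and because $|I_2(tu)|$ scales like $t^{4}$ while $|c|$ is only linear, the term $-c/I_2(t^{-}u)$ in $\mu(c,t^{-}u)$ vanishes; this forces $\Lambda^{-}(c,u)\to 0^{+}$ uniformly, and the minimax formula yields $\mu_{n,c}^{-}\to 0$. Conversely, as $c\to 0^{+}$ the local minimizer $t^{+}(c,u)$ collapses to $0$; a careful expansion of $\psi_{c,u}$ near $t=0$ (the range $2<p<3$ is crucial here so that the $L^{p}$ nonlinearity does not compensate) shows $\Lambda^{+}(c,u)\to-\infty$ uniformly on $S$, which combined with the inequality $\mu_{n,c}^{+}<\mu_{n,c}^{-}$ from Theorem \ref{THMAP3} forces $\mu_{n,c}^{+}\to-\infty$. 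Items (3) and (4) then follow by the intermediate value theorem applied to each continuous curve. For (3), given $\mu\in(0,\mu_{n,0}^{-})$, since $\mu_{k,0}^{-}\geq\mu_{n,0}^{-}>\mu$ for every $k\leq n$ by the monotonicity of $\{\mu_{k,c}^{-}\}_{k}$ in Theorem \ref{THMAP3}, and $\mu_{k,c}^{-}\to 0$ as $c\to-\infty$, continuity and monotonicity in $c$ yield $c_k<0$ with $\mu_{k,c_k}^{-}=\mu$; the values $c_k$ are ordered $c_1\leq\dots\leq c_n$, and whenever consecutive ones coincide the classical genus bound for coinciding Ljusternik--Schnirelman levels still produces the required number of distinct pairs $\pm u_{k,c_k}$ of solutions of \eqref{SP} at parameter $\mu$ with negative energy. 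Item (4) is strictly parallel on $(0,c^{*})$, using the limit $\mu_{k,c}^{+}\to-\infty$ as $c\to 0^{+}$.

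The main obstacle I foresee is the uniform asymptotic control of $\Lambda^{\pm}(c,u)$ on the sphere $S$. The Bopp--Podolsky interaction $\int\phi_u u^{2}$ is nonlocal and is neither scale-invariant nor as easily weakly continuous as the $L^{p}$ term, and the three competing homogeneities $2$, $4$ and $p\in(2,3)$ make the behavior of the critical fibers $t^{\pm}(c,u)$ delicate, especially near the threshold $c=c^{*}$ where the two fibers coalesce. Once these uniform estimates are in place through the preparatory work of Section \ref{sec:6}, the remainder of the proof reduces to a straightforward combination of Theorems \ref{THM1} and \ref{THM2} with the intermediate value theorem and the standard genus multiplicity argument.
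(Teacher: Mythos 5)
Your proposal follows essentially the same route as the paper: Theorem \ref{EC3} is obtained by checking that $N(u)=\int(|\nabla u|^2+\omega|u|^2)$, $A(u)=\int\phi_u u^2$, $B(u)=\int|u|^p$ fit the abstract third class of Section \ref{sec:6} with $(\eta,\alpha,\beta)=(2,4,p)$, applying Theorem \ref{THM2} in its increasing alternative for items (1)--(2), proving the uniform asymptotics of $\Lambda^\pm$ exactly as you sketch (Lemma \ref{plSP}), and running the intermediate value argument along the energy curves for items (3)--(4) (Propositions \ref{ncriticaSP} and \ref{ncriticaSP1}). The only ingredient you leave implicit is the verification of the coercivity-type condition \eqref{H4} for the Bopp--Podolsky functional, which the paper proves separately and needs (via Lemma \ref{lSPc}) to obtain precisely the uniform control of $t^{-}$ and $\Lambda^{-}$ on $S$ that you correctly identify as the main obstacle.
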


\begin{figure}[h]
	\centering
	\begin{tikzpicture}[>=latex]
	\draw[->] (-1,0) -- (5,0) node[below] {\scalebox{0.8}{$\mu$}};
	\foreach \x in {}
	\draw[shift={(\x,0)}] (0pt,2pt) -- (0pt,-2pt) node[below] {\footnotesize $\x$};
	\draw[->] (0,-1.5) -- (0,2) node[left] {\scalebox{0.8}{$\mbox{Energy}$}};
	\foreach \y in {}
	\draw[shift={(0,\y)}] (2pt,0pt) -- (-2pt,0pt) node[left] {\footnotesize $\y$};
	\node[below left] at (0,0) {\footnotesize $0$};
	\draw [thick] (-.1,1.5) node[left]{\scalebox{0.8}{$c^*$}} -- (.1,1.5); 
	\draw [thick,dashed] (0,1.5) -- (6,1.5);
	
	\draw [thick] (0.4,-1.5) -- (0.4,2);
	
	\draw[red,thick] (0.3,-2) .. controls (0.4,-.7) .. (4,1.5);
	\draw[red,thick] (0.2,-2) .. controls (0.3,-.4) .. (3.3,1.5);
	\draw[red,thick] (0.1,-2) .. controls (0.2,-.1)  .. (2.1,1.5);
	\draw  (3,.7) node[below]{\scalebox{1.5}{$\nexists$}} ;
		\draw  (2,-.5) node[below]{\scalebox{1.5}{$\nexists$}}  ; 	
	\draw[red]  (4,1.5) node[above]{\scalebox{0.7}{$\mu_{1,c}^-$}} ; 	
	\draw[red]  (3.3,1.5) node[above]{\scalebox{0.7}{$\mu_{2,c}^-$}} ; 
	\draw[red]  (2.1,1.5) node[above]{\scalebox{0.7}{$\mu_{n,c}^-$}} ;

	\draw[blue,thick] (-1,.3) .. controls (2,0.55) .. (3.5,1.5);
	\draw[blue,thick] (-1,.6) .. controls (2,0.85) .. (3,1.5);
	\draw[blue,thick] (-1,.9) .. controls (1.8,1.2) .. (2,1.5);
	\draw[blue]  (-1.3,0) node[above]{\scalebox{0.7}{$\mu_{1,c}^+$}} ; 	
	\draw[blue]  (-1.3,.35) node[above]{\scalebox{0.7}{$\mu_{2,c}^+$}} ; 
	\draw[blue]  (-1.3,.7) node[above]{\scalebox{0.7}{$\mu_{n,c}^+$}} ; 
	\draw  (-.6,-.5) node[above]{\scalebox{1.5}{$\nexists$}} ;
	
	\draw (.4,-.2) node{\scalebox{0.6}{$\bullet$}};
	\draw (.4,-.75) node{\scalebox{0.6}{$\bullet$}};
	\draw (.4,-1.25) node{\scalebox{0.6}{$\bullet$}};
	\draw (.4,.4) node{\scalebox{0.6}{$\bullet$}};
	\draw (.4,.7) node{\scalebox{0.6}{$\bullet$}};
	\draw (.4,1.05) node{\scalebox{0.6}{$\bullet$}};
	\end{tikzpicture}
	\caption{Energy curves for \eqref{SP}} \label{fig:SP}
	
\end{figure}
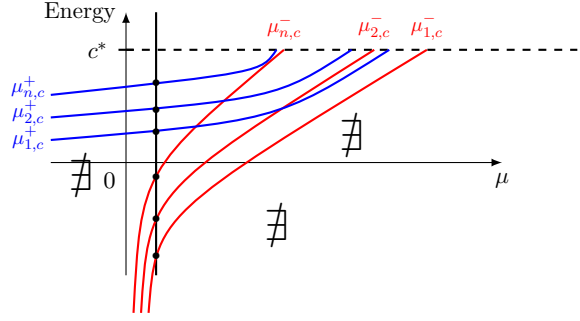

In the case $a=0$
this result should be compared with  
\cite[Theorem 3.1]{AR}, 
where the authors show the existence of multiple solutions with positive and negative energy
for small values of the parameter $\mu$. Moreover when $a>0$ Theorem \ref{EC3} improves \cite[Theorem 1.1]{DS} where the existence of a nontrivial 
solution at a positive energy level is proved for small values of $\mu$. We also refer to \cite{DMT} for the existence of infinitely many high energy solutions for a problem related to \eqref{SP}.

\medskip
\section{Proof of the abstract results}\label{sec:proofs}

The next  results are at the core of our approach. Throughout this section we assume that \eqref{H1} holds. In addition, $t(c,u)$ can be either $t^+(c,u)$ or  $t^-(c,u)$, i.e. a nondegenerate minimizer or a nondegenerate maximizer of $\psi_{c,u}$.
\begin{lemma}\label{l0} The following statements hold.
	\begin{enumerate}[label=(\roman{*}), ref=\roman{*}]
	\item\label{I0-i} The map $(c,u) \mapsto t(c,u)$ belongs to $C^1(\mathcal{I} \times X\setminus\{0\})$ and 
	\begin{equation}\label{dt}
	\frac{\partial t}{\partial c}(c,u)=\frac{-I_2'(t(c,u)u)u}{I_2(t(c,u)u)^2\psi''_{c,u}(t(c,u))} \qquad \forall (c,u) \in \mathcal{I} \times X\setminus\{0\}.
	\end{equation}
	In particular, for every $u \in X\setminus\{0\}$, the map
	$c \mapsto t(c,u)$ is increasing (respect. decreasing) if
	 $\psi''_{c,u}(t(c,u))I_2'(t(c,u)u)u<0$ (respect. $>0$); \medskip 
		\item\label{I0-ii}  $\Lambda \in C^1(\mathcal{I} \times X \setminus \{0\})$, and for any $v\in X$
		\begin{equation} \label{lp}
\frac{\partial \Lambda}{\partial u}(c,u)v=\frac{\Phi_{\Lambda(c,u)}'(t(c,u)u)t(c,u)v}{I_2(t(c,u)u)}, \qquad \forall (c,u) \in \mathcal{I} \times X\setminus\{0\}.
\end{equation}
In particular, $\frac{\partial \Lambda}{\partial u}(c,u)u=0$ for any $(c,u) \in \mathcal{I} \times X\setminus\{0\}$.
 Furthermore, 
\begin{equation} \label{dl}
\frac{\partial \Lambda}{\partial c}(c,u)=-\frac{1}{I_2(t(c,u)u)},
\end{equation}
so that for every $u \in X\setminus\{0\}$ the map $c \mapsto \Lambda(c,u)$ is decreasing (respect. increasing) if $I_2(t(c,u)u)>0$ (respect. $<0$). \medskip
	\item \label{I0-iii}For every $c>0$ the maps $u \mapsto t(c,u),\Lambda(c,u)$ are $(-1)$-homogeneous and $0$-homogeneous, respectively.
	
	\end{enumerate}
\end{lemma}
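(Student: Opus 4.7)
The plan is to handle the three parts in order, using the Implicit Function Theorem for (i), the critical point identity $\psi'_{c,u}(t(c,u))=0$ to kill unwanted chain-rule terms in (ii), and a direct scaling argument for (iii).

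For part (i), I would apply the Implicit Function Theorem to the map
$$F:\mathcal{I}\times (X\setminus\{0\})\times (0,\infty)\to \mathbb{R}, \qquad F(c,u,t):=\psi'_{c,u}(t).$$
By hypothesis \eqref{H1}(a) this $F$ is $C^1$, and by \eqref{H1}(b) the equation $F(c,u,t)=0$ is locally solved by $t=t(c,u)$ with $\partial_t F(c,u,t(c,u))=\psi''_{c,u}(t(c,u))\neq 0$ (Morse nondegeneracy). Hence $(c,u)\mapsto t(c,u)$ is $C^1$. To obtain \eqref{dt}, differentiate the identity $\psi'_{c,u}(t(c,u))=0$ in $c$. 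Using the explicit form $\psi_{c,u}(t)=(I_1(tu)-c)/I_2(tu)$ one computes $\partial_c \psi'_{c,u}(t)=I_2'(tu)u/I_2(tu)^2$, and implicit differentiation yields \eqref{dt}. The monotonicity in $c$ is then read off from the sign of the right-hand side.

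For part (ii), write $\Lambda(c,u)=\psi_{c,u}(t(c,u))$. Smoothness of $\Lambda$ follows by composition from (i) and \eqref{H1}(a). For $\partial_c \Lambda$, the chain rule gives
$$\frac{\partial \Lambda}{\partial c}(c,u)=\frac{\partial \psi_{c,u}}{\partial c}(t(c,u))+\psi'_{c,u}(t(c,u))\,\frac{\partial t}{\partial c}(c,u),$$
but the second term is $0$ because $t(c,u)$ is a critical point of $\psi_{c,u}$. From $\partial_c \psi_{c,u}(t)=-1/I_2(tu)$ we get \eqref{dl}, and the sign statement follows. For $\partial_u \Lambda$, view $\Lambda(c,u)=\mu(c,t(c,u)u)$ and compute the directional derivative in $v$:
$$\frac{\partial \Lambda}{\partial u}(c,u)v=\frac{\partial \mu}{\partial u}(c,t(c,u)u)\bigl[(\partial_u t(c,u)v)u+t(c,u)v\bigr].$$
The key reduction is that the $(\partial_u t\cdot v)u$ piece is annihilated: indeed $\frac{\partial \mu}{\partial u}(c,t(c,u)u)\cdot u=\frac{d}{dt}\mu(c,tu)\big|_{t=t(c,u)}=\psi'_{c,u}(t(c,u))=0$. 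Combining this with the identity $\frac{\partial \mu}{\partial u}(c,w)=\Phi'_{\mu(c,w)}(w)/I_2(w)$ recalled in the introduction yields \eqref{lp}. Setting $v=u$ and using $\Phi'_{\Lambda(c,u)}(t(c,u)u)\cdot u=0$ (the same critical point identity) gives $\frac{\partial \Lambda}{\partial u}(c,u)\cdot u=0$.

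For part (iii), the homogeneity is just a change of variables in the fibering map. For every $\alpha>0$,
$$\psi_{c,\alpha u}(t)=\frac{I_1(t\alpha u)-c}{I_2(t\alpha u)}=\psi_{c,u}(\alpha t),$$
so critical points in $t$ transform by $t\mapsto t/\alpha$, and the Morse character is preserved. Uniqueness in \eqref{H1}(b) then forces $t(c,\alpha u)=t(c,u)/\alpha$, proving the $(-1)$-homogeneity of $u\mapsto t(c,u)$. Substituting into $\Lambda$,
$$\Lambda(c,\alpha u)=\psi_{c,\alpha u}(t(c,\alpha u))=\psi_{c,u}\!\left(\alpha\cdot \tfrac{t(c,u)}{\alpha}\right)=\Lambda(c,u),$$
giving the $0$-homogeneity. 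I do not expect any essential obstacle here: the only delicate point in the whole proof is remembering to use the critical point identity $\psi'_{c,u}(t(c,u))=0$ to discard the chain-rule terms that would otherwise obscure the clean formulas \eqref{dt}, \eqref{lp}, \eqref{dl}.
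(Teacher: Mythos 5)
Your proposal is correct and follows essentially the same route as the paper: the Implicit Function Theorem applied to $F(c,u,t)=\psi'_{c,u}(t)$ for (i), the critical point identity $\psi'_{c,u}(t(c,u))=0$ to annihilate the chain-rule terms involving $\partial_u t$ and $\partial_c t$ for (ii) (your use of $\frac{\partial\mu}{\partial u}(c,w)=\Phi'_{\mu(c,w)}(w)/I_2(w)$ is just a repackaging of the paper's expansion of the numerator as $I_1'-\Lambda I_2'$), and the scaling relation $\psi_{c,su}(t)=\psi_{c,u}(st)$ together with uniqueness of the Morse critical point for (iii).
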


\begin{proof} 
\eqref{I0-i} Let $F:\mathcal{I} \times (0,\infty)\times X\setminus\{0\}\to \mathbb{R}$ be given by $F(c,t,u)=\psi'_{c,u}(t)$. Note that
	\begin{equation*}
		F(c,t(c,u),u)=0  \quad \mbox{and} \quad \frac{\partial F}{\partial t}(c,t(c,u),u)=\psi''_{c,u}(t(c,u))\neq0, \quad \forall (c,u) \in \mathcal{I} \times X\setminus\{0\}.
	\end{equation*}
	From the Implicit Function Theorem we deduce the first assertion. In addition, from $F(c,t(c,u),u)=0$ it follows that	\begin{equation*}
		\frac{\partial F}{\partial c}(c,t(c,u),u)+\frac{\partial F}{\partial t}(c,t(c,u),u)\frac{\partial t}{\partial c}(c,u)=0.
	\end{equation*}
Since $\frac{\partial F}{\partial c}(c,t,u)=\frac{I_2'(tu)u}{I_2(tu)^2}$, we infer \eqref{dt}.

\smallskip

\eqref{I0-ii}  Since  $(c,u) \mapsto \mu(c,u),t(c,u) \in C^1(\mathcal{I} \times X \setminus \{0\})$, 
		it follows from  \eqref{eq:Lambda} that  $\Lambda \in C^1(\mathcal{I} \times X \setminus \{0\})$. Computing we get
		 \begin{equation*} 
	\frac{\partial \Lambda}{\partial u}(c,u)v=\frac{\left[I'_1(t(c,u)u)-\Lambda(c,u)I'_2(t(c,u)u\right]\left(t(c,u)v+\left(\frac{\partial t}{\partial u}(c,u)v\right)u\right)}{I_2(t(c,u)u)},
\end{equation*}
and from
$$\left[I'_1(t(c,u)u)-\Lambda(c,u)I'_2(t(c,u)u)\right]u=I_2(t(c,u)u)\psi_{c,u}'(t(c,u))=0,$$
we obtain \eqref{lp}. Taking $v=u$ we find that
$$\frac{\partial \Lambda}{\partial u}(c,u)u=\frac{\Phi_{\Lambda(c,u)}'(t(c,u)u)t(c,u)u}{I_2(t(c,u)u)}=t(c,u)\psi_{c,u}'(t(c,u))=0.$$
In addition
 $$\frac{\partial \mu}{\partial u}(c,t(c,u)u)=\psi'_{c,u}(t(c,u))=0,$$
so that  by \eqref{eq:Lambda}  
 we deduce that $$\frac{\partial \Lambda}{\partial c}(c,u)=\frac{\partial \mu}{\partial c}(c,t(c,u)u) +\frac{\partial \mu}{\partial u}(c,t(c,u)u)\frac{\partial t}{\partial c}(c,u)u=\frac{\partial \mu}{\partial c}(c,t(c,u)u)=-\frac{1}{I_2(t(c,u)u)}.$$
	
\smallskip
	
	\eqref{I0-iii} Note that $\psi_{c,su}(t)=\mu(c,tsu)=\psi_{c,u}(st)$ for any $s,t>0$, so that $\psi'_{c,su}(t)=\psi_{c,u}'(st)s$, which implies by uniqueness that $t(c,su)=s^{-1}t(c,u)$. It follows that $\Lambda(c,su)=\Lambda(c,s^{-1}t(c,u) su)=\Lambda(c,u)$.
\end{proof}

From \eqref{lp} and the definition of $\Lambda(c,u)$ we derive the following result:

\begin{corollary}\label{c1}
	If $\frac{\partial \Lambda}{\partial u}(c,u)=0$ then $\Phi_{\Lambda(c,u)}'(t(c,u)u)=0$ and $\Phi_{\Lambda(c,u)}(t(c,u)u)=c$. 
\end{corollary}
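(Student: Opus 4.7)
The plan is to exploit formula \eqref{lp} from Lemma \ref{l0}\eqref{I0-ii} together with the defining identity \eqref{mu} for $\mu(c,\cdot)$. Both conclusions of the corollary will then drop out almost immediately; the role of the corollary is really to record that a critical point of $u\mapsto \Lambda(c,u)$ produces a solution of \eqref{ef} at the level $c$ with parameter value $\mu=\Lambda(c,u)$.

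First I would handle the derivative part. Fix $(c,u)\in\mathcal{I}\times X\setminus\{0\}$ and assume $\frac{\partial\Lambda}{\partial u}(c,u)=0$. By \eqref{lp}, for every $v\in X$,
$$\frac{t(c,u)}{I_2(t(c,u)u)}\,\Phi'_{\Lambda(c,u)}(t(c,u)u)v=0.$$
Since $t(c,u)>0$ by the definition of $t^{\pm}(c,u)$ in \eqref{H1} and since $I_2(t(c,u)u)\neq 0$ (which is guaranteed on $X\setminus\{0\}$ by the assumption underlying \eqref{mu}), the scalar factor is nonzero. Hence $\Phi'_{\Lambda(c,u)}(t(c,u)u)v=0$ for all $v\in X$, i.e. $\Phi'_{\Lambda(c,u)}(t(c,u)u)=0$.

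For the energy identity, I would simply read off \eqref{mu}. By definition $\Lambda(c,u)=\mu(c,t(c,u)u)=\frac{I_1(t(c,u)u)-c}{I_2(t(c,u)u)}$, so the equivalence in \eqref{mu}, applied at the point $w=t(c,u)u$ with $\mu=\Lambda(c,u)$, yields exactly $\Phi_{\Lambda(c,u)}(t(c,u)u)=c$.

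There is essentially no obstacle here: the corollary is a direct consequence of Lemma \ref{l0}, and the only things to check are that $t(c,u)\neq 0$ and $I_2(t(c,u)u)\neq 0$, both of which are built into the standing hypotheses. The value of the statement is conceptual rather than technical, as it closes the loop between the reduced functional $\Lambda$ and the original constrained problem \eqref{ef}.
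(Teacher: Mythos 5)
Your argument is correct and coincides with the paper's own derivation: the paper states that Corollary \ref{c1} follows directly from \eqref{lp} (giving $\Phi'_{\Lambda(c,u)}(t(c,u)u)=0$ once one notes $t(c,u)>0$ and $I_2(t(c,u)u)\neq 0$) together with the definition $\Lambda(c,u)=\mu(c,t(c,u)u)$ and the equivalence \eqref{mu} for the energy identity. Nothing is missing.
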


Recall that since $\|\cdot \| \in C^1(X \setminus \{0\})$ the tangent space to $S$ at $u$
is given by  
$$\mathcal{T}_{u}(S)=\{v \in X: i'(u)v=0\}$$
where $i(u)=\frac{1}{2}\|u\|^2$, and then $X= \mathcal T_{u}S\oplus \mathbb Ru$.

An interesting feature of the functionals $\Lambda(c,\cdot)$ is the fact that the unit sphere $S$ is a
{\sl natural constraint} to find critical points of these functionals:\\

\begin{proposition}\label{propciritcalrestricted}  
Let $u \in S$. Then $\frac{\partial \widetilde{\Lambda}}{\partial u}(c,u)=0$ if and only if $\frac{\partial \Lambda}{\partial u}(c,u)=0$.
		\end{proposition}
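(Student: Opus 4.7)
The plan is to exploit the $0$-homogeneity of $u\mapsto\Lambda(c,u)$ (Lemma \ref{l0}(\ref{I0-iii})) together with the direct sum decomposition $X=\mathcal{T}_u(S)\oplus\mathbb{R}u$ recalled just before the statement. This reduces the claim to a one-line Lagrange multiplier argument, with no new estimates involved.

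First I would dispatch the easy implication: if $\frac{\partial \Lambda}{\partial u}(c,u)=0$ in $X^*$, then its restriction to the tangent subspace $\mathcal{T}_u(S)$ vanishes too, which is by definition $\frac{\partial \widetilde{\Lambda}}{\partial u}(c,u)=0$.

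For the converse, suppose $\frac{\partial \widetilde{\Lambda}}{\partial u}(c,u)=0$, i.e. $\frac{\partial \Lambda}{\partial u}(c,u)w=0$ for every $w\in\mathcal{T}_u(S)=\{v\in X:i'(u)v=0\}$. Given an arbitrary $v\in X$, I would use the splitting $v=w+\alpha u$ with $w\in\mathcal{T}_u(S)$ and $\alpha\in\mathbb{R}$ (guaranteed since $i'(u)u=\|u\|^2\neq 0$, so the rank-one projection onto $\mathbb{R}u$ along $\ker i'(u)$ is well defined). Then linearity yields
$$\frac{\partial \Lambda}{\partial u}(c,u)\,v=\frac{\partial \Lambda}{\partial u}(c,u)\,w+\alpha\,\frac{\partial \Lambda}{\partial u}(c,u)\,u.$$
The first term vanishes by hypothesis, and the second by Lemma \ref{l0}(\ref{I0-ii}), which records the Euler identity $\frac{\partial \Lambda}{\partial u}(c,u)\,u=0$ coming from $0$-homogeneity. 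Since $v$ was arbitrary, $\frac{\partial \Lambda}{\partial u}(c,u)=0$ in $X^*$.

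There is essentially no obstacle: the only subtlety is making sure the tangent/normal decomposition is available, which is immediate from the assumption $\|\cdot\|\in C^1(X\setminus\{0\})$ stated in the abstract setting. The proof is therefore genuinely a two-line argument once Lemma \ref{l0}(\ref{I0-ii})--(\ref{I0-iii}) are in hand, and its role in the paper is mainly to legitimize working with $\widetilde{\Lambda}$ on $S$ when applying Ljusternik--Schnirelmann theory.
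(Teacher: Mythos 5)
Your argument is correct and is essentially identical to the paper's own proof: both directions are handled via the decomposition $X=\mathcal{T}_u(S)\oplus\mathbb{R}u$ together with the identity $\frac{\partial \Lambda}{\partial u}(c,u)u=0$ recorded in Lemma \ref{l0}~(\ref{I0-ii}). The only cosmetic difference is that you motivate that identity by Euler's relation for the $0$-homogeneous functional, whereas the paper obtains it directly from $\psi'_{c,u}(t(c,u))=0$; both routes land on the same statement of Lemma \ref{l0}.
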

		
\begin{proof} Let $u \in S $ be such that $\frac{\partial \widetilde{\Lambda}}{\partial u}(c,u)=0$. From Lemma 
\ref{l0} \eqref{I0-ii} we know that  $\frac{\partial \Lambda}{\partial u}(c,u)u=0$. Note that $\frac{\partial \widetilde{\Lambda}}{\partial u}(c,u)=\frac{\partial \Lambda}{\partial u}(c,u)_{| \mathcal{I} \times \mathcal{T}_{u}(S)}$.
If $w\in X$, then $w=v+su$ for some $v\in \mathcal{T}_{u}(S)$ and $s\in \mathbb{R}$, which implies that $\frac{\partial \Lambda}{\partial u}(c,u)w=\frac{\partial \widetilde{\Lambda}}{\partial u}(c,u)v+\frac{\partial \Lambda}{\partial u}(c,u)tu=0$. 
Since the converse is obvious, the proof is complete.
\end{proof}

\medskip
\subsection{Proof of Theorem \ref{THM1}} 
  \eqref{THM1-2}	
Let $u \in X \setminus \{0\}$ be such that  $\Phi'_\mu(u)=0$ and $\Phi_\mu(u)=c$. Then  $\mu=\frac{I_1(u)-c}{I_2(u)}=\psi_{c,u}(1)$. In addition, \eqref{mut} yields that $\psi_{c,u}'(1)=\frac{\Phi_{\psi_{c,u}(1)}'(u)u}{I_2(u)}=\frac{\Phi_{\mu}'(u)u}{I_2(u)}=0$,
i.e. $t(c,u)=1$. Thus $\mu=\Lambda(c,u) \geq \mu_{1,c}$ (respect. $\leq \mu_{1,c}$) if $u \mapsto \Lambda(c,u)$ is bounded from below (respect. above).

 
 	\smallskip


 \eqref{THM1-1} First we consider the case where \eqref{H2} holds with $u \mapsto \widetilde{\Lambda}(c,u)$ bounded from below.
By the Ljusternick-Schnirelman theorem (see e.g. \cite[Corollary 4.17]{G} or \cite{S}) there exist infinitely many $ u_{n,c} \in  S$ such that
$$\frac{\partial \Lambda}{\partial u}(c,\pm u_{n,c})=0\quad \text{and}\quad
\Lambda(c,\pm u_{n,c}) = \mu_{n,c} \quad \forall n\in \mathbb{N}$$
(we used here Proposition \ref{propciritcalrestricted}).
From  Corollary \ref{c1} and the fact that $u \mapsto t(c,u)$ is even, 
the sequence $v_{n}:=t(c,u_{n,c})u_{n,c}$
satisfies
$$\Phi_{\mu_{n,c}}( v_{n,c})= c\quad\text{and}\quad
\Phi'_{\mu_{n,c}}(v_{n,c}) = 0 \quad \forall n\in \mathbb N.$$
Now, if \eqref{H2} holds with $u \mapsto \widetilde{\Lambda}(c,u)$ bounded from above, then we deal with the functional $u \mapsto -\widetilde{\Lambda}(c,u)$, which is bounded from below. Moreover,  since $\frac{\partial \widetilde{\Lambda}}{\partial u}(c,u)=-(\frac{\partial (-\tilde{\Lambda)}}{\partial u}(c,u))$  we see that $u \mapsto -\widetilde{\Lambda}(c,u)$  satisfies the Palais-Smale condition at the level $\mu$ if and only if $u \mapsto \widetilde{\Lambda}(c,u)$ satisfies the Palais-Smale condition at the level $-\mu$.
Since  by assumption $u \mapsto \widetilde{\Lambda}(c,u)$ satisfies this condition at 
$$\mu_n=\displaystyle \sup_{F\in \mathcal{F}_n}\inf_{u\in F} \widetilde{\Lambda}(c,u)=-\displaystyle \inf_{F\in \mathcal{F}_n}\sup_{u\in F} (-\widetilde{\Lambda}(c,u)),$$
 we deduce that $-\mu_n$ is a critical value of $u \mapsto -\widetilde{\Lambda}(c,u)$, so that $\mu_n$ is a critical value of $u \mapsto \widetilde{\Lambda}(c,u)$ for every $n$, and conclude as in the previous case.

Finally, the second assertion follows from \cite[Lemma A.1]{QSS}. \qed

\subsection{Proof of Theorem \ref{THM2}}
Let us fix $n$. First of all, since $c \mapsto \Lambda(c,u)$ is decreasing in $\mathcal{I}$ it is clear that $c \mapsto \mu_{n,c}$ is nonincreasing in $\mathcal{I}$. Let us fix $[a,b]\subset \mathcal{I}$. We claim that there exist  $T>0$ such that for all $c \in [a,b]$ there exists a Palais-Smale sequence $\{u_{k,c}\}$ of the functional $u \mapsto \tilde{\Lambda}(c,u)$ at the level $\mu_{n,c}$ such that 
$\{u_{j,c}\} \subset S_{a,T}:=\{u \in S: \Lambda(a,u) \leq T \}$. Indeed, otherwise we could find a sequence $\{c_k\} \subset [a,b]$ such that for any $k$ there exists a Palais-Smale sequence $\{u_{j,c_k}\}$ of the functional $u \mapsto \tilde{\Lambda}(c_k,u)$ at the level $\mu_{n,c_k}$ with $\{u_{j,c_k}\} \not \subset S_{a,T}$. Thus we can extract a sequence $\{u_k\} \subset S$ such that $\Lambda(c_k,u_k)$ is bounded, $\frac{\partial \tilde{\Lambda}}{\partial u}(c_k,u_k) \to 0$
and  $\Lambda(a,u_k) \to+ \infty$. Since $c_k \to c$ we infer by \eqref{PSG} that $\{u_k\}$ is compact and hence the set $K=\{(a,u_k):k\in \mathbb{N}\}$ is compact, which contradicts \eqref{H3}. Therefore
\begin{equation*}
\mu_{n,c}=\inf_{F\in \widetilde{\mathcal{F}}_n}\sup_{u\in F}\Lambda(c,u),
\end{equation*}
where $\widetilde{\mathcal{F}}_n=\{F\in \mathcal{F}_n: F \subset S_{a,T} \}$. 
Now, by the mean value theorem we have
$|\Lambda(c_2,u)-\Lambda(c_1,u)|=\frac{\partial \Lambda}{\partial c}(\theta,u)|c_2-c_1|$,
where $\theta:=\theta(u)\in (\min\{c_1,c_2\},\max\{c_1,c_2\} )$. Since $c \mapsto \Lambda(c,u)$ is decreasing, we see that $\Lambda$ is bounded in $[a,b] \times S_{a,T}$, and
by \eqref{H3}  there exist $M,m>0$ such that $m|c_2-c_1|\leq |\Lambda(c_2,u)-\Lambda(c_1,u)|\leq M|c_2-c_1|$ for $c_1,c_2 \in [a,b]$ and $u \in S_T$. It follows that $m|c_2-c_1|\leq |\mu_{n,c_1}-\mu_{n,c_2}| \leq M|c_2-c_1|$ for $c_1,c_2 \in [a,b]$, so that $c \mapsto \mu_{n,c}$ is decreasing and locally  Lipschitz continuous. The proof in the case that $c \mapsto \Lambda(c,u)$ is increasing in $\mathcal{I}$ is similar, so we omit it. \qed

\medskip

In the sequel we consider three classes of functionals, which are inspired by (but not limited to) problems \eqref{quasi} with $q=p$, $q<p$, and \eqref{SP}, respectively. The next sections have the same structure, namely:
\begin{itemize}
	\item general assumptions.
	\item expressions of $\mu(c,u), \psi_{c,u}$ and verification of  \eqref{H1};
	\item analysis of the maps $t^\pm, \Lambda^\pm$ and verification of \eqref{H2};
	\item application of Theorems \ref{THM0} and \ref{THM1};
	\item analysis of the maps $c \mapsto \mu_{n,c}$: continuity, monotonicity and asymptotic behaviour;
\item main results of the section.
\end{itemize}
\medskip

\section{A first class of functionals}
\label{sec:4}
\medskip

We consider the functional
\begin{equation*}
	\Phi_\mu(u):=\frac{1}{\eta}\left(N(u) -\mu A(u)\right)-\frac{1}{\beta}B(u)
\end{equation*}
where  $1<\eta,\beta$, and $N,A,B \in C^1(X)$ are even functionals satisfying the following conditions:
\begin{enumerate}[label=(\arabic*),ref=\arabic*]
	\item\label{st-1} $N,A$ are  $\eta$-homogeneous, and $B$ is $\beta$-homogeneous.
	\item\label{st-2} $A(u)>0$ and $B(u)\neq 0$ for any $u \neq 0$.
	\item\label{st-3} There exists $C>0$ such that $ N(u)\geq C^{-1}\|u\|^{\eta}$, $A(u) \leq C\|u\|^\eta$ and $|B(u)|\leq C\|u\|^{\beta}$ for all $u \in X$. 
	\item\label{st-4} $A'$ and $B'$ are completely continuous, i.e. $A'(u_n) \to A'(u)$ and $B'(u_n) \to B'(u)$ in $X^*$ if $u_n \rightharpoonup u$ in $X$.
	\item\label{st-5} $N$ is weakly lower semicontinuous and there exists $C>0$ such that 
	$$(N'(u)-N'(v))(u-v)\geq C(\|u\|^{\eta-1}-\|v\|^{\eta-1})(\|u\|-\|v\|) \quad \forall u,v \in X.$$ 
\end{enumerate}

In all the results in this section the above conditions are implicitly assumed.

Let $c \in \mathbb{R}$ be fixed. For $u\in X\setminus\{0\}$, we have 
\begin{equation*}
	\mu(c,u):=\displaystyle \frac{\eta}{A(u)}\left(\frac{1}{\eta}N(u)-\frac{1}{\beta}B(u)-c\right).
\end{equation*}
Thus
\begin{equation}\label{st-psi}
	\psi_{c,u}(t)=\frac{N(u)}{A(u)}- \frac{\eta B(u)}{\beta A(u)}t^{\beta-\eta}-\frac{\eta c}{A(u)}t^{-\eta},\quad t>0
\end{equation}
which is cleary $C^{2}$ in $(0,\infty)$. Moreover,
\begin{equation}\label{0dp}
	\psi_{c,u}'(t)=- (\beta-\eta)\frac{\eta B(u)}{\beta A(u)}t^{\beta-\eta-1}+\frac{\eta^2 c}{A(u)}t^{-\eta-1},
\end{equation}
so that the map $(c,u,t)\mapsto \psi_{c,u}'(t)$ is clearly $C^{1}$ in $\mathbb{R} \times X \setminus \{0\} \times (0,\infty)$.

Note that if $u$ is a critical point of $\Phi_\mu$ then the homogeinity of $N,A$ and $B$ yields that
$N(u) -\mu A(u)=B(u)$, and then
\begin{equation}\label{epm}
\Phi_{\mu}(u)=\left(\frac{1}{\eta}-\frac{1}{\beta}\right)\left(N(u) -\mu A(u)\right)=\frac{\beta-\eta}{\beta \eta}B(u).
\end{equation}
It follows that if $\Phi_{\mu}(u)=c$ and $\eta<\beta$ then $B(u)$ and $c$ have the same sign, whereas for $\eta>\beta$ they have opposite signs.
\medskip
\subsection{The case $\eta<\beta$}
In this subsection we deal with the superhomogeneous case $\eta<\beta$.
From \eqref{epm},  the following alternative must occur for $u \neq 0$ solving \eqref{ef}:
$c>0 \mbox{ and } B(u)>0$ or  $c<0 \mbox{ and } B(u)<0$.
Therefore  we shall assume throughout this section one of the following conditions:
\begin{enumerate}[label=(C\arabic*),ref=C\arabic*,start=1]
	\item \label{C1} $B(u)>0$ for any $u \neq 0$. 
	\item \label{C2}   $B(u)<0$ for any $u \neq 0$.
\end{enumerate}
\medskip

We start proving that \eqref{H1} is satisfied. The next lemma follows promptly from \eqref{st-psi} and \eqref{0dp}:
\begin{lemma}\label{0lem:H1}
	For any $u \in X \setminus \{0\}$ and $c > 0$ (respect. $c<0$) the map $\psi_{c,u}$ has a unique critical point $t^-(c,u)>0$ (respect. $t^+(c,u)>0$),
	which is a non-degenerate global maximizer (respect. minimizer) if \eqref{C1} (respect.  \eqref{C2}) holds. Moreover
	\begin{equation}\label{tea}
	t^{\pm}(c,u)=\left(\frac{\eta \beta}{\beta-\eta} \frac{c}{B(u)}\right)^{\frac{1}{\beta}}=\left(\frac{\eta \beta}{\beta-\eta}\right)^{\frac{1}{\beta}}|B(u)|^{-\frac{1}{\beta}}|c|^{\frac{1}{\beta}},
	\end{equation}
and the map $c \mapsto t^{-}(c,u)$ (respect. $t^{+}(c,u)$) 
is increasing in $(0,+\infty)$ (respect. decreasing in $(-\infty,0)$) if \eqref{C1} (respect. \eqref{C2}) holds. In particular, for any $c_1>0$ there exists $C>0$ such that $t^{\pm}(c,u) \geq C$
for any $u \in S$ and $|c| \geq c_1>0$.
\end{lemma}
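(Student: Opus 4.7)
The plan is to solve $\psi'_{c,u}(t)=0$ explicitly using the expression \eqref{0dp}. Multiplying that equation by $\tfrac{A(u)}{\eta}t^{\eta+1}$ and rearranging gives
\[
t^\beta \;=\; \frac{\eta\beta}{\beta-\eta}\,\frac{c}{B(u)},
\]
so a positive solution exists (and is unique) if and only if $c/B(u)>0$. In case \eqref{C1} this amounts to $c>0$, while in case \eqref{C2} it amounts to $c<0$, yielding the explicit formula for $t^\pm(c,u)$ in \eqref{tea}. Note that the two expressions in \eqref{tea} agree because under \eqref{C1} (resp.\ \eqref{C2}) we have $B(u)=|B(u)|$ and $c=|c|$ (resp.\ $B(u)=-|B(u)|$ and $c=-|c|$), so $c/B(u)=|c|/|B(u)|$ in both cases.

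To identify the nature of this critical point I would read off the asymptotics of $\psi_{c,u}$ from \eqref{st-psi}. In the case \eqref{C1} with $c>0$, as $t\to 0^+$ the term $-\tfrac{\eta c}{A(u)}t^{-\eta}\to -\infty$, and since $\beta>\eta$ and $B(u)>0$, the term $-\tfrac{\eta B(u)}{\beta A(u)}t^{\beta-\eta}\to -\infty$ as $t\to+\infty$; thus $\psi_{c,u}\to-\infty$ at both ends and, having exactly one critical point, it must be a global maximizer. A symmetric argument handles the case \eqref{C2} with $c<0$, where both boundary limits are $+\infty$ and the unique critical point is a global minimizer. Nondegeneracy follows by differentiating \eqref{0dp} and evaluating at $t^\pm(c,u)$: a short computation gives
\[
\psi''_{c,u}(t^\pm(c,u)) \;=\; -\frac{\eta\beta}{A(u)}\,\frac{c}{(t^\pm)^{\eta+2}},
\]
(after using the critical point equation to eliminate one of the two terms), whose sign is $-\operatorname{sgn}(c)$, hence strictly negative in case \eqref{C1} and strictly positive in case \eqref{C2}.

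The monotonicity of $c\mapsto t^\pm(c,u)$ is then immediate from the explicit formula: in case \eqref{C1} the quantity $c/B(u)$ is a positive increasing function of $c\in(0,+\infty)$, so $t^-(c,u)=\bigl(\tfrac{\eta\beta}{\beta-\eta}\tfrac{c}{B(u)}\bigr)^{1/\beta}$ is increasing; in case \eqref{C2} the quantity $c/B(u)$ is a positive decreasing function of $c\in(-\infty,0)$, so $t^+(c,u)$ is decreasing. (Alternatively one could appeal to Lemma \ref{l0}\eqref{I0-i} combined with the sign of $\psi''_{c,u}(t^\pm)I_2'(t^\pm u)u$, but the direct argument is cleaner here.) Finally, for the uniform lower bound, assumption \eqref{st-3} gives $|B(u)|\le C\|u\|^\beta=C$ for $u\in S$, so that $|B(u)|^{-1/\beta}\ge C^{-1/\beta}$, and then
\[
t^\pm(c,u)\;\ge\;\left(\frac{\eta\beta}{\beta-\eta}\right)^{1/\beta} C^{-1/\beta}\,c_1^{1/\beta} \qquad \forall\,u\in S,\ |c|\ge c_1,
\]
which is the desired bound. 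No step presents a real obstacle; the only care needed is in tracking signs to align cases \eqref{C1}/\eqref{C2} with the sign of $c$ and with whether $t^\pm$ is a maximizer or minimizer.
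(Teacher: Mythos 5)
Your proposal is correct and is exactly the computation the paper has in mind: the authors state that the lemma ``follows promptly from \eqref{st-psi} and \eqref{0dp}'', and your explicit solution of $\psi'_{c,u}(t)=0$, the end-point asymptotics, the sign of $\psi''_{c,u}$ at the critical point, and the bound $|B(u)|\le C$ on $S$ from \eqref{st-3} are precisely the details being suppressed. The only blemish is a harmless constant: using the critical-point relation one gets $\psi''_{c,u}(t^\pm)=-\eta^2\beta c/\bigl(A(u)(t^\pm)^{\eta+2}\bigr)$ rather than $-\eta\beta c/\bigl(A(u)(t^\pm)^{\eta+2}\bigr)$, which does not affect the sign and hence not the nondegeneracy conclusion.
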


Consequently  Lemma \ref{l0} \eqref{I0-i} provides that  $t^- \in C^{1}((0,\infty) \times (X \setminus \{0\}))$ and $t^+ \in C^{1}((-\infty,0) \times (X \setminus \{0\}))$.
 Next we prove some properties of the maps $\Lambda^\pm$. Let $D_c:=\left(\frac{\beta \eta |c|}{|\beta-\eta|}\right)^{\frac{\beta-\eta}{\beta}}$ for $c \in \mathbb{R}$, $\mathcal{I}_+=(0,\infty)$, and $\mathcal{I}_{-}=(-\infty,0)$.
 
  Then
\begin{equation}\label{eta=alphaex}
	\Lambda^{\pm}(c,u)=\frac{N(u)\pm D_c|B(u)|^{\frac{\eta}{\beta}}}{A(u)}, \quad \forall c \in \mathcal{I}_{\mp}, u \in S,
\end{equation}

Moreover, we denote by $\Lambda$ the continuous extension of $\Lambda^{\pm}$ to $c=0$, i.e.
\begin{equation}\label{eta=alphaex0}
	\Lambda(u)=\Lambda^{\pm}(0,u):=\frac{N(u)}{A(u)}, \quad \forall u\in X\setminus\{0\},
\end{equation}
and set $t^{\pm}(0,u)\equiv 0$.

The following result, which is a direct consequence of \eqref{tea}, \eqref{eta=alphaex}, and \eqref{eta=alphaex0}, shows that if $u \mapsto \Lambda^{\pm}(c_0,u)$  is bounded for some $c_0 \in \mathcal{I}_{\mp}$ then it is bounded for any $c\in \mathcal{I}_{\mp}$:

\begin{lemma}\label{ls0} Let  $S_0\subset S$ and $K\subset [0,\infty)$ (respect. $(-\infty,0]$) be a compact set if \eqref{C1} (respect. \eqref{C2}) holds. Then	
$$\sup_{K\times S_0}\Lambda^{\pm}<+\infty \Longleftrightarrow \sup_{S_0}N<+\infty, \quad \inf_{S_0}A>0 \quad \mbox{and}\quad \inf_{S_0}B>0. $$
	Moreover, in such a case we also have $\displaystyle \sup_{ K\times S_0}t^{\pm}<+\infty$, so that $t^{\pm}$ is bounded and away from zero in $K\times S_0$.	In particular, $\Lambda^{\pm}(c,u_n) \to +\infty$ for any $c\in \mathcal{I}_{\mp}$ if $u_n \rightharpoonup 0$ in $X$.
\end{lemma}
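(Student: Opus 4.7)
My plan is to exploit the closed-form expression $\Lambda^\pm(c,u)=(N(u)\pm D_c|B(u)|^{\eta/\beta})/A(u)$ from \eqref{eta=alphaex} together with $t^\pm(c,u)=(\eta\beta/(\beta-\eta))^{1/\beta}|B(u)|^{-1/\beta}|c|^{1/\beta}$ from \eqref{tea}, combined with the universal bounds $N(u)\ge C^{-1}$, $A(u)\le C$, $|B(u)|\le C$ on $S$ provided by \eqref{st-3}, and with the continuity (hence boundedness on the compact set $K$) of $c\mapsto D_c$. A convenient device throughout will be the continuous extension $\Lambda^\pm(0,u)=N(u)/A(u)$, which turns the equivalence into a statement about the scalar ratio $N/A$.

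For the implication $(\Leftarrow)$ the computation is essentially one line: in case (C1), since $D_cB^{\eta/\beta}\ge 0$, one has $\Lambda^-(c,u)\le N(u)/A(u)\le \sup_{S_0}N/\inf_{S_0}A<+\infty$; in case (C2), $|B|\le C$ on $S$ together with $K$ compact yield $\Lambda^+(c,u)\le (\sup_{S_0}N+D_{\max K}C^{\eta/\beta})/\inf_{S_0}A<+\infty$. For $(\Rightarrow)$ I would specialize (or take the limit $c\to 0$ within $K$) in the bound $\sup_{K\times S_0}\Lambda^\pm\le M$: the resulting inequality $N(u)/A(u)\le M$ on $S_0$, combined with $N\ge C^{-1}$ and $A\le C$, simultaneously produces $\sup_{S_0}N\le MC$ and $\inf_{S_0}A\ge 1/(MC)$. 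The third conclusion $\inf_{S_0}|B|>0$ I would extract by contradiction: supposing $|B(u_n)|\to 0$ for some sequence in $S_0$ at a fixed $c_0\in K\setminus\{0\}$, formula \eqref{tea} forces $t^\pm(c_0,u_n)\to+\infty$, which together with the shape of \eqref{eta=alphaex} and the bound on $N/A$ just obtained contradicts the boundedness of $\Lambda^\pm$ on $K\times S_0$.

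The ``moreover'' clause will then be a direct substitution into \eqref{tea}: $\inf_{S_0}|B|>0$ together with compactness of $K$ yields $\sup t^\pm<+\infty$, while $|B|\le C$ on $S$ together with $|c|$ bounded away from $0$ on the portion of $K$ away from the origin delivers the ``bounded away from zero'' half. Finally, for the ``in particular'' assertion, if $u_n\rightharpoonup 0$ in $X$ then \eqref{st-4} gives $A'(u_n),B'(u_n)\to 0$ in $X^*$, and Euler's identity $\eta A(u)=A'(u)u$, $\beta B(u)=B'(u)u$ together with the boundedness of $\{u_n\}$ in $X$ forces $A(u_n),B(u_n)\to 0$. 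Plugging into \eqref{eta=alphaex}, the $D_c|B(u_n)|^{\eta/\beta}$ term vanishes while the numerator stays eventually $\ge C^{-1}/2$ (using $N\ge C^{-1}$ on $S$), and the strictly positive denominator $A(u_n)\to 0^+$ (by \eqref{st-2}), whence $\Lambda^\pm(c,u_n)\to+\infty$. The main obstacle I foresee is the contradiction step for $\inf_{S_0}|B|>0$ in $(\Rightarrow)$, since the boundedness of $\Lambda^\pm$ constrains $|B|$ only indirectly through its combination with $N$ and $A$ in the fraction; the rescue is the explicit formula \eqref{tea}, which turns a blow-up of $t^\pm$ into quantitative control on the $|B|^{\eta/\beta}$-term in \eqref{eta=alphaex}.
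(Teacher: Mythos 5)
Your $(\Leftarrow)$ direction, the ``moreover'' clause, and the final ``in particular'' clause are essentially sound (modulo writing $\sup_{c\in K}D_c$ rather than $D_{\max K}$ for $K\subset(-\infty,0]$). The genuine gap is in the $(\Rightarrow)$ direction, specifically in the derivation of $\inf_{S_0}|B|>0$ (and also of $\inf_{S_0}A>0$ whenever $0\notin K$: a general compact $K\subset[0,\infty)$ need not contain or accumulate at $0$, so ``taking the limit $c\to 0$ within $K$'' is not available, and the inequality $\Lambda^-(c,u)\le M$ only gives $N/A\le M+D_cB^{\eta/\beta}/A$, which is circular without a lower bound on $A$). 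Your contradiction argument for $\inf_{S_0}|B|>0$ does not close: if $|B(u_n)|\to 0$ at a fixed $c_0\neq 0$, then indeed $t^{\pm}(c_0,u_n)\to+\infty$ by \eqref{tea}, but this blow-up does not conflict with the boundedness of $\Lambda^{\pm}$ through \eqref{eta=alphaex}, because the only place $B$ enters that formula is the term $D_{c_0}|B(u_n)|^{\eta/\beta}$ in the numerator, and that term tends to $0$, not to $+\infty$. Concretely, if $N/A$ were bounded and $A$ bounded away from zero on $S_0$, then $\Lambda^{\pm}(c_0,u_n)=N(u_n)/A(u_n)\pm D_{c_0}|B(u_n)|^{\eta/\beta}/A(u_n)$ would remain bounded as $|B(u_n)|\to0$; the formulas together with the pointwise bounds of \eqref{st-3} alone simply do not force $\inf_{S_0}|B|>0$.

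The missing ingredient is \eqref{st-4}. By homogeneity, $A(u)=\eta^{-1}A'(u)u$ and $B(u)=\beta^{-1}B'(u)u$, so the complete continuity of $A'$ and $B'$ makes $A$ and $B$ weakly sequentially continuous on bounded sets; combined with \eqref{st-2} and \eqref{C1}/\eqref{C2} this yields, for $\{u_n\}\subset S$, the chain $B(u_n)\to 0\Rightarrow u_n\rightharpoonup 0\Rightarrow A(u_n)\to 0$ (and conversely). Once $A(u_n)\to 0$, the numerator of \eqref{eta=alphaex} stays eventually $\geq C^{-1}/2>0$ (exactly as in your ``in particular'' step) while the denominator tends to $0^{+}$, so $\Lambda^{\pm}(c_0,u_n)\to+\infty$, which is the contradiction you need; the same mechanism establishes $\inf_{S_0}A>0$ for $K$ bounded away from $0$. (The paper gives no proof, asserting the lemma is a direct consequence of \eqref{tea}, \eqref{eta=alphaex} and \eqref{eta=alphaex0}; a complete argument must in fact import \eqref{st-4} into the $(\Rightarrow)$ direction, not only into the last clause as you do.)
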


We shall use Lemma \ref{ls0} in several occasions.

\begin{lemma} \label{0pl} The following statements hold.
	\begin{enumerate}[label=(\roman*),ref=\roman*]
		\item\label{0pl-i} For every $c>0$ (respect. $c<0$)  the map $u \mapsto \Lambda^-(c,u)$ (respect. $u \mapsto \Lambda^+(c,u)$) is bounded from below on $S$ if \eqref{C1} (respect. \eqref{C2}) holds.
		\item\label{0pl-ii} For every $u\in X \setminus\{0\}$ the map $c\mapsto \Lambda^-(c,u)$ (respect. $c \mapsto \Lambda^+(c,u)$) is decreasing in $[0,\infty)$ (respect. in $(-\infty,0]$) if \eqref{C1} (respect. \eqref{C2}) holds.
		\item\label{0pl-iii} Suppose that \eqref{C1} holds and $\Lambda$ is bounded in  $S_0\subset S$.
		Then $\displaystyle \lim_{c \to +\infty}\Lambda^-(c,u)= -\infty$ uniformly in $S_0$.
		\item\label{0pl-iv} Suppose that \eqref{C2} holds. Then $\displaystyle \lim_{c \to -\infty}\Lambda^+(c,u)= +\infty$ uniformly in $S$.
	\end{enumerate}
\end{lemma}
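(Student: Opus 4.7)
The plan is to exploit the explicit formulas \eqref{eta=alphaex} for $\Lambda^{\pm}$, the reflexivity of $X$, the weak continuity of $A$ and $B$ (which follows from the complete continuity of $A'$, $B'$ in (4)), and the bound $N\ge C^{-1}\|\cdot\|^{\eta}$ from (3). The easy part is \eqref{0pl-ii}: by Lemma \ref{l0} \eqref{I0-ii}, $\frac{\partial \Lambda^{\pm}}{\partial c}(c,u)=-1/I_2(t^{\pm}(c,u)u)=-\eta/(t^{\pm}(c,u)^{\eta}A(u))<0$, since $A>0$ on $X\setminus\{0\}$, so strict monotonicity is immediate.

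For \eqref{0pl-i} and \eqref{0pl-iv} I would argue by contradiction using weak compactness of $S$. For \eqref{0pl-i}, fix $c>0$, assume $\{u_n\}\subset S$ satisfies $\Lambda^{-}(c,u_n)\to-\infty$, and extract $u_n\rightharpoonup u$. If $u\neq 0$, weak continuity gives $A(u_n)\to A(u)>0$ and $B(u_n)\to B(u)>0$; combined with $N(u_n)\ge C^{-1}$, this keeps $\Lambda^{-}(c,u_n)$ bounded below (or $\to+\infty$ if $N(u_n)\to+\infty$), contradicting the assumption. If $u=0$, then $A(u_n),B(u_n)\to 0$, so the numerator $N(u_n)-D_c B(u_n)^{\eta/\beta}$ is eventually $\ge(2C)^{-1}>0$ while $A(u_n)\to 0^{+}$; hence $\Lambda^{-}(c,u_n)\to+\infty$, again a contradiction. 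The same scheme handles \eqref{0pl-iv}: assume sequences $c_n\to-\infty$, $u_n\in S$ with $\Lambda^{+}(c_n,u_n)\le M$, and extract $u_n\rightharpoonup u$. In the case $u\neq 0$ use $\Lambda^{+}\ge D_{|c_n|}|B(u_n)|^{\eta/\beta}/A(u_n)\to+\infty$, since $D_{|c_n|}\to+\infty$ while $|B|^{\eta/\beta}/A$ has a positive limit; in the case $u=0$ use $\Lambda^{+}\ge N(u_n)/A(u_n)\to+\infty$ just as before.

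Item \eqref{0pl-iii} requires one additional observation: the hypothesis that $\Lambda$ is bounded on $S_0$ forces both $\inf_{S_0}A>0$ and $\inf_{S_0}B>0$. Indeed, if $A(u_n)\to 0$ or $B(u_n)\to 0$ for some $\{u_n\}\subset S_0$, one extracts a weakly convergent subsequence whose limit must be $0$ (by weak continuity of $A,B$ combined with (2) or (C1)); this forces $A(u_n)\to 0$ and hence $\Lambda(u_n)=N(u_n)/A(u_n)\to+\infty$, contradicting the boundedness. Combined with $A\le C$ on $S$ from (3), this yields a uniform constant $\delta>0$ with $B(u)^{\eta/\beta}/A(u)\ge\delta$ on $S_0$, and then \eqref{eta=alphaex} gives $\Lambda^{-}(c,u)\le\sup_{S_0}\Lambda-D_c\delta\to-\infty$ uniformly in $u\in S_0$ as $c\to+\infty$, since $D_c\to+\infty$.

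The main subtlety to watch is that $N$ is only assumed weakly lower semicontinuous and is not a priori bounded above on $S$, so $N(u_n)$ may diverge along weakly convergent subsequences; however, unbounded $N$ only drives $\Lambda^{\pm}$ further toward $+\infty$, which is consistent with rather than against the desired conclusions, so the arguments above go through without needing any extra boundedness assumption on $N$.
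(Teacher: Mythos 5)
Your proof is correct and follows essentially the same route as the paper's: both rest on the explicit formula \eqref{eta=alphaex}, the weak continuity of $A$ and $B$ coming from \eqref{st-4}, and a weak-compactness dichotomy on the weak limit of a sequence in $S$ (the paper packages the resulting uniform bounds into Lemma \ref{ls0} and then deduces \eqref{0pl-iii} via $\Lambda^-(c,u)\le C_1-C_2D_c$ and \eqref{0pl-iv} by first forcing $B(u_n)\to 0$, exactly as you do up to reordering). The only difference is one of detail: for item \eqref{0pl-i} the paper simply declares the lower bound ``clear'' from \eqref{st-3}, whereas you correctly note that the case $A(u_n)\to 0$ genuinely needs \eqref{st-4} to conclude $B(u_n)\to 0$ and hence positivity of the numerator, so your version supplies a step the paper leaves implicit.
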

\begin{proof} \eqref{0pl-i} and \eqref{0pl-ii} are clear from  \eqref{eta=alphaex}, \eqref{eta=alphaex0} and condition \eqref{st-3}.
	Let us prove  \eqref{0pl-iii}. by Lemma \ref{ls0} there exist $C_1,C_2>0$ such that
	$\Lambda^-(c,u) \leq C_1-C_2 D_c$ for any $u \in S_0$ and $c>0$.
	Since $D_c \to +\infty$ as $c \to +\infty$, we obtain the desired conclusion.
	To prove	
	\eqref{0pl-iv}, assume by contradiction that 
	 $c_n\to -\infty$ and $\{u_n\}\subset S$ are such that $$M \ge \Lambda^+(c_n,u_n)=\frac{N(u_n)+ D_c|B(u_n)|^{\frac{\eta}{\beta}}}{A(u_n)} \ge C_1D_c |B(u_n)|^{\frac{\eta}{\beta}}.$$ It follows that $B(u_n)\to 0$ and, by condition \eqref{st-4}, we obtain $u_n \rightharpoonup 0$, so that $$\Lambda^+(c_n,u_n)\ge \frac{N(u_n)}{A(u_n)} \to+ \infty,$$ a contradiction. Thus $ \lim_{c\to -\infty}\Lambda^+(c,u)=+\infty$ uniformly with respect to $u\in S$, and the proof is complete.
\end{proof}
The next result implies that \eqref{H2} is satisfied.

\begin{proposition}\label{0PS} 
	Suppose that $c_n\to c\in \mathcal{I}_{\mp}$
	and $\{u_{n}\}\subset S$ are such that $\frac{\partial \widetilde{\Lambda^\pm}}{\partial u}(c_n,u_n)\to 0$ and $\{\widetilde{\Lambda^\pm}(c_n,u_n)\}$ is bounded. Then 
	$\{u_n\}$ has  a convergent subsequence. In particular, the maps $u \mapsto \widetilde{\Lambda^\pm}(c,u)$ satisfy the Palais-Smale condition for any $c\in \mathcal{I}_{\mp}$.
\end{proposition}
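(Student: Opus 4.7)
The plan is to extract a weakly convergent subsequence $u_n\rightharpoonup u$, rule out $u=0$ using the uniform lower bound on $N$ from \eqref{st-3}, pass to the limit in the critical point equation for $t_n u_n$ where $t_n:=t^\pm(c_n,u_n)$, and finally upgrade weak to strong convergence by combining condition \eqref{st-5} with the uniform convexity of $X$. Since $u_n\in S$, the sequence is bounded in the reflexive space $X$, so up to a subsequence $u_n\rightharpoonup u$. From the Euler identities $\eta A(u_n)=A'(u_n)u_n$ and $\beta B(u_n)=B'(u_n)u_n$ together with the complete continuity of $A'$, $B'$ in \eqref{st-4}, one obtains $A(u_n)\to A(u)$ and $B(u_n)\to B(u)$.

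Next I would rule out $u=0$. If $u=0$ then $A(u_n)\to 0$ and $B(u_n)\to 0$; since $\{c_n\}$ is bounded, the factor $D_{c_n}|B(u_n)|^{\eta/\beta}\to 0$, whereas $N(u_n)\ge C^{-1}\|u_n\|^{\eta}=C^{-1}$ by \eqref{st-3}. Plugging into \eqref{eta=alphaex} forces $\Lambda^\pm(c_n,u_n)\to +\infty$, contradicting the assumed boundedness. Hence $u\neq 0$ and, by \eqref{st-2}, $A(u_n)\to A(u)>0$ and $|B(u_n)|\to |B(u)|>0$. Since $c\in\mathcal{I}_{\mp}$ gives $|c_n|$ bounded and bounded away from zero, formula \eqref{tea} shows that $t_n$ is bounded and bounded away from zero; up to a further subsequence, $t_n\to t>0$ and $\mu_n:=\Lambda^\pm(c_n,u_n)\to\mu$.

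I would then derive a critical point relation for $t_n u_n$. The argument in Proposition \ref{propciritcalrestricted} applies uniformly on $S$ because the splitting $X=\mathcal{T}_{u_n}S\oplus\mathbb{R}u_n$ is uniformly bounded on $S$ (as $\|\cdot\|\in C^1(X\setminus\{0\})$); hence $\frac{\partial\Lambda^\pm}{\partial u}(c_n,u_n)\to 0$ in $X^*$. Plugging into \eqref{lp} and using that $I_2(t_n u_n)=t_n^\eta A(u_n)/\eta$ is bounded and bounded away from zero, we infer $\Phi'_{\mu_n}(t_n u_n)\to 0$ in $X^*$. By the homogeneity of $N'$, $A'$ and $B'$, this is equivalent to
\[
N'(u_n)-\mu_n A'(u_n)-\frac{\eta}{\beta}t_n^{\beta-\eta}B'(u_n)\to 0\quad\text{in }X^*.
\]
Since $A'(u_n)\to A'(u)$ and $B'(u_n)\to B'(u)$ in $X^*$ by \eqref{st-4}, we conclude that $N'(u_n)$ converges in $X^*$, and in particular $(N'(u_n)-N'(u))(u_n-u)\to 0$. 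Invoking \eqref{st-5} and the strict monotonicity of $a\mapsto a^{\eta-1}$, this forces $\|u_n\|\to\|u\|$, and then uniform convexity of $X$ together with $u_n\rightharpoonup u$ delivers $u_n\to u$ strongly.

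The main obstacle is the non-triviality step: one must exclude the pathological scenario $u=0$, which requires balancing the competing behaviours of $N$, $A$, $B$ on $\{u_n\}$. Here the uniform lower bound $N(u_n)\ge C^{-1}$ provided by \eqref{st-3} is essential and specific to the structural assumptions of this section; once this is in place, the extraction of a $(PS)$-type critical point equation for $t_n u_n$ and the final passage to strong convergence via the $(S_+)$-type condition \eqref{st-5} proceed along fairly standard lines.
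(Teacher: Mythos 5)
Your argument is correct and is essentially the paper's proof written out in full: the paper simply cites Lemma \ref{0lem:H1} and Lemma \ref{ls0} for the facts that $t^\pm(c_n,u_n)$ and $A(u_n)$ are bounded and away from zero (equivalently, that the weak limit is nonzero), and then invokes Lemma \ref{psg} together with Remark \ref{rps}, whose content is exactly your chain ``$\frac{\partial \widetilde{\Lambda^\pm}}{\partial u}(c_n,u_n)\to 0$ upgrades to $\frac{\partial \Lambda^\pm}{\partial u}(c_n,u_n)\to 0$ via the splitting $X=\mathcal{T}_{u_n}S\oplus\mathbb{R}u_n$, hence $\Phi'_{\mu_n}(t_nu_n)\to 0$ by \eqref{lp}, and strong convergence follows from \eqref{st-4}, \eqref{st-5} and uniform convexity.'' All the individual steps (ruling out $u=0$ via \eqref{eta=alphaex} and \eqref{st-3}, the bounds on $t_n$ from \eqref{tea}, and the Radon--Riesz conclusion) check out.
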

\begin{proof} We deal only with $c>0$ and $\widetilde{\Lambda^-}$, since the other case is similar.  By Lemma \ref{0lem:H1} and Lemma \ref{ls0} we know that $t^-(c_n,u_n)$ and $A(u_n)$ are  bounded and away from zero. Therefore, up to a subsequence, the assumptions of Lemma \ref{psg} are satisfied (see also Remark \ref{rps}), and we reach the desired conclusion.
\end{proof}

By the results above and Theorem \ref{THM1}, 
 the sequences

$$ \mu_{n,c}^- = \inf_{F\in \mathcal F_{n}} \sup_{u\in F}\widetilde{\Lambda^-}(c,u), \quad \mbox{if }  \eqref{C1} \mbox{ holds and } c>0,$$
and
$$\mu_{n,c}^+ = \inf_{F\in \mathcal F_{n}} \sup_{u\in F}\widetilde{\Lambda^+}(c,u),\quad \mbox{if }  \eqref{C2} \mbox{ holds and }c<0,$$ 
give rise to a sequence  $\{u_{n,c}\}$ of critical points of the families $\Phi_{\mu_{n,c}^{-}}$ and $\Phi_{\mu_{n,c}^{+}}$ having energy equal to $c$.


\begin{lemma} \label{lii}
Let $c \neq 0$. Then $\mu_{n,c} \to +\infty$ as $n \to +\infty$.  If, in addition  $N$ is bounded on bounded sets of $X$, then  $\|u_{n,c}\| \to+ \infty$ as $n \to +\infty$.
\end{lemma}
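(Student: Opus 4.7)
My plan is to handle the two assertions in sequence.

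For the first assertion $\mu_{n,c}\to +\infty$, I would simply invoke the last statement of Theorem~\ref{THM1}, part \eqref{THM1-1}, which guarantees $\mu_{n,c}\to +\infty$ provided that $\Lambda(c,w_n)\to +\infty$ whenever $w_n\rightharpoonup 0$ in $X$. This hypothesis is exactly the last conclusion of Lemma~\ref{ls0} for $\Lambda^{\pm}$ (and the analogous argument for $\Lambda^{+}$ with $c<0$), so nothing more is required.

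For the second assertion, the key observation is that $u_{n,c}$ is an actual critical point of $\Phi_{\mu_{n,c}}$ at the level $c$, so the Euler identity (using that $N,A$ are $\eta$-homogeneous and $B$ is $\beta$-homogeneous) yields $\Phi'_{\mu_{n,c}}(u_{n,c})u_{n,c}=0$, i.e.
\begin{equation*}
N(u_{n,c})-\mu_{n,c}A(u_{n,c})=B(u_{n,c}),
\end{equation*}
and substituting this back into $\Phi_{\mu_{n,c}}(u_{n,c})=c$ gives \eqref{epm}, namely $B(u_{n,c})=\frac{\beta\eta c}{\beta-\eta}$, a fixed nonzero constant. In particular, by \eqref{st-3} this already gives a \emph{uniform positive lower bound} $\|u_{n,c}\|\ge \kappa>0$.

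To obtain the upper blow-up, I would argue by contradiction and suppose that $\|u_{n_k,c}\|$ is bounded along some subsequence. Extract a further subsequence with $u_{n_k,c}\rightharpoonup u_0$ in $X$. Because $A'$ and $B'$ are completely continuous (\eqref{st-4}), a standard mean-value argument shows that $A$ and $B$ are weakly continuous on bounded sets, so $A(u_{n_k,c})\to A(u_0)$ and $B(u_{n_k,c})\to B(u_0)$. Since $B(u_{n_k,c})$ equals the fixed nonzero constant $\frac{\beta\eta c}{\beta-\eta}$, we conclude $u_0\neq 0$, and then \eqref{st-2} forces $A(u_0)>0$. Combining this with $\mu_{n,c}\to+\infty$ from part (i) gives $\mu_{n_k,c}A(u_{n_k,c})\to +\infty$. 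On the other hand, the Euler identity rewrites as
\begin{equation*}
\mu_{n_k,c}A(u_{n_k,c})=N(u_{n_k,c})-\frac{\beta\eta c}{\beta-\eta},
\end{equation*}
and the right-hand side is bounded under the assumption that $N$ is bounded on bounded sets. This is a contradiction, so $\|u_{n,c}\|\to+\infty$.

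The main technical step, which I expect to be the only non-mechanical part, is the weak continuity of $A$ and $B$ on bounded sets derived from complete continuity of their Fréchet derivatives; everything else is Euler-type bookkeeping combined with the already-proven first assertion.
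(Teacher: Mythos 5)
Your proposal is correct and follows essentially the same route as the paper: the first assertion via Theorem~\ref{THM1} and Lemma~\ref{ls0}, and the second by contradiction, using \eqref{epm} to fix $B(u_{n,c})=\frac{\beta\eta}{\beta-\eta}c$, weak continuity of $A$ and $B$ to get a nonzero weak limit, and the homogeneity identity $\mu_n A(u_n)=N(u_n)-B(u_n)$ together with the boundedness of $N$ on bounded sets. The only cosmetic difference is that the paper phrases the final contradiction as $A(u_n)\to 0$ forcing $u=0$, whereas you phrase it as $\mu_n A(u_n)$ being simultaneously bounded and unbounded; these are the same computation.
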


\begin{proof}
Let us fix $c$ and write $\mu_n= \mu_{n,c}$ and $u_n=u_{n,c}$. 
The first assertion follows from Theorem \ref{THM1} and Lemma \ref{ls0}. Let us assume that $N$ is bounded on bounded sets of $X$. Assume by contradiction that $\{u_n\}$ is bounded in $X$, then we can assume that $u_n \rightharpoonup u$ in $X$. From \eqref{epm} and $\Phi_{\mu_n}(u_n)=c$ we find that $B(u_n)=\frac{\beta \eta}{\beta-\eta}c$, so that $B(u)\neq 0$, i.e. $u \neq 0$. On the other hand, $\Phi_{\mu_n}'(u_n)=0$ implies that $A'(u_n)=\mu_n^{-1} (N'(u_n)-\frac{\eta}{\beta}B'(u_n))$. By homogeinity we infer that $A(u_n)\to 0$, so $u=0$, and we reach a contradiction.
\end{proof}

Let us proceed with the study of $\mu_{n,c}^\pm$ as functions of $c$. In view of \eqref{eta=alphaex0}, we set
\begin{equation}\label{vpp}
\mu_{n,0}^\pm:=\mu_{n}=\inf_{F\in \mathcal{F}_n}\sup_{u\in F}\frac{N(u)}{A(u)}.
\end{equation}

\begin{lemma}\label{0lip} 
For every $n \in \mathbb{N}$ the following properties hold.
	\begin{enumerate}[label=(\roman*),ref=\roman*]
		\item\label{0lip-0} 
		The map $c \mapsto \mu_{n,c}^\pm$ is decreasing and continuous in $\overline{\mathcal{I}_{\mp}}$, and locally Lipschitz continuous in $\mathcal{I}_{\mp}$.
		
		\item\label{0lip-i} $\displaystyle \lim_{c\to \mp \infty}\mu_{n,c}^{\pm}=\pm \infty$.
		
	\end{enumerate}
\end{lemma}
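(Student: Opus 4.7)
The plan is to derive (i) on the open interval $\mathcal{I}_\mp$ directly from Theorem \ref{THM2}, then extend continuity up to the endpoint $c=0$ via an elementary sandwich estimate, and finally read off (ii) from Lemma \ref{0pl}.

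First, I would apply Theorem \ref{THM2} to the family $\widetilde{\Lambda^\pm}(c,\cdot)$ on $\mathcal{I}_\mp$. Condition \eqref{H1} is exactly Lemma \ref{0lem:H1}; \eqref{H2} combines the lower bound from Lemma \ref{0pl}\eqref{0pl-i} with the Palais--Smale property in Proposition \ref{0PS}; and \eqref{PSG} is Proposition \ref{0PS} itself. To check \eqref{H3}, I would invoke Lemma \ref{0pl}\eqref{0pl-ii} for monotonicity in $c$, together with the fact that on any compact $K \subset \mathcal{I}_\mp \times S$ the projection $S_0$ onto $S$ is strongly compact, so \eqref{st-2} forces $A$ to be bounded away from zero on $S_0$, which in turn makes $\Lambda^\pm$ bounded above on $K$; finally, whenever $\Lambda^\pm$ is bounded on some $[a,b]\times S_0$, Lemma \ref{ls0} guarantees that $t^\pm$ is bounded and bounded away from zero there, so formula \eqref{dl} combined with $A$ bounded and bounded away from zero on $S_0$ yields that $\partial\Lambda^\pm/\partial c$ is bounded and bounded away from zero. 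Theorem \ref{THM2} then produces the local Lipschitz continuity and strict monotonicity of $c\mapsto \mu_{n,c}^\pm$ on $\mathcal{I}_\mp$.

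To upgrade continuity up to the endpoint $c=0$, the crucial observation is the elementary estimate
$$\frac{|B(u)|^{\eta/\beta}}{A(u)} \leq C^{\,1+\eta/\beta}\,\Lambda(u) \quad \text{for every } u \in S,$$
which follows from \eqref{st-3} alone: $|B(u)|^{\eta/\beta} \leq C^{\eta/\beta}$ on $S$, and $1/A(u) = \Lambda(u)/N(u) \leq C\,\Lambda(u)$ since $N(u) \geq C^{-1}$ on $S$. Plugging this into \eqref{eta=alphaex} would produce the two-sided pinchings $(1-KD_c)\Lambda(u) \leq \Lambda^-(c,u) \leq \Lambda(u)$ under \eqref{C1} and $\Lambda(u) \leq \Lambda^+(c,u) \leq (1+KD_c)\Lambda(u)$ under \eqref{C2}, with $K := C^{1+\eta/\beta}$, valid as soon as $KD_c < 1$. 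Since $\Lambda > 0$ on $S$ forces $\mu_n > 0$, taking $\inf_F\sup_F$ preserves these inequalities and gives $(1-KD_c)\mu_n \leq \mu_{n,c}^{-} \leq \mu_n$ and $\mu_n \leq \mu_{n,c}^{+} \leq (1+KD_c)\mu_n$; as $D_c \to 0$ when $c \to 0$, this forces $\mu_{n,c}^\pm \to \mu_n = \mu_{n,0}^\pm$, completing (i).

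For (ii), Lemma \ref{0pl}\eqref{0pl-iii} immediately handles $\mu_{n,c}^-$: fixing any compact $F_0 \in \mathcal{F}_n$, $\Lambda$ is bounded on $F_0$ by continuity, hence $\sup_{F_0}\Lambda^-(c,\cdot) \to -\infty$ as $c \to +\infty$, and the squeeze $\mu_{n,c}^- \leq \sup_{F_0}\Lambda^-(c,\cdot)$ sends $\mu_{n,c}^-$ to $-\infty$. For $\mu_{n,c}^+$, Lemma \ref{0pl}\eqref{0pl-iv} is stronger, giving $\Lambda^+(c,\cdot) \to +\infty$ uniformly on all of $S$, so $\mu_{n,c}^+ \geq \inf_{u\in S}\Lambda^+(c,u) \to +\infty$ as $c \to -\infty$. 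The step I expect to require the most care is the continuity at $c=0$: Theorem \ref{THM2} does not reach the boundary of $\mathcal{I}_\mp$, and the sandwich via $|B|^{\eta/\beta}/A \lesssim \Lambda$ is precisely the device needed to bridge that gap.
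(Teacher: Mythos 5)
Your proof is correct, and for everything except the endpoint it follows the paper's route exactly: the paper likewise verifies \eqref{H3} via Lemma \ref{ls0} and \eqref{dl}, gets \eqref{PSG} from Proposition \ref{0PS}, invokes Theorem \ref{THM2} on $\mathcal{I}_{\mp}$, and proves (ii) by evaluating $\Lambda^-$ on a fixed compact $F\in\mathcal{F}_n$ (Lemma \ref{0pl}\eqref{0pl-iii}) and using the uniform divergence of $\Lambda^+$ (Lemma \ref{0pl}\eqref{0pl-iv}). Where you genuinely diverge is the continuity at $c=0$: the paper disposes of it with ``arguing as in the proof of Theorem \ref{THM2}'', whereas you prove it by the explicit pinching $(1-KD_c)\Lambda(u)\le\Lambda^-(c,u)\le\Lambda(u)$ (and its mirror for $\Lambda^+$) obtained from \eqref{eta=alphaex} and \eqref{st-3}, then pass the inequality through $\inf_F\sup_F$. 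This is arguably the better argument: the mean-value mechanism of Theorem \ref{THM2} cannot literally be pushed to $c=0$, since $t^{\pm}(c,u)\sim|c|^{1/\beta}\to0$ makes $\frac{\partial\Lambda}{\partial c}=-\eta/(t^{\pm}(c,u)^{\eta}A(u))$ blow up there, so only a H\"older modulus $D_c\sim|c|^{(\beta-\eta)/\beta}$ survives at the endpoint --- which is exactly what your sandwich delivers. Two small points to tighten: when checking boundedness of $\Lambda^{\pm}$ on a compact $K\subset\mathcal{I}_{\mp}\times S$, you need $\sup_{S_0}N<+\infty$ and (for Lemma \ref{ls0}) $\inf_{S_0}|B|>0$ in addition to $\inf_{S_0}A>0$; all three follow from continuity on the compact projection $S_0$, so the gap is cosmetic. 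And the conclusion $|\mu_{n,c}^{\pm}-\mu_n|\le KD_c\,\mu_n\to0$ implicitly uses $\mu_n<+\infty$, which holds because $\Lambda$ is bounded on the unit sphere of any $n$-dimensional subspace; worth one line.
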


\begin{proof} 
\eqref{0lip-0} Indeed, by Lemma \ref{ls0} and \eqref{dl} we see that  \eqref{H3} is satisfied by $\Lambda^{\pm}$. Proposition \ref{0PS} shows that $\Lambda^{\pm}$ satisfy \eqref{PSG}, so Theorem \ref{THM2} yields that $c \mapsto \mu_{n,c}^\pm$ is decreasing and  locally Lipschitz continuous in $\mathcal{I}_{\mp}$. Moreover, arguing as in the proof of Theorem \ref{THM2} we see that $c \mapsto \mu_{n,c}^\pm$ is also continuous at $c=0$.

	\smallskip

\eqref{0lip-i} To prove that $\displaystyle \lim_{c\to +\infty}\mu_{n,c}^-=- \infty$ it suffices to pick some $F \in \mathcal{F}_n$ such that $\Lambda$ is bounded in $F$. By  Lemma \ref{0pl} \eqref{0pl-iii} we have that $\displaystyle \lim_{c \to +\infty}\Lambda^-(c,u)= -\infty$  uniformly in $F$, which yields the desired conclusion. On the other hand,  Lemma \ref{0pl} \eqref{0pl-iii} yields that $\mu_{n,c}^+= \Lambda^+(c,u_{n,c})\to +\infty$ as $c \to -\infty$.
\end{proof}

Next we establish some bifurcation type results, which imply, in particular, Theorem \ref{EC1} (1):

\begin{proposition}\label{bif}
	Assume that
	either \eqref{C1}  or \eqref{C2} hold. 
	\begin{enumerate}[label=(\roman*),ref=\roman*]
		\item \label{bif-1}	If $\{u_k\} \subset X$ and $\{\mu_k\} \subset \mathbb{R}$ are such that $\Phi_{\mu_k}'(u_k)=0$ and $|\Phi_{\mu_k}(u_k)| \to +\infty$  then $ \|u_k\| \to +\infty$. 
		\item \label{bif-2} If $\{u_k\} \subset X$ and $\{\mu_k\} \subset \mathbb{R}$ are such that $\{\mu_k\}$ is bounded from above, $\Phi_{\mu_k}'(u_k)=0$ and $\Phi_{\mu_k}(u_k) \to 0$  then $u_k \to 0$ in $X$.
	\end{enumerate}
	
\end{proposition}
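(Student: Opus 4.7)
The plan is to combine the Pohozaev-type identity \eqref{epm}, which for any critical point $u$ of $\Phi_\mu$ yields $\Phi_\mu(u)=\tfrac{\beta-\eta}{\beta\eta}B(u)$, with the Euler identity $N(u)-\mu A(u)=B(u)$ coming from the homogeneities of $N,A,B$, and with the weak continuity of $B$ derived from the complete continuity of $B'$ in \ref{st-4} via $B(w)=\tfrac{1}{\beta}B'(w)w$. For part (i), \eqref{epm} immediately gives $|B(u_k)|=\tfrac{\beta\eta}{|\beta-\eta|}|\Phi_{\mu_k}(u_k)|\to+\infty$, and the growth bound $|B(u)|\le C\|u\|^\beta$ from \ref{st-3} forces $\|u_k\|\to+\infty$.

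For part (ii), the same identity gives $B(u_k)\to 0$. I first show that $\{u_k\}$ is bounded. Suppose along a subsequence $\|u_k\|\to+\infty$ and set $v_k:=u_k/\|u_k\|\in S$. Since $\|u_k\|^\beta B(v_k)=B(u_k)\to 0$ and $\|u_k\|\to+\infty$, I deduce $B(v_k)\to 0$; up to a subsequence $v_k\rightharpoonup v$, weak continuity of $B$ yields $B(v)=0$, hence $v=0$ by \ref{st-2}, so $A(v_k)\to 0^+$ by complete continuity of $A'$ combined with $A(v_k)=\tfrac{1}{\eta}A'(v_k)v_k$. Rescaling the Euler identity gives $N(v_k)-\mu_k A(v_k)=\|u_k\|^{-\eta}B(u_k)\to 0$, and since $N(v_k)\ge C^{-1}>0$ this forces $\mu_k A(v_k)\to N(v_k)$, which is bounded below by a positive constant, so $\mu_k\to+\infty$, contradicting the upper bound on $\{\mu_k\}$. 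Hence $\{u_k\}$ is bounded, and along any weakly convergent subsequence $u_k\rightharpoonup u$ the weak continuity of $B$ gives $B(u)=0$, i.e., $u=0$.

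To upgrade weak to strong convergence, I use $N(u_k)=\mu_k A(u_k)+B(u_k)$, with $A(u_k)\to 0^+$ (complete continuity of $A'$ with $u_k\rightharpoonup 0$ bounded) and $B(u_k)\to 0$; the task is to show $\mu_k A(u_k)\to 0$. This is where the sign constraint is crucial: under \eqref{C1} we have $B(u_k)>0$, so $\mu_k A(u_k)\ge -B(u_k)\to 0$, while splitting on the sign of $\mu_k$ and using $\mu_k\le M$ gives $\mu_k A(u_k)\le\max\{0,MA(u_k)\}\to 0$, and a squeeze forces $\mu_k A(u_k)\to 0$; under \eqref{C2} the inequality $\mu_k A(u_k)\ge -B(u_k)>0$ forces $\mu_k>0$, after which $0<\mu_k A(u_k)\le MA(u_k)\to 0$. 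Either way $N(u_k)\to 0$, and the coercivity $N(u)\ge C^{-1}\|u\|^\eta$ from \ref{st-3} yields $\|u_k\|\to 0$ along the subsequence; uniqueness of the limit extends this to the whole sequence. The main obstacle is ruling out $\mu_k\to-\infty$: in both the boundedness and the strong-convergence steps it is the definite sign of $B$ combined with $N\ge 0$ that pins down $\mu_k A(u_k)$, and without hypothesis \eqref{C1} or \eqref{C2} the argument would break.
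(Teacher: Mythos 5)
Your proof is correct and follows essentially the same route as the paper: the identity \eqref{epm} reduces both parts to the behaviour of $B(u_k)$, and the Euler identity $N(u_k)-\mu_k A(u_k)=B(u_k)$ together with the weak continuity of $A$ and $B$ (via complete continuity of $A'$, $B'$ and homogeneity) closes the argument. The only substantive differences are that you actually prove the boundedness of $\{u_k\}$ in part (ii) by a normalization/contradiction argument --- a step the paper dismisses with ``one can show'' --- and that you replace the paper's case split ($\mu_k\to-\infty$ versus $\{\mu_k\}$ bounded) by a single squeeze on $\mu_k A(u_k)$; note only that your closing remark that the definite sign of $B$ is crucial is overstated, since the squeeze uses just $N\ge 0$, $A>0$ and $\mu_k\le M$.
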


\begin{proof}
Recall that by \eqref{epm} we have $\frac{\beta-\eta}{\beta \eta}B(u_k)=\Phi_{\mu_k}(u_k)$.
	 Then  $|B(u_k)| \to +\infty$  if $|\Phi_{\mu_k}(u_k)| \to +\infty$ and it follows,
	by \eqref{1st-3}, that $\|u_k\| \to+ \infty$, proving \eqref{bif-1}.
	
	Again by \eqref{epm},  if $\Phi_{\mu_k}(u_k)\to 0$ then $B(u_k) \to 0$. 
	If $\mu_k \to -\infty$ then $N(u_k)<B(u_k) \to 0$, so $u_k \to 0$ in $X$. 
	On the other hand, if $\{\mu_k\}$ is bounded  then one can show that $\{u_k\}$ is bounded, and from $B(u_k) \to 0$ we infer that $u_k \rightharpoonup 0$. Finally, $N(u_k)=\mu_k A(u_k) +B(u_k)$ yields that $u_k \to 0$ in $X$
	and \eqref{bif-2} is proved.
\end{proof}

\begin{corollary}\label{cbif}
	The following statements hold.
	\begin{enumerate}[label=(\roman*),ref=\roman*]
		\item\label{cbif-1} If \eqref{C1} holds then for every $\mu\in\mathbb{R}$ there exist two increasing sequences $\{c_{k}\} \subset  (0,+\infty)$ and $\{n_k\} \subset \mathbb{N}$ such that $\mu_{n_k,c_{k}}^-=\mu$ for every $k\in \mathbb{N}$, and $c_{k},n_k,\|u_{n_k,c_k}\| \to+ \infty$ as $k \to+ \infty$.
		\item\label{cbif-2} If \eqref{C2} holds then for every $\mu\in\mathbb{R}$ there exist two increasing sequences $\{-c_{k}\} \subset  (0,\infty)$ and $\{n_k\} \subset \mathbb{N}$ such that $\mu_{n_k,c_{k}}^+=\mu$ for every $k\in \mathbb{N}$, and $-c_{k},n_k,\|u_{n_k,c_k}\| \to+ \infty$ as $k \to+ \infty$.
	\end{enumerate}
\end{corollary}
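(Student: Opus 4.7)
The plan is to cross each energy curve $c\mapsto\mu_{n,c}^-$ with the horizontal line of height $\mu$ via the intermediate value theorem, and then to force the crossing levels $c_n$ to diverge. I describe (i) in detail; part (ii) is entirely analogous, carried out on $\overline{\mathcal{I}_-}=(-\infty,0]$ using the ``$+$'' versions of the preceding lemmas, noting that the map $c\mapsto \mu_{n,c}^+$ is continuous and strictly decreasing on $(-\infty,0]$, equals $\mu_n$ at $c=0$, and diverges to $+\infty$ as $c\to -\infty$.

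Fix $\mu\in\mathbb R$. By Lemma \ref{0lip}\eqref{0lip-0} each map $c\mapsto\mu_{n,c}^-$ is continuous and strictly decreasing on $[0,+\infty)$, with $\mu_{n,0}^-=\mu_n$ by \eqref{vpp} and $\mu_{n,c}^-\to-\infty$ as $c\to+\infty$ by Lemma \ref{0lip}\eqref{0lip-i}. Moreover $\mu_n\to+\infty$: whenever $w_k\in S$ satisfies $w_k\rightharpoonup 0$, the weak continuity of $A$ coming from \eqref{st-4} gives $A(w_k)\to 0$ while \eqref{st-3} yields $N(w_k)\ge C^{-1}$, so $\Lambda(w_k)=N(w_k)/A(w_k)\to+\infty$, and the standard genus argument then propagates this into $\mu_n\to+\infty$. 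Hence $\mu_n>\mu$ for every $n$ above some threshold, and the intermediate value theorem provides, for each such $n$, a unique $c_n>0$ with $\mu_{n,c_n}^-=\mu$; Theorem \ref{THM1} then supplies the associated critical point $u_{n,c_n}$.

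To force $c_n\to+\infty$, fix $M>0$: Lemma \ref{lii} applied at the single value $c=M\in\mathcal I_+$ gives $\mu_{n,M}^-\to+\infty$, so $\mu_{n,M}^->\mu=\mu_{n,c_n}^-$ for $n$ large, and the strict decrease of $c\mapsto \mu_{n,c}^-$ forces $c_n>M$. Since $M$ is arbitrary, $c_n\to+\infty$, and one extracts indices $n_1<n_2<\cdots$ so that $c_{n_k}$ is itself strictly increasing and divergent. Setting $\bar u_k:=u_{n_k,c_{n_k}}$, we have $\Phi_\mu'(\bar u_k)=0$ and $\Phi_\mu(\bar u_k)=c_{n_k}\to+\infty$, so Proposition \ref{bif}\eqref{bif-1}, which rests only on the homogeneity identity \eqref{epm} and the bound $|B(u)|\le C\|u\|^\beta$ from \eqref{st-3}, delivers $\|\bar u_k\|\to+\infty$.

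The main obstacle is this second step: converting the pointwise-in-$c$ divergence $\mu_{n,c}\to+\infty$ at any fixed $c\neq 0$ (Lemma \ref{lii}) into the stronger assertion that the specific crossing values $c_n$ along $\{\mu_{n,c}^-=\mu\}$ themselves blow up. Once this is secured, everything else---existence of $c_n$, extraction of the subsequence $n_k$, and the norm blow up $\|u_{n_k,c_{n_k}}\|\to+\infty$---follows cleanly from the intermediate value theorem and Proposition \ref{bif}\eqref{bif-1}.
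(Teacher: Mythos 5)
Your treatment of part (\ref{cbif-1}) is correct and is essentially the paper's argument with the bookkeeping rearranged: the paper builds the two sequences iteratively (set $d_1=1$, use Lemma \ref{lii} to get $n_1$ with $\mu^-_{n_1,d_1}>\mu$, then Lemma \ref{0lip} and the intermediate value theorem to get $c_1>d_1$ with $\mu^-_{n_1,c_1}=\mu$, then repeat with $d_2=c_1+2$, etc.), whereas you first produce a crossing $c_n$ for every large $n$ and then force $c_n\to+\infty$ by comparing with $\mu^-_{n,M}$ at a fixed level $M$. The ingredients (Lemma \ref{lii}, Lemma \ref{0lip}, Proposition \ref{bif}\eqref{bif-1}) and the conclusion are the same, and your version is, if anything, cleaner about why the crossing levels must diverge.

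Part (\ref{cbif-2}), however, is not ``entirely analogous'', and the facts you yourself list show why. Under \eqref{C2} one has, by \eqref{eta=alphaex}, $\Lambda^+(c,u)=\bigl(N(u)+D_c|B(u)|^{\eta/\beta}\bigr)/A(u)\ge N(u)/A(u)$ for every $c<0$, hence $\mu^+_{n,c}\ge\mu_n$ for all $c<0$, with $\mu_n$ as in \eqref{vpp}; equivalently, the curve $c\mapsto\mu^+_{n,c}$ decreases from $+\infty$ at $c=-\infty$ only down to $\mu_n$ at $c=0^-$, so its range on $(-\infty,0)$ is contained in $(\mu_n,+\infty)$. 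The intermediate value theorem therefore produces a crossing $\mu^+_{n,c}=\mu$ with $c<0$ only when $\mu>\mu_n$, and since $\{\mu_n\}$ is nondecreasing and $\mu_n\to+\infty$ (the same genus argument you invoke in part (i)), this happens for at most finitely many $n$. So no sequence $n_k\to+\infty$ with $\mu^+_{n_k,c_k}=\mu$ can be extracted for a fixed $\mu\in\mathbb{R}$, and your sketch of (\ref{cbif-2}) does not close. To be fair, the paper's own proof of this item consists of the single sentence ``the proof of the second item is similar'' and runs into exactly the same obstruction; the difficulty appears to lie in the statement of item (\ref{cbif-2}) itself (compare with Subsection \ref{sub}, where in the subhomogeneous case the roles are arranged so that the curves do sweep out all of $\mathbb{R}$), but in any case the argument as you describe it does not establish it.
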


\begin{proof}
	Let us assume that \eqref{C1} holds. Set $d_1=1$. We know from Lemma \ref{lii} that there exists $n_1\in \mathbb{N}$ such that $\mu_{n_1,d_1}^->\mu$. By Lemma \ref{0lip}  there exists $c_1>d_1$ such that $\mu_{n_1,c_1}^-=\mu$. Now take $d_2=c_1+2$, so that again by Lemma \ref{lii} we can find $n_2>n_1$ such that $\mu_{n_2,d_2}^->\mu$ and thus, by Lemma \ref{lii} and Lemma \ref{0lip}, there exists $c_2>d_2$ such that $\mu_{n_2,c_2}^-=\mu$. By repeating this procedure the existence of $\{c_{k}\}$ and $\{n_k\}$ is proved. From Proposition \ref{bif} we have $\|u_{n_k,c_k}\|\to+ \infty$. The proof of the second item is similar.
\end{proof}

\subsection{The case $\eta>\beta$}\label{sub}
The subhomogeneous case $\eta>\beta$ can be handled in a similar way. Let us briefly describe the main points. 

Now \eqref{epm} shows that if $\Phi_{\mu}(u)=c$ then $B(u)$ and $c$ have opposite signs. Hence, if $u\neq 0$ then one of the following two cases must occur:
$c<0<B(u) \quad \mbox{or} \quad c>0>B(u)$.
Let us focus on the first case and assume that  $B(u)>0$ for every $u \neq 0$. Then for any $c<0$ and $u \neq 0$ the map $\psi_{c,u}$ has a unique critical point $t^+(c,u)>0$, still given by \eqref{tea}, which is a non-degenerate global minimizer. Moreover, the map $c \mapsto t^+(c,u)$ is 
decreasing in $(-\infty,0)$ and uniformly away from zero for $c$ away from zero. The map $\Lambda^+$, given by $\Lambda^+(c,u)=\mu(c,t^+(c,u)u)$ for $c<0$ and $u \neq 0$, still satisfies \eqref{eta=alphaex}. It follows that Lemma \ref{ls0} remains valid. Moreover:
\begin{itemize}
	\item for any $c<0$
	the map $u \mapsto \Lambda^+(c,u)$ is bounded from below on $S$,
	\item for any $u \neq 0$ the map $c\mapsto \Lambda^+(c,u)$ is decreasing in $(-\infty,0)$,
	\item if the functional $\Lambda$ is bounded in $S_0\subset S$ then $\Lambda^+(c,u) \to -\infty$ uniformly in $S_0$ as $c \to 0^-$. 
\end{itemize} 
Furthermore, the proof of Proposition \ref{0PS} still applies, and we deduce that for any $c<0$  the map $u \mapsto \widetilde{\Lambda^+}(c,u)$ satisfies the Palais-Smale condition. Thus 
$ \mu_{n,c}^+ = \displaystyle \inf_{F\in \mathcal F_{n}} \sup_{u\in F}\widetilde{\Lambda^+}(c,u)$ are critical values of the functional $u \mapsto \Lambda^+(c,u)$.  Arguing as in the proof of Lemma \ref{0lip} we see that, for every $n$:
\begin{itemize}
	\item  the map $c \mapsto \mu_{n,c}^+$ is decreasing and locally Lipschitz continuous in $(-\infty,0)$,
	\item $\displaystyle \lim_{c\to 0^-}\mu_{n,c}^+=-\infty$, and $\displaystyle \lim_{c\to -\infty}\mu_{n,c}^+=\mu_{n}$, with $\mu_n$ given by \eqref{vpp}. 
\end{itemize}
One may also check that Proposition \ref{bif} remains valid, implying in particular that $u_{n,c} \to 0$ as $c \to 0^-$, i.e. $(-\infty,0)$ is a bifurcation point, and $\|u_{n,c}\| \to +\infty$ as $c \to -\infty$, i.e. every $(\mu_n,+\infty)$ is a bifurcation point.

Finally, Corollary \ref{cbif} read now as follows:
for every $\mu\in\mathbb{R}$ there exist two increasing sequences $\{c_{k}\} \subset  (-\infty,0)$ and $\{n_k\} \subset \mathbb{N}$ such that $\mu_{n_k,c_{k}}^-=\mu$ for every $k\in \mathbb{N}$, and $n_k\to+ \infty$,  $c_{k}\to 0^-$, and $u_{n_k,c_k}\to 0$  in  $X$ as $k \to +\infty$. Thus every $(\mu,0)$ is a bifurcation point.

We are now in position to state the following result for problem \eqref{ef}:

 \begin{theorem} \label{ts1} 
 Assume that  \eqref{C1} hold.
 		\begin{enumerate}
 	\item If $\eta<\beta$ (respect. $\eta>\beta$) then for every $c>0$ (respect. $c<0$) there exists a sequence $\{(\mu_{n,c},u_{n,c})\}  \subset \mathbb{R} \times X\setminus\{0\}$ such that $(\mu_{n,c},\pm u_{n,c})$ solves \eqref{ef} for every $n \in \mathbb{N}$.
 Moreover:
 \begin{enumerate}
 		\item $\{\mu_{n,c} \}$ is non-decreasing and  $\mu_{n,c} \to +\infty$ as $n\to +\infty$.
 		\item \eqref{ef} has no solution for $\mu<\mu_{1,c}$.
 	\end{enumerate} 
 \item If $\eta<\beta$ then:
 	\begin{enumerate}
 		\item For every $n\in \mathbb{N}$ the map $c \mapsto \mu_{n,c}$ is locally Lipschitz continuous and decreasing in $(0,+\infty)$, and continuous in $[0,+\infty)$.
 		\item  $\mu_{n,c}\to -\infty$ and $\|u_{n,c}\| \to +\infty$ as $c\to+ \infty$, i.e. $(-\infty,+\infty)$ is a bifurcation point.
 		\item   $\mu_{n,c}\to \mu_n$ and $u_{n,c} \to 0$ in $W_0^{1,p}(\Omega)$ as $c \to 0^+$, i.e. every $(\mu_n,0)$ is a bifurcation point.
 		\item For every $\mu\in \mathbb{R}$ there are infinitely many  $u_n\in X$ such that 
 		$\Phi_\mu'(\pm u_n)=0$, and $\Phi_\mu(u_n)\to +\infty$ and $\|u_n\|\to +\infty$ as $n \to \infty$, so $(\mu,+\infty)$ is a bifurcation point for any  $\mu\in \mathbb{R}$.
 	\end{enumerate}
 	\item If $\eta>\beta$ then:
 	\begin{enumerate}
 		\item For every $n\in \mathbb{N}$ the map $c \mapsto \mu_{n,c}$ is locally Lipschitz continuous and decreasing in $(-\infty,0)$.
 		\item $\mu_{n,c}\to -\infty$ and $u_{n,c} \to 0$ in $X$ as $c\to 0^-$, i.e. $(-\infty,0)$ is a bifurcation point.
 		\item $\mu_{n,c}\to \mu_n$ and $\|u_{n,c}\| \to +\infty$ as $c \to -\infty$, i.e. every $(\mu_n,+\infty)$ is a bifurcation point.
 		\item For every $\mu\in \mathbb{R}$ there are infinitely many  $v_n\in X$ such that 
 		$\Phi_\mu'(\pm v_n)=0$ and $\Phi_\mu(v_n)\to 0^-$ and $v_n\to 0$ in $W_0^{1,p}(\Omega)$  as $n \to +\infty$, so $(\mu,0)$ is a bifurcation point for any $\mu \in \mathbb{R}$.
 	\end{enumerate}
 \end{enumerate}
 \end{theorem}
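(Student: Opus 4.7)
The proof of Theorem \ref{ts1} is essentially a synthesis of the machinery assembled earlier in the section, so my plan is to check, case by case, that the hypotheses of the abstract theorems are met, then read off the conclusions. The general strategy is: verify \eqref{H1}, \eqref{H2}, \eqref{H3} and \eqref{PSG} for $\widetilde{\Lambda^\pm}$, then invoke Theorems \ref{THM1} and \ref{THM2}, and finally translate the asymptotic/bifurcation information into the statement about $(\mu_{n,c},u_{n,c})$. Throughout I will use the convention $\mathcal{I}_{-}=(-\infty,0)$ and $\mathcal{I}_{+}=(0,+\infty)$, and restrict to the signs of $c$ dictated by \eqref{epm}.

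For part (1), I would first note that Lemma \ref{0lem:H1} gives \eqref{H1} (with the appropriate extremum type $t^\pm$ depending on $\eta \lessgtr \beta$), and that Lemma \ref{0pl}\eqref{0pl-i} together with Proposition \ref{0PS} gives \eqref{H2}. Then Theorem \ref{THM1}\eqref{THM1-1} produces the sequence $\{u_{n,c}\}\subset S$ of critical points of $\widetilde{\Lambda^\pm}(c,\cdot)$ at the minimax values $\mu_{n,c}$; Corollary \ref{c1} and the evenness of $t^\pm$ promote $t^\pm(c,u_{n,c})u_{n,c}$ (still denoted $u_{n,c}$) to solutions of \eqref{ef}. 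Monotonicity $\mu_{n,c}\leq \mu_{n+1,c}$ is Remark \ref{infsup}. The fact $\mu_{n,c}\to+\infty$ is Lemma \ref{lii} (whose second conclusion on $\|u_{n,c}\|\to+\infty$ I would note requires $N$ bounded on bounded sets; for \eqref{quasi} this holds automatically). The non-existence below $\mu_{1,c}$ is Theorem \ref{THM1}\eqref{THM1-2}, applicable because $t^\pm$ is the unique critical point of $\psi_{c,u}$ by Lemma \ref{0lem:H1}.

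For part (2) (the case $\eta<\beta$ under \eqref{C1}), I would combine Lemma \ref{ls0} (boundedness of $t^\pm$ on weakly away-from-zero sets), Lemma \ref{0pl}\eqref{0pl-ii} (monotonicity of $c\mapsto\Lambda^-$) and formula \eqref{dl} to verify \eqref{H3}; \eqref{PSG} is Proposition \ref{0PS}. Theorem \ref{THM2} then yields (a). Asymptotic (b) follows directly from Lemma \ref{0lip}\eqref{0lip-i}, combined with Proposition \ref{bif}\eqref{bif-1} to upgrade $|\mu_{n,c}|\to+\infty$-type information to $\|u_{n,c}\|\to+\infty$. Asymptotic (c), that $\mu_{n,c}\to\mu_n$ and $u_{n,c}\to 0$ as $c\to 0^+$, follows from the continuous extension \eqref{eta=alphaex0}, the identification \eqref{vpp} and Proposition \ref{bif}\eqref{bif-2} (since $\mu_{n,c}$ stays bounded as $c\to 0^+$ by Lemma \ref{0lip}\eqref{0lip-0}). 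Finally, (d) is a direct restatement of Corollary \ref{cbif}\eqref{cbif-1}, which uses the intermediate value property of the continuous decreasing map $c\mapsto\mu_{n,c}^-$ together with $\mu_{n,c}^-\to -\infty$ as $c\to+\infty$.

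Part (3) (the case $\eta>\beta$) is handled identically, reading off Subsection \ref{sub} instead: the listed bullet points there already furnish the analogues of \eqref{H1}--\eqref{H3} and \eqref{PSG}, and the end of Subsection \ref{sub} states the corresponding version of Corollary \ref{cbif}. Since no genuinely new argument is needed, I anticipate no serious obstacle; the only mildly delicate point is part (2)(c), where one must rule out the possibility that $u_{n,c}$ leaves every bounded set as $c\to 0^+$. This is handled by Proposition \ref{bif}\eqref{bif-2}, whose hypothesis $\Phi_{\mu_{n,c}}(u_{n,c})=c\to 0$ with $\mu_{n,c}$ bounded (by continuity at $c=0$, Lemma \ref{0lip}\eqref{0lip-0}) gives $u_{n,c}\to 0$ in $X$, and a symmetric observation handles the analogous step in part (3).
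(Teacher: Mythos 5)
Your proposal matches the paper's own proof, which is exactly this synthesis: existence via Lemma \ref{0pl}, Proposition \ref{0PS}, Lemma \ref{lii} together with Theorems \ref{THM0} and \ref{THM1}, and the properties of $c\mapsto\mu_{n,c}$ via Lemma \ref{0lip}, Proposition \ref{bif} and Corollary \ref{cbif}, with the case $\eta>\beta$ read off from Subsection \ref{sub}. Your extra care about the hypothesis of Lemma \ref{lii} and about using Proposition \ref{bif}\eqref{bif-2} with the boundedness of $\mu_{n,c}$ near $c=0$ is consistent with (indeed slightly more explicit than) the paper's argument.
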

 
\begin{proof}
The existence of the sequence $\{(\mu_{n,c},u_{n,c})\}  \subset \mathbb{R} \times X\setminus\{0\}$ follows from Lemma \ref{0pl}, Proposition \ref{0PS}, and Lemma \ref{lii}, combined with Theorem \ref{THM0} and Theorem \ref{THM1}; whereas the properties of the map $c \mapsto \mu_{n,c}$ are consequences of Lemma \ref{0lip}, Proposition \ref{bif} and Corollary \ref{cbif}. Note that the corresponding results in the case  $\eta>\beta$ were 
just described above.
\end{proof}

\subsection{Proof of Theorems \ref{THMAP1} and \ref{EC1}}
It is clear that $N(u)=\int_\Omega |\nabla u|^p$, $A(u)=\int_\Omega |u|^p$, and $B(u)=\int_\Omega |u|^r$ satisfy the conditions
\eqref{st-1}-\eqref{st-5} of this section with $\eta=p$ and $\beta=r$,
so Theorem \ref{ts1}  yield all the conclusions, except for:
\begin{itemize}
	\item $\|u_{n,c}\| \to +\infty$ as $n \to +\infty$, which follows from Lemma \ref{lii};
\item  $u_{n,c}$ is sign-changing for $n$ large enough, which is a consequence of the fact that \eqref{quasi} has no positive solution for $\mu>\mu_1$,  the first eigenvalue of the Dirichlet $p$-Laplacian, cf. \cite[Theorem 2.1]{AH}.
\end{itemize}

\medskip 
\section{A second class of functionals}
\label{SubCC}

We consider now the functional
\begin{equation*}
\Phi_\mu(u):=\frac{1}{\eta}N(u) -\frac{\mu}{\alpha}A(u)-\frac{1}{\beta}B(u)
\end{equation*}
where  $1<\alpha< \eta<\beta$, and $N,A,B \in C^1(X)$ are even functionals satisfying the following conditions:
\begin{enumerate}[label=(\arabic*),ref=\arabic*]
\item\label{1st-1} $N,A,B$ are  $\eta$-homogeneous, $\alpha$-homogeneous and $\beta$-homogeneous, respectively.
\item\label{1st-2} $A(u),B(u)>0$ for any $u \neq 0$.

\item\label{1st-3} There exists $C>0$ such that $ N(u)\geq C^{-1}\|u\|^{\eta}$, $A(u) \leq C\|u\|^\alpha$ and $B(u)\leq C\|u\|^{\beta}$ for all $u \in X$. 
\item\label{1st-4} $A'$ and $B'$ are completely continuous, i.e. $A'(u_n) \to A'(u)$ and $B'(u_n) \to B'(u)$ in $X^*$ if $u_n \rightharpoonup u$ in $X$.
\item\label{1st-5} $N$ is weakly lower semicontinuous and there exists $C>0$ such that 
	$$(N'(u)-N'(v))(u-v)\geq C(\|u\|^{\eta-1}-\|v\|^{\eta-1})(\|u\|-\|v\|) \quad \forall u,v \in X.$$ 
\end{enumerate}

The above conditions are implicitly assumed in all this section.

Let $c \in \mathbb{R}$ be fixed. For $u\in X\setminus\{0\}$, we have explicitly
\begin{equation*}
	\mu(c,u):=\displaystyle \frac{\alpha}{A(u)}\left(\frac{1}{\eta}N(u)-\frac{1}{\beta}B(u)-c\right).
\end{equation*}
Then
\begin{equation}\label{1st-psi}
	\psi_{c,u}(t)=\frac{\alpha N(u)}{\eta A(u)}t^{\eta-\alpha}- \frac{\alpha B(u)}{\beta A(u)}t^{\beta-\alpha}-\frac{\alpha c}{A(u)}t^{-\alpha},\quad t>0
\end{equation}
which is clearly a $C^{2}$ map in $(0,\infty)$, its first and second derivatives being given by
\begin{equation*}
\psi_{c,u}'(t)=(\eta-\alpha)\frac{\alpha N(u)}{\eta A(u)}t^{\eta-\alpha-1}- (\beta-\alpha)\frac{\alpha B(u)}{\beta A(u)}t^{\beta-\alpha-1}+\frac{\alpha^2 c}{A(u)}t^{-\alpha-1}.
\end{equation*}
and
\begin{equation*}
	\psi_{c,u}''(t)=(\eta-\alpha)(\eta-\alpha-1)\frac{\alpha N(u)}{\eta A(u)}t^{\eta-\alpha-2}- (\beta-\alpha)(\beta-\alpha-1)\frac{\alpha B(u)}{\beta A(u)}t^{\beta-\alpha-2}-\frac{(\alpha+1)\alpha^2 c}{A(u)}t^{-\alpha-2}
\end{equation*}
Moreover, for any $t>0$ the map $(c,u)\mapsto \psi_{c,u}'(t)$ is $C^{1}$.
By the previous formulae and some computations, we derive the following result:
\begin{lemma}\label{lem:H1}
For each $u\in X\setminus\{0\}$ the system $\psi_{c,u}'(t)=\psi_{c,u}''(t)=0$ has a unique solution $(t(u),c(u))$ given by
	\begin{equation}\label{RayleighCC}
		t(u)=\left(\frac{\eta-\alpha}{\beta-\alpha}\frac{N(u)}{B(u)}\right)^{\frac{1}{\beta-\eta}}, \quad	c(u)=-\frac{(\eta-\alpha)(\beta-\eta)}{\eta\beta\alpha}\left(\frac{\eta-\alpha}{\beta-\alpha}\right)^{\frac{\eta}{\beta-\eta}}\frac{N(u)^{\frac{\beta}{\beta-\eta}}}{B(u)^{\frac{\eta}{\beta-\eta}}}.
	\end{equation}
	Moreover
	\begin{enumerate}[label=(\roman*),ref=\roman*]
		\item if $c\in(c(u),0)$, then $\psi_{c,u}$ has exactly two  critical points  $0<t^+(c,u)<t^-(c,u)$, both of Morse type, the first one being a local minimizer and the second one a local maximizer. Moreover $0<\psi_{c,u}(t^+(c,u))<\psi_{c,u}(t^-(c,u))$.
		\item If $c=c(u)$, then $\psi_{c,u}$ is decreasing and has exactly one critical point  $0<t^0(c,u)$, which corresponds to an inflection point.
		\item If $c<c(u)$, then $\psi_{c,u}$ has no critical points, is decreasing and satisfies 
		$$ \lim_{t\to 0^+}\psi_{c,u}(t)=+\infty\ \text{ and }\  \lim_{t\to+ \infty}\psi_{c,u}(t)=-\infty.$$
		\item If $c\ge 0$, then  $\psi_{c,u}$ has a unique critical point $t^-(c,u)>0$ which is a global maximizer of Morse type. 
	\end{enumerate}
	Furthermore, whenever defined, $t^\pm(c,u)$ satisfy
		\begin{equation}\label{dp1}
		\frac{\eta-\alpha}{\eta}N(u)t^\pm(c,u)^{\eta}-\frac{\beta-\alpha}{\beta}B(u)t^\pm(c,u)^{\beta}+\alpha c=0
	\end{equation}
and	
\begin{equation}\label{dp111}
\frac{(\eta-\alpha)(\eta-\alpha-1)}{\eta}N(u)t^\pm(c,u)^{\eta}-\frac{(\beta-\alpha)(\beta-\alpha-1)}{\beta}B(u)t^\pm(c,u)^{\beta}-\alpha(\alpha+1) c \gtrless 0.
\end{equation}
 In particular we have
	\begin{equation*}
		t^-(0,u)=\left(\frac{\beta}{\eta} \frac{\alpha-\eta}{ \alpha-\beta}\frac{N(u)}{B(u)}\right)^{\frac{1}{\beta-\eta}}.
	\end{equation*}
\end{lemma}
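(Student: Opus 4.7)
The plan is to reduce the entire analysis of critical points of $\psi_{c,u}$ to the study of a single auxiliary function of $t$ whose shape is easy to describe. First I would factor
$$\psi'_{c,u}(t)=\frac{\alpha}{A(u)}\,t^{-\alpha-1}\,g(t),\qquad g(t):=\frac{\eta-\alpha}{\eta}N(u)\,t^{\eta}-\frac{\beta-\alpha}{\beta}B(u)\,t^{\beta}+\alpha c,$$
so that positive critical points of $\psi_{c,u}$ correspond exactly to positive zeros of $g$; this identification yields \eqref{dp1} at once. A parallel factorization of $\psi''_{c,u}$ writes its value as $(\alpha/A(u))\,t^{-\alpha-2}$ times the bracket in \eqref{dp111}, so the Morse-type assertions will reduce to sign computations involving $g$ and $g'$.

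Next I would analyze the shape of $g$. By conditions (2) and (3), $g(0)=\alpha c$, $g(t)\to-\infty$ as $t\to\infty$, and $g'(t)=(\eta-\alpha)N(u)\,t^{\eta-1}-(\beta-\alpha)B(u)\,t^{\beta-1}$ has a unique positive zero, namely the $t(u)$ of \eqref{RayleighCC}, at which $g$ attains its global maximum. Plugging $g'(t(u))=0$ back into the expression for $g(t(u))$ gives the simple identity $g(t(u))=\alpha(c-c(u))$ with $c(u)$ exactly as in \eqref{RayleighCC}; this identifies $(t(u),c(u))$ as the unique solution of the joint system $\psi'_{c,u}=\psi''_{c,u}=0$.

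From here a straightforward case analysis handles (i)--(iv). If $c\ge 0$, then $g(0)\ge 0$, $g$ increases on $(0,t(u))$ and then strictly decreases to $-\infty$, so $g$ has a single positive zero $t^-(c,u)>t(u)$ with $g'(t^-)<0$, yielding a nondegenerate global maximum of $\psi_{c,u}$. If $c<c(u)$, then $g<0$ on $(0,\infty)$, so $\psi_{c,u}$ is strictly decreasing, with the stated limits at $0$ and $\infty$ read off from the dominant terms $-\alpha c\,t^{-\alpha}/A(u)$ and $-\alpha B(u)\,t^{\beta-\alpha}/(\beta A(u))$. If $c=c(u)$, the same argument produces a decreasing $\psi_{c,u}$ with a unique inflection at $t(u)$. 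Finally, if $c(u)<c<0$, then $g(0)<0<g(t(u))$ and $g(\infty)=-\infty$, so $g$ has exactly two positive zeros $t^+<t(u)<t^-$ with $g'(t^+)>0$ and $g'(t^-)<0$; the factorization of $\psi''_{c,u}$ then gives the signs in \eqref{dp111} together with the nondegenerate local minimum/maximum structure.

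The main obstacle is the strict inequality $0<\psi_{c,u}(t^+)<\psi_{c,u}(t^-)$ in case (i). I would handle the positivity by writing
$$\psi_{c,u}(t)=\frac{\alpha}{A(u)}\,t^{-\alpha}\left[\frac{N(u)}{\eta}t^{\eta}-\frac{B(u)}{\beta}t^{\beta}-c\right],$$
using $g(t^\pm)=0$ to eliminate $N(u)(t^\pm)^{\eta}/\eta$ in favour of $B(u)(t^\pm)^{\beta}$ and $c$, and simplifying to
$$\psi_{c,u}(t^\pm)=\frac{\alpha}{A(u)(\eta-\alpha)}\,(t^\pm)^{-\alpha}\left[\frac{\beta-\eta}{\beta}B(u)(t^\pm)^{\beta}-\eta c\right],$$
which is strictly positive because $c<0$ and the remaining factors are positive by (2). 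The ordering $\psi_{c,u}(t^+)<\psi_{c,u}(t^-)$ then follows because $g>0$ on $(t^+,t^-)$, so $\psi'_{c,u}>0$ there and $\psi_{c,u}$ is strictly increasing between its two critical points. The closing formula for $t^-(0,u)$ is obtained simply by setting $c=0$ in $g(t)=0$ and solving for the unique positive root.
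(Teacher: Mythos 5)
Your argument is correct and is precisely the computation the paper leaves implicit (the paper only says the lemma follows ``by the previous formulae and some computations''): factoring $\psi'_{c,u}(t)=\frac{\alpha}{A(u)}t^{-\alpha-1}g(t)$, locating the unique maximum of $g$ at $t(u)$, and reading off the case analysis from the signs of $g(0^+)=\alpha c$ and $g(t(u))=\alpha(c-c(u))$ is exactly the intended route, and your derivation of the positivity of $\psi_{c,u}(t^\pm)$ via elimination of $N(u)$ is a valid variant of the identity \eqref{1st-Lambda1} used later in the paper. No gaps.
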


Moreover we also get

\begin{lemma}\label{lem:H1C2Rayleigh}
	The functional $c$ defined in \eqref{RayleighCC} belongs to $C^1(X \setminus \{0\})$. Moreover it is $0$-homogeneous and bounded from above by a negative constant, i.e.
	$c^*:=\displaystyle \sup_{u\in X\setminus\{0\}}c(u)<0$.
\end{lemma}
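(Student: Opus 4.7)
The plan is to verify the three properties in turn, each essentially by inspecting the explicit formula \eqref{RayleighCC}. Write
$$c(u)=-K\,\frac{N(u)^{\beta/(\beta-\eta)}}{B(u)^{\eta/(\beta-\eta)}},\qquad K:=\frac{(\eta-\alpha)(\beta-\eta)}{\eta\beta\alpha}\left(\frac{\eta-\alpha}{\beta-\alpha}\right)^{\eta/(\beta-\eta)}>0,$$
where the positivity of $K$ comes from the ordering $1<\alpha<\eta<\beta$ and the positivity of the base $(\eta-\alpha)/(\beta-\alpha)$.

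For the $C^1$ regularity, first note that by \eqref{1st-3} one has $N(u)\ge C^{-1}\|u\|^{\eta}>0$ for every $u\in X\setminus\{0\}$, and by \eqref{1st-2} one has $B(u)>0$ on $X\setminus\{0\}$. Since $N,B\in C^{1}(X)$ and the functions $s\mapsto s^{\beta/(\beta-\eta)}$ and $s\mapsto s^{-\eta/(\beta-\eta)}$ are smooth on $(0,\infty)$, the composition yields $c\in C^{1}(X\setminus\{0\})$.

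For the $0$-homogeneity, I would simply substitute $tu$ for $u$ with $t>0$ and use the $\eta$-homogeneity of $N$ and the $\beta$-homogeneity of $B$:
$$c(tu)=-K\,\frac{(t^{\eta}N(u))^{\beta/(\beta-\eta)}}{(t^{\beta}B(u))^{\eta/(\beta-\eta)}}=-K\,t^{\frac{\eta\beta-\beta\eta}{\beta-\eta}}\,\frac{N(u)^{\beta/(\beta-\eta)}}{B(u)^{\eta/(\beta-\eta)}}=c(u),$$
the exponent of $t$ being $0$.

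The strict negativity of $c^{*}$ is the only point that requires a small argument rather than a direct reading of the formula. Since $c$ is $0$-homogeneous, $c^{*}=\sup_{u\in S}c(u)$. On $S$, assumption \eqref{1st-3} gives $N(u)\ge C^{-1}$ and $B(u)\le C$, so
$$\frac{N(u)^{\beta/(\beta-\eta)}}{B(u)^{\eta/(\beta-\eta)}}\ge \frac{C^{-\beta/(\beta-\eta)}}{C^{\eta/(\beta-\eta)}}=C^{-(\beta+\eta)/(\beta-\eta)}>0\qquad\forall u\in S,$$
whence $c(u)\le -K\,C^{-(\beta+\eta)/(\beta-\eta)}<0$ uniformly in $u\in S$, giving $c^{*}<0$. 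I expect no serious obstacle here: the only subtlety is noticing that the strict inequality $c^{*}<0$ (as opposed to the pointwise $c(u)<0$, which is immediate) follows from the simultaneous upper bound on $B$ and lower bound on $N$ provided by \eqref{1st-3} on the unit sphere.
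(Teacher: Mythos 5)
Your proposal is correct and follows exactly the route of the paper's own (one-line) proof: regularity from the positivity of $N$ and $B$ on $X\setminus\{0\}$ together with the explicit formula, $0$-homogeneity from condition \eqref{1st-1}, and the uniform negative upper bound from \eqref{1st-3} on the unit sphere. You have simply written out the computations the paper declares to be clear, and they check out.
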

\begin{proof} Indeed, it is clear that $c \in C^1(X \setminus \{0\})$, while the $0$-homogeneity follows by \eqref{1st-1}, and the boundedness from above is a consequence of \eqref{1st-3}.
\end{proof}

The next result follows from Lemma \ref{lem:H1} and shows that \eqref{H1} holds with two choices of $\mathcal{I}$:
\begin{lemma}\label{lem:H1C2}
For any $c\in (c^*,0)$ and $u\in X\setminus\{0\}$, the map $\psi_{c,u}$ satisfies  Lemma \ref{lem:H1} (1), while for any $c \ge 0$ and $u\in X\setminus\{0\}$, it satisfies  Lemma \ref{lem:H1} (4). In particular \eqref{H1} holds with $\mathcal{I}=(c^*,0)$ as well as with $\mathcal{I}=(c^*,+\infty)$.
\end{lemma}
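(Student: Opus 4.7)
The plan is to reduce both case statements to the classification already carried out in Lemma \ref{lem:H1}, using the uniform bound $c(u) \leq c^* < 0$ provided by Lemma \ref{lem:H1C2Rayleigh}. Fix any $c \in (c^*, 0)$; then for every $u \in X \setminus \{0\}$ one has $c(u) \leq c^* < c < 0$, so the pair $(c,u)$ falls in regime (i) of Lemma \ref{lem:H1}, which produces exactly one local minimizer $t^+(c,u)$ and one local maximizer $t^-(c,u)$ of $\psi_{c,u}$, both of Morse type. For $c \geq 0$ and any $u \in X \setminus \{0\}$, regime (iv) of Lemma \ref{lem:H1} applies directly and yields the unique global Morse maximizer $t^-(c,u)$. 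These two observations establish the two assertions verbatim.

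For the ``in particular'' clause, condition (a) of \eqref{H1} has already been recorded in the computations preceding Lemma \ref{lem:H1}: the explicit formula for $\psi_{c,u}'(t)$, together with $N,A,B \in C^1(X)$ and $A(u) > 0$ on $X \setminus \{0\}$, yields that $(c,u,t) \mapsto \psi_{c,u}'(t)$ is $C^1$ on $\mathbb{R} \times X \setminus \{0\} \times (0, \infty)$, and therefore on each of the restricted domains corresponding to $\mathcal{I} = (c^*,0)$ and $\mathcal{I} = (c^*,+\infty)$. Since $c^* < 0$, both candidate sets $\mathcal{I}$ are genuinely nonempty and open. For condition (b), when $\mathcal{I} = (c^*, 0)$ either alternative is available because the first part of the lemma provides both $t^+$ and $t^-$ of Morse type throughout this range. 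When $\mathcal{I} = (c^*, +\infty)$ the minimizer $t^+$ disappears in the regime $c \geq 0$, so one must work with the maximizer alternative in (b); this is legitimate because the unique local Morse maximizer $t^-(c,u)$ is supplied by regime (i) for $c \in (c^*,0)$ and by regime (iv) for $c \geq 0$, hence exists uniformly on $\mathcal{I} \times X \setminus \{0\}$.

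The argument presents no genuine obstacle since the entire structural analysis of $\psi_{c,u}$ was already absorbed into Lemma \ref{lem:H1}. The only conceptually delicate point is the reliance on the strict inequality $c^* < 0$ from Lemma \ref{lem:H1C2Rayleigh}: without it the interval $(c^*,0)$ would be empty, and the passage from the pointwise condition $c > c(u)$ to the uniform condition $c > c^*$ would collapse. This uniformity is precisely what decouples the choice of Morse critical point from the point $u$ and allows \eqref{H1} to be verified on a fixed $\mathcal{I}$ independent of $u$.
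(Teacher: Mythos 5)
Your proposal is correct and follows exactly the route the paper intends: the paper states that the lemma "follows from Lemma \ref{lem:H1}," and your argument simply makes explicit the reduction $c(u)\le c^*<c<0 \Rightarrow c\in(c(u),0)$ (regime (i)) together with the direct application of regime (iv) for $c\ge 0$, plus the already-recorded regularity of $(c,u,t)\mapsto\psi'_{c,u}(t)$. Your observation that on $\mathcal{I}=(c^*,+\infty)$ one must select the maximizer alternative in (\ref{H1})(b) is the right reading of the hypothesis and matches how the paper subsequently uses $t^-$ on that interval.
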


Consequently Lemma \ref{l0} \eqref{I0-i} applies
and the maps $(c,u)\mapsto t^+(c,u),t^-(c,u)$ belong to $C^{1}((c^*,0) \times (X\setminus\{0\}))$ and $C^{1}((c^*,+\infty) \times (X\setminus\{0\}))$, respectively.
Let us obtain some further properties of these maps.

\begin{lemma}\label{l1} 
The following statements hold.
	\begin{enumerate}[label=(\roman*),ref=\roman*]

		\item\label{l1-1} 
		The map $c \mapsto t^-(c,u)$ is increasing in $(c^*,+\infty)$.
		Moreover, 
		\begin{equation}\label{bt-}
			t^-(c,u)> \left(\frac{\eta-\alpha}{\beta-\alpha}\frac{N(u)}{B(u)}\right)^{\frac{1}{\beta-\eta}},\ \forall c>c^*, u\in S.
		\end{equation}
	In particular, there exists $C'>0$ such that $t^-(c,u)\geq C'$ for every $u  \in S$ and $c >c^*$. \smallskip
	\item\label{l1-2} $t^-(c,u)\to +\infty$ as $c \to +\infty$, uniformly in $S$.
		\item\label{l1-3} 
	The map $c \mapsto t^+(c,u)$ is decreasing in $(c^*,0)$.
	Moreover, 
	\begin{equation}\label{bt+}
		t^+(c,u)<\left(-\frac{\alpha\beta\eta c}{(\beta-\eta)(\eta-\alpha)}\frac{1}{N(u)}\right)^{\frac{1}{\eta}},\ \forall c\in(c^*,0), u\in S,
	\end{equation}
In particular, there exists $C'>0$ such that $t^+(c,u)\le  C'$ for every $u  \in S$ and $c\in(c^*,0)$. \smallskip
	\item\label{l1-4} $t^+(c,u)\to 0$ as $c \to 0^-$, uniformly in $S$.
\end{enumerate}
\end{lemma}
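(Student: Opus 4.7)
For the two monotonicity statements in (\ref{l1-1}) and (\ref{l1-3}) I would apply Lemma \ref{l0} (\ref{I0-i}), which reduces the sign of $\partial t^{\pm}/\partial c$ to the sign of $\psi''_{c,u}(t^{\pm})\,I_2'(t^{\pm}u)u$. In the present setting $I_2=A/\alpha$, and the $\alpha$-homogeneity of $A$ together with \eqref{1st-2} gives $I_2'(tu)u=t^{\alpha-1}A(u)>0$ for every $t>0$ and $u\in X\setminus\{0\}$. Since the Morse-type property forces $\psi''_{c,u}(t^-)<0$ and $\psi''_{c,u}(t^+)>0$, one reads off that $c\mapsto t^-(c,u)$ is increasing in $(c^*,+\infty)$ and $c\mapsto t^+(c,u)$ is decreasing in $(c^*,0)$.

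For \eqref{bt-} in (\ref{l1-1}) I would study the one-variable function $g(t):=\frac{\eta-\alpha}{\eta}N(u)t^\eta-\frac{\beta-\alpha}{\beta}B(u)t^\beta$. Identity \eqref{dp1} rewrites as $g(t^{\pm})=-\alpha c$, and a direct computation of $g'$ shows that $g$ has a unique maximum at $t=t(u)$, with $g(t(u))=-\alpha c(u)$. A short calculation relating $\psi''_{c,u}$ to $g'$ at a zero of $\psi'_{c,u}$ then forces $g'(t^-)<0$, whence $t^-(c,u)>t(u)$, which is \eqref{bt-}. The uniform lower bound $C'$ on $S$ is immediate from \eqref{1st-3}: the estimates $N(u)\ge C^{-1}$ and $B(u)\le C$ keep $t(u)$ bounded below by a positive constant on $S$. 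For (\ref{l1-2}) I would argue by contradiction: if $t^-(c_n,u_n)\le M$ along some $c_n\to+\infty$, then dropping the non-negative first term of \eqref{dp1} and using $B(u_n)\le C$ yields $-\alpha c_n\ge-\frac{(\beta-\alpha)CM^\beta}{\beta}$, contradicting $-\alpha c_n\to-\infty$.

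For \eqref{bt+} the plan is to eliminate the $B$-term by combining \eqref{dp1} with \eqref{dp111}, the strict inequality in \eqref{dp111} being $>0$ for $t^+$ since $\psi''_{c,u}(t^+)>0$. Multiplying \eqref{dp1} by $(\beta-\alpha-1)$ and subtracting from \eqref{dp111} cancels the $(t^+)^\beta$ coefficient, and the surviving inequality simplifies to $-\frac{(\eta-\alpha)(\beta-\eta)}{\eta}N(u)(t^+)^\eta>\alpha\beta c$, which (since $c<0$) rearranges precisely to \eqref{bt+}. The uniform upper bound $C'$ on $S\times(c^*,0)$ follows from $|c|\le|c^*|$ combined with $N(u)\ge C^{-1}$, and (\ref{l1-4}) is the same estimate with $|c|\to 0$. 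The main technical step is this algebraic manipulation: the multiplier $(\beta-\alpha-1)$ is the unique one killing the $B$-coefficient and yielding a bound on $t^+$ depending only on $N(u)$ and $c$; an analogous manipulation for $t^-$ would produce a lower bound of the same shape, but the sharper geometric bound \eqref{bt-} in terms of the ratio $N/B$ is obtained by the different argument via $g$ outlined above.
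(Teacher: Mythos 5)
Your proposal is correct and follows essentially the same route as the paper: monotonicity via Lemma \ref{l0}(\ref{I0-i}) with $I_2=A/\alpha$ and the Morse-type signs of $\psi''_{c,u}$, and the bounds \eqref{bt-}, \eqref{bt+} by combining \eqref{dp1} with \eqref{dp111} (your auxiliary function $g$ and the identity $\operatorname{sign}\psi''_{c,u}(t^\pm)=\operatorname{sign}g'(t^\pm)$ is just a repackaging of that same combination), with \eqref{l1-2} and \eqref{l1-4} read off from \eqref{dp1} and \eqref{bt+} exactly as in the paper.
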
 
\begin{proof}

\eqref{l1-1}
 Note that $\Phi_\mu=I_1-\mu I_2$ with $I_2(u):=\frac{1}{\alpha}A(u)$, so that $I_2'(tu)u=t^{\alpha-1}A(u)>0$, 
 and  by   Lemma \ref{l0}\eqref{I0-i}  it follows that  the map $c \mapsto t^-(c,u)$ is increasing in $(c^*,\infty)$. To conclude the proof, note from \eqref{dp1} and \eqref{dp111} that
$(\eta-\alpha)N(u)t^-(c,u)^{\eta}-(\beta-\alpha)B(u)t^-(c,u)^{\beta} <0$,
which yields \eqref{bt-}.


\smallskip

\eqref{l1-2} By \eqref{1st-3} we know that $B$ is bounded on $S$, so the claim follows from \eqref{dp1}.

\eqref{l1-3} By Lemma \ref{l0} it follows that  the map $c \mapsto t^+(c,u)$ is decreasing in $(c^*,0)$. To conclude the proof, we infer from \eqref{dp1} and \eqref{dp111} that
$\frac{(\eta-\alpha)(\eta-\beta)}{\eta}N(u)t^+(c,u)^{\eta}-\beta\alpha c>0$,
which yields \eqref{bt+}.

\smallskip 
\eqref{l1-4} It follows from \eqref{l1-3} just proved.
\end{proof}

Let us consider now the maps $\Lambda^\pm$. First note that by Lemma \ref{lem:H1} we have $$\Lambda^+(c,u)=\psi_{c,u}(t^+(c,u))<\psi_{c,u}(t^-(c,u))=\Lambda^-(c,u), \quad \forall u \in S, c \in (c^*,0).$$ Moreover
from
 $\psi_{c,u}'(t^\pm(c,u))=0$ we have
 \begin{eqnarray}
\Lambda^\pm(c,u)&=&\frac{\beta-\eta}{\beta-\alpha}\frac{\alpha N(u)}{\eta A(u)}t^\pm(c,u)^{\eta-\alpha}-\frac{\alpha c}{A(u)}\frac{\beta}{\beta-\alpha}t^\pm(c,u)^{-\alpha} \nonumber\\
&=&\frac{\alpha t^\pm(c,u)^{-\alpha}}{(\beta-\alpha)A(u)}\left(\frac{\beta-\eta}{\eta} N(u)t^\pm(c,u)^{\eta}- \beta c\right).
\label{1st-Lambda1}
\end{eqnarray}
for the corresponding values of $c$ and $u \in X \setminus \{0\}$.
In particular,
\begin{equation*}
\Lambda^-(0,u)=C_{\alpha,\beta,\eta} \frac{N(u)^{\frac{\beta-\alpha}{\beta-\eta}}}{A(u)B(u)^{\frac{\eta-\alpha}{\beta-\eta}}} \quad \text{for some} \quad C_{\alpha,\beta,\eta}>0.
\end{equation*}

The following relations between $\Lambda^{\pm}$, $t^{\pm}$ and $N$, $A$, and $B$ shall be used repeatedly in the sequel:
	\begin{lemma}\label{ls} Let  $S_0\subset S$.
	\begin{enumerate}[label=(\roman*),ref=\roman*]
\item\label{ls:i} Let $K\subset (c^*,\infty)$ be a compact set. Then
		$$\sup_{K\times S_0}\Lambda^-<+\infty \Longleftrightarrow \sup_{ K\times S_0}t^-<+\infty \Longleftrightarrow \sup_{S_0}N<+\infty, \inf_{S_0}A>0 \mbox{ and } \inf_{S_0}B>0 .$$
\item \label{ls:ii}	Let $K\subset (c^*,0)$ be a compact set.	Then
$$\sup_{ K\times S_0}\Lambda^+<+\infty \Longleftrightarrow \sup_{S_0}N<+\infty, \inf_{S_0}A>0 \mbox{ and } \inf_{S_0}B>0.$$
Moreover, in such case we also have $\displaystyle \inf_{ K\times S_0}t^+>0$, so that $t^+$ is bounded and away from zero in $K\times S_0$. 
\end{enumerate}			
	\end{lemma}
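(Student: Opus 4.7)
My plan is to exploit two closed-form expressions for $\Lambda^\pm(c,u)$ obtained by using the critical-point identity \eqref{dp1} to eliminate either the $B$-term or the $N$-term from $\psi_{c,u}(t^\pm(c,u))$, namely
\begin{equation*}
\Lambda^\pm(c,u)=\frac{\alpha(\beta-\eta)}{\eta(\beta-\alpha)}\cdot\frac{N(u)\,t^\pm(c,u)^{\eta-\alpha}}{A(u)}-\frac{\alpha\beta c}{(\beta-\alpha)A(u)}\,t^\pm(c,u)^{-\alpha}
\end{equation*}
and
\begin{equation*}
\Lambda^\pm(c,u)=\frac{\alpha(\beta-\eta)}{\beta(\eta-\alpha)}\cdot\frac{B(u)\,t^\pm(c,u)^{\beta-\alpha}}{A(u)}-\frac{\alpha\eta c}{(\eta-\alpha)A(u)}\,t^\pm(c,u)^{-\alpha}.
\end{equation*}
These will be coupled with the universal bounds $N(u)\in[C^{-1},C]$ and $A(u),B(u)\le C$ on $S$ supplied by \eqref{1st-3}, the lower bound $t^-(c,u)\ge C'$ on $(c^*,+\infty)\times S$ from Lemma \ref{l1}(i), the upper bound $t^+(c,u)\le C'$ on $(c^*,0)\times S$ from \eqref{bt+}, and the weak continuity of $A$ and $B$ on $X$, which follows from their homogeneity, the complete continuity of $A',B'$ in \eqref{1st-4}, and the fact that $A,B>0$ away from $0$.

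For part (i) I prove the cyclic implications ``$\Lambda^-$ bounded $\Rightarrow$ $t^-$ bounded $\Rightarrow$ $(\sup N,\inf A,\inf B)$-control $\Rightarrow$ $\Lambda^-$ bounded''. For the first, I rearrange the first formula into $N(u)(t^-)^{\eta-\alpha}\le\mathrm{const}\cdot\bigl(A(u)\Lambda^-+|c|\bigr)$; the hypothesis $\Lambda^-\le M$, together with $A(u)\le C$ and $|c|$ bounded on $K$, bound $N(u)(t^-)^{\eta-\alpha}$, and $N\ge C^{-1}$ then forces $t^-$ bounded. For the second, the bound $t^-\le T$ combined with \eqref{bt-} yields $N(u)/B(u)\le\tfrac{\beta-\alpha}{\eta-\alpha}T^{\beta-\eta}$, which with $N\in[C^{-1},C]$ gives $\sup_{S_0}N<\infty$ and $\inf_{S_0}B>0$ immediately; for $\inf_{S_0}A>0$ I argue by contradiction: if $A(u_n)\to 0$ along $u_n\in S_0$, weak compactness of the unit ball combined with the weak continuity of $A$ and $B$ gives $u_n\rightharpoonup 0$ and hence $B(u_n)\to 0$, contradicting the lower bound on $B$ just obtained. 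For the third, I first derive $t^-$ bounded by rewriting \eqref{dp1} as
\begin{equation*}
t^-(c,u)^{\beta-\eta}=\frac{\beta}{(\beta-\alpha)B(u)}\Bigl[\tfrac{\eta-\alpha}{\eta}N(u)+\alpha c\,t^-(c,u)^{-\eta}\Bigr],
\end{equation*}
whose right-hand side is bounded thanks to the three inequalities, compactness of $K$, and $t^-\ge C'$; the second $\Lambda^\pm$-formula, together with $B\le C$, $A\ge m_A>0$, $t^-\in[C',T]$ and $|c|$ bounded, then yields $\Lambda^-$ bounded.

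For part (ii) the upper bound on $t^+$ is already \eqref{bt+}, so the only new ingredient beyond an analogue of part (i) is to establish $\inf_{K\times S_0}t^+>0$. Under the three inequalities, I extract $t^+$ bounded below from \eqref{dp1} itself: the identity $-\alpha c=\tfrac{\eta-\alpha}{\eta}N(u)(t^+)^\eta-\tfrac{\beta-\alpha}{\beta}B(u)(t^+)^\beta\le\tfrac{(\eta-\alpha)M_N}{\eta}(t^+)^\eta$, combined with $c\le c_{\max}<0$ on compact $K\subset(c^*,0)$, gives $(t^+)^\eta\ge\tfrac{-\alpha\eta c_{\max}}{(\eta-\alpha)M_N}>0$, and the second formula then delivers $\Lambda^+$ bounded. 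Conversely, under $\Lambda^+\le M$, the sign $c<0$ makes both summands in the first formula nonnegative, so the trivial bound $\Lambda^+\ge-\tfrac{\alpha\beta c}{(\beta-\alpha)A(u)}(t^+)^{-\alpha}$ yields $A(u)(t^+)^\alpha\ge\tfrac{-\alpha\beta c_{\max}}{(\beta-\alpha)M}$; using $A\le C$ and $t^+\le C'$ in turn, this produces simultaneously $\inf A>0$ and $\inf t^+>0$. The companion inequality $\Lambda^+\ge\tfrac{\alpha(\beta-\eta)N(u)(t^+)^{\eta-\alpha}}{\eta(\beta-\alpha)A(u)}$ then bounds $N$ from above, and $\inf B>0$ follows by the same weak-continuity contradiction as in part (i), now powered by $\inf A>0$. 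The principal obstacle I foresee is bookkeeping—consistently choosing between the two $\Lambda^\pm$-formulas and the correct sign convention for $c$ at each step; the only step that does not reduce to pure algebra is the lower bound on $A$ (and consequently on $B$), which genuinely relies on the complete-continuity input from \eqref{1st-4}.
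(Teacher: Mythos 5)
Your proof is correct in substance and follows essentially the same route as the paper: both rest on the closed-form expression \eqref{1st-Lambda1} for $\Lambda^\pm$, the identity \eqref{dp1}, and the bounds \eqref{bt-}, \eqref{bt+} from Lemma \ref{l1}; your second formula for $\Lambda^\pm$ (eliminating $N$ instead of $B$) and the explicit weak-continuity contradiction for $\inf_{S_0}A>0$ merely fill in details the paper leaves implicit. One slip to correct: condition \eqref{1st-3} gives only $N(u)\ge C^{-1}$ on $S$ together with $A(u),B(u)\le C$; there is \emph{no} universal upper bound $N\le C$, and if there were, the clause $\sup_{S_0}N<+\infty$ in the lemma would be vacuous. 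In the step where you deduce the three conditions from $t^-\le T$, the correct chain is: \eqref{bt-} gives $N(u)/B(u)\le\frac{\beta-\alpha}{\eta-\alpha}T^{\beta-\eta}$, whence $\inf_{S_0}B>0$ follows from $N\ge C^{-1}$, and $\sup_{S_0}N<+\infty$ follows from $B\le C$ — not from a (nonexistent) a priori bound $N\le C$. With that wording fixed, the argument is complete.
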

	\begin{proof} 
	\eqref{ls:i}
 From \eqref{1st-Lambda1} it is clear that if $\Lambda^-$ is bounded from above on $K\times S_0$ then $t^-$ is bounded therein, which implies by Lemma \ref{l1} \eqref{l1-1} that that $N$ is bounded and $A,B$ are away from zero on $S_0$. Now the latter assertion yields, by \eqref{dp1}, that
\begin{equation*}
C_1t^-(c,u)^\eta-C_2t^-(c,u)^\beta+\alpha c \ge 0, \quad  \forall (c,u)\in K\times S_0,
\end{equation*}
which implies that  $t^-$ is bounded on $K\times S_0$. Since it is also away from zero, we deduce that $\Lambda^-$ is bounded from above on $K\times S_0$.

\smallskip
	\eqref{ls:ii} Now \eqref{1st-Lambda1} shows that if $\Lambda^+$ is bounded from above on $K\times S_0$ then $t^+$ is away from zero therein, $A$ and $B$ are away from zero in $S_0$, and $N$ is bounded therein.
Finally, the latter assumption and \begin{equation*}
C_1t^+(c,u)^\eta-C_2t^+(c,u)^\beta+\alpha c \ge 0, \quad  \forall (c,u)\in K\times S_0,
\end{equation*} show that $t^+$ is away from zero on $K\times S_0$, and consequently, by \eqref{1st-Lambda1}, $\Lambda^+$ is bounded from above therein.
	\end{proof}

\begin{lemma} \label{pl} 
The following statements hold.
\begin{enumerate}[label=(\roman*),ref=\roman*]
\item\label{pl-i} For every $c> c^*$ the map $u \mapsto \Lambda^-(c,u)$ is bounded from below on $S$. For every $c^*<c<0$ the map $u \mapsto \Lambda^+(c,u)$ is bounded from below on $S$, by a positive constant.
\item\label{pl-ii}  For every $u\in X \setminus\{0\}$ the maps $c\mapsto \Lambda^+(c,u),\Lambda^-(c,u)$ are decreasing in $(c^*,0)$ and $(c^*,+\infty)$, respectively.
\item\label{pl-iii} If $\Lambda^-(0,u)$ is bounded in  $S_0\subset S$
then $\displaystyle \lim_{c \to +\infty}\Lambda^-(c,u)= -\infty$ uniformly in $S_0$.
\item\label{pl-iv} If  $\Lambda^+(\overline{c},u)$ is bounded in $S_0\subset S$ for some $\overline{c}\in(c^*,0)$ then $\displaystyle \lim_{c \to 0^-}\Lambda^+(c,u)= 0$ uniformly in $S_0$.
\end{enumerate}
\end{lemma}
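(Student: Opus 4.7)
The plan is to leverage the explicit representation \eqref{1st-Lambda1} together with the a priori bounds on $t^\pm(c,u)$ from Lemma \ref{l1} and the homogeneity estimates in condition \eqref{1st-3}.

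Part (ii) is immediate from Lemma \ref{l0}\eqref{I0-ii}: since $I_2(u)=A(u)/\alpha$ is positive and $\alpha$-homogeneous, one computes
\[
\frac{\partial \Lambda^\pm}{\partial c}(c,u) \;=\; -\frac{\alpha}{A(u)\, t^\pm(c,u)^\alpha} \;<\; 0,
\]
so $c\mapsto \Lambda^\pm(c,u)$ is strictly decreasing on its interval of definition.

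For part (i), the case of $\Lambda^+$ is direct. With $c\in(c^*,0)$ both summands in the bracket of \eqref{1st-Lambda1} are non-negative, hence $\Lambda^+(c,u)\ge \frac{-\alpha\beta c}{(\beta-\alpha)A(u)\,t^+(c,u)^\alpha}$; inserting the upper bound \eqref{bt+} together with $N(u)\ge C^{-1}$ and $A(u)\le C$ on $S$ (coming from \eqref{1st-3}) yields a uniform positive lower bound $\Lambda^+(c,u)\ge K(c)>0$. For $\Lambda^-$ with $c\le 0$ both terms in \eqref{1st-Lambda1} are likewise non-negative, and \eqref{bt-} combined with \eqref{1st-3} gives a (positive) lower bound. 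The delicate case is $\Lambda^-$ for $c>0$, since the second summand is then negative and $A(u)$ need not be bounded away from zero on $S$; here I would argue by contradiction. Assuming a minimizing sequence $\{u_n\}\subset S$ with $\Lambda^-(c,u_n)\to-\infty$, pass to a weakly convergent subsequence $u_n\rightharpoonup u$ in $X$ (using reflexivity). If $u=0$, complete continuity of $A',B'$ (condition \eqref{1st-4}) combined with Euler's identity gives $A(u_n),B(u_n)\to 0$, while $N(u_n)\ge C^{-1}$; fixing $T>0$ large enough that $\frac{C^{-1}}{\eta}T^{\eta-\alpha}-cT^{-\alpha}\ge 1$, the bracket of $\psi_{c,u_n}(T)$ exceeds $1/2$ for $n$ large, so $\Lambda^-(c,u_n)\ge \psi_{c,u_n}(T)\ge \frac{\alpha}{2A(u_n)}\to+\infty$, contradicting the assumption. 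If instead $u\neq 0$, then $A(u),B(u)>0$ and \eqref{dp1} forces $\{t^-(c,u_n)\}$ to remain bounded, so $\Lambda^-(c,u_n)$ stays bounded by a finite quantity, again a contradiction.

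For parts (iii) and (iv) I would invoke Lemma \ref{ls}: the boundedness hypotheses translate into $\sup_{S_0}N<+\infty$, $\inf_{S_0}A>0$ and $\inf_{S_0}B>0$, with uniform control of $t^\pm$ accordingly. For (iii), \eqref{dp1} shows $t^-(c,u)^\beta \sim \frac{\alpha\beta c}{(\beta-\alpha)B(u)}$ as $c\to+\infty$ uniformly on $S_0$, and plugging this asymptotic into \eqref{1st-Lambda1} the negative contribution scales as $c^{(\beta-\alpha)/\beta}$ while the positive one scales only as $c^{(\eta-\alpha)/\beta}$; since $\beta>\eta$ the former dominates, so $\Lambda^-(c,u)\to-\infty$ uniformly on $S_0$. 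For (iv), Lemma \ref{l1}\eqref{l1-4} already gives $t^+(c,u)\to 0$ uniformly in $S$; the $(t^+)^\beta$ term in \eqref{dp1} is then negligible compared to $(t^+)^\eta$, so $t^+(c,u)^\eta\sim \frac{-\alpha\eta c}{(\eta-\alpha)N(u)}$ uniformly on $S_0$. Substituting this into \eqref{1st-Lambda1} one sees that both summands are of order $|c|^{(\eta-\alpha)/\eta}$, and combined with the positivity $\Lambda^+>0$ from Lemma \ref{lem:H1}(i) this gives $\Lambda^+(c,u)\to 0$ uniformly on $S_0$.

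The hard part will be (i) for $\Lambda^-$ at $c>0$: no pointwise algebraic bound is available because $A(u)$ can degenerate on $S$, so the weak-compactness dichotomy using \eqref{dp1} appears essential. The remaining parts reduce to asymptotic computations that follow cleanly from Lemmas \ref{lem:H1}, \ref{l1} and \ref{ls}.
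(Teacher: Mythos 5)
Your arguments for (ii), (iii), (iv) and for the $\Lambda^+$ part of (i) are correct and essentially coincide with the paper's: (ii) is \eqref{dl} with $I_2=A/\alpha>0$; the bound $\Lambda^+(c,u)\ge -C_1c\,t^+(c,u)^{-\alpha}$ combined with the uniform bound on $t^+$ from Lemma \ref{l1}\eqref{l1-3} is exactly the paper's proof of the second claim in (i); and (iii)--(iv) rest on Lemma \ref{ls} just as in the paper (for (iii) the paper replaces your asymptotics of $t^-$ by the majorant $\psi_{c,u}\le\varphi_c$ with $\max_{t>0}\varphi_c\to-\infty$, but the two computations are interchangeable).

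The one genuine gap is in your treatment of $\Lambda^-$ for $c>0$, in the sub-case $u_n\rightharpoonup u\neq 0$. You assert that \eqref{dp1} forces $\{t^-(c,u_n)\}$ to be bounded, but \eqref{dp1} only gives $t^-(c,u_n)^{\beta-\eta}\lesssim N(u_n)/B(u_n)+c\,t^-(c,u_n)^{-\eta}/B(u_n)$, so boundedness of $t^-$ requires $N(u_n)$ to be bounded --- and in this abstract framework $N$ is \emph{not} assumed bounded on $S$: condition \eqref{1st-3} is only a lower bound (compare Lemma \ref{lii}, where boundedness of $N$ on bounded sets is an extra hypothesis, and Lemma \ref{ls}, where $\sup_{S_0}N<+\infty$ appears as a nontrivial condition). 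Your dichotomy on the weak limit therefore does not exhaust the possibilities. The fix is cheap: if $N(u_n)\to+\infty$ along a subsequence, then \eqref{dp1} gives $t^-(c,u_n)\sim\left(N(u_n)/B(u_n)\right)^{1/(\beta-\eta)}\to+\infty$ and \eqref{1st-Lambda1} yields $\Lambda^-(c,u_n)\gtrsim N(u_n)^{(\beta-\alpha)/(\beta-\eta)}\to+\infty$, again contradicting $\Lambda^-(c,u_n)\to-\infty$. Alternatively --- and this is the paper's route --- apply Lemma \ref{ls}\eqref{ls:i} to $S_0=\{u_n\}$: since $\Lambda^-(c,\cdot)$ is bounded from above on $S_0$, that lemma delivers in one stroke that $t^-$ is bounded, $N$ is bounded and $A,B$ are away from zero, after which \eqref{1st-Lambda1} (with $t^-$ also away from zero by Lemma \ref{l1}\eqref{l1-1}) forces $A(u_n)\to 0$, contradicting $\inf_{S_0}A>0$; this bypasses the weak-compactness dichotomy entirely.
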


\begin{proof}
\eqref{pl-i} Assume by contradiction that there exists $\{u_n\} \subset S$ such that $\Lambda^-(c,u_n) \to -\infty$. Then Lemma \ref{ls} implies that $\{t^-(c,u_n)\}$ is bounded. By Lemma \ref{l1} \eqref{l1-1} we also know that  $\{t^-(c,u_{n})\}$ is  away from zero. Thus, by \eqref{1st-Lambda1}, it follows that $A(u_{n})\to 0$, which contradicts Lemma \ref{ls}. The first assertion is then proved. Now,
from \eqref{1st-Lambda1} we deduce that
$\Lambda^+(c,u) \geq -C_1c t^+(c,u)^{-\alpha}$ for some $C_1>0$, and since $t^+$ is bounded, we infer the second assertion.
\smallskip

\eqref{pl-ii}  It follows from  Lemma \ref{l0} \eqref{I0-ii} and the fact that $I_2>0$.

\smallskip

\eqref{pl-iii}
By Lemma \ref{ls} we know that if $\Lambda^-(0,u)$ is bounded in $S_0$, then $N$ is bounded  and $A,B$ are  away from zero in $S_0$. Therefore, 
by \eqref{1st-psi},
 \begin{equation*}
 	\psi_{c,u}(t)\le\varphi_c(t):= C_1t^{\eta-\alpha}- C_2t^{\beta-\alpha}-C_3ct^{-\alpha},\quad \forall t>0, 
	\ \ \forall u \in S_0,
 \end{equation*}
where $C_1,C_2,C_3$ are positive constants. Note that $\displaystyle \max_{t>0}\varphi_c(t)\to -\infty $ as $c\to +\infty$,  and since
\begin{equation*}
	\Lambda^-(c,u)=\psi_{c,u}(t^-(c,u))\le \varphi_c(t^-(c,u))\le  \max_{t>0}\varphi_c(t),
\end{equation*}
we obtain the desired conclusion. 

\smallskip

\eqref{pl-iv} Suppose that $\Lambda^+(\overline{c},u)$ is bounded in $S_0\subset S$.
By Lemma \ref{ls} we know that $A$ is away from zero and $N$ is bounded in $S_0$. 
 Therefore, by \eqref{dp1} there exist $C_1,C_2>0$ such that
\begin{equation*}
C_1t^+(c,u)^{\eta}-C_2t^+(c,u)^{\beta}+\alpha c\ge 0, \quad \forall  c\in(\overline{c},0), u\in S_0,
\end{equation*}
 and thus
\begin{equation*}
	C_1t^+(c,u)^{\eta-\alpha}-C_2t^+(c,u)^{\beta-\alpha}\ge -\alpha \frac{c}{t^+(c,u)^{\alpha}}>0,\quad \forall c\in(\overline{c},0), u\in S_0.
\end{equation*}
By  Lemma \ref{l1} \eqref{l1-3}, it follows that $\frac{c}{t^+(c,u)^{\alpha}}\to 0$, as $c\to 0^-$, uniformly in $S_0$. As a consequence, by \eqref{1st-Lambda1}, we conclude that 
 $\displaystyle \lim_{c \to 0^-}\Lambda^+(c,u)= 0$ uniformly in $S_0$.
\end{proof}


Next we prove that the maps $u \mapsto \widetilde{\Lambda^{\pm}}(c,u)$ satisfy the Palais-Smale condition. In the sequel $I_+:=(c^*,0)$ and $I_-=(c^*,+\infty)$.

\begin{proposition}\label{PS} 
Suppose that $c_n\to c \in \mathcal{I} _\pm$
and $\{u_{n}\}\subset S$ are such that $\frac{\partial \widetilde{\Lambda^{\pm}}}{\partial u}(c_n,u_n)\to 0$ and $\{\widetilde{\Lambda^{\pm}}(c_n,u_n)\}$ is bounded. Then 
$\{u_n\}$ has  a convergent subsequence. In particular, the map $u \mapsto \widetilde{\Lambda^{\pm}}(c,u)$ satisfies the Palais-Smale condition. 
\end{proposition}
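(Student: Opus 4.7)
The plan is to mirror the strategy used to prove Proposition \ref{0PS}: once we trap $t^{\pm}(c_n,u_n)$ inside a compact subinterval of $(0,\infty)$, the abstract compactness lemma (Lemma \ref{psg}, with Remark \ref{rps}) applies and yields the result. The only new work is therefore the verification of these uniform bounds in the present concave--convex setting.

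First, I would fix a compact set $K \subset \mathcal{I}_\pm$ containing $\{c_n\}$ and $c$. In the minus case, the boundedness of $\{\widetilde{\Lambda^-}(c_n,u_n)\}$ together with Lemma \ref{ls}\eqref{ls:i} gives an upper bound on $t^-(c_n,u_n)$ and also ensures $\sup_n N(u_n) < \infty$ and $\inf_n A(u_n), \inf_n B(u_n) > 0$; the lower bound on $t^-(c_n,u_n)$ then comes from \eqref{bt-} in Lemma \ref{l1}\eqref{l1-1} combined with the boundedness of $B$ on $S$ (assumption \eqref{1st-3}) and the lower bound $N \geq C^{-1}$ on $S$. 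In the plus case, Lemma \ref{ls}\eqref{ls:ii} directly produces a lower bound on $t^+(c_n,u_n)$ and the same control on $N, A, B$; the upper bound is then provided by \eqref{bt+} in Lemma \ref{l1}\eqref{l1-3}, using that $\{c_n\}$ stays inside a compact subinterval of $(c^*,0)$.

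Second, by Proposition \ref{propciritcalrestricted} together with the formula \eqref{lp} in Lemma \ref{l0}\eqref{I0-ii}, the assumption $\frac{\partial \widetilde{\Lambda^\pm}}{\partial u}(c_n,u_n)\to 0$ in $\mathcal{T}_{u_n}(S)^*$ upgrades (by the splitting $X = \mathcal{T}_{u_n}(S) \oplus \mathbb{R}u_n$) to
\begin{equation*}
\frac{t_n \, \Phi_{\mu_n}'(t_n u_n)}{I_2(t_n u_n)} \longrightarrow 0 \quad \text{in } X^*,
\end{equation*}
where $t_n := t^\pm(c_n,u_n)$ and $\mu_n := \Lambda^\pm(c_n,u_n)$. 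Since $I_2(t_n u_n) = \alpha^{-1} t_n^{\alpha} A(u_n)$ is bounded and bounded away from zero by the previous step, and $t_n$ is likewise, setting $v_n := t_n u_n$ we obtain a bounded sequence with $\Phi_{\mu_n}'(v_n) \to 0$ in $X^*$ and $\{\mu_n\}$ bounded.

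Third, I would invoke Lemma \ref{psg} precisely as in the proof of Proposition \ref{0PS}: the complete continuity of $A'$ and $B'$ in assumption \eqref{1st-4} handles the lower-order terms, and the uniform monotonicity of $N'$ in assumption \eqref{1st-5} promotes weak to strong convergence of $\{v_n\}$ to some $v \neq 0$. Since $t_n$ lies in a compact subset of $(0,\infty)$, we may assume $t_n \to t_0 > 0$ along a subsequence, and then $u_n = v_n/t_n \to v/t_0$ in $X$, proving the claim. The main delicate point is the bookkeeping to keep $t^\pm(c_n,u_n)$ away from the degenerate regime where $t^+$ and $t^-$ coalesce (namely $c = c(u)$ of Lemma \ref{lem:H1}), and this is precisely what the uniform bounds provided by the definition of $c^*$ in Lemma \ref{lem:H1C2Rayleigh} and the bounds in Lemmas \ref{l1} and \ref{ls} are designed to rule out.
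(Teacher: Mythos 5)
Your proposal is correct and follows essentially the same route as the paper: the paper's proof likewise extracts a weakly convergent subsequence, uses Lemma \ref{l1} and Lemma \ref{ls} to show that $t^{\pm}(c_n,u_n)$ is bounded and bounded away from zero and that the weak limit is nonzero, and then concludes via Lemma \ref{psg} and Remark \ref{rps}. Your second step merely unpacks the internal mechanism of Lemma \ref{psg} (the splitting $X=\mathcal{T}_{u_n}(S)\oplus\mathbb{R}u_n$ and formula \eqref{lp}), which the paper leaves encapsulated in that lemma.
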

\begin{proof} Write, for simplicity, $\Lambda:=\Lambda^{\pm}$ and $t(c,u):=t^\pm(c,u)$.  Let $t_n:=t(c_n,u_n)$. We can assume that  $u_n\rightharpoonup u$ in $X$.
By Lemma \ref{l1} and Lemma \ref{ls} we deduce that $\{t_n\}$ is bounded and away from zero, and $u\neq 0$. 	Hence, up to a subsequence, the assumptions of Lemma \ref{psg} are satisfied (see also Remark \ref{rps}), and we obtain the desired conclusion.
\end{proof}

By Lemma \ref{lem:H1C2}, Lemma \ref{pl} and Proposition \ref{PS} we are in position to apply Theorem \ref{THM1}
to show that for any $c$ the sequences
$$\mu_{n,c}^- = \inf_{F\in \mathcal F_{n}} \sup_{u\in F}\widetilde{\Lambda^-}(c,u),\quad \mbox{if} \quad c>c^*,$$ and 
$$ \mu_{n,c}^+ = \inf_{F\in \mathcal F_{n}} \sup_{u\in F}\widetilde{\Lambda^+}(c,u), \quad \mbox{if} \quad  c\in(c^*,0)$$
give rise to sequences $\{u_{n,c}\}$, $\{v_{n,c}\}$ of critical points of the families $\Phi_{\mu_{n,c}^{-}}$ and $\Phi_{\mu_{n,c}^{+}}$, respectively, and having energy equal to $c$. Note that by Lemma \ref{pl} we have $\mu_{n,c}^+>0$ for every $n \in \mathbb{N}$ and $c\in(c^*,0)$.

\begin{corollary}\label{cii} The following holds.
\begin{enumerate}	[label=(\roman*),ref=\roman*]
\item\label{cii:i} Let $c>c^*$. Then $\mu_{n,c}^- \to+ \infty$ and $\|u_{n,c}\| \to +\infty$ as $n\to+ \infty$.
\item \label{cii:ii} Let $c \in (c^*,0)$. Then $\mu_{n,c}^+ \to+ \infty$ and either $v_{n,c} \to 0$ or $v_{n,c} \rightharpoonup 0$ and $v_{n,c} \not \to 0$ as $n\to+ \infty$. In particular, the latter case holds if $N$ is bounded on bounded sets.
\end{enumerate}	
\end{corollary}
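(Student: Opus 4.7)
The plan is to derive both divergence claims $\mu_{n,c}^{\pm}\to+\infty$ from Theorem \ref{THM1}(2), and the asymptotic information on $u_{n,c},v_{n,c}$ via contradiction arguments based on the Euler/Nehari identities. To invoke Theorem \ref{THM1}(2) I must check that $\Lambda^{\pm}(c,w_n)\to+\infty$ whenever $w_n\rightharpoonup 0$ in $S$. Complete continuity of $A'$ and $B'$ together with $\alpha$- and $\beta$-homogeneity yield $A(w_n),B(w_n)\to 0$, while $N(w_n)\geq C^{-1}$ from \eqref{1st-3}. For the $\Lambda^-$ branch, \eqref{bt-} then forces $t^-(c,w_n)\to+\infty$, and substituting into \eqref{1st-Lambda1} (using $\eta>\alpha$) gives divergence; for the $\Lambda^+$ branch, $t^+(c,w_n)$ stays bounded above by \eqref{bt+}, but the bracket in \eqref{1st-Lambda1} remains bounded below by $-\beta c>0$ since $c<0$, so $\Lambda^+(c,w_n)\sim C/A(w_n)\to+\infty$.

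For $\|u_{n,c}\|\to+\infty$ in \eqref{cii:i}, suppose by contradiction $\{u_n\}:=\{u_{n,c}\}$ is bounded and extract $u_n\rightharpoonup u^*$. The Nehari identity $N(u_n)=\mu_n A(u_n)+B(u_n)$ (from $\Phi'_{\mu_n}(u_n)u_n=0$) combined with $\Phi_{\mu_n}(u_n)=c$ eliminates $N$ to give $\tfrac{\eta-\alpha}{\eta\alpha}\mu_n A(u_n)=\tfrac{\beta-\eta}{\eta\beta}B(u_n)-c$. Since the LHS is bounded and $\mu_n\to+\infty$, necessarily $A(u_n)\to 0$; by complete continuity $u^*=0$, hence $B(u_n)\to 0$ and $N(u_n)\to -c\eta\alpha/(\eta-\alpha)$. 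A trichotomy on the sign of $c$ closes the proof: if $c>0$, $N(u_n)$ would approach a negative value, contradicting $N\geq 0$; if $c<0$, then $\|u_n\|$ is bounded below, so writing $w_n:=u_n/\|u_n\|$ we have $B(w_n)\to 0$ and \eqref{bt-} forces $t^-(c,w_n)=\|u_n\|\to+\infty$, a contradiction; if $c=0$, then $\|u_n\|\to 0$, yet the explicit formula for $t^-(0,\cdot)$ in Lemma \ref{lem:H1} together with the uniform bounds $C^{-1}\leq N$ and $B\leq C$ on $S$ yields a strictly positive lower bound on $t^-(0,w_n)=\|u_n\|$, contradiction.

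In \eqref{cii:ii}, $\{v_{n,c}\}$ is bounded since $v_{n,c}=t^+(c,w_{n,c})w_{n,c}$ with $t^+$ uniformly bounded on $S$ via \eqref{bt+}. To deduce $v_n\rightharpoonup 0$ for the full sequence, take any weakly convergent subsequence $v_{n_k}\rightharpoonup v^*$; from $\Phi'_{\mu_n}(v_n)=0$, complete continuity of $A'$ together with boundedness of $N'(v_n)$ and $B'(v_n)$ on the bounded set $\{v_n\}$ and $\mu_n\to+\infty$ force $A'(v^*)=0$, whence $v^*=0$ by $\alpha$-homogeneity and \eqref{1st-2}; the usual subsequence-of-subsequence argument then promotes this to weak convergence of the full sequence. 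Assuming further that $N$ is bounded on bounded sets, if we suppose $v_n\to 0$ strongly, continuity of $N$ and $N(0)=0$ give $N(v_n)\to 0$; but the same identity as in \eqref{cii:i} yields $\mu_n A(v_n)\to -c\eta\alpha/(\eta-\alpha)>0$ and $B(v_n)\to 0$, so $N(v_n)=\mu_n A(v_n)+B(v_n)$ tends to a strictly positive constant, a contradiction.

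The main obstacle is the case $c=0$ in \eqref{cii:i}: the sign trichotomy that handles $c\neq 0$ does not produce a contradiction here, and one must combine the explicit formula for $t^-(0,\cdot)$ from Lemma \ref{lem:H1} with the two-sided bounds on $N$ and $B$ over $S$ to extract a strictly positive lower bound on $\|u_n\|$ incompatible with $\|u_n\|\to 0$. The remaining steps, in particular the upgrade from subsequential to full-sequence weak convergence in \eqref{cii:ii}, are routine.
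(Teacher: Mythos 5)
Your proposal is correct in substance but follows a genuinely different route from the paper. The paper's proof is short because it leans entirely on Lemma \ref{ls}: writing $u_{n,c}=t^-(c,w_n)w_n$ with $w_n\in S$ and noting $\Lambda^-(c,w_n)=\mu_{n,c}^-\to+\infty$, the equivalences of Lemma \ref{ls} immediately force $t^-(c,w_n)=\|u_{n,c}\|\to+\infty$; likewise in \eqref{cii:ii} the dichotomy ``$N(w_n)\to\infty$ or $A(w_n)\to 0$'' comes straight from Lemma \ref{ls}, with \eqref{bt+} and \eqref{dp1} settling the two branches. You instead work directly on the critical points via the Euler and energy identities $N=\mu A+B$ and $\tfrac{\eta-\alpha}{\eta\alpha}\mu A=\tfrac{\beta-\eta}{\eta\beta}B-c$, plus a sign trichotomy on $c$. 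This is longer (the case $c=0$ needs the explicit formula for $t^-(0,\cdot)$, and the case $c<0$ needs \eqref{bt-}), but it is self-contained and, as a by-product, your final contradiction in \eqref{cii:ii} uses only the continuity of $N$ at $0$ rather than boundedness on bounded sets, so it actually shows the alternative $v_{n,c}\to 0$ cannot occur at all --- consistent with, and slightly sharper than, the stated dichotomy.

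There is one step that is not justified by the hypotheses of this section: in \eqref{cii:ii} you deduce $A'(v^*)=0$ from $\Phi'_{\mu_n}(v_n)=0$ by invoking ``boundedness of $N'(v_n)$ \ldots on the bounded set $\{v_n\}$.'' Conditions \eqref{1st-1}--\eqref{1st-5} only give that $N$ is $C^1$, $\eta$-homogeneous, weakly l.s.c.\ and satisfies a one-sided monotonicity bound; none of this forces $N'$ to be bounded on bounded sets of an infinite-dimensional space (complete continuity is assumed for $A'$ and $B'$ only). The repair is immediate and already in your own part \eqref{cii:i}: since $B(v_n)$ is bounded by \eqref{1st-3} and $\mu_{n,c}^+\to+\infty$, the scalar identity $\tfrac{\eta-\alpha}{\eta\alpha}\mu_n A(v_n)=\tfrac{\beta-\eta}{\eta\beta}B(v_n)-c$ gives $A(v_n)\to 0$ directly, whence $A(v^*)=0$ by weak continuity of $A$ and $v^*=0$ by \eqref{1st-2}, with no derivative bounds needed. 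With that substitution the argument is complete.
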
	

\begin{proof}
\eqref{cii:i} 
 By Lemma \ref{ls} we know that $\Lambda^{-}(c,w_n) \to \infty$ if $w_n \rightharpoonup 0$ in $X$, so Theorem \ref{THM1} yields the first assertion. Moreover, writing $u_{n,c}=t^-(c,w_n)w_n$ with $w_n \in S$, we have $\|u_{n,c}\|=t^-(c,w_n)$ and $\Lambda^-(c,w_n)=\mu_{n,c}^- \to \infty$. By Lemma \ref{ls} we deduce that $t^-(c,w_n)\to \infty$. 
 
 \smallskip
 
\eqref{cii:ii}  Lemma \ref{ls} yields the first assertion. 
Writing $v_{n,c}=t^+(c,w_n)w_n$, by Lemma \ref{ls} we have either $N(w_n) \to \infty$ or $A(w_n) \to 0$. In the first case we have $t^+(c,w_n)\to 0$ by \eqref{bt+}, so that $v_{n,c} \to 0$. In the second case $w_n \rightharpoonup 0$ in $X$. Since $t^+(c,w_n)$ is bounded by Lemma \ref{l1}, we see that $v_{n,c} \rightharpoonup 0$ in $X$. Finally, \eqref{dp1} shows that $t^+(c,w_n) \not \to 0$, so $v_{n,c} \not \to 0$ in $X$.
\end{proof}

\begin{proposition} \label{pdm} There holds $\mu_{n,c}^+<\mu_{n,c}^-$ for every $n\in \mathbb{N}$ and $c\in (c^*,0)$. 
\end{proposition}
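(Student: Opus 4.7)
The plan is to start from the weak bound $\mu_{n,c}^+ \leq \mu_{n,c}^-$, which follows immediately from Lemma \ref{lem:H1}\eqref{I0-i}: the pointwise strict inequality $\Lambda^+(c,u) < \Lambda^-(c,u)$ on $S$ yields $\sup_F \widetilde{\Lambda^+}(c,\cdot) \leq \sup_F \widetilde{\Lambda^-}(c,\cdot)$ for every $F \in \mathcal{F}_n$, and taking the infimum gives the weak inequality. The strict inequality requires a quantitative gap, and this is where the real work lies.

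To upgrade, I would fix $M > \mu_{n,c}^-$ and set $S_M := \{u \in S : \Lambda^-(c,u) \leq M\}$, and establish the uniform gap
$$\delta_M := \inf_{u \in S_M} \bigl(\Lambda^-(c,u) - \Lambda^+(c,u)\bigr) > 0.$$
Arguing by contradiction, suppose $\{u_k\} \subset S_M$ with $\Lambda^-(c,u_k) - \Lambda^+(c,u_k) \to 0$. By Lemma \ref{ls}\eqref{ls:i}, $\{N(u_k)\}$ is bounded and $A(u_k),B(u_k)$ are bounded away from zero; combining Lemma \ref{l1} with \eqref{dp1}, $t^\pm(c,u_k)$ are bounded and bounded away from zero. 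Using reflexivity of $X$ and the complete continuity of $A'$ and $B'$, up to a subsequence $u_k \rightharpoonup \bar u \neq 0$ with $A(u_k) \to A(\bar u)$, $B(u_k) \to B(\bar u)$, $N(u_k) \to \bar N$ where $\bar N \geq N(\bar u)$ by weak lower semicontinuity, and $t^\pm(c,u_k) \to \bar t^\pm > 0$. Passing \eqref{dp1} to the limit, both $\bar t^+$ and $\bar t^-$ are positive roots of
$$P(t) := \frac{\eta-\alpha}{\eta}\bar N\, t^\eta - \frac{\beta-\alpha}{\beta}B(\bar u)\, t^\beta + \alpha c.$$

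The argument then splits into two cases, and the second one is the main obstacle. If $\bar t^+ < \bar t^-$, formula \eqref{1st-Lambda1} forces $\lim_k(\Lambda^-(c,u_k)-\Lambda^+(c,u_k)) > 0$, contradicting the hypothesis. If $\bar t^+ = \bar t^-$, then $P$ has a double positive root; the same computation producing \eqref{RayleighCC} in Lemma \ref{lem:H1} then shows that $c$ must coincide with the value obtained by substituting $(N(u),B(u)) \mapsto (\bar N, B(\bar u))$ in the formula for $c(u)$. As this expression is strictly decreasing in $N$ and $\bar N \geq N(\bar u)$, it follows that
$$c \;\leq\; c(\bar u) \;\leq\; c^* \;:=\; \sup_{u\neq 0} c(u),$$
contradicting $c > c^*$. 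Hence $\delta_M > 0$.

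To conclude, I would take $M := \mu_{n,c}^- + 1$ and pick a minimizing sequence $\{F_k\} \subset \mathcal{F}_n$ with $\sup_{F_k}\Lambda^-(c,\cdot) \to \mu_{n,c}^-$. For $k$ large one has $\sup_{F_k}\Lambda^-(c,\cdot) \leq \mu_{n,c}^- + \delta_M/2 < M$, so $F_k \subset S_M$; applying the uniform gap,
$$\sup_{F_k}\Lambda^+(c,\cdot) \;\leq\; \sup_{F_k}\Lambda^-(c,\cdot) - \delta_M \;\leq\; \mu_{n,c}^- - \tfrac{\delta_M}{2},$$
whence $\mu_{n,c}^+ \leq \mu_{n,c}^- - \delta_M/2 < \mu_{n,c}^-$. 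The crucial point, and the hardest step, is the degenerate sub-case $\bar t^+ = \bar t^-$: it is there that the hypothesis $c > c^*$ (namely, that $\psi_{c,u}$ genuinely has two distinct critical points rather than a degenerate one) must be invoked through the asymptotics of the roots of $P$.
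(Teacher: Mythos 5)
Your argument is correct, and its engine is the same as the paper's: both proofs reduce the strict inequality to a \emph{uniform} gap between $\Lambda^+$ and $\Lambda^-$ on the sublevel set $\{u\in S:\Lambda^-(c,u)\le \mu_{n,c}^-+1\}$, and in both cases the gap is ultimately guaranteed by the fact that coalescence of $t^+$ and $t^-$ would force $c\le c^*$ through the explicit formula \eqref{RayleighCC}. The implementations differ, though. The paper (Lemma \ref{secondd} and Corollary \ref{secondd1}) first establishes a uniform lower bound $\psi''_{c,u}(s)\ge D$ for $s\in[t^+(c,u),t^+(c,u)+\delta]$ and $u$ in the sublevel set, and then converts this into the gap $\psi_{c,u}(t^-)-\psi_{c,u}(t^+)\ge D\delta^2$ via the mean value theorem; you instead apply sequential compactness directly to the difference $\Lambda^--\Lambda^+$ and analyze the limiting fibering map, which is somewhat more direct and bypasses the second-derivative machinery entirely. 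What each buys: the paper's route isolates a reusable quantitative statement (the uniform convexity of $\psi_{c,u}$ near $t^+$, recycled verbatim in Section \ref{sec:6}), while yours keeps the whole proof in one self-contained contradiction argument.

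Two steps deserve one more line each, though neither affects validity. (i) In the case $\bar t^+=\bar t^-=\bar t$ you assert that $P$ has a \emph{double} root; to see $P'(\bar t)=0$, pass to the limit in \eqref{dp111}: the left-hand side is $>0$ at $t^+(c,u_k)$ and $<0$ at $t^-(c,u_k)$, and both evaluations converge to the same number, which is therefore zero — this, together with $P(\bar t)=0$, is exactly the system solved in Lemma \ref{lem:H1}, so the formula for $c$ follows. (ii) In the case $\bar t^+<\bar t^-$, the strict positivity of $\lim_k(\Lambda^--\Lambda^+)$ does not quite follow from staring at \eqref{1st-Lambda1}; the clean justification is that $\Lambda^\pm(c,u_k)=\psi_{c,u_k}(t^\pm(c,u_k))$ converges to $\bar\psi(\bar t^\pm)$, where $\bar\psi$ is the fibering map with coefficients $(\bar N,A(\bar u),B(\bar u))$, and $\bar\psi$ is strictly increasing on $[\bar t^+,\bar t^-]$ since these are its only two critical points. (A purely cosmetic remark: in the last step replace $\mu_{n,c}^-+\delta_M/2$ by $\mu_{n,c}^-+\min\{\delta_M,1\}/2$ so that the containment $F_k\subset S_M$ holds regardless of the size of $\delta_M$.)
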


The proof of this result relies on the following lemma:

\begin{lemma}\label{secondd} Suppose that there exists a sequence $(s_n,u_n)\in \mathbb{R}\times S$ such that $t^+(c,u_n)\le s_n<t^+(c,u_n)+1/n$, $\Lambda^+(c,u_n)$ is bounded and $\psi_{c,u_n}''(s_n)=o_n(1)$. Then $c\le c^*$.
\end{lemma}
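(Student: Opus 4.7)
The plan is to pass to the limit in $\psi_{c,u_n}'(t^+(c,u_n))=0$ and in the hypothesis $\psi_{c,u_n}''(s_n)=o_n(1)$, recovering a pair of algebraic identities of the same form as those defining the critical--inflection configuration \eqref{RayleighCC}; the weak lower semicontinuity of $N$ then closes the argument.

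First I would use the boundedness of $\Lambda^+(c,u_n)$ together with Lemma \ref{ls}\eqref{ls:ii} to pass to a subsequence along which $t^+(c,u_n)\to t_0>0$ (so that the pinching $t^+(c,u_n)\le s_n<t^+(c,u_n)+1/n$ forces $s_n\to t_0$), $\{N(u_n)\}$ is bounded, and $\{A(u_n)\}$, $\{B(u_n)\}$ are bounded away from zero. Since $\{u_n\}\subset S$ is bounded in $X$, I may also assume $u_n\rightharpoonup u$; the complete continuity of $A'$ and $B'$ from \eqref{1st-4}, combined with Euler's identity coming from homogeneity, gives $A(u_n)\to A(u)>0$ and $B(u_n)\to B(u)>0$, so $u\neq 0$.

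Next I would rewrite $\psi_{c,u_n}'(t^+(c,u_n))=0$ as the identity \eqref{dp1} and solve it explicitly for $N(u_n)$ in terms of $B(u_n)$, $t^+(c,u_n)$ and $c$; since the right-hand side converges, $N(u_n)\to N_0$ where $N_0$ satisfies
\begin{equation*}
\frac{\eta-\alpha}{\eta}N_0 t_0^{\eta}-\frac{\beta-\alpha}{\beta}B(u)t_0^{\beta}+\alpha c=0,
\end{equation*}
and condition \eqref{1st-5} yields $N(u)\le N_0$. Then I multiply $\psi_{c,u_n}''(s_n)$ by the bounded factor $A(u_n)s_n^{\alpha+2}/\alpha$ and pass to the limit, obtaining the second identity
\begin{equation*}
\frac{(\eta-\alpha)(\eta-\alpha-1)}{\eta}N_0 t_0^{\eta}-\frac{(\beta-\alpha)(\beta-\alpha-1)}{\beta}B(u)t_0^{\beta}-(\alpha+1)\alpha c=0.
\end{equation*}

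These two relations form a linear system in the unknowns $N_0 t_0^\eta$ and $B(u)t_0^\beta$, whose solution is exactly the computation behind \eqref{RayleighCC}: it forces
\begin{equation*}
c=-\frac{(\eta-\alpha)(\beta-\eta)}{\eta\beta\alpha}\left(\frac{\eta-\alpha}{\beta-\alpha}\right)^{\frac{\eta}{\beta-\eta}}\frac{N_0^{\beta/(\beta-\eta)}}{B(u)^{\eta/(\beta-\eta)}},
\end{equation*}
which is the same expression as $c(u)$ in \eqref{RayleighCC} with $N_0$ in place of $N(u)$. Since $\beta/(\beta-\eta)>0$ and $0<N(u)\le N_0$, a direct comparison gives $c\le c(u)\le c^*$, as required. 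The main subtlety I anticipate is precisely the possible strict inequality $N(u)<N_0$, coming from the fact that $N$ is only weakly lower semicontinuous and not weakly continuous; however, the critical--point identity pins down $N_0=\lim N(u_n)$ uniquely as a function of $B(u)$, $t_0$ and $c$, and the resulting formula for $c$ is monotone in $N_0$ in exactly the direction needed for the estimate to go through.
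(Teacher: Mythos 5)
Your proof is correct and follows essentially the same route as the paper: both combine the first-order identity $\psi_{c,u_n}'(t^+(c,u_n))=0$ with the hypothesis $\psi_{c,u_n}''(s_n)=o_n(1)$, multiply by $A(u_n)s_n^{\alpha+1}$ (resp. $A(u_n)s_n^{\alpha+2}$), and solve the resulting $2\times 2$ system so as to recognize the formula \eqref{RayleighCC}. The only difference is that the paper solves the system at the level of the sequence itself, obtaining $c=\lim_n c(u_n)\le c^*$ directly because $c(u_n)\le c^*$ for every $n$, which bypasses entirely the weak lower semicontinuity comparison $N(u)\le N_0$ that you flagged as the main subtlety.
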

\begin{proof} Let us write $t_n=t^+(c,u_n)$. By Lemma \ref{l1} and Lemma \ref{ls} we know that $\{t_n\}$, $\{N(u_n)\}$ and $\{A(u_n)\}$ are bounded and away from zero, so we can assume that $t_n,s_n \to t>0$ and $u_n \rightharpoonup u\neq 0$. Thus
\small{	\begin{eqnarray*}
			\frac{\eta-\alpha}{\eta}N(u_n)s_n^{\eta}-\frac{\beta-\alpha}{\beta}B(u_n)s_n^{\beta}+\alpha c 
		&=&\frac{\eta-\alpha}{\eta}N(u_n)(t_n+o_n(1))^{\eta}-\frac{\beta-\alpha}{\beta}B(u_n)(t_n+o_n(1))^{\beta}+\alpha c \\
		&=& t_n^{\alpha+1}A(u_n)\psi_{c,u_n}'(t_n)+o_n(1)=o_n(1),\quad \forall n,
	\end{eqnarray*}}
	which combined with $\psi_{c,u_n}''(s_n)=o_n(1)$ yields
	\begin{equation*} \label{pu}
	\left\{
	\begin{array}
	[c]{lll}%
	\displaystyle\frac{\eta-\alpha}{\eta}N(u_n)s_n^{\eta}-\frac{\beta-\alpha}{\beta}B(u_n)s_n^{\beta}+\alpha c =o_n(1),\\
	\displaystyle\frac{(\eta-\alpha)(\eta-\alpha-1)}{\eta}N(u_n)s_n^{\eta}-\frac{(\beta-\alpha)(\beta-\alpha-1)}{\beta}B(u_n)s_n^{\beta}-\alpha(\alpha+1) =o_n(1).
	\end{array}
	\right. 
	\end{equation*}
	Solving this system in the variables $(s_n,c)$ we conclude that
	$s_n=\left(\frac{\eta-\alpha}{\beta-\alpha}\frac{N(u_n)}{B(u_n)}+o_n(1)\right)^{\frac{1}{\beta-\eta}}$,	and (recall \eqref{RayleighCC}) $c=\lim_{n\to+\infty} c(u_n)\leq c^*$.
\end{proof}
\begin{corollary}\label{secondd1} Fix $c\in (c^*,0)$ and suppose that $\Lambda^+(c,\cdot)$ is bounded over $S_0\subset S$. Then there exists $D,\delta>0$ such that $\psi_{c,u}''(s)\ge D$ for all $(s,u)\in[t^+(c,u),t^+(c,u)+\delta]\times S_0$.
\end{corollary}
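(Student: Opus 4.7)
The plan is to argue by contradiction, reducing to the previous Lemma \ref{secondd}. Suppose the conclusion fails. Then, taking $D=\delta=1/n$, for every $n$ one can find $u_n\in S_0$ and $s_n\in[t^+(c,u_n),\,t^+(c,u_n)+1/n]$ with $\psi_{c,u_n}''(s_n)<1/n$. Since $\Lambda^+(c,\cdot)$ is bounded on $S_0$, Lemma \ref{ls}\eqref{ls:ii} gives that $N$ is bounded on $S_0$, that $A,B$ are bounded away from zero on $S_0$, and that $t^+(c,u_n)$ is bounded and bounded away from zero. Hence $s_n$ lies in a fixed compact interval $[a_0,b_0]\subset(0,\infty)$ for all $n$ large enough.

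The next step is to upgrade $\psi_{c,u_n}''(s_n)<1/n$ to the stronger statement $\psi_{c,u_n}''(s_n)=o_n(1)$. For this I would use the explicit expression
\[
\psi_{c,u}''(t)=(\eta-\alpha)(\eta-\alpha-1)\frac{\alpha N(u)}{\eta A(u)}t^{\eta-\alpha-2}-(\beta-\alpha)(\beta-\alpha-1)\frac{\alpha B(u)}{\beta A(u)}t^{\beta-\alpha-2}-\frac{(\alpha+1)\alpha^2 c}{A(u)}t^{-\alpha-2},
\]
whose coefficients are uniformly bounded (and with $A(u_n)$ uniformly away from $0$) when $u$ ranges over $S_0$. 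Consequently, $t\mapsto\psi_{c,u_n}''(t)$ is $L$-Lipschitz on $[a_0,b_0]$ for some $L>0$ independent of $n$, so
\[
\bigl|\psi_{c,u_n}''(s_n)-\psi_{c,u_n}''(t^+(c,u_n))\bigr|\le L|s_n-t^+(c,u_n)|\le L/n.
\]
Since $t^+(c,u_n)$ is a Morse-type local minimizer of $\psi_{c,u_n}$, we have $\psi_{c,u_n}''(t^+(c,u_n))\ge 0$, and combined with $\psi_{c,u_n}''(s_n)<1/n$ this forces both $\psi_{c,u_n}''(t^+(c,u_n))\to 0$ and $\psi_{c,u_n}''(s_n)\to 0$, i.e. $\psi_{c,u_n}''(s_n)=o_n(1)$.

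All the hypotheses of Lemma \ref{secondd} are now met: $u_n\in S_0$ (so $\Lambda^+(c,u_n)$ is bounded), $t^+(c,u_n)\le s_n<t^+(c,u_n)+1/n$, and $\psi_{c,u_n}''(s_n)=o_n(1)$. The lemma therefore yields $c\le c^*$, contradicting the standing assumption $c\in(c^*,0)$, and the corollary follows.

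The only delicate point is the uniform Lipschitz estimate for $\psi_{c,u_n}''$, but this is entirely routine once the bounds provided by Lemma \ref{ls}\eqref{ls:ii} are in hand; the real content of the argument is the bridge built by Lemma \ref{secondd} between the vanishing of $\psi''$ at the minimum and the extremal value $c^*$.
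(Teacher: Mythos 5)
Your proof is correct and follows the same route as the paper: argue by contradiction with $D=\delta=1/n$ and reduce to Lemma \ref{secondd}. In fact you supply a detail the paper leaves implicit, namely why $\psi_{c,u_n}''(s_n)<1/n$ can be upgraded to $\psi_{c,u_n}''(s_n)=o_n(1)$ (via the uniform Lipschitz bound for $\psi_{c,u}''$ on a fixed compact interval, coming from Lemma \ref{ls}\eqref{ls:ii}, together with $\psi_{c,u_n}''(t^+(c,u_n))\ge 0$), which is exactly the point needed to invoke Lemma \ref{secondd}.
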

\begin{proof} On the contrary we can find a sequence $(s_n,u_n)\in \mathbb{R}\times S_0$ such that $t^+(c,u_n)\le s_n<t^+(c,u_n)+1/n$, $\Lambda^+(c,u_n)$ is bounded and $\psi_{c,u_n}''(s_n)=o_n(1)$, which contradicts Lemma \ref{secondd}.
\end{proof}

\begin{proof}[Proof of Proposition \ref{pdm}] Indeed, since $\Lambda^+(c,u)<\Lambda^-(c,u)$ we have $\mu_{n,c}^+\le \mu_{n,c}^-$. This inequality also implies that
	$\displaystyle \mu_{n,c}^+=\inf_{F\in\widetilde{\mathcal{F}}_n}\sup_{u\in F}\Lambda^+(c,u)
	$ and $\displaystyle \mu_{n,c}^-=\inf_{F\in \widetilde{\mathcal{F}}_n}\sup_{u\in F}\Lambda^-(c,u)$,
	where	$\widetilde{\mathcal{F}}_n=\{F\in \mathcal{F}_n:\ \displaystyle\sup_{u\in F}\Lambda^-(c,u)\le \mu_{n,c}^-+1 \}$.	Let $S_0=\{u\in S: \Lambda^-(c,u)\le \mu_{n,c}^-+1\}$. We write $t_0=t^+(c,u)$, $t_1=t^-(c,u)$ and $t_\delta=t_0+\delta$. From Corollary \ref{secondd1} and the mean value theorem we infer that
	$\psi_{c,u}(t_\delta)\ge	\psi_{c,u}(t_0) +D\delta^2$, so that
	$\psi_{c,u}(t_1)-\psi_{c,u}(t_0)> D\delta^2$, for any $u\in S_0$.
	Hence
	$\displaystyle 	\sup_{u\in F}\Lambda^+(c,u)\leq \sup_{u\in F}\Lambda^-(c,u)-D\delta^2$, for any $F\in \widetilde{\mathcal{F}}_n$,
	and then
	$\mu_{n,c}^+\leq	\mu_{n,c}^--D\delta^2$,
	which completes the proof.
\end{proof}

Let us proceed with the study of the behavior of $\mu_{n,c}^\pm$ with respect to $c$.

\begin{lemma}\label{lip} 
	For every $n \in \mathbb{N}$ the following properties hold:
	\strut
	\begin{enumerate}[label=(\roman*),ref=\roman*]
		\item\label{lip-0}  The maps $c \mapsto \mu_{n,c}^\pm$ are decreasing and locally Lipschitz continuous in $I_\pm$. 
		\item\label{lip-i} $\displaystyle \lim_{c\to +\infty}\mu_{n,c}^-=-\infty$ and $\displaystyle \lim_{c\to 0^-}\mu_{n,c}^+=0$.
	\end{enumerate}
\end{lemma}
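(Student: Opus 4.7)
The plan for part (i) is to deduce both statements directly from the abstract continuity result Theorem \ref{THM2} applied to $\Lambda^-$ on $\mathcal{I}_-$ and to $\Lambda^+$ on $\mathcal{I}_+$. Hypothesis \eqref{H1} is already Lemma \ref{lem:H1C2}; hypothesis \eqref{H2} follows by combining the boundedness from below of $\Lambda^\pm$ on $S$ (Lemma \ref{pl}\eqref{pl-i}) with the Palais-Smale condition (Proposition \ref{PS}); and hypothesis \eqref{PSG} is exactly the content of Proposition \ref{PS}. Thus the only nontrivial task is to verify \eqref{H3}.

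For \eqref{H3}, monotonicity of $c\mapsto \Lambda^\pm(c,u)$ is Lemma \ref{pl}\eqref{pl-ii}, and the boundedness of $\Lambda^\pm$ on compact subsets $K\subset \mathcal{I}_\pm\times S$ is immediate from Lemma \ref{ls} (continuity and positivity of $N,A,B$ force $N$ to be bounded and $A,B$ to be bounded away from zero on the compact projection of $K$). The key computation is for the derivative: Lemma \ref{l0}\eqref{I0-ii} combined with $I_2=A/\alpha$ gives
\[
\frac{\partial \Lambda^\pm}{\partial c}(c,u)=-\frac{\alpha}{A(u)\, t^\pm(c,u)^{\alpha}}.
\]
If $\Lambda^\pm$ is bounded on $[a,b]\times S_0 \subset \mathcal{I}_\pm\times S$, then by Lemma \ref{ls} we have $\inf_{S_0}A>0$ and $\sup_{[a,b]\times S_0}t^-<+\infty$, $\inf_{[a,b]\times S_0}t^+>0$; while Lemma \ref{l1} provides the complementary universal bounds $t^-\geq C'$ and $t^+\leq C'$. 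Together with the upper bound $A(u)\le C$ on $S$ coming from \eqref{1st-3}, the quantity $A(u)\,t^\pm(c,u)^\alpha$ is both bounded and bounded away from zero on $[a,b]\times S_0$, so \eqref{H3} is verified. Theorem \ref{THM2} then yields the decreasing behavior and the local Lipschitz continuity of $c\mapsto \mu_{n,c}^\pm$ in $\mathcal{I}_\pm$.

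For part (ii), the strategy is to choose a fixed test family and use the uniform asymptotics from Lemma \ref{pl}\eqref{pl-iii} and \eqref{pl-iv}. Pick any $n$-dimensional symmetric subspace $X_n\subset X$ and set $F:=S\cap X_n \in \mathcal F_n$. Since all norms on $X_n$ are equivalent, the positive continuous functionals $N,A,B$ are bounded and bounded away from zero on the compact set $F$. By Lemma \ref{ls}, this implies that $\Lambda^-(0,\cdot)$ is bounded on $F$, and that $\Lambda^+(\bar c,\cdot)$ is bounded on $F$ for some (any) $\bar c\in (c^*,0)$. Lemma \ref{pl}\eqref{pl-iii} then gives $\sup_{u\in F}\Lambda^-(c,u)\to -\infty$ as $c\to+\infty$, whence $\mu_{n,c}^- \le \sup_{u\in F}\Lambda^-(c,u)\to-\infty$. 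Similarly, Lemma \ref{pl}\eqref{pl-iv} yields $\sup_{u\in F}\Lambda^+(c,u)\to 0$ as $c\to 0^-$, which together with the positivity of $\mu_{n,c}^+$ from Lemma \ref{pl}\eqref{pl-i} forces $\mu_{n,c}^+\to 0^+$.

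The only truly delicate point is the ``bounded away from zero'' requirement on $\partial_c \Lambda^\pm$ in \eqref{H3}, since this is precisely what makes the Lipschitz estimate \emph{two-sided} and not merely one-sided; this is where the interplay between the universal bounds of Lemma \ref{l1} and the $\Lambda^\pm$-dependent bounds of Lemma \ref{ls} is essential. The remaining parts of the argument are a direct assembly of facts already established in this section.
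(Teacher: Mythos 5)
Your proposal is correct and follows essentially the same route as the paper: part (i) is obtained by verifying \eqref{H3} via \eqref{dl} and Lemma \ref{ls} and \eqref{PSG} via Proposition \ref{PS}, then invoking Theorem \ref{THM2}; part (ii) is obtained by fixing a single $F\in\mathcal{F}_n$ on which $N$ is bounded and $A,B$ are away from zero and applying the uniform limits of Lemma \ref{pl}\eqref{pl-iii} and \eqref{pl-iv}. You merely make explicit some details the paper leaves implicit (the formula $\partial_c\Lambda^\pm=-\alpha/(A(u)t^\pm(c,u)^\alpha)$, the two-sided bounds on $t^\pm$ from Lemmas \ref{l1} and \ref{ls}, the choice $F=S\cap X_n$, and the use of $\mu_{n,c}^+>0$ to upgrade $\limsup\le 0$ to a genuine limit).
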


\begin{proof} 
	\eqref{lip-0} Indeed, by \eqref{dl} and Lemma \ref{ls} we see that \eqref{H3} is satisfied by $\Lambda^{\pm}$, whereas Proposition \ref{PS} shows that \eqref{PSG} is satisfies by these functionals, so Theorem \ref{THM2} yields the conclusion.
	
	\smallskip
	
	\eqref{lip-i} It suffices to pick some $F \in \mathcal{F}_n$ such that $N$ is bounded and $A,B$ are away from zero in $F$. By Lemma \ref{ls} we deduce that $\Lambda^-(0,u)$ and $\Lambda^+(\overline{c},u)$ are bounded in $F$, for any $\overline{c} \in (c^*,0)$. Thus Lemma \ref{pl} implies that $\displaystyle \lim_{c \to +\infty}\Lambda^-(c,u)= -\infty$ and $\displaystyle \lim_{c \to 0^-}\Lambda^+(c,u)= 0$ uniformly in $F$, which yields the desired conclusions.
\end{proof}

Next we show that bifurcation occurs at $(\mu,+\infty)$ for every $\mu \in \mathbb{R}$, and at $(\mu,0)$ for every $\mu >0$: 

\begin{corollary}\label{ccbif} The following statements hold.
	\begin{enumerate} [label=(\roman*),ref=\roman*]
		\item\label{ccbif:i} For every $\mu\in\mathbb{R}$ there exist two increasing sequences $\{c_{k}\} \subset (0,+\infty)$ and $\{n_k\} \subset \mathbb{N}$ such that $\mu_{n_k,c_{k}}^-=\mu$ for every $k\in \mathbb{N}$, and $c_{k},n_k,\|v_{n_k,c_k}\| \to+ \infty$ as $k \to +\infty$.
		
		\item \label{ccbif:ii} For every $\mu>0$ there  exist two increasing sequences $\{c_{k}\} \subset (c^*,0)$ and $\{n_k\} \subset \mathbb{N}$ such that $\mu_{n_k,c_{k}}^+=\mu$ for every $k\in \mathbb{N}$,  $n_k\to+ \infty$,  $c_{k}\to 0^-$, and $u_{n_k,c_k}\to 0$  in  $X$ as $k \to+ \infty$.
		
	\end{enumerate}
\end{corollary}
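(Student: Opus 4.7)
The plan is to mimic closely the inductive construction used in the proof of Corollary \ref{cbif} for the first class of functionals, combining the monotonicity and continuity of the energy curves $c\mapsto\mu_{n,c}^{\pm}$ provided by Lemma \ref{lip} with the fact that for each fixed $c$ the sequences $\mu_{n,c}^{\pm}$ blow up to $+\infty$ as $n\to\infty$ (Corollary \ref{cii}). The intermediate value theorem then produces, for every prescribed level $\mu$, a pair $(n_k,c_k)$ on the $n_k$-th curve with image $\mu$; iterating carefully forces simultaneously $n_k\to\infty$ and $c_k$ toward the appropriate endpoint of the interval.

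For part (\ref{ccbif:i}), fix $\mu\in\mathbb{R}$. First I would set $d_1:=1$ and use Corollary \ref{cii}(\ref{cii:i}) to select $n_1$ with $\mu_{n_1,d_1}^{-}>\mu$. Since by Lemma \ref{lip} the map $c\mapsto\mu_{n_1,c}^{-}$ is continuous, decreasing on $(c^{*},+\infty)$, and tends to $-\infty$ as $c\to+\infty$, the intermediate value theorem gives $c_1>d_1$ with $\mu_{n_1,c_1}^{-}=\mu$. Then I would set $d_2:=c_1+2$, pick $n_2>n_1$ with $\mu_{n_2,d_2}^{-}>\mu$, and repeat, producing strictly increasing sequences with $n_k,c_k\to+\infty$. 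To obtain the norm divergence of the corresponding solutions, write them as $t^{-}(c_k,w_k)w_k$ with $w_k\in S$, so their norm equals $t^{-}(c_k,w_k)$; this tends to $+\infty$ by Lemma \ref{l1}(\ref{l1-2}), which states that $t^{-}(c,\cdot)\to+\infty$ uniformly on $S$ as $c\to+\infty$.

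Part (\ref{ccbif:ii}) is symmetric. Fix $\mu>0$ and choose an auxiliary sequence $d_k\in(c^{*},0)$ with $d_k\nearrow 0^{-}$. For each $k$, Corollary \ref{cii}(\ref{cii:ii}) allows to select $n_k>n_{k-1}$ with $\mu_{n_k,d_k}^{+}>\mu$, and since $c\mapsto\mu_{n_k,c}^{+}$ is continuous and decreasing on $(c^{*},0)$ with limit $0$ as $c\to 0^{-}$ (Lemma \ref{lip}), the intermediate value theorem gives $c_k\in(d_k,0)$ with $\mu_{n_k,c_k}^{+}=\mu$. Thus $c_k\to 0^{-}$, and writing the corresponding solutions as $t^{+}(c_k,w_k)w_k$ with $w_k\in S$, Lemma \ref{l1}(\ref{l1-4}) yields that their norm equals $t^{+}(c_k,w_k)\to 0$, so they converge strongly to zero in $X$.

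The only delicate point is the bookkeeping that simultaneously enforces $n_k\to\infty$, the monotonicity of $c_k$ with the correct limit behaviour, and the exact equality $\mu_{n_k,c_k}^{\pm}=\mu$. This is a standard double induction whose template is already in place in the proof of Corollary \ref{cbif}, so no genuinely new difficulty arises; the asymptotic inputs on $t^{\pm}$ provided by Lemma \ref{l1} do the rest.
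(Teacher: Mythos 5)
Your proposal is correct and follows essentially the same route as the paper: the paper's proof of Corollary \ref{ccbif} simply invokes the inductive scheme of Corollary \ref{cbif} (using Corollary \ref{cii} for the blow-up in $n$ and Lemma \ref{lip} for continuity, monotonicity and the limits in $c$) and then obtains the norm asymptotics by writing the solutions as $t^{\mp}(c_k,w_k)w_k$ and applying Lemma \ref{l1} \eqref{l1-2} and \eqref{l1-4}, exactly as you do. The only bookkeeping point you flag (choosing $d_{k+1}$ after $c_k$ so that $\{c_k\}$ is increasing) is handled the same way in the paper's template, so nothing is missing.
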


\begin{proof}
	We can argue as in the proof of Corollary \ref{cbif} to obtain the sequences $\{c_{k}\}$ and $\{n_k\}$. Moreover,  writing $v_k=v_{n_k,c_k}$ we have $v_k=t_kw_k$ where $t_k=t^-(c_{n_k},w_k)$ and $w_k \in S$ for every $k$. Since $c_k\to +\infty$, it follows from Lemma \ref{l1} \eqref{l1-2} that $\|v_k\|=t_k\to+ \infty$, which proves \eqref{ccbif:i}.
	A similar argument applied to $u_k=u_{n_k,c_k}$ yields $u_k=t_kw_k$ with $t_k=t^+(c_{n_k},w_k)$, and Lemma \ref{l1} \eqref{l1-4} shows that $u_k \to 0$ in $X$, giving \eqref{ccbif:ii}.
\end{proof}

\begin{theorem} \label{ts2} Under the previous conditions there exists $c^*<0$ such that: 
	\begin{enumerate}[label=(\roman*),ref=\roman*]
		\item\label{ts2:i} 
		 For any $c>c^*$ there exists a sequence $\{(\mu_{n,c}^-,u_{n,c})\}  \subset \mathbb{R} \times X\setminus\{0\}$ such that $(\mu_{n,c}^-,\pm u_{n,c})$ solves \eqref{ef} for every $n \in \mathbb{N}$.  Moreover:
		\begin{enumerate}
			\item  $\{\mu_{n,c}^-\}$ is non-decreasing, and $\mu_{n,c}^-,\|u_{n,c}\| \to +\infty$ as $n\to +\infty$.
			\item If $c>0$ and $\mu<\mu_{1,c}^-$ then \eqref{ef} has no solution.
		\end{enumerate}
		
		\item\label{ts2:ii} For any $c\in (c^*,0)$  there exists a sequence $\{(\mu_{n,c}^+,v_{n,c})\}  \subset (0,\infty) \times X\setminus\{0\}$ such that $(\mu_{n,c}^+,\pm v_{n,c})$ solves \eqref{ef} for every $n \in \mathbb{N}$. Moreover:
		\begin{enumerate}
			\item $\{\mu_{n,c}^+\}$ is non-decreasing, and $\mu_{n,c}^+ \to +\infty$ and $v_{n,c} \rightharpoonup 0$ as $n\to +\infty$.
			\item $\mu_{n,c}^+<\mu_{n,c}^-$ for every $n$.
			\item If $\mu<\mu_{1,c}^+$ then \eqref{ef} has no solution.
		\end{enumerate}

		\item\label{ts2:iii} For every $n\in \mathbb{N}$ the map $c \mapsto \mu_{n,c}^-$ is locally Lipschitz continuous and decreasing in $(c^*,\infty)$, and $\displaystyle \lim_{c\to +\infty}\mu_{n,c}^-=-\infty$. 
		
		\item \label{ts2:iv}For every $n\in \mathbb{N}$ the map $c \mapsto \mu_{n,c}^+$ is locally Lipschitz continuous and decreasing in $(c^*,0)$, and $\displaystyle \lim_{c\to 0^-}\mu_{n,c}^+=0$.

		\item\label{ts2:v} For every $\mu\in \mathbb{R}$ there are infinitely many $u_n\in X$ such that 
		$\Phi_\mu'(\pm u_n)=0$, and $\Phi_\mu(u_n)\to \infty$ and $\|u_n\|\to +\infty$ as $n \to \infty$, so $(\mu,+\infty)$ is a bifurcation point for any  $\mu\in \mathbb{R}$.
		
		\item \label{ts2:vi}For every $\mu>0$ there are infinitely many  $v_n\in X$ such that 
		$\Phi_\mu'(\pm v_n)=0$ and $\Phi_\mu(v_n)\to 0^-$ and $v_n\to 0$ in $X$  as $n \to +\infty$, so $(\mu,0)$ is a bifurcation point for any  $\mu>0$.
	\end{enumerate}	
\end{theorem}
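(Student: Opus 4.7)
The proof of Theorem \ref{ts2} consists in assembling the results established throughout Section \ref{SubCC}, with $c^*$ being the negative constant singled out in Lemma \ref{lem:H1C2Rayleigh}. My plan is to dispatch the existence and qualitative claims in (i)--(ii) first, then the Lipschitz regularity in (iii)--(iv), and finally the bifurcation consequences in (v)--(vi).

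For (i), I would apply Theorem \ref{THM1}(2) to the functional $\widetilde{\Lambda^-}(c,\cdot)$ on $\mathcal{I}=(c^*,+\infty)$: Lemma \ref{lem:H1C2} verifies (H1), Lemma \ref{pl}(i) supplies the lower bound, and Proposition \ref{PS} gives the Palais--Smale condition. Monotonicity of $\{\mu_{n,c}^-\}$ is Remark \ref{infsup}, while $\mu_{n,c}^-\to+\infty$ and $\|u_{n,c}\|\to+\infty$ are precisely Corollary \ref{cii}(i). For the non-existence when $c>0$ and $\mu<\mu_{1,c}^-$, I would invoke Theorem \ref{THM1}(1): Lemma \ref{lem:H1}(iv) shows that for $c\ge 0$ the map $\psi_{c,u}$ has the unique critical point $t^-(c,u)$, so the uniqueness hypothesis of Theorem \ref{THM1}(1) is in force.

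For (ii), existence again follows from Theorem \ref{THM1}(2), this time on $\mathcal{I}=(c^*,0)$, with the same trio of inputs. The positivity $\mu_{n,c}^+>0$ is the second half of Lemma \ref{pl}(i); monotonicity of $\{\mu_{n,c}^+\}$ comes from Remark \ref{infsup}; the asymptotics $\mu_{n,c}^+\to+\infty$ and $v_{n,c}\rightharpoonup 0$ come from Corollary \ref{cii}(ii); and the strict inequality $\mu_{n,c}^+<\mu_{n,c}^-$ is exactly Proposition \ref{pdm}. Items (iii) and (iv) are Lemma \ref{lip}. Finally, for (v) and (vi) I would invoke Corollary \ref{ccbif}: the produced sequences satisfy $\Phi_\mu(\cdot)=c_k$ with $c_k\to+\infty$ in case (i) (forcing $\|u_n\|\to+\infty$ by Lemma \ref{l1}(ii)) and $c_k\to 0^-$ with $u_{n_k,c_k}\to 0$ in $X$ in case (ii), yielding bifurcation at $(\mu,+\infty)$ and $(\mu,0)$ respectively after relabelling.

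The delicate step is the non-existence claim in (ii)(c), because on $(c^*,0)$ the fibering map $\psi_{c,u}$ has two critical points, so Theorem \ref{THM1}(1) does not apply directly. Instead I would appeal to Theorem \ref{THM0}: by Lemma \ref{lem:H1}(i), on each fiber through $u\in S$ the critical values of $u\mapsto \mu(c,u)$ are exactly $\Lambda^+(c,u)$ and $\Lambda^-(c,u)$, with $\Lambda^+(c,u)<\Lambda^-(c,u)$. Hence the ground state level of $u\mapsto \mu(c,u)$ is $\inf_{u\in S}\Lambda^+(c,u)=\mu_{1,c}^+$, and Theorem \ref{THM0} rules out solutions with $\mu<\mu_{1,c}^+$.
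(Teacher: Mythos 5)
Your proposal is correct and takes essentially the same route as the paper, whose proof assembles exactly the same ingredients: Corollary \ref{cii}, Lemma \ref{pl}, Propositions \ref{PS} and \ref{pdm} combined with Theorems \ref{THM0} and \ref{THM1} for items (i)--(ii), Lemma \ref{lip} for (iii)--(iv), and Corollary \ref{ccbif} for (v)--(vi). Your explicit treatment of the non-existence claim in (ii)(c) via Theorem \ref{THM0}, identifying the ground state level as $\inf_{u\in S}\Lambda^+(c,u)=\mu_{1,c}^+$ because every critical value on a fiber is one of $\Lambda^{\pm}(c,u)$ with $\Lambda^+(c,u)<\Lambda^-(c,u)$, correctly fills in a detail the paper leaves implicit.
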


\begin{proof}
\eqref{ts2:i} and \eqref{ts2:ii} follow from Corollary \ref{cii}, Lemma \ref{pl}, and Propositions \ref{PS} and \ref{pdm}, combined with Theorems \ref{THM0} and \ref{THM1}, whereas \eqref{ts2:iii} and \eqref{ts2:iv} follow from Lemma \ref{lip}. Finally, \eqref{ts2:v} and \eqref{ts2:vi} are consequences of Corollary \ref{ccbif}.
\end{proof}

\subsection{Proof of Theorems \ref{THMAP2} and \ref{EC2}}
It is clear that $N(u)=\int_\Omega |\nabla u|^p$, $A(u)=\int_\Omega |u|^q$, and $B(u)=\int_\Omega |u|^r$ satisfy the conditions
\eqref{1st-1}-\eqref{1st-5} of this section with $\alpha=q$, $\eta=p$ and $\beta=r$,
so Theorem \ref{ts2} yield all the conclusions, except Theorem \ref{THMAP2} (1)-(b) and Theorem \ref{THMAP2} (2)-(c), which follow from the fact that \eqref{quasi} has no positive solution for $\mu>\mu_1$, the first eigenvalue of the Dirichlet $p$-Laplacian,  cf. \cite[Theorem 2.1]{AH}.

\medskip


\section{A third class of functionals}
\label{sec:6}
\medskip

The last functional we study  is
\begin{equation*}
	\Phi_\mu(u):=\frac{1}{\eta}N(u) +\frac{\mu}{\alpha}A(u)-\frac{1}{\beta}B(u)
\end{equation*}
where 
$1<\eta<\beta<\alpha$, and $N,A,B \in C^1(X)$ are even functionals satisfying:
\begin{enumerate}
	\item\label{2nd-1} $N,A,B$ are  $\eta$-homogeneous, $\alpha$-homogeneous and $\beta$-homogeneous, respectively.
	\item\label{2nd-2} $A(u),B(u)>0$ for any $u \neq 0$.
	\item\label{2nd-3} There exists $C>0$ such that $ N(u)\geq C^{-1}\|u\|^{\eta}$ and $B(u)\leq C\|u\|^{\beta}$ for all $u \in X$. 
	\item\label{2nd-4} $A'$ and $B'$ are completely continuous, i.e. $A'(u_n) \to A'(u)$ and $B'(u_n) \to B'(u)$ in $X^*$ if $u_n \rightharpoonup u$ in $X$.

	\item\label{2nd-6} $N$ is weakly lower semicontinuous and there exists $C>0$ such that 
		$$(N'(u)-N'(v))(u-v)\geq C(\|u\|^{\eta-1}-\|v\|^{\eta-1})(\|u\|-\|v\|) \quad \forall u,v \in X.$$ 
\end{enumerate}

We will work in this section by assuming implicitly the above conditions.

Furthermore, we assume the following condition on the family $\Phi_\mu$, which is satisfied, for instance, if $\Phi_\mu$ is coercive for every $\mu>0$:\\
\begin{enumerate}[label=(H\arabic*),ref=H\arabic*,start=4]
\item\label{H4} For any $c,a>0$ there exists $M>0$ such that $\displaystyle \cup_{\mu\ge a} \{u\in X: \Phi_{\mu}(u)\leq c\} \subset B(0,M)$. Hereafter $B(0,M)$ is the ball entered in $0$ of radius $M$ in $X$.
\end{enumerate}
\medskip
Let then $c \in \mathbb{R}$ be fixed. Note that for $u\in X\setminus\{0\}$, we have 
\begin{equation*}
	\mu(c,u):=\displaystyle \frac{\alpha}{A(u)}\left(-\frac{1}{\eta}N(u)+\frac{1}{\beta}B(u)+c\right).
\end{equation*}
Then
\begin{equation*}
	\psi_{c,u}(t)=-\frac{\alpha N(u)}{\eta A(u)}t^{\eta-\alpha}+\frac{\alpha B(u)}{\beta A(u)}t^{\beta-\alpha}+\frac{\alpha c}{A(u)}t^{-\alpha},\quad t>0
\end{equation*}
is a $C^{2}$ map, with
\begin{equation*}
	\psi_{c,u}'(t)=(\alpha-\eta)\frac{\alpha N(u)}{\eta A(u)}t^{\eta-\alpha-1}- (\alpha-\beta)\frac{\alpha B(u)}{\beta A(u)}t^{\beta-\alpha-1}-\frac{\alpha^2 c}{A(u)}t^{-\alpha-1}.
\end{equation*}
Moreover, for any $t>0$ the map $(c,u)\mapsto \psi_{c,u}'(t)$ is $C^{1}$.

A simple analysis of $\psi_{c,u}$ provides us  the following result:
\begin{lemma}\label{techn1}
	For each $u\in X\setminus\{0\}$ the system $\psi_{c,u}'(t)=\psi_{c,u}''(t)=0$ has a unique solution $(t(u),c(u))$ given by
	\begin{equation}\label{RayleighSP}
		t(u)=\left(\frac{\alpha-\eta}{\alpha-\beta}\frac{N(u)}{B(u)}\right)^{\frac{1}{\beta-\eta}}, \quad	c(u)=\frac{(\alpha-\eta)(\beta-\eta)}{\eta\beta\alpha}\left(\frac{\alpha-\eta}{\alpha-\beta}\right)^{\frac{\eta}{\beta-\eta}}\frac{N(u)^{\frac{\beta}{\beta-\eta}}}{B(u)^{\frac{\eta}{\beta-\eta}}}.
	\end{equation}
	Moreover:
	\begin{enumerate}[label=(\roman*),ref=\roman*]
		\item \label{c-i}If $c\in(0,c(u))$ then $\psi_{c,u}$ has exactly two  critical points  $0<t^+(c,u)<t^-(c,u)$, both of Morse type, the first one being a local minimizer and the second one a local maximizer. Moreover $\psi_{c,u}(t^-(c,u))>0$.
		\item \label{c-ii} If $c=c(u)$ then $\psi_{c,u}$ is decreasing and has exactly one critical point  $t^0(c,u)>0$, which corresponds to an inflection point.
		\item \label{c-iii} If $c>c(u)$, then $\psi_{c,u}$ is decreasing, has no critical points, and satisfies $\displaystyle \lim_{t\to 0^+}\psi_{c,u}(t)=+\infty$ and $\displaystyle \lim_{t\to +\infty}\psi_{c,u}(t)=0$.
		\item \label{c-iv} If $c\le 0$, then  $\psi_{c,u}$ has a unique critical point $t^-(c,u)>0$, which is a global maximizer of Morse type, and $\psi_{c,u}(t^-(c,u))>0$.
	\end{enumerate}
	Furthermore, whenever defined, $t^\pm(c,u)$ satisfy
	\begin{equation}\label{dp1SP}
		\frac{\alpha-\eta}{\eta}N(u)t^\pm(c,u)^{\eta}+\frac{\beta-\alpha}{\beta}B(u)t^\pm(c,u)^{\beta}-\alpha c=0,
	\end{equation}
	and	
	\begin{equation}\label{dp111SP}
		\frac{(\alpha-\eta)(\eta-\alpha-1)}{\eta}N(u)t^\pm(c,u)^{\eta}+\frac{(\beta-\alpha)(\beta-\alpha-1)}{\beta}B(u)t^\pm(c,u)^{\beta}+\alpha(\alpha+1) c \gtrless 0.
	\end{equation}
	In particular, we have
	\begin{equation*}
		t^-(0,u)=\left(\frac{\beta}{\eta}\frac{\alpha-\eta}{\alpha-\beta}\frac{N(u)}{B(u)}\right)^{\frac{1}{\beta-\eta}}=\left(\frac{\alpha}{\eta}\frac{\beta-\eta}{\alpha-\beta}\frac{N(u)}{\Lambda^-(0,u)A(u)}\right)^{\frac{1}{\alpha-\eta}}.
	\end{equation*}
\end{lemma}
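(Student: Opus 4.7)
The plan is to reduce the entire analysis to the scalar equation $\psi'_{c,u}(t)=0$. Multiplying this by the positive factor $A(u)t^{\alpha+1}/\alpha$ turns it into
$$h(t):=\frac{\alpha-\eta}{\eta}N(u)t^{\eta}+\frac{\beta-\alpha}{\beta}B(u)t^{\beta}-\alpha c=0,$$
which is precisely \eqref{dp1SP}; an analogous manipulation converts $\psi''_{c,u}(t)=0$ into the identity appearing in \eqref{dp111SP}, where the $\gtrless$ symbol will ultimately encode the sign of $\psi''_{c,u}$ at $t^{\pm}(c,u)$. Regarding the system $\psi'_{c,u}(t)=\psi''_{c,u}(t)=0$ as linear in the two unknowns $N(u)t^{\eta}$ and $B(u)t^{\beta}$, I would eliminate $c$ to obtain $(\alpha-\eta)N(u)t^{\eta}=(\alpha-\beta)B(u)t^{\beta}$, then back-substitute to derive the unique pair $(t(u),c(u))$ displayed in \eqref{RayleighSP}; in particular $c(u)>0$ holds automatically.

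For the case distinction (i)--(iv), I would study $h$ on $(0,\infty)$: it is increasing on $(0,t(u))$ and decreasing on $(t(u),\infty)$, with $h(0)=-\alpha c$, $h(t)\to-\infty$ as $t\to+\infty$, and a direct substitution using $t(u)^{\beta-\eta}=\tfrac{\alpha-\eta}{\alpha-\beta}\,\tfrac{N(u)}{B(u)}$ yields $h(t(u))=\alpha\bigl(c(u)-c\bigr)$. A sign discussion on the four natural regions of $c$ then gives: if $c>c(u)$ then $\max h<0$, so $\psi_{c,u}$ is strictly decreasing with no critical point; if $c=c(u)$ the unique zero of $h$ is $t(u)$, producing the inflection $t^{0}(c,u)$; if $0<c<c(u)$ then $h(0)<0<h(t(u))$, so $h$ has exactly two zeros $t^{+}(c,u)<t(u)<t^{-}(c,u)$, corresponding to a local minimum and a local maximum of $\psi_{c,u}$ respectively; and if $c\le 0$ then $h(0)\ge 0$, leaving only the zero $t^{-}(c,u)>t(u)$, a global maximum. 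The Morse character at $t^{\pm}(c,u)$ for $c\neq c(u)$ is immediate, because $(t(u),c(u))$ is the unique degenerate pair by the first step.

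To obtain the positivity claim $\psi_{c,u}(t^{-}(c,u))>0$ in (i) and (iv), I would eliminate $\alpha c$ between \eqref{dp1SP} and the definition of $\psi_{c,u}$; a short simplification collapses the result to
$$\psi_{c,u}(t^{-}(c,u))=\frac{t^{-}(c,u)^{-\alpha}}{A(u)}\bigl(B(u)\,t^{-}(c,u)^{\beta}-N(u)\,t^{-}(c,u)^{\eta}\bigr),$$
which is positive since $t^{-}(c,u)>t(u)$ and $t(u)^{\beta-\eta}>N(u)/B(u)$ (note that $\tfrac{\alpha-\eta}{\alpha-\beta}>1$ as $\eta<\beta$). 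The first formula for $t^{-}(0,u)$ follows by setting $c=0$ in $h=0$ and solving for $t^{\beta-\eta}$; the equivalent expression in terms of $\Lambda^{-}(0,u)$ is then obtained by substituting $B(u)$ back into $\Lambda^{-}(0,u)=\psi_{0,u}(t^{-}(0,u))$---the two summands there combine linearly because $\psi'_{0,u}(t^{-}(0,u))=0$---and solving for $t^{-}(0,u)^{\alpha-\eta}$. I expect the only real care to be the sign analysis in the four subcases, notably the branch $c\le 0$ which has no counterpart in the companion Section \ref{SubCC}; the unimodal profile of $h$ together with the explicit formula $h(t(u))=\alpha(c(u)-c)$ makes this bookkeeping routine.
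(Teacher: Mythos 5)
Your argument is correct and is precisely the ``simple analysis of $\psi_{c,u}$'' that the paper invokes without writing out: reduce $\psi_{c,u}'=0$ to the unimodal equation $h(t)=0$, locate the maximum of $h$ at $t(u)$ with $h(t(u))=\alpha\bigl(c(u)-c\bigr)$, and read off the four cases, with the Morse property following from the uniqueness of the degenerate pair $(t(u),c(u))$. The only point you leave implicit is the pair of limits in item (iii), but these follow at once from the signs of the exponents $\eta-\alpha,\ \beta-\alpha,\ -\alpha<0$ together with $c>c(u)>0$, so there is no gap.
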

\medskip
Some properties of the functional $c$ given in \eqref{RayleighSP} are given in the next

\begin{lemma}\label{lem:H1SPRayleigh}
	The functional $c$ defined in \eqref{RayleighSP} belongs to $C^1(X \setminus \{0\})$. Moreover it is $0$-homogeneous and bounded from below by a positive constant. 
\end{lemma}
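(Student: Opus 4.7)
The plan is to mirror the structure of the proof of Lemma \ref{lem:H1C2Rayleigh}, which dealt with the analogous situation in the previous section, and to exploit the explicit formula \eqref{RayleighSP} for $c(u)$ combined with the growth conditions in \eqref{2nd-3}.

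First, for the $C^1$ regularity, I would simply observe that both $N$ and $B$ are $C^1$ on $X$ by assumption, that $B(u)>0$ for every $u\neq 0$ by \eqref{2nd-2}, and that the exponents $\beta/(\beta-\eta)$ and $\eta/(\beta-\eta)$ are positive (since $\eta<\beta$). Hence the expression $N(u)^{\beta/(\beta-\eta)}/B(u)^{\eta/(\beta-\eta)}$ is a composition of $C^1$ maps on $X\setminus\{0\}$, giving $c\in C^1(X\setminus\{0\})$.

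Next, for the $0$-homogeneity, I would use the homogeneity properties stated in \eqref{2nd-1}: for any $t>0$,
\[
\frac{N(tu)^{\frac{\beta}{\beta-\eta}}}{B(tu)^{\frac{\eta}{\beta-\eta}}}
=\frac{t^{\frac{\eta\beta}{\beta-\eta}}N(u)^{\frac{\beta}{\beta-\eta}}}{t^{\frac{\beta\eta}{\beta-\eta}}B(u)^{\frac{\eta}{\beta-\eta}}}
=\frac{N(u)^{\frac{\beta}{\beta-\eta}}}{B(u)^{\frac{\eta}{\beta-\eta}}},
\]
so that $c(tu)=c(u)$, i.e. $c$ is $0$-homogeneous.

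The main (though still routine) step is the boundedness from below by a positive constant. By $0$-homogeneity it suffices to prove $\inf_{u\in S}c(u)>0$. For $u\in S$ the condition \eqref{2nd-3} yields $N(u)\geq C^{-1}$ and $B(u)\leq C$, and since $\alpha>\beta>\eta$ the constant
\[
K:=\frac{(\alpha-\eta)(\beta-\eta)}{\eta\beta\alpha}\left(\frac{\alpha-\eta}{\alpha-\beta}\right)^{\frac{\eta}{\beta-\eta}}
\]
in front of \eqref{RayleighSP} is strictly positive. Therefore
\[
c(u)\;\geq\; K\,\frac{C^{-\beta/(\beta-\eta)}}{C^{\eta/(\beta-\eta)}}\;=\;K\,C^{-\frac{\beta+\eta}{\beta-\eta}}\;>\;0
\qquad \forall u\in S,
\]
and combining with $0$-homogeneity we conclude that $c(u)\geq KC^{-(\beta+\eta)/(\beta-\eta)}>0$ for every $u\in X\setminus\{0\}$. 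I do not anticipate any real obstacle in carrying out the argument; the only care needed is in verifying that all the exponents are positive (which follows from $1<\eta<\beta<\alpha$) so that the growth bounds on $N$ and $B$ can be used in the indicated direction.
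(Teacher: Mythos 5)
Your proof is correct and follows exactly the route the paper takes (the paper's own proof is a one-line remark that regularity is clear, $0$-homogeneity follows from the homogeneity assumptions, and the positive lower bound follows from the growth condition on $N$ and $B$); you have merely written out the details, including the correct reduction to the unit sphere and the sign check on the constant $K$.
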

\begin{proof} Indeed, it is clear that $c \in C^1(X \setminus \{0\})$, while the $0$-homogeneity follows by \eqref{2nd-1} and the boundedness from below is a consequence of \eqref{2nd-3}.
\end{proof}

By Lemma \ref{lem:H1SPRayleigh}, it follows that
$c^*:=\displaystyle \inf_{u\in X\setminus\{0\}}c(u)>0$.

\begin{lemma}\label{lem:H1SP}
	For any $c\in (0,c^*)$ and $u\in X\setminus\{0\}$, the map $\psi_{c,u}$ satisfies  Lemma \ref{techn1} \eqref{c-i}, while for any $c \le 0$ and $u\in X\setminus\{0\}$, it satisfies  Lemma \ref{techn1} \eqref{c-iv}. In particular \eqref{H1} holds with $I=(0,c^*)$ as well as with $I=(-\infty,c^*)$.
\end{lemma}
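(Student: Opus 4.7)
The strategy is to read off both parts of the conclusion directly from Lemma \ref{techn1} by using the definition of $c^*$ as the infimum of the functional $c(\cdot)$ to determine, for each $(c,u)\in\mathcal{I}\times(X\setminus\{0\})$, which of the four cases of Lemma \ref{techn1} applies. Once the per-fiber structure of $\psi_{c,u}$ is fixed, verifying (H1) reduces to a regularity check on the explicit formula for $\psi'_{c,u}(t)$ together with an identification of a uniform choice of Morse-type extremizer over $\mathcal{I}$.

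For the first assertion, fix $c\in(0,c^*)$ and $u\in X\setminus\{0\}$. By Lemma \ref{lem:H1SPRayleigh}, $c^*>0$ is attained (or at least approached) as an infimum, and by definition $c^*\leq c(v)$ for every $v\in X\setminus\{0\}$; in particular $0<c<c^*\leq c(u)$, so the hypothesis $c\in(0,c(u))$ of Lemma \ref{techn1}\eqref{c-i} is met and we obtain the two Morse-type critical points $0<t^+(c,u)<t^-(c,u)$ with the stated minimizer/maximizer structure. For the second assertion, the case $c\leq 0$ is covered verbatim by Lemma \ref{techn1}\eqref{c-iv}, which provides the unique Morse-type global maximizer $t^-(c,u)$.

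It remains to verify \eqref{H1} for the two choices of $\mathcal{I}$. Condition (a) follows from the explicit expression
\[
\psi'_{c,u}(t)=(\alpha-\eta)\frac{\alpha N(u)}{\eta A(u)}t^{\eta-\alpha-1}-(\alpha-\beta)\frac{\alpha B(u)}{\beta A(u)}t^{\beta-\alpha-1}-\frac{\alpha^2 c}{A(u)}t^{-\alpha-1},
\]
since $N,A,B\in C^1(X)$, the map $u\mapsto 1/A(u)$ is $C^1$ on $X\setminus\{0\}$ (recall $A(u)>0$ there by \eqref{2nd-2}), the dependence on $c$ is affine, and the $t$-dependence is smooth on $(0,\infty)$. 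For condition (b), when $\mathcal{I}=(0,c^*)$ the preceding paragraph yields that $\psi_{c,u}$ has exactly one local minimizer $t^+(c,u)$ of Morse type (and, incidentally, also a Morse-type local maximizer $t^-(c,u)$), so the first alternative in (H1)(b) holds. When $\mathcal{I}=(-\infty,c^*)$ we take the \emph{second} alternative: across both sub-ranges $c\leq 0$ and $c\in(0,c^*)$ the map $\psi_{c,u}$ admits exactly one local maximizer $t^-(c,u)$, which is of Morse type in either case.

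The only real point requiring care is consistency of the global maximizer $t^-$ across the gluing at $c=0$ in the second case; this is automatic because Lemma \ref{techn1}\eqref{c-i} and \eqref{c-iv} both assert \emph{uniqueness} of the local maximizer and its non-degeneracy, so $t^-$ extends to a single $C^1$ map on $(-\infty,c^*)\times(X\setminus\{0\})$ via Lemma \ref{l0}\eqref{I0-i}. I do not expect any serious obstacle in the proof: the content of the lemma is essentially a bookkeeping consequence of Lemma \ref{techn1} combined with the definition of $c^*$.
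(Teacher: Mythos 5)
Your proposal is correct and matches the paper's (implicit) argument: the paper states this lemma without proof as an immediate consequence of Lemma \ref{techn1} together with the definition $c^*=\inf_{u}c(u)>0$ from Lemma \ref{lem:H1SPRayleigh}, which is exactly the bookkeeping you carry out. Your verification of (H1)(a) from the explicit formula for $\psi'_{c,u}(t)$ and your choice of the minimizer alternative on $(0,c^*)$ and the maximizer alternative on $(-\infty,c^*)$ are precisely what the paper relies on in the sequel.
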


Consequently Lemma \ref{l0} \eqref{I0-i} applies
and the maps $(c,u)\mapsto t^+(c,u),t^-(c,u)$ belong to $C^{1}((0,c^*) \times (X\setminus\{0\}))$ and $C^{1}((-\infty,c^*) \times (X\setminus\{0\}))$, respectively.
Let us obtain some further properties of these maps.

\begin{lemma}\label{l1SP} 
	The following properties hold:
\begin{enumerate}[label=(\roman*),ref=\roman*]
	\item\label{l1-1SP}	The map $c \mapsto t^-(c,u)$ is decreasing in $(-\infty,c^*)$.
	Moreover, 
	\begin{equation*}
		t^-(c,u)> \left(\frac{\alpha-\eta}{\alpha-\beta}\frac{N(u)}{B(u)}\right)^{\frac{1}{\beta-\eta}},\ \forall c<c^*, u\in S.
	\end{equation*}
	In particular, there exists $C'>0$ such that $t^-(c,u)\geq C'$ for every $u  \in S$ and $c <c^*$.
	\smallskip
	\item\label{l1-2SP} $t^-(c,u)\to +\infty$ as $c \to -\infty$, uniformly in $S$.
	\item\label{l1-3SP} 
	The map $c \mapsto t^+(c,u)$ is increasing in $(0,c^*)$.
	Moreover, 
	\begin{equation*}
		t^+(c,u)<\left(\frac{\alpha\beta\eta c}{(\beta-\eta)(\alpha-\eta)}\frac{1}{N(u)}\right)^{\frac{1}{\eta}},\ \forall c\in(0,c^*), u\in S.
	\end{equation*}
	In particular, there exists $C'>0$ such that $t^+(c,u)\le  C'$ for every $u  \in S$ and $c\in(0,c^*)$. \smallskip
	\item\label{l1-4SP} $t^+(c,u)\to 0$ as $c \to 0^+$, uniformly in $S$.
\end{enumerate}
\end{lemma}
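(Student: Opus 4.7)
The structure of Lemma \ref{l1SP} mirrors that of Lemma \ref{l1} for the second class of functionals, so the plan is to adapt those arguments to the present sign conventions. The key preliminary observation is that, writing $\Phi_\mu = I_1 - \mu I_2$, we have $I_2(u) = -\tfrac{1}{\alpha}A(u)$, hence $I_2'(tu)u = -t^{\alpha-1}A(u)$ and $-I_2'(tu)u = t^{\alpha-1}A(u)>0$ on $X\setminus\{0\}$. Lemma \ref{l0}\eqref{I0-i} then yields
\[
\frac{\partial t^{\pm}}{\partial c}(c,u) = \frac{t^{\pm}(c,u)^{\alpha-1}A(u)}{I_2(t^{\pm}(c,u)u)^2\,\psi''_{c,u}(t^{\pm}(c,u))}.
\]
Since $t^-$ is a local maximizer of $\psi_{c,u}$ while $t^+$ is a local minimizer (Lemma \ref{techn1}), $\psi''_{c,u}(t^-)<0$ and $\psi''_{c,u}(t^+)>0$. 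Hence $c\mapsto t^-(c,u)$ is decreasing on $(-\infty,c^*)$ and $c\mapsto t^+(c,u)$ is increasing on $(0,c^*)$, which gives the monotonicity parts of \eqref{l1-1SP} and \eqref{l1-3SP}.

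For the pointwise bounds on $t^\pm$, I would combine \eqref{dp1SP} and \eqref{dp111SP}. Multiplying \eqref{dp1SP} by $(\alpha+1)$ and adding to \eqref{dp111SP} eliminates the constant term and yields, after collecting like powers of $t^{\pm}$,
\[
(\alpha-\eta)N(u)\,t^{\pm}(c,u)^{\eta}\;\gtrless\;(\alpha-\beta)B(u)\,t^{\pm}(c,u)^{\beta},
\]
the inequality being ``$<$'' for $t^-$ and ``$>$'' for $t^+$, inherited from the signs of $\psi''_{c,u}(t^\pm)$. Solving the ``$<$'' version for the ratio $t^{-,\beta-\eta}$ gives the lower bound $t^-(c,u)^{\beta-\eta}>\tfrac{\alpha-\eta}{\alpha-\beta}\tfrac{N(u)}{B(u)}$ stated in \eqref{l1-1SP}. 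For \eqref{l1-3SP}, I would plug the reverse inequality back into \eqref{dp1SP} to eliminate $B(u)t^{+\beta}$, obtaining $(\alpha-\eta)\tfrac{\beta-\eta}{\eta\beta}N(u)\,t^{+,\eta}<\alpha c$, which rearranges to the stated upper bound. The uniform bounds over $S$ follow from \eqref{2nd-3}: $N(u)\geq C^{-1}$ and $B(u)\leq C$ on $S$ yield $t^-(c,u)\geq C'$ and $t^+(c,u)\leq C'$ for suitable $C'>0$.

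Parts \eqref{l1-2SP} and \eqref{l1-4SP} are consequences of the pointwise estimates and \eqref{dp1SP}. For \eqref{l1-4SP}, the upper bound in \eqref{l1-3SP} combined with $N(u)\geq C^{-1}$ on $S$ gives $t^+(c,u)\leq \bigl(C\,\alpha\beta\eta c/((\beta-\eta)(\alpha-\eta))\bigr)^{1/\eta}$, which tends to $0$ uniformly in $S$ as $c\to 0^+$. For \eqref{l1-2SP}, use \eqref{dp1SP} and the positivity of $\tfrac{\alpha-\eta}{\eta}N(u)\,t^{-,\eta}$ to obtain $\alpha c \geq -\tfrac{\alpha-\beta}{\beta}B(u)\,t^-(c,u)^{\beta}$; together with $B(u)\leq C$ on $S$ this yields $t^-(c,u)^{\beta}\geq \beta|\alpha c|/((\alpha-\beta)C)\to +\infty$ as $c\to -\infty$, uniformly in $S$.

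The main obstacle, such as it is, lies in keeping track of signs: the ordering $\eta<\beta<\alpha$ here reverses several sign patterns compared with Lemma \ref{l1}, where $\alpha<\eta<\beta$. Once the combined inequality $(\alpha-\eta)N\,t^\eta\gtrless(\alpha-\beta)B\,t^\beta$ is correctly derived from \eqref{dp1SP}--\eqref{dp111SP}, the remainder is routine algebra and the arguments closely parallel the proof of Lemma \ref{l1}.
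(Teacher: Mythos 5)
Your proof is correct and follows essentially the same route as the paper: monotonicity via Lemma \ref{l0}\,(\ref{I0-i}) using $I_2'(tu)u=-t^{\alpha-1}A(u)<0$ and the sign of $\psi''_{c,u}(t^\pm)$, the pointwise bounds by combining \eqref{dp1SP} and \eqref{dp111SP} to eliminate $c$, and the uniform statements and asymptotics from condition \eqref{2nd-3}. (Incidentally, your conclusion that $c\mapsto t^+(c,u)$ is \emph{increasing} is the correct one and matches the lemma statement; the paper's own proof of item (\ref{l1-3SP}) contains a typo saying ``decreasing''.)
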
 
\begin{proof}

\eqref{l1-1}
Note that $\Phi_\mu=I_1-\mu I_2$ with $I_2(u):=-\frac{1}{\alpha}A(u)$, so that $I_2'(tu)u=-t^{\alpha-1}A(u)<0$, 
and  by   Lemma \ref{l0}\eqref{I0-i}  it follows that  the map $c \mapsto t^-(c,u)$ is decreasing in $(-\infty,c^*)$. In addition, note from \eqref{dp1SP} and \eqref{dp111SP} that
$(\alpha-\eta)N(u)t^-(c,u)^{\eta}+(\beta-\alpha)B(u)t^-(c,u)^{\beta} <0,
$
so that
$t^-(c,u)> \left(\frac{\alpha-\eta}{\alpha-\beta}\frac{N(u)}{B(u)}\right)^{\frac{1}{\beta-\eta}}
$
for any $c>c^*$ and $u \in S$. 


\smallskip

\eqref{l1-2} By \eqref{2nd-3} we know that $B$ is bounded on $S$, so the claim follows from \eqref{dp1SP}.

\smallskip

\eqref{l1-3}  Lemma \ref{l0} shows that the map $c \mapsto t^+(c,u)$ is decreasing in $(0,c^*)$. In addition, from \eqref{dp1SP} and \eqref{dp111SP} we infer that
$\frac{(\alpha-\eta)(\eta-\beta)}{\eta}N(u)t^+(c,u)^{\eta}+\beta\alpha c>0$,
i.e.
$t^+(c,u)<\left(\frac{\alpha\beta\eta c}{(\beta-\eta)(\alpha-\eta)}\frac{1}{N(u)}\right)^{{1}/{\eta}}$,
for $c\in(0,c^*)$, and $u\in S$.

\smallskip

\eqref{l1-4} It follows from \eqref{l1-3} and the fact that $N$ is bounded away from zero on $S$.
\end{proof}
Let us consider now the maps $\Lambda^\pm$. First note that from
$\psi_{c,u}'(t^\pm(c,u))=0$ we have
\begin{eqnarray}
	\Lambda^\pm(c,u)=\psi_{c,u}(t^\pm(c,u))&=&\frac{\beta-\eta}{\alpha-\beta}\frac{\alpha N(u)}{\eta A(u)}t^\pm(c,u)^{\eta-\alpha}-\frac{\alpha c}{A(u)}\frac{\beta}{\alpha-\beta}t^\pm(c,u)^{-\alpha} \nonumber\\
	&=&\frac{\alpha t^\pm(c,u)^{-\alpha}}{(\alpha-\beta)A(u)}\left(\frac{\beta-\eta}{\eta} N(u)t^\pm(c,u)^{\eta}- \beta c\right)\nonumber \\
	&=& \frac{\alpha ct^\pm(c,u)^{-\alpha}}{(\alpha-\eta)A(u)}\left(\frac{\beta-\eta}{\beta} B(u)\frac{t^\pm(c,u)^{\beta}}{c}- \eta \right)	\label{1st-Lambda1SP}
\end{eqnarray}
for the corresponding values of $c$ and $u \in X \setminus \{0\}$.
In particular,
\begin{equation*}
	\Lambda^-(0,u)=D_{\alpha,\beta,\eta} \frac{B(u)^{\frac{\alpha-\eta}{\beta-\eta}}}{A(u)N(u)^{\frac{\alpha-\beta}{\beta-\eta}}}\quad \text{ for some }D_{\alpha,\beta,\eta}>0.
\end{equation*}
	\begin{lemma}\label{lSPc} Let $S_0\subset S$. 
		 \begin{enumerate}[label=(\roman*),ref=\roman*]
		\item\label{lSPc1}  Let $K\subset (-\infty,c^*)$ be a compact set. Then
			\begin{eqnarray*}
				\inf_{(c,u)\in K\times S_0}\Lambda^-(c,u)>0 &\Longleftrightarrow& \sup_{u\in K\times S_0}t^-(c,u)<+\infty \\
				&\Longleftrightarrow& \sup_{u\in S_0}N(u)<+\infty, \inf_{u\in S_0}A(u)>0\ and \inf_{u\in S_0}B(u)>0.
			\end{eqnarray*} 
		\item\label{lSPc2} Let $K\subset (0,c^*)$ be a compact. Then
			\begin{equation*}
				\inf_{(c,u)\in K\times S_0}\Lambda^+(c,u)>-\infty \Longleftrightarrow \sup_{u\in S_0}N(u)<+\infty, \inf_{u\in S_0}A(u)>0\ and \inf_{u\in S_0}B(u)>0.
			\end{equation*}
	 Moreover any of the latter statements implies that $\inf_{u\in K\times S_0}t^+(c,u)>0$.
	\end{enumerate} 
\end{lemma}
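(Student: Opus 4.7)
The plan is to adapt the proof of Lemma \ref{ls} to the current setting $\eta<\beta<\alpha$, where the role played there by ``$\Lambda^-$ bounded above'' is taken here by ``$\Lambda^-$ bounded below by a positive constant''. The key tools are the two equivalent forms of $\Lambda^\pm$ in \eqref{1st-Lambda1SP}, the critical-point identity \eqref{dp1SP}, the strict lower bound for $t^-$ given by Lemma \ref{l1SP}\eqref{l1-1SP}, the growth hypothesis \eqref{2nd-3} (yielding $N\ge C^{-1}$ and $B\le C$ on $S$), and the weak continuity of $A$ and $B$ on $X$ coming from the complete continuity of $A'$ and $B'$ in \eqref{2nd-4}: in particular, if $u_n\in S$ satisfies $A(u_n)\to 0$ or $B(u_n)\to 0$, then $u_n\rightharpoonup 0$, and hence $A(u_n)$ and $B(u_n)$ both vanish in the limit.

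For part \eqref{lSPc1} I would proceed cyclically $(\mathrm{a})\Rightarrow(\mathrm{b})\Rightarrow(\mathrm{c})\Rightarrow(\mathrm{a})$, where $(\mathrm{a})$, $(\mathrm{b})$, $(\mathrm{c})$ are the three conditions in the statement. The step $(\mathrm{b})\Rightarrow(\mathrm{c})$ is essentially Lemma \ref{l1SP}\eqref{l1-1SP}: the strict bound $t^-(c,u)>(\frac{\alpha-\eta}{\alpha-\beta}N/B)^{1/(\beta-\eta)}$ together with $B\le C$ on $S$ forces $\sup_{S_0}N<\infty$ and, upon rearrangement, $\inf_{S_0}B>0$; the lower bound on $A$ is the only delicate point and uses the weak-continuity remark above, since $A(u_n)\to 0$ would give $B(u_n)\to 0$ and hence $t^-(c,u_n)\to\infty$, contradicting $(\mathrm{b})$. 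For $(\mathrm{c})\Rightarrow(\mathrm{a})$, I would use \eqref{dp1SP} and the hypotheses in $(\mathrm{c})$ together with $c$ bounded on $K$ to trap $t^-$ inside a compact subinterval of $(0,\infty)$ uniformly on $K\times S_0$, and then read off $\Lambda^-\ge m>0$ from \eqref{1st-Lambda1SP}, using also that $K$ stays at positive distance from $c^*$. Finally, $(\mathrm{a})\Rightarrow(\mathrm{b})$ would be derived from the alternative form of $\Lambda^-$ obtained by eliminating $N$ in \eqref{1st-Lambda1SP} via \eqref{dp1SP},
\begin{equation*}
\Lambda^-(c,u)=\frac{\alpha}{(\alpha-\eta)A(u)}\left(\frac{\beta-\eta}{\beta}B(u)\,t^-(c,u)^{\beta-\alpha}-\eta c\,t^-(c,u)^{-\alpha}\right),
\end{equation*}
in which both exponents of $t^-$ are negative: if $t^-\to\infty$ along a sequence in $K\times S_0$, then multiplying through by $A$ yields $\Lambda^-A\to 0$, hence $A\to 0$, and the weak-continuity remark gives $B(u_n)\to 0$. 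A final comparison of $x_n=N(u_n)t^-(c_n,u_n)^\eta$ and $y_n=B(u_n)t^-(c_n,u_n)^\beta$ via \eqref{dp1SP} forces $y_n/x_n\to 1$, contradicting the strict lower bound $y_n/x_n>\frac{\alpha-\eta}{\alpha-\beta}>1$ implied by Lemma \ref{l1SP}\eqref{l1-1SP}.

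Part \eqref{lSPc2} is treated in the same spirit, now using Lemma \ref{l1SP}\eqref{l1-3SP}, which already bounds $t^+$ from above uniformly. The additional assertion $\inf_{K\times S_0}t^+>0$ is read off directly from \eqref{dp1SP}: on $K\subset(0,c^*)$ we have $c\ge c_{\min}>0$ and the second summand in \eqref{dp1SP} is non-positive, so $\frac{\alpha-\eta}{\eta}N(t^+)^\eta\ge\alpha c_{\min}$, which gives a positive lower bound for $t^+$ once $N$ is bounded. The main obstacle will be the $(\mathrm{a})\Rightarrow(\mathrm{b})$ step in part \eqref{lSPc1} when $c>0$: a naive asymptotic analysis of $Bt^\beta$ and $Nt^\eta$ does not immediately produce a contradiction, and it is precisely the strict multiplicative gap $\frac{\alpha-\eta}{\alpha-\beta}>1$ in Lemma \ref{l1SP}\eqref{l1-1SP} that tips the balance and closes the argument.
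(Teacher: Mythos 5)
Your overall architecture (the cycle of implications, the use of \eqref{dp1SP} to trap $t^\pm$, the weak-continuity argument $A(u_n)\to0\Rightarrow u_n\rightharpoonup0\Rightarrow B(u_n)\to0$, and the derivation of $\inf t^+>0$ from the sign of the middle term of \eqref{dp1SP}) matches the paper's proof, and those steps are sound. The genuine gap is in the implication $\inf_{K\times S_0}\Lambda^->0\Rightarrow\sup_{K\times S_0}t^-<+\infty$ of part (i). Your closing contradiction rests on the claim that \eqref{dp1SP} forces $y_n/x_n\to1$ for $x_n=N(u_n)t_n^\eta$, $y_n=B(u_n)t_n^\beta$. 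It does not: \eqref{dp1SP} reads $\frac{\alpha-\eta}{\eta}x_n-\frac{\alpha-\beta}{\beta}y_n=\alpha c_n$ with $c_n$ bounded and $x_n\geq C^{-1}t_n^\eta\to\infty$ by \eqref{2nd-3}, so in fact $y_n/x_n\to\frac{\beta(\alpha-\eta)}{\eta(\alpha-\beta)}$. Since $\beta>\eta$, this limit is \emph{strictly larger} than the threshold $\frac{\alpha-\eta}{\alpha-\beta}$ coming from Lemma \ref{l1SP} \eqref{l1-1SP} (it is exactly the ratio realized by $t^-(0,u)$), so no contradiction arises and the argument collapses.

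The deeper point is that this implication cannot be extracted from the structural hypotheses \eqref{2nd-1}--\eqref{2nd-6} alone: they give no lower control on $A$, so nothing there forbids a sequence with $A(u_n)\to0$, $B(u_n)\to0$, $t_n\to\infty$ and $\Lambda^-(c_n,u_n)$ bounded away from zero --- your own computation, carried out correctly, produces precisely this consistent scenario rather than a contradiction. This is exactly where the paper invokes the extra coercivity hypothesis \eqref{H4}: since $\Phi_{\mu_n}(t_nu_n)=c_n\leq\max K$ with $\mu_n=\Lambda^-(c_n,u_n)\geq m>0$, where $m$ is the assumed positive infimum, condition \eqref{H4} places $t_nu_n$ in a fixed ball, i.e. $t_n=\|t_nu_n\|\leq M$. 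Replacing your asymptotic argument by this one-line appeal to \eqref{H4} repairs the proof; the remaining steps of your proposal then go through essentially as in the paper.
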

\begin{proof} \eqref{lSPc1} Indeed, suppose that $\inf_{(c,u)\in K\times S_0}\Lambda^-(c,u)>0$, then by \eqref{H4} we have that $\sup_{(c,u)\in K\times S_0}t^-(c,u)<+\infty$ and hence by Lemma \ref{l1SP} \eqref{l1-1SP} and \eqref{2nd-4}  we conclude that  $\sup_{u\in S_0}N(u)<+\infty$, $\inf_{u\in S_0}A(u)>0$  and $\inf_{u\in S_0}B(u)>0$. 		
		
		Now assume that $\sup_{u\in S_0}N(u)<+\infty$ and $\inf_{u\in S_0}B(u)>0$.  The latter assertion yields, by \eqref{dp1SP}, that
		\begin{equation*}
			C_1t^-(c,u)^\eta-C_2t^-(c,u)^\beta-\alpha c \ge 0, \quad  \forall (c,u)\in K\times S_0,
		\end{equation*}
		which implies that $\sup_{(c,u)\in K\times S_0}t^-(c,u)<+\infty$.
		
		To conclude suppose that $\sup_{(c,u)\in K\times S_0}t^-(c,u)<+\infty$. By Lemma \ref{l1SP} \eqref{l1-1SP} and \eqref{2nd-4}  we conclude that  $\sup_{u\in S_0}N(u)<+\infty$, $\inf_{u\in S_0}A(u)>0$  and $\inf_{u\in S_0}B(u)>0$. By \eqref{1st-Lambda1SP} it is clear that $\inf_{(c,u)\in (K\cap (-\infty,0])\times S_0}\Lambda^-(c,u)>0$. Moreover since by Lemma \ref{plSP} \eqref{pl-iiSP} we have that $\Lambda^-(c,u)>\Lambda^-(0,u)$, it follows, again by \eqref{1st-Lambda1SP}, that 	$\inf_{(c,u)\in (K\cap [0,c^*))\times S_0}\Lambda^-(c,u)>0$	 and hence the proof is complete.

\smallskip
	
\eqref{lSPc2}	In fact, suppose that $\inf_{(c,u)\in K\times S_0}\Lambda^+(c,u)>-\infty$. Since, by Lemma \ref{l1SP} \eqref{l1-3SP}, we know that $\sup_{(c,u)\in K\times S_0}t^+(c,u)<+\infty$, it follows by \eqref{1st-Lambda1SP} and \eqref{2nd-4} that $\inf_{(c,u)\in K\times S_0}t^+(c,u)>0$, $\inf_{u\in S_0}A(u)>0$ and $\inf_{u\in S_0}B(u)>0$, hence, by Lemma \ref{l1SP} \eqref{l1-3SP}, we obtain that  $\sup_{u\in S_0}N(u)<+\infty$. 
		
		Now assume that $\sup_{u\in S_0}N(u)<+\infty$ and $\inf_{u\in S_0}B(u)>0$.  The latter assertion yields, by \eqref{dp1SP}, that
		\begin{equation*}
			C_1t^+(c,u)^\eta-C_2t^+(c,u)^\beta-\alpha c \ge 0, \quad  \forall (c,u)\in K\times S_0,
		\end{equation*}
		which implies that $\inf_{(c,u)\in K\times S_0}t^+(c,u)>0$. Since by Lemma \ref{l1SP} \eqref{l1-3SP} we know that $\sup_{(c,u)\in K\times S_0}t^+(c,u)<+\infty$, the proof follows by \eqref{1st-Lambda1SP}.
\end{proof}

\begin{remark}\label{lSPcRmk} One can easily see from the proof that indeed
the stronger assertion holds:
		\begin{eqnarray*}
		\inf_{(c,u)\in (-\infty,c^*)\times S_0}\Lambda^-(c,u)>0 \ \ \Longrightarrow \ \ 
		\begin{cases}
		 \sup_{u\in (-\infty,c^*)\times S_0}t^-(c,u)<+\infty, \\
 \sup_{u\in S_0}N(u)<+\infty, \\
 \inf_{u\in S_0}A(u)>0, \\
 		 \inf_{u\in S_0}B(u)>0.
		\end{cases}
	\end{eqnarray*}
\end{remark}
Other properties of the maps $\Lambda^{\pm}$ are listed below.
\begin{lemma} \label{plSP} \strut
	The following  hold:
	\begin{enumerate}[label=(\roman*),ref=\roman*]
		\item \label{pl-0SP} $\Lambda^+(c,u)< \Lambda^-(c,u)$ for every $u \in S$ and $c\in (0,c^*)$. Moreover $\Lambda^-(c,u)>0$ for every $u \in S$ and $c< c^*$.
		\item\label{pl-iSP} The maps $u \mapsto \Lambda^-(c,u), \Lambda^+(c,u)$ are bounded from above on $S$, for every $c< c^*$ and $c\in (0,c^*)$, respectively.
		\item\label{pl-iiSP}  For every $u\in X \setminus\{0\}$ the maps $c\mapsto \Lambda^+(c,u),\Lambda^-(c,u)$ are increasing in $(0,c^*)$ and $(-\infty,c^*)$, respectively.
		\item\label{pl-iiiSP} $\displaystyle \lim_{c \to -\infty}\Lambda^-(c,u)= 0$ uniformly in $S$.
		\item\label{pl-ivSP} $\displaystyle \lim_{c \to 0^+}\Lambda^+(c,u)= -\infty$ uniformly in $S$.
	\end{enumerate}
\end{lemma}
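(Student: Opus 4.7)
The plan is to prove \eqref{pl-0SP}--\eqref{pl-ivSP} in order, relying on Lemmas~\ref{techn1}, \ref{l0} and \ref{l1SP}, together with the sublevel boundedness hypothesis \eqref{H4}. The main obstacle is \eqref{pl-iSP}, where the lack of an a priori lower bound for $A$ on the infinite-dimensional sphere $S$ forces an indirect argument.

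Items \eqref{pl-0SP} and \eqref{pl-iiSP} are essentially immediate. For \eqref{pl-0SP}, since $t^+(c,u)<t^-(c,u)$ are the only critical points of $\psi_{c,u}$ on $(0,\infty)$ for $c\in(0,c^*)$, with $t^+$ a local minimum and $t^-$ a local maximum (Lemma~\ref{techn1}\eqref{c-i}), the map $\psi_{c,u}$ is strictly increasing on $[t^+,t^-]$, hence $\Lambda^+(c,u)<\Lambda^-(c,u)$; the positivity $\Lambda^-(c,u)>0$ for $c<c^*$ is read off Lemma~\ref{techn1}\eqref{c-i} and \eqref{c-iv}. For \eqref{pl-iiSP}, apply Lemma~\ref{l0}\eqref{I0-ii}: the present $I_2=-A/\alpha$ satisfies $I_2(t^\pm(c,u)u)<0$, hence $\partial_c\Lambda^\pm(c,u)=-1/I_2(t^\pm(c,u)u)>0$.

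For \eqref{pl-iiiSP} I argue by contradiction: if $\Lambda^-(c_n,u_n)\ge\varepsilon>0$ for some $c_n\to-\infty$ and $u_n\in S$, set $\mu_n:=\Lambda^-(c_n,u_n)$ and $v_n:=t^-(c_n,u_n)u_n$; by the very definition of $\Lambda^-$ one has $\Phi_{\mu_n}(v_n)=c_n$, so $\{v_n\}$ lies eventually in $\bigcup_{\mu\ge\varepsilon}\{u:\Phi_\mu(u)\le 1\}$ and \eqref{H4} gives $t^-(c_n,u_n)=\|v_n\|\le M$, contradicting Lemma~\ref{l1SP}\eqref{l1-2SP}. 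For \eqref{pl-ivSP} I would use a direct estimate. Combining \eqref{1st-Lambda1SP} with \eqref{dp1SP} yields
\[
\Lambda^+(c,u)=\frac{B(u)\,t^+(c,u)^{\beta-\alpha}-N(u)\,t^+(c,u)^{\eta-\alpha}}{A(u)}.
\]
By Lemma~\ref{l1SP}\eqref{l1-3SP}--\eqref{l1-4SP}, $t^+(c,u)\to 0$ and $N(u)t^+(c,u)^\eta\le Kc$ uniformly on $S$; since $B(u)/N(u)$ is bounded on $S$ (by \eqref{2nd-3}) and $t^+(c,u)^{\eta-\beta}\to+\infty$ uniformly, for $c$ small one has $B(u)t^+(c,u)^{\beta-\alpha}\le\tfrac12 N(u)t^+(c,u)^{\eta-\alpha}$, whence $\Lambda^+(c,u)\le -N(u)t^+(c,u)^{\eta-\alpha}/(2A(u))$. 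The explicit upper bound on $t^+$ together with $N(u)\ge C^{-1}$ gives $N(u)t^+(c,u)^{\eta-\alpha}\ge K'c^{(\eta-\alpha)/\eta}\to+\infty$ uniformly, and boundedness of $A$ on $S$ (a consequence of complete continuity of $A'$ combined with the Euler identity $A(u)=A'(u)u/\alpha$) closes the argument.

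The hard step is \eqref{pl-iSP}. I would proceed by contradiction: assume $\Lambda^-(c,u_n)\to+\infty$ for some $\{u_n\}\subset S$. Eventually $\Lambda^-(c,u_n)\ge 1$, so \eqref{H4} applied to $v_n:=t^-(c,u_n)u_n$---which satisfies $\Phi_{\Lambda^-(c,u_n)}(v_n)=c$, hence lies in a sublevel set of $\Phi_\mu$ with $\mu\ge 1$---yields $t^-(c,u_n)\le M$. Combined with $t^-(c,u_n)\ge C'$ from Lemma~\ref{l1SP}\eqref{l1-1SP}, the sequence $\{t^-(c,u_n)\}$ stays in $[C',M]$. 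Extracting $u_n\rightharpoonup u$, complete continuity of $A'$ and $B'$ gives $A(u_n)\to A(u)$ and $B(u_n)\to B(u)$. If $u\neq 0$ then $A(u)>0$ and the same formula (now with $t^-$ in place of $t^+$) keeps $\Lambda^-(c,u_n)$ bounded, a contradiction. If $u=0$ then $B(u_n)\to 0$ while $N(u_n)t^-(c,u_n)^{\eta-\alpha}\ge C^{-1}M^{\eta-\alpha}>0$, so the numerator becomes strictly negative for large $n$, again contradicting $\Lambda^-(c,u_n)\to+\infty$. The upper bound on $\Lambda^+(c,\cdot)$ for $c\in(0,c^*)$ then follows at once from \eqref{pl-0SP}, since $\Lambda^+(c,u)<\Lambda^-(c,u)\le\sup_{u\in S}\Lambda^-(c,u)<+\infty$.
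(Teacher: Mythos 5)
Your proof is correct, and while the overall strategy (argue by contradiction, use \eqref{H4} to bound $t^\pm$, and exploit the a priori bounds of Lemma \ref{l1SP}) is the same as the paper's, the organization is genuinely different. The paper channels items \eqref{pl-iSP}, \eqref{pl-iiiSP} and \eqref{pl-ivSP} through the auxiliary equivalences of Lemma \ref{lSPc} and Remark \ref{lSPcRmk}, and works with the three expressions in \eqref{1st-Lambda1SP}; you instead invoke \eqref{H4} directly on $v_n=t^\pm(c,u_n)u_n$ (using that $\Phi_{\Lambda^\pm(c,u_n)}(v_n)=c$ by construction) and reduce everything to the single closed form $\Lambda^\pm(c,u)=\bigl(B(u)t^{\beta-\alpha}-N(u)t^{\eta-\alpha}\bigr)/A(u)$, which you correctly derive by eliminating $c$ via \eqref{dp1SP}. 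This buys you a noticeably cleaner \eqref{pl-iiiSP} (the paper first forces $N$ bounded and $B$ away from zero and then contradicts $\Lambda^-(c_m,u_m)<\Lambda^-(0,u_m)$, whereas you get an immediate clash between the bound $t^-\le M$ from \eqref{H4} and Lemma \ref{l1SP}\eqref{l1-2SP}), a quantitative rate $\Lambda^+(c,u)\le -K'c^{(\eta-\alpha)/\eta}/(2\sup_S A)$ in \eqref{pl-ivSP} rather than the paper's limit computation, and in \eqref{pl-iSP} a transparent weak-limit dichotomy $u\neq 0$ versus $u=0$. You also make explicit a point the paper leaves implicit, namely that $A$ is bounded on $S$ via complete continuity of $A'$ and the Euler identity. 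Items \eqref{pl-0SP} and \eqref{pl-iiSP} coincide with the paper's proof. The only cosmetic caveat is that \eqref{H4} is stated for positive levels, so for $c\le 0$ one should formally apply it with the level $\max(c,1)$, as you in effect do.
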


\begin{proof}
	\eqref{pl-0SP} It follows from itens  \eqref{c-i} and \eqref{c-iv} of Lemma \ref{techn1}.
	
		\smallskip

	\eqref{pl-iSP} Suppose, on the contrary, that there exists $\{u_n\}\subset S$ such that $\Lambda^-(c,u_n)\to +\infty$. By Lemma \ref{lSPc} \eqref{lSPc1} we know that $\{t^-(c,u_n)\}$ is bounded and either $N(u_n)\to \infty$ or $A(u_n)\to 0$. Now, by \eqref{2nd-3} we know that $\{B(u_n)\}$ is bounded, so that from Lemma \ref{l1SP} \eqref{l1-1SP} we deduce that $\{N(u_n)\}$ is  bounded as well, and consequently $A(u_n) \to 0$. However, in such case we also have $B(u_n)\to 0$, which contradicts Lemma \ref{l1SP} \eqref{l1-1SP}. Thus  $u \mapsto \Lambda^-(c,u)$ is bounded from above on $S$. Since $\Lambda^+(c,u)< \Lambda^-(c,u)$ for all $c\in (0,c^*)$ and $u\in S$ we deduce that $u \mapsto \Lambda^+(c,u)$ is also bounded from above on $S$.
	
	\smallskip
	
	\eqref{pl-iiSP}  It follows from  Lemma \ref{l0} \eqref{I0-ii} and the fact that $I_2<0$.
	
	\smallskip
	
	\eqref{pl-iiiSP} 	 Suppose on the contrary that there exists a sequence $c_m\to -\infty$ and $u_m\in S$ such that $\Lambda^-(c_m,u_m)\ge C>0$, where $C$ is a positive constant. By Lemma \ref{lSPc} \eqref{lSPc1} and Remark \ref{lSPcRmk} we conclude that $t^-(c_m,u_m)$ is bounded, $B(u_m)$ is away from $0$ and $N(u_m)$ is bounded. Thus, by \eqref{1st-Lambda1SP}, we conclude that $\Lambda^-(c_m,u_m)\to \infty$ as $m\to \infty$, which is a contradiction since, by the last item, $\Lambda^-(c_m,u_m)<\Lambda^-(0,u_m)\le D$.

\smallskip

	\eqref{pl-ivSP} Note from \eqref{2nd-3} and \eqref{dp1SP} that there exist positive constants $C_1,C_2$ such that
	\begin{equation}\label{ineq}
		C_1t^+(c,u)^\eta-C_2t^+(c,u)^\beta-\alpha c\le 0,\quad \forall c\in (0,c^*),\ u\in S,
	\end{equation}
which implies that
\begin{equation*}
	C_1t^+(c,u)^{\eta-\alpha}-C_2t^+(c,u)^{\beta-\alpha}\le \alpha ct^+(c,u)^{-\alpha},\quad \forall c\in (0,c^*),\ u\in S,
\end{equation*}
and by Lemma \ref{l1SP} \eqref{l1-4SP} we conclude that $ct^+(c,u)^{-\alpha}\to \infty$ as $c\to 0^+$, uniformly in $S$. Now divide \eqref{ineq} to obtain
\begin{equation*}
	\frac{1}{c}(C_1t^+(c,u)^\eta-C_2t^+(c,u)^\beta)\le \alpha, \quad\forall c\in (0,c^*),\ u\in S. 
\end{equation*}
Since $\eta<\beta$ we conclude from Lemma \ref{l1SP} \eqref{l1-4SP} that $c^{-1}t^+(c,u)^\eta\to z<\infty$ as $c\to 0^+$, uniformly in $S$, and then $c^{-1}t^+(c,u)^\beta=c^{-1}t^+(c,u)^\eta t^+(c,u)^{\beta-\eta}\to 0$ as $c\to 0^+$, uniformly in $S$. By plugging these informations in the third equality of \eqref{1st-Lambda1SP} we obtain the desired conclusion.
\end{proof}
\medskip
In the sequel $I_+=(0,c^*)$ and $I_-=(-\infty,c^*)$.
\begin{proposition}\label{PSSP} 
	Suppose that $c_n\to c \in \mathcal{I} _\pm$
	and $\{u_{n}\}\subset S$ are such that $\frac{\partial \widetilde{\Lambda^{\pm}}}{\partial u}(c_n,u_n)\to 0$ and $\widetilde{\Lambda^{\pm}}(c_n,u_n)\to \mu$ ($>0$ if $\widetilde{\Lambda^{-}}$ is considered). Then $\{u_n\}$ has  a convergent subsequence. In particular, the map $u \mapsto \widetilde{\Lambda^{\pm}}(c,u)$ satisfies the Palais-Smale condition at every $\mu$ ($>0$ if $\widetilde{\Lambda^{-}}$ is considered). 
\end{proposition}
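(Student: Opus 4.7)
The plan is to follow closely the strategy used for Proposition \ref{PS}: setting $t_n := t^{\pm}(c_n,u_n)$, I would first pass to a subsequence along which $u_n \rightharpoonup u$ in $X$, then verify that (a) $\{t_n\}$ is bounded and bounded away from zero and (b) $u\neq 0$. Once these two facts are in place, Lemma \ref{psg} of the Appendix (cf.\ also Remark \ref{rps}) delivers the strong convergence $u_n\to u$, which gives the desired compactness. The last assertion of the proposition then follows by specialising to the constant sequence $c_n\equiv c$.

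Fix a compact set $K\subset \mathcal{I}_{\pm}$ containing $\{c_n\}\cup\{c\}$, and consider $S_0:=\{u_n\}\subset S$. In the case of $\widetilde{\Lambda^-}$, the hypothesis $\widetilde{\Lambda^-}(c_n,u_n)\to \mu>0$ ensures that eventually $\Lambda^-(c_n,u_n)\geq \mu/2>0$, so Lemma \ref{lSPc}(\ref{lSPc1}) yields $\sup_n N(u_n)<+\infty$, $\inf_n A(u_n)>0$, $\inf_n B(u_n)>0$, and $\sup_n t^-(c_n,u_n)<+\infty$; combined with the uniform lower bound from Lemma \ref{l1SP}(\ref{l1-1SP}), this gives the required two-sided control on $\{t_n\}$. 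In the case of $\widetilde{\Lambda^+}$, the mere boundedness of $\Lambda^+(c_n,u_n)$ suffices: Lemma \ref{lSPc}(\ref{lSPc2}) produces the same bounds on $N,A,B$ along $\{u_n\}$ together with $\inf_n t^+(c_n,u_n)>0$, while the upper bound on $t^+$ is furnished by Lemma \ref{l1SP}(\ref{l1-3SP}).

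The bound $\inf_n A(u_n)>0$, combined with the weak sequential continuity of $A$ (a consequence of the complete continuity of $A'$ in assumption \eqref{2nd-4} on the reflexive space $X$), implies $A(u)>0$ and hence $u\neq 0$. All the hypotheses of Lemma \ref{psg} being now in force, we conclude $u_n\to u$ strongly in $X$.

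The main technical point, and the reason why the assumption $\mu>0$ is needed for $\widetilde{\Lambda^-}$ but not for $\widetilde{\Lambda^+}$, lies in the structure of Lemma \ref{lSPc}: part (\ref{lSPc1}) requires a \emph{positive} lower bound on $\Lambda^-$ to force the crucial upper bound on $t^-$, whereas part (\ref{lSPc2}) needs only boundedness from below of $\Lambda^+$. Without a positive floor on $\Lambda^-$, one cannot rule out $t^-\to+\infty$, which would break the passage from weak to strong convergence; hence the asymmetry in the statement.
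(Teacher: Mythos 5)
Your proof is correct and follows essentially the same route as the paper's: pass to a weakly convergent subsequence, use Lemma \ref{l1SP} (items \eqref{l1-1SP}, \eqref{l1-3SP}) and Lemma \ref{lSPc} to get two-sided bounds on $t^{\pm}(c_n,u_n)$ and to rule out $u=0$, and then conclude via Lemma \ref{psg} and Remark \ref{rps}. Your version is in fact more explicit than the paper's (which leaves the verification of the bounds and of $u\neq 0$ to the reader), and your closing remark correctly identifies why the positivity of $\mu$ is needed only in the $\widetilde{\Lambda^-}$ case.
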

\begin{proof} Write, for simplicity, $\Lambda:=\Lambda^{\pm}$ and $t(c,u):=t^\pm(c,u)$.  Let $t_n:=t(c_n,u_n)$. 	We can assume that  $u_n\rightharpoonup u$ in $X$. By Lemma \ref{l1SP} items \eqref{l1-1SP} and \eqref{l1-3SP} and Lemma \ref{lSPc} we 
	find that $d_1\le t_n\le d_2$ for all $n$ and some $d_1,d_2>0$. Similarly, one can prove that $u\neq 0$. We use then Lemma \ref{psg} and Remark \ref{rps} to conclude.
\end{proof}

By Lemmae \ref{lem:H1SP} and \ref{plSP} and Proposition \ref{PSSP} we are able to apply Theorem \ref{THM1}
to show that for any $c$ the sequences
$$\mu_{n,c}^- = \sup_{F\in \mathcal F_{n}} \inf_{u\in F}\widetilde{\Lambda^-}(c,u),\quad \mbox{if} \quad c<c^*,$$ and 
$$ \mu_{n,c}^+ = \sup_{F\in \mathcal F_{n}} \inf_{u\in F}\widetilde{\Lambda^+}(c,u), \quad \mbox{if} \quad  c\in(0,c^*).$$
give rise to critical points of the families of functionals $\Phi_{\mu_{n,c}^{-}}$ and $\Phi_{\mu_{n,c}^{+}}$ having energy equal to $c$. From now, if  necessary, we will use the fact that (see Remark \ref{infsup})
	$$\mu_{n,c}^-= \left(\sup_{F\in \widetilde{\mathcal{F}}_n}\inf_{u\in F}(\Lambda^{-})^{-1}(c,u)\right)^{-1},$$
and
$$\mu_{n,c}^+=- \sup_{F\in \widetilde{\mathcal{F}}_n}\inf_{u\in F}-\Lambda^{+}(c,u).$$

\begin{corollary}\label{ciiSP}
	There holds $\mu_{n,c}^- \to 0$ as $n\to \infty$ for any $c<c^*$, while $\mu_{n,c}^+ \to -\infty$ as $n\to+ \infty$ for any $c\in(0,c^*)$. Moreover, $\|u_{n,c}\| \to +\infty$ as $n\to +\infty$. Finally, if $N(v_{n,c}/\|v_{n,c}\|)$ is bounded  then $v_{n,c} \rightharpoonup 0$ and $v_{n,c} \not \to 0$ in $X$ as $n\to \infty$, while if $N(v_{n,c}/\|v_{n,c}\|)$ is unbounded, then up to a subsequence, $v_{n,c}\to 0$ as $n\to+ \infty$.
\end{corollary}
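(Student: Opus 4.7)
The plan is to recast each assertion in a form to which Theorem \ref{THM1} applies. By Remark \ref{infsup} we have $\mu_{n,c}^+ = -\inf_{F\in\mathcal{F}_n}\sup_{u\in F}(-\Lambda^+(c,u))$ and, since $\Lambda^-(c,\cdot)>0$, $\mu_{n,c}^- = \bigl(\inf_{F\in\mathcal{F}_n}\sup_{u\in F}(\Lambda^-(c,u))^{-1}\bigr)^{-1}$. The functionals $-\Lambda^+(c,\cdot)$ and $(\Lambda^-(c,\cdot))^{-1}$ are bounded below on $S$ by Lemma \ref{plSP}, are even, and their Palais--Smale properties at the relevant (positive) levels are inherited from Proposition \ref{PSSP}. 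Hence the final part of Theorem \ref{THM1} \eqref{THM1-1} will yield $\mu_{n,c}^+ \to -\infty$ and $\mu_{n,c}^- \to 0$ as soon as I verify that $\Lambda^+(c,w_n) \to -\infty$ and $\Lambda^-(c,w_n) \to 0^+$ respectively, for every $w_n \in S$ with $w_n \rightharpoonup 0$ in $X$.

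Both weak-null limits start from the observation that $A(w_n),B(w_n) \to 0$ by complete continuity of $A',B'$ combined with homogeneity, while $N(w_n) \geq C^{-1}$ by \eqref{2nd-3}. For the $+$ case the fiber parameter $t_n^+:=t^+(c,w_n)$ stays bounded by Lemma \ref{l1SP} \eqref{l1-3SP}, so \eqref{dp1SP} forces $N(w_n)(t_n^+)^\eta \to \alpha\eta c/(\alpha-\eta)$; inserting this into \eqref{1st-Lambda1SP} shows the leading term is $-\alpha\eta c\,(t_n^+)^{-\alpha}/\bigl((\alpha-\eta)A(w_n)\bigr)$, which diverges to $-\infty$ since $A(w_n)\to 0$ and $(t_n^+)^{-\alpha}$ stays bounded away from $0$. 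For the $-$ case I would first exclude the possibility that $t_n^-:=t^-(c,w_n)$ remains bounded, since then the analogous computation would force $\Lambda^-(c,w_n)\to -\infty$, contradicting the positivity in Lemma \ref{plSP} \eqref{pl-0SP}; thus $t_n^-\to \infty$, and the asymptotic balance in \eqref{dp1SP} (namely $B(w_n)(t_n^-)^{\beta-\eta}$ of the order of $N(w_n)$) substituted into \eqref{1st-Lambda1SP} delivers $\Lambda^-(c,w_n)\to 0^+$.

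For the norm assertion $\|u_{n,c}\|\to +\infty$, I would combine $\Phi'_{\mu_{n,c}^-}(u_{n,c})u_{n,c}=0$ with $\Phi_{\mu_{n,c}^-}(u_{n,c})=c$ to obtain the Pohozaev-like identity
\begin{equation*}
\frac{\beta-\eta}{\eta}\,N(u_{n,c})-\frac{\alpha-\beta}{\alpha}\,\mu_{n,c}^-\,A(u_{n,c})=\beta c,
\end{equation*}
and argue by contradiction: if $\{u_{n,c}\}$ were bounded in $X$, then $A(u_{n,c})$ would also be bounded, so since $\mu_{n,c}^-\to 0$ the identity would force $N(u_{n,c})\to \beta\eta c/(\beta-\eta)$; passing to a weak limit and exploiting the complete continuity built into Proposition \ref{PSSP} would then yield a nontrivial critical point at the level $0$, inconsistent with $\Lambda^->0$ (alternatively one can invoke \eqref{H4}, applied to a suitable truncation $\mu_{n,c}^-\ge a_n>0$).

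For the dichotomy on $v_{n,c}=t^+(c,w_n)w_n$ with $w_n=v_{n,c}/\|v_{n,c}\|\in S$, I split on the behavior of $\{N(w_n)\}$. If this sequence is bounded, Lemma \ref{l1SP} \eqref{l1-3SP} bounds $t^+(c,w_n)$ above and \eqref{dp1SP} bounds it away from $0$, so $\|v_{n,c}\|$ is bounded and bounded away from $0$; since $\Lambda^+(c,w_n)=\mu_{n,c}^+\to -\infty$, \eqref{1st-Lambda1SP} forces $A(w_n)\to 0$, and the weak continuity of $A$ provided by \eqref{2nd-4} entails $w_n\rightharpoonup 0$ in $X$, whence $v_{n,c}\rightharpoonup 0$ but $v_{n,c}\not\to 0$. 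If instead $N(w_n)\to \infty$ along a subsequence, Lemma \ref{l1SP} \eqref{l1-3SP} directly yields $t^+(c,w_n)\to 0$ and thus $v_{n,c}\to 0$. The main obstacle is the weak-null estimate $\Lambda^-(c,w_n)\to 0^+$: the rate comparison between $A(w_n)$ and $B(w_n)^{(\alpha-\eta)/(\beta-\eta)}$ dictated by \eqref{dp1SP} must be tracked carefully so as not to circularly compromise the lower bound $\Lambda^->0$.
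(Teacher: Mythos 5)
Your overall strategy coincides with the paper's: rewrite $\mu_{n,c}^{\pm}$ via Remark \ref{infsup}, invoke the last assertion of Theorem \ref{THM1} for the functionals $(\Lambda^-)^{-1}(c,\cdot)$ and $-\Lambda^+(c,\cdot)$, and reduce everything to the behaviour of $t^{\pm}(c,w_n)$ and $\Lambda^{\pm}(c,w_n)$ along weakly null sequences $w_n\in S$. Your treatment of $\Lambda^+(c,w_n)\to-\infty$ (bounded $t^+_n$, $B(w_n)\to 0$, hence $N(w_n)(t^+_n)^\eta\to\alpha\eta c/(\alpha-\eta)$ and a negative bracket in \eqref{1st-Lambda1SP} divided by $A(w_n)\to 0$) and of the final dichotomy on $v_{n,c}$ is correct and essentially reproduces what the paper packages into Lemma \ref{lSPc} \eqref{lSPc2} and Lemma \ref{l1SP} \eqref{l1-3SP}.

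The genuine gap is exactly the point you flag as ``the main obstacle'': your route to $\Lambda^-(c,w_n)\to 0^+$ cannot be closed by asymptotic analysis of \eqref{dp1SP} and \eqref{1st-Lambda1SP} alone. Once $t^-_n\to+\infty$, the identity \eqref{dp1SP} only pins down the product $B(w_n)(t^-_n)^{\beta-\eta}$ relative to $N(w_n)$; the quantity $\Lambda^-(c,w_n)=\frac{\alpha}{(\alpha-\eta)A(w_n)}\bigl(\frac{\beta-\eta}{\beta}B(w_n)(t^-_n)^{\beta-\alpha}-\eta c\,(t^-_n)^{-\alpha}\bigr)$ is then a genuine $0/0$ ratio (both the bracket and $A(w_n)$ tend to $0$), and the structural hypotheses \eqref{2nd-1}--\eqref{2nd-6} do not force it to vanish: one can arrange $A(w_n)$ to decay much faster than $B(w_n)(t^-_n)^{\beta-\alpha}$, making $\Lambda^-(c,w_n)$ blow up instead. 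The paper closes this precisely through the coercivity hypothesis \eqref{H4}: in the proof of Lemma \ref{lSPc} \eqref{lSPc1}, the implication $\inf\Lambda^->0\Rightarrow\sup t^-<+\infty$ is obtained from \eqref{H4} (since $\Phi_{\Lambda^-(c,u)}(t^-(c,u)u)=c$ with $\Lambda^-(c,u)\ge a>0$ forces $t^-(c,u)\le M$), and its contrapositive combined with $t^-(c,w_n)\to\infty$ gives $\Lambda^-(c,w_n)\to 0$. Your proposal never invokes \eqref{H4} at this step, so the argument is incomplete as written. The same omission affects your proof that $\|u_{n,c}\|\to+\infty$: the Pohozaev-type identity gives a contradiction for $c\le 0$ but not for $c\in(0,c^*)$, where $N(u_{n,c})\to\beta\eta c/(\beta-\eta)>0$ is perfectly admissible, and your parenthetical appeal to \eqref{H4} with ``a suitable truncation $\mu_{n,c}^-\ge a_n>0$'' does not work because \eqref{H4} requires a fixed lower bound $a$ while $\mu_{n,c}^-\to 0$. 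The paper's shorter route — $\|u_{n,c}\|=t^-(c,w_n)$ and $\Lambda^-(c,w_n)=\mu_{n,c}^-\to 0$ implies $t^-(c,w_n)\to+\infty$ by the \eqref{H4}-based equivalence in Lemma \ref{lSPc} \eqref{lSPc1} — is the missing ingredient in both places.
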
	

\begin{proof}
	By Lemma \ref{lSPc} we know that $(\Lambda^{-})^{-1}(c,w_n) \to +\infty$ and $-\Lambda^{+}(c,w_n)\to +\infty$ if $w_n \rightharpoonup 0$ in $X$, so Theorem \ref{THM1} yields the first assertion. Moreover, writing $u_{n,c}=t^-(c,w_n)w_n$ with $w_n \in S$, we have $\|u_{n,c}\|=t^-(c,w_n)$ and $\Lambda^-(c,w_n)=\mu_{n,c}^- \to 0$. By Lemma \ref{lSPc} we deduce that $t^-(c,w_n)\to+ \infty$. A similar argument yields $v_{n,c}=t^+(c,w_n)w_n$ with $\Lambda^+(c,w_n)=\mu_{n,c}^+ \to -\infty$, so by Lemma \ref{lSPc} we have either $N(w_n) \to +\infty$ or $A(w_n) \to 0$. If $N(w_n)$ is unbounded then by Lemma \ref{l1SP} \eqref{l1-3SP} we obtain that $t^+(c,w_n)\to 0$ as $n\to +\infty$. If $N(w_n)$ is bounded then $w_n \rightharpoonup 0$ in $X$. Since $t^+(c,w_n)$ is bounded by Lemma \ref{l1}, we see that $v_{n,c} \rightharpoonup 0$ in $X$. Finally, \eqref{dp1} shows that $t^+(c,w_n) \not \to 0$, so $v_{n,c} \not \to 0$ in $X$.
\end{proof}
Let us proceed with the study of $\mu_{n,c}^\pm$ with respect to $c$.

\begin{lemma}\label{lipSP} 
	For every $n$ the following properties hold:
	\strut
	\begin{enumerate}[label=(\roman*),ref=\roman*]
		\item\label{lip-0SP}  The maps $c \mapsto \mu_{n,c}^\pm$ are increasing and locally Lipschitz continuous in $I_\pm$. 
		\item\label{lip-iSP} $\displaystyle \lim_{c\to -\infty}\mu_{n,c}^-=0$ and $\displaystyle \lim_{c\to 0^+}\mu_{n,c}^+=-\infty$.
	\end{enumerate}
\end{lemma}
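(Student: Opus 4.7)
The plan is to prove part (i) by applying Theorem \ref{THM2} separately to $\Lambda^-$ on $\mathcal{I}_-=(-\infty,c^*)$ and to $\Lambda^+$ on $\mathcal{I}_+=(0,c^*)$, using the ``bounded above / increasing'' alternative throughout. Hypothesis \eqref{H1} was verified in Lemma \ref{lem:H1SP}. For \eqref{H2}, the upper boundedness of $u \mapsto \widetilde{\Lambda^\pm}(c,u)$ on $S$ is Lemma \ref{plSP} \eqref{pl-iSP}, and the Palais--Smale condition at the level $\mu_{n,c}^\pm$ is Proposition \ref{PSSP}; note that for $\widetilde{\Lambda^-}$ the positivity requirement in Proposition \ref{PSSP} is automatic since $\mu_{n,c}^-\ge \inf_{u\in S}\Lambda^-(c,u)>0$ by Lemma \ref{plSP} \eqref{pl-0SP}. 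The condition \eqref{PSG} is exactly Proposition \ref{PSSP}.

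For \eqref{H3}, the required increasing monotonicity of $c \mapsto \Lambda^\pm(c,u)$ is Lemma \ref{plSP} \eqref{pl-iiSP}, while continuity of $\Lambda^\pm$ on $\mathcal{I}_\pm \times (X \setminus \{0\})$ gives lower boundedness on any compact $K \subset \mathcal{I}_\pm \times S$. The subtle point—and the step I expect to be the main obstacle—is the last part of \eqref{H3}: if $\Lambda^\pm$ is bounded on $[a,b]\times S_0$ then $\partial \Lambda^\pm/\partial c$ must be bounded and bounded away from zero there. Here I would use the identification $I_2=-A/\alpha$ together with formula \eqref{dl}, yielding
\begin{equation*}
\frac{\partial \Lambda^\pm}{\partial c}(c,u)=\frac{\alpha}{A(t^\pm(c,u)u)}=\frac{\alpha}{t^\pm(c,u)^\alpha A(u)}.
\end{equation*}
Now Lemma \ref{lSPc} (combined with Remark \ref{lSPcRmk} in the $\Lambda^-$ case) together with Lemma \ref{l1SP} \eqref{l1-1SP}, \eqref{l1-3SP} shows that boundedness of $\Lambda^\pm$ on $[a,b]\times S_0$ forces $t^\pm$ to be bounded and bounded away from zero, and $A$ to be bounded and bounded away from zero on $S_0$; this is precisely what is needed. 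Theorem \ref{THM2} then delivers that $c \mapsto \mu_{n,c}^\pm$ is increasing and locally Lipschitz continuous in $\mathcal{I}_\pm$.

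For part (ii) the asymptotic statements follow directly from the uniform convergence obtained in Lemma \ref{plSP}. Indeed, for any $F\in\mathcal{F}_n$ one has
\begin{equation*}
0<\mu_{n,c}^-=\sup_{F\in \mathcal{F}_n}\inf_{u\in F}\Lambda^-(c,u)\leq \sup_{u\in S}\Lambda^-(c,u),
\end{equation*}
and Lemma \ref{plSP} \eqref{pl-iiiSP} gives $\Lambda^-(c,u)\to 0$ uniformly on $S$ as $c\to -\infty$, whence $\mu_{n,c}^-\to 0^+$. Symmetrically, Lemma \ref{plSP} \eqref{pl-ivSP} says that for any $M>0$ one has $\Lambda^+(c,u)\leq -M$ for every $u\in S$ once $c>0$ is sufficiently small, so $\inf_{u\in F}\Lambda^+(c,u)\leq -M$ for every $F\in\mathcal{F}_n$, hence $\mu_{n,c}^+\leq -M$, giving $\mu_{n,c}^+\to -\infty$ as $c\to 0^+$.
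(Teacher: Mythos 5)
Your proof follows essentially the same route as the paper's: part (i) is Theorem \ref{THM2} applied to $\Lambda^{\pm}$ after checking \eqref{H3} via \eqref{dl} and Lemma \ref{lSPc}, and part (ii) is read off from the uniform limits in Lemma \ref{plSP} \eqref{pl-iiiSP} and \eqref{pl-ivSP}; you simply supply the details that the paper's two-line proof leaves implicit. One small slip: $\inf_{u\in S}\Lambda^-(c,u)$ need not be positive (the fact that $\mu_{n,c}^-\to 0$ as $n\to\infty$ indicates it is typically $0$), so to justify $\mu_{n,c}^->0$ you should instead fix a single compact $F_0\in\mathcal{F}_n$ and use that $\Lambda^-(c,\cdot)$ is continuous and positive on $F_0$, whence $\mu_{n,c}^-\geq \inf_{u\in F_0}\Lambda^-(c,u)>0$.
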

\begin{proof} 
	\eqref{lip-0SP} Indeed, by \eqref{dl} and Lemma \ref{lSPc} we see that \eqref{H3} is satisfied. Theorem \ref{THM2} yields the conclusion.
	
		\smallskip

	\eqref{lip-iSP} It follows from Lemma \ref{plSP} items \eqref{pl-iiiSP} and \eqref{pl-ivSP}.
\end{proof}

Now we prove a multiplicity result for fixed $\mu$.

\begin{proposition}\label{ncriticaSP} Given $n\in \mathbb{N}$, for any $\mu\in(0,\mu^-_{n,0})$ the functional $\Phi_{\mu}$ has at least $n$ pairs of critical points with negative energy. 
\end{proposition}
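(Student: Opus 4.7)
The plan is to apply the intermediate value theorem to the first $n$ energy curves $c \mapsto \mu_{k,c}^-$, $k=1,\ldots,n$, and then read off $n$ pairs of critical points of $\Phi_\mu$ with negative energy. Fix $\mu \in (0,\mu_{n,0}^-)$. By Remark \ref{infsup}, since $u \mapsto \widetilde{\Lambda^-}(c,u)$ is bounded from above and $\mathcal{F}_{k+1}\subset \mathcal{F}_k$, the sequence $\{\mu_{k,c}^-\}_k$ is nonincreasing in $k$, whence $\mu_{k,0}^- \geq \mu_{n,0}^- > \mu$ for every $k\in\{1,\ldots,n\}$. By Lemma \ref{lipSP}, for each such $k$ the map $c \mapsto \mu_{k,c}^-$ is continuous and increasing on $(-\infty,0]$ with $\displaystyle\lim_{c\to -\infty} \mu_{k,c}^- = 0$. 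The intermediate value theorem therefore yields a unique $c_k \in (-\infty,0)$ with $\mu_{k,c_k}^- = \mu$; the monotonicity in $c$ combined with the nonincreasing character in $k$ gives $c_1 \leq c_2 \leq \cdots \leq c_n < 0$.

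For each $k$, apply Theorem \ref{THM1} at parameter $c_k$: there exists $w_k \in S$ with $\widetilde{\Lambda^-}(c_k,w_k)=\mu$ and $\frac{\partial \widetilde{\Lambda^-}}{\partial u}(c_k,w_k)=0$, and Corollary \ref{c1} lifts it to a pair $\pm u_k := \pm t^-(c_k,w_k)\,w_k$ of critical points of $\Phi_\mu = \Phi_{\mu_{k,c_k}^-}$ with $\Phi_\mu(\pm u_k) = c_k < 0$. If the values $c_1,\ldots,c_n$ are pairwise distinct, the resulting $n$ pairs have pairwise distinct negative energies and are therefore distinct, proving the claim.

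If instead a consecutive block of indices $k,k+1,\ldots,k+\ell$ share a common value $c$, then $\mu = \mu_{k,c}^- = \cdots = \mu_{k+\ell,c}^-$ is a critical value of $\widetilde{\Lambda^-}(c,\cdot)$ of multiplicity at least $\ell+1$, so the classical Ljusternik--Schnirelman multiplicity theorem (the same result invoked in the proof of Theorem \ref{THM1}) asserts that the critical set of $\widetilde{\Lambda^-}(c,\cdot)$ at level $\mu$ has Krasnoselskii genus $\geq \ell+1$, hence contains at least $\ell+1$ antipodal pairs of critical points $\pm w$. The map $w \mapsto t^-(c,w)\,w$ is injective up to sign on $S$ (since $w=u/\|u\|$ recovers $w$ from $u$ up to sign), so these lift via Corollary \ref{c1} to $\ell+1$ distinct pairs of critical points of $\Phi_\mu$ at energy $c$. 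Summing the contributions over all blocks produces at least $n$ distinct pairs of critical points of $\Phi_\mu$ with negative energy.

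The main obstacle is the possible coincidence of several $c_k$'s: one cannot just enumerate critical values but must invoke the genus-based multiplicity form of Ljusternik--Schnirelman, and one must check that distinct critical points of the restricted functional $\widetilde{\Lambda^-}(c,\cdot)$ on $S$ lift through $w \mapsto t^-(c,w)\,w$ to distinct critical points of $\Phi_\mu$. Once this bookkeeping is made, the curve-by-curve intermediate value argument furnishes the required $n$ pairs.
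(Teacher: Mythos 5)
Your proposal is correct and follows essentially the same route as the paper: for each $k\le n$ use the monotonicity and continuity of $c\mapsto\mu^-_{k,c}$ from Lemma \ref{lipSP}, together with $\mu^-_{k,c}\to 0$ as $c\to-\infty$ and $\mu^-_{k,0}\ge\mu^-_{n,0}>\mu$, to find $c_k<0$ with $\mu^-_{k,c_k}=\mu$, and then lift the corresponding critical points of $\widetilde{\Lambda^-}(c_k,\cdot)$ to critical points of $\Phi_\mu$ at energy $c_k$. In fact you are more careful than the paper's own (very terse) proof on the one delicate point, namely distinctness of the $n$ pairs when several $c_k$ coincide, which you resolve correctly via the genus form of the Ljusternik--Schnirelman multiplicity theorem and the observation that $w\mapsto t^-(c,w)w$ is injective up to sign on $S$.
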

\begin{proof} Indeed, by Lemma \ref{lipSP} \eqref{lip-0SP} there exists $c\in(-\infty,0)$ such that $\mu^-_{n,c}=\mu$, so that $\Phi_{\mu}$ has a pair of critical points with negative energy. In a similar way, there exists $c_1$ such that $\mu^-_{n-1,c_1}=\mu$. Therefore $\Phi_{\mu}$ has another pair of critical points with negative energy. By repeating this procedure we obtain $n$ pairs of critical points with negative energy.  
\end{proof}
\begin{proposition}\label{ncriticaSP1} 
Fix $c\in (0,c^*)$. 
Then for any $\mu<\mu^+_{n,c}$ the functional $\Phi_{\mu}$ has at least $n$ pairs of critical points with positive energy. 
\end{proposition}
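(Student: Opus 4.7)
The plan is to adapt the argument of Proposition \ref{ncriticaSP} to the ``$+$'' branch. First I would record a monotonicity observation on the min-max sequence: since $\mathcal{F}_{k+1}\subset \mathcal{F}_k$, the definition
\[
\mu^+_{k,c'}=\sup_{F\in \mathcal{F}_k}\inf_{u\in F}\widetilde{\Lambda^+}(c',u)
\]
yields $\mu^+_{k,c'}\ge \mu^+_{k+1,c'}$ for every $c'\in(0,c^*)$. Fixing $c\in(0,c^*)$ and $\mu<\mu^+_{n,c}$, this gives $\mu^+_{k,c}\ge \mu^+_{n,c}>\mu$ for every $k=1,\dots,n$.

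Next I would invoke Lemma \ref{lipSP}: for each $k$ the map $c'\mapsto \mu^+_{k,c'}$ is continuous and strictly increasing on $(0,c^*)$, with $\lim_{c'\to 0^+}\mu^+_{k,c'}=-\infty$. By the intermediate value theorem there exists, for each $k=1,\dots,n$, a unique $c_k\in(0,c]$ such that $\mu^+_{k,c_k}=\mu$. The corresponding critical point $v_{k,c_k}\in S$ of $\widetilde{\Lambda^+}(c_k,\cdot)$ produces, via Proposition \ref{propciritcalrestricted} and Corollary \ref{c1}, a pair $\pm t^+(c_k,v_{k,c_k})v_{k,c_k}$ of critical points of $\Phi_{\mu^+_{k,c_k}}=\Phi_\mu$ lying at the positive energy level $c_k$.

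To conclude that these produce $n$ distinct pairs I would compare two consecutive levels. From $\mu^+_{k,c_k}=\mu=\mu^+_{k+1,c_{k+1}}\le \mu^+_{k,c_{k+1}}$ and the strict monotonicity of $c'\mapsto \mu^+_{k,c'}$ one deduces $c_k\le c_{k+1}$. If all the $c_k$ are distinct then the critical points have pairwise distinct energies, so they are distinct. If on the contrary $c_k=c_{k+1}=:\bar c$ for some $k$, then two consecutive LS values of the even functional $\widetilde{\Lambda^+}(\bar c,\cdot)$ coincide at the level $\mu$; by the standard Ljusternik--Schnirelmann theorem (used already in the proof of Theorem \ref{THM1}) the critical set of $\widetilde{\Lambda^+}(\bar c,\cdot)$ at that level has Krasnoselskii genus $\ge 2$, hence is infinite, and through $t^+(\bar c,\cdot)$ provides infinitely many distinct pairs of critical points of $\Phi_\mu$ at energy $\bar c$. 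In either case we recover at least $n$ pairs with positive energy.

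The only delicate point is the last one, the bookkeeping when some of the $c_k$ coincide; everything else is an immediate translation, via the ``$+$'' replacements ($c \to 0^+$ in place of $c\to-\infty$, $(0,c^*)$ in place of $(-\infty,0)$), of the scheme that already worked for $\Phi_\mu$'s negative-energy critical points in Proposition \ref{ncriticaSP}.
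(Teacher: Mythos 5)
Your proof is correct and follows essentially the same route as the paper's (one-line) argument: by Lemma \ref{lipSP} and the monotonicity of $k\mapsto\mu^+_{k,c}$, the horizontal line at height $\mu$ crosses each of the curves $c'\mapsto\mu^+_{k,c'}$ for $k=1,\dots,n$, and each crossing yields a pair of critical points of $\Phi_\mu$ at a positive energy level. Your extra discussion of the case where some of the crossing levels $c_k$ coincide --- invoking the genus estimate from Ljusternik--Schnirelmann theory to get infinitely many critical points at that level --- supplies a distinctness argument that the paper leaves implicit, and is a sound refinement.
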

\begin{proof} Indeed, by Lemma \ref{lipSP} the vertical line at $(\mu,0)$ crosses the curves $(0,c^*)\ni c\mapsto \mu_{1,c}, \cdots, \mu_{n,c}$, which complete the proof.
	\end{proof}
To conclude let us prove 
\begin{proposition} \label{pdmSP} There holds $\mu_{n,c}^+<\mu_{n,c}^-$ for every $n\in \mathbb{N}$ and $c\in (0,c^*)$. 
\end{proposition}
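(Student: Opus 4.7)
The plan is to mirror the argument used in Proposition~\ref{pdm}. By Lemma~\ref{plSP}~\eqref{pl-0SP} we have $\Lambda^+(c,u) < \Lambda^-(c,u)$ pointwise on $S$, so the sup-inf characterizations immediately give $\mu_{n,c}^+ \le \mu_{n,c}^-$; the task is to promote this to a strict inequality with a quantitative gap. Assuming $\mu_{n,c}^+ > -\infty$ (otherwise the claim is trivial), I would work on the sublevel set $S_0 := \{u \in S : \Lambda^+(c,u) \ge \mu_{n,c}^+ - 1\}$. By Lemma~\ref{lSPc}~\eqref{lSPc2} together with Lemma~\ref{l1SP}~\eqref{l1-3SP}, the quantities $N(u)$, $A(u)$, $B(u)$, and $t^+(c,u)$ are all uniformly bounded and bounded away from zero on $S_0$; since $\Lambda^-(c,u) > \Lambda^+(c,u) \ge \mu_{n,c}^+ - 1$ on $S_0$, Lemma~\ref{lSPc}~\eqref{lSPc1} yields analogous control on $t^-(c,u)$.

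The technical core is an SP analog of Lemma~\ref{secondd}: if $u_n \in S_0$ and $s_n \in [t^+(c,u_n), t^+(c,u_n) + 1/n]$ satisfy $\psi''_{c,u_n}(s_n) \to 0$, then $c \ge c^*$. To establish this, I would combine the identity $\psi''_{c,u_n}(s_n) = o_n(1)$ with the critical point relation $\psi'_{c,u_n}(t^+(c,u_n)) = 0$, using $s_n - t^+(c,u_n) = o_n(1)$, to obtain the linear system
\begin{align*}
\tfrac{\alpha-\eta}{\eta} N(u_n) s_n^\eta - \tfrac{\alpha-\beta}{\beta} B(u_n) s_n^\beta &= \alpha c + o_n(1),\\
\tfrac{(\alpha-\eta)(\eta-\alpha-1)}{\eta} N(u_n) s_n^\eta - \tfrac{(\alpha-\beta)(\beta-\alpha-1)}{\beta} B(u_n) s_n^\beta &= -\alpha(\alpha+1) c + o_n(1).
\end{align*}
Solving this forces the definite limits $N(u_n) s_n^\eta \to \frac{\alpha\beta\eta c}{(\beta-\eta)(\alpha-\eta)}$ and $B(u_n) s_n^\beta \to \frac{\alpha\beta\eta c}{(\beta-\eta)(\alpha-\beta)}$. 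Extracting a weak limit $u_n \rightharpoonup u^*$, the $\beta$-homogeneity of $B$ combined with assumption \eqref{2nd-4} gives $B(u_n) \to B(u^*) > 0$, so $u^* \neq 0$ and $s^* := \lim s_n$ is determined. The remaining relation matches the defining formula for $(t(u^*), c(u^*))$ from Lemma~\ref{techn1}, except that the weak lower semicontinuity of $N$ only provides $N(u^*) \le \lim N(u_n)$; since $c(u)$ is increasing in $N(u)$, this still forces $c(u^*) \le c$, and combining with $c^* \le c(u^*)$ yields the desired $c \ge c^*$, contradicting $c \in (0,c^*)$.

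Exactly as in Corollary~\ref{secondd1}, this implies the existence of $D, \delta > 0$ with $\psi''_{c,u}(s) \ge D$ on $[t^+(c,u), t^+(c,u) + \delta]$ for all $u \in S_0$. Since $\psi'_{c,u}(t^+(c,u)) = 0$ and hence $\psi'_{c,u}(s) \ge D(s - t^+(c,u)) > 0$ on $(t^+(c,u), t^+(c,u) + \delta]$, while $\psi'_{c,u}(t^-(c,u)) = 0$, we must have $t^-(c,u) \ge t^+(c,u) + \delta$ on $S_0$. Integrating once more yields $\Lambda^-(c,u) \ge \psi_{c,u}(t^+(c,u) + \delta) \ge \Lambda^+(c,u) + D\delta^2/2$ uniformly on $S_0$. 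Choosing a maximizing sequence $F_k \in \mathcal{F}_n$ for $\mu_{n,c}^+$, one has $F_k \subset S_0$ eventually, and therefore
$$\mu_{n,c}^- \;\ge\; \inf_{u \in F_k} \Lambda^-(c,u) \;\ge\; \inf_{u \in F_k} \Lambda^+(c,u) + \tfrac{D\delta^2}{2} \;\longrightarrow\; \mu_{n,c}^+ + \tfrac{D\delta^2}{2},$$
which is the claimed strict inequality.

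The main obstacle is precisely the SP analog of Lemma~\ref{secondd}: because $N$ is only assumed weakly lower semicontinuous rather than weakly continuous, one cannot directly identify $\lim N(u_n)$ with $N(u^*)$, and the precise algebraic structure of the linear system above is what allows the weak limit $u^*$ to be pinned close enough to the degenerate locus $\{c(\cdot) = c^*\}$ to close the argument.
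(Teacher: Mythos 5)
Your proposal is correct and follows essentially the same route as the paper, which proves this proposition by establishing exactly your two ingredients (Lemma \ref{seconddSP}, the SP analogue of Lemma \ref{secondd}, and Corollary \ref{secondd1SP}) and then repeating the uniform-gap argument of Proposition \ref{pdm}, adapted as you do to the $\sup\inf$ characterization of $\mu_{n,c}^{\pm}$. The only difference is that the weak-limit detour you flag as the ``main obstacle'' is unnecessary: since $c(u_n)\ge c^*$ for every $n$ by the definition of $c^*$ as an infimum, the system directly yields $c=\lim_n c(u_n)\ge c^*$ without invoking the weak lower semicontinuity of $N$ at all, which is how the paper closes Lemma \ref{seconddSP}.
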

The proof of this result relies on the following lemma:

\begin{lemma}\label{seconddSP} Suppose that there exists a sequence $(s_n,u_n)\in \mathbb{R}\times S$ such that $t^+(c,u_n)\le s_n<t^+(c,u_n)+1/n$, $\Lambda^+(c,u_n)$ is bounded from below and $\psi_{c,u_n}''(s_n)=o_n(1)$. Then $c\ge c^*$.
\end{lemma}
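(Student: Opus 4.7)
The plan is to mirror the proof of Lemma \ref{secondd}, exploiting the formal parallelism between \eqref{RayleighCC} and \eqref{RayleighSP}. The core idea is to combine the exact first-order condition $\psi'_{c,u_n}(t_n)=0$ at $t_n:=t^+(c,u_n)$ with the near-vanishing $\psi''_{c,u_n}(s_n)=o_n(1)$ to extract a $2\times 2$ asymptotic linear system whose solution forces $s_n$ toward the Rayleigh-type inflection point $t(u_n)$ of \eqref{RayleighSP}, and $c$ toward the associated value $c(u_n)$. Since $c^*=\inf_{u\ne 0}c(u)>0$ by Lemma \ref{lem:H1SPRayleigh}, this will yield $c\ge c^*$.

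The first step is compactness. Since $\Lambda^+(c,u_n)$ is bounded from below, Lemma \ref{lSPc} \eqref{lSPc2} applied with $K=\{c\}$ and $S_0=\{u_n\}$ gives $\sup_n N(u_n)<\infty$, $\inf_n A(u_n)>0$, $\inf_n B(u_n)>0$, and $\inf_n t^+(c,u_n)>0$. Together with Lemma \ref{l1SP} \eqref{l1-3SP} and \eqref{2nd-3}, this places $N(u_n),A(u_n),B(u_n),t_n,s_n$ in a compact subinterval of $(0,\infty)$; passing to a subsequence I may therefore assume all of them converge to strictly positive limits. The hypothesis $|s_n-t_n|<1/n$ then yields $s_n^\eta=t_n^\eta+o_n(1)$ and $s_n^\beta=t_n^\beta+o_n(1)$.

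Next, the identity \eqref{dp1SP} at $t_n$ combined with the expansion above produces a first asymptotic equation in the variables $N(u_n)s_n^\eta$ and $B(u_n)s_n^\beta$, namely
\begin{equation*}
\frac{\alpha-\eta}{\eta}N(u_n)s_n^{\eta}+\frac{\beta-\alpha}{\beta}B(u_n)s_n^{\beta}-\alpha c=o_n(1).
\end{equation*}
Multiplying $\psi''_{c,u_n}(s_n)=o_n(1)$ by the bounded quantity $s_n^{\alpha+2}A(u_n)/\alpha$ and reading off the coefficients via \eqref{dp111SP} gives a second, independent asymptotic relation
\begin{equation*}
\frac{(\alpha-\eta)(\eta-\alpha-1)}{\eta}N(u_n)s_n^{\eta}+\frac{(\beta-\alpha)(\beta-\alpha-1)}{\beta}B(u_n)s_n^{\beta}+\alpha(\alpha+1)c=o_n(1).
\end{equation*}

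Finally, this $2\times 2$ system is nondegenerate: eliminating $c$ (multiplying the first equation by $\alpha+1$ and adding to the second) recovers $s_n^{\beta-\eta}=\frac{\alpha-\eta}{\alpha-\beta}\frac{N(u_n)}{B(u_n)}+o_n(1)$, i.e.\ $s_n=t(u_n)+o_n(1)$ with $t(u_n)$ as in \eqref{RayleighSP}; back-substitution then gives $c=c(u_n)+o_n(1)$. Passing to the limit yields $c=\lim_n c(u_n)\ge\inf_{u\ne 0}c(u)=c^*$. I expect the only genuinely delicate point to be verifying that the lower boundedness of $\Lambda^+$ truly transfers to the required two-sided bounds on $t_n,s_n,N(u_n),A(u_n),B(u_n)$; thereafter the algebra is identical, up to sign changes reflecting $c^*=\inf c$ (rather than $\sup c$, as in Section \ref{SubCC}), to the concave-convex case.
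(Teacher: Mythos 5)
Your proposal is correct and follows essentially the same route as the paper: establish via Lemma \ref{lSPc} (and Lemma \ref{l1SP}) that $t_n$, $s_n$, $N(u_n)$, $A(u_n)$, $B(u_n)$ are bounded and bounded away from zero, transfer the exact first-order identity \eqref{dp1SP} from $t_n$ to $s_n$, pair it with the rescaled second-order condition to get a $2\times 2$ asymptotic system, and solve to find $s_n=t(u_n)+o_n(1)$ and $c=\lim_n c(u_n)\ge c^*$. The elimination you carry out (multiply the first equation by $\alpha+1$ and add) is exactly the computation the paper leaves implicit, and your version even corrects the paper's typographical omission of the factor $c$ in the second equation of the system.
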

\begin{proof} Let us write $t_n=t^+(c,u_n)$. By Lemmae \ref{l1SP} and \ref{lSPc} we know that $\{t_n\}$ and $\{A(u_n)\}$ are bounded and away from zero, so we can assume that $t_n,s_n \to t>0$ and $u_n \rightharpoonup u\neq 0$. Thus
\small{	\begin{eqnarray*}
			\frac{\alpha-\eta}{\eta}N(u_n)s_n^{\eta}-\frac{\alpha-\beta}{\beta}B(u_n)s_n^{\beta}-\alpha c 
		&=&\frac{\alpha-\eta}{\eta}N(u_n)(t_n+o_n(1))^{\eta}-\frac{\alpha-\beta}{\beta}B(u_n)(t_n+o_n(1))^{\beta}-\alpha c \\
		&=& t_n^{\alpha+1}A(u_n)\psi_{c,u_n}'(t_n)+o_n(1)=o_n(1),\ \forall n,
	\end{eqnarray*}}
	which combined with $\psi_{c,u_n}''(s_n)=o_n(1)$ yields
	\begin{equation*}
		\label{pu}
		\left\{
		\begin{array}
			[c]{lll}%
			\displaystyle\frac{\alpha-\eta}{\eta}N(u_n)s_n^{\eta}-\frac{\alpha-\beta}{\beta}B(u_n)s_n^{\beta}-\alpha c =o_n(1),\\
						\displaystyle\frac{(\alpha-\eta)(\eta-\alpha-1)}{\eta}N(u_n)s_n^{\eta}-\frac{(\alpha-\beta)(\beta-\alpha-1)}{\beta}B(u_n)s_n^{\beta}+\alpha(\alpha+1) =o_n(1).
		\end{array}
		\right. 
	\end{equation*}
	Solving this system in the variables $(s_n,c)$ we conclude that
	$s_n=\left(\frac{\alpha-\eta}{\alpha-\beta}\frac{N(u_n)}{B(u_n)}+o_n(1)\right)^{\frac{1}{\beta-\eta}}$,
	and (recall \eqref{RayleighSP}) $c=\lim_{n\to+\infty} c(u_n)\ge c^*$.
\end{proof}
As a consequence we have
\begin{corollary}\label{secondd1SP} Fix $c\in (0,c^*)$ and suppose that $\Lambda^+(c,\cdot)$ is bounded from below over $S_0\subset S$. Then there exists $D,\delta>0$ such that $\psi_{c,u}''(s)\ge D$ for all $(s,u)\in[t^+(c,u),t^+(c,u)+\delta]\times S_0$.
\end{corollary}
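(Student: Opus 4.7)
The plan is to argue by contradiction, in direct parallel with the proof of Corollary \ref{secondd1} in Section \ref{SubCC}, invoking Lemma \ref{seconddSP} as the key ingredient.

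First I would suppose, towards a contradiction, that the conclusion fails. That means: for every $n \in \mathbb{N}$ there exist $u_n \in S_0$ and $s_n \in [t^+(c,u_n), t^+(c,u_n)+1/n]$ with $\psi''_{c,u_n}(s_n) < 1/n$. This is just the negation of the uniform lower bound obtained by taking $D = 1/n$ and $\delta = 1/n$ and letting $n \to \infty$. In particular $\psi''_{c,u_n}(s_n) = o_n(1)$, and by construction $t^+(c,u_n) \le s_n < t^+(c,u_n) + 1/n$.

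Next I would observe that since $u_n \in S_0$ and $\Lambda^+(c,\cdot)$ is bounded from below on $S_0$ by hypothesis, the sequence $\{\Lambda^+(c,u_n)\}$ is bounded from below. This puts us precisely in the setting of Lemma \ref{seconddSP} applied to the fixed value $c$ together with the sequences $(s_n, u_n)$. That lemma then yields $c \ge c^*$, contradicting the assumption $c \in (0,c^*)$. Hence the desired uniform lower bound $\psi''_{c,u}(s) \ge D$ holds on $[t^+(c,u), t^+(c,u)+\delta] \times S_0$ for some $D,\delta > 0$.

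There is really no serious obstacle here, since all the analytic work has already been done inside Lemma \ref{seconddSP}: the only subtle point is to make sure the hypotheses of that lemma are actually matched. Specifically, one must check that "$\Lambda^+(c,u_n)$ bounded from below" (as in the assumption of the corollary) is indeed what Lemma \ref{seconddSP} requires, and not two-sided boundedness; a glance at the statement confirms this. Beyond that, the argument is a clean contradiction, and the passage from the pointwise failure of the quantitative estimate to the existence of the sequences $(s_n,u_n)$ with the stated properties is purely formal.
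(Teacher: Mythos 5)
Your argument is correct and coincides with the paper's own proof: both negate the uniform bound with $D=\delta=1/n$ to produce a sequence $(s_n,u_n)\in\mathbb{R}\times S_0$ satisfying the hypotheses of Lemma \ref{seconddSP}, whose conclusion $c\ge c^*$ contradicts $c\in(0,c^*)$. Your extra check that the lemma only needs $\Lambda^+(c,u_n)$ bounded from below (supplied by the hypothesis on $S_0$) is the right point to verify and is handled correctly.
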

\begin{proof} On the contrary we can find a sequence $(s_n,u_n)\in \mathbb{R}\times S_0$ such that $t^+(c,u_n)\le s_n<t^+(c,u_n)+1/n$, $\Lambda^+(c,u_n)$ is bounded and $\psi_{c,u_n}''(s_n)=o_n(1)$, which contradicts Lemma \ref{seconddSP}.
\end{proof}

\begin{proof}[Proof of Proposition \ref{pdmSP}] By using Lemma \ref{seconddSP} and Corollary \ref{secondd1SP} the proof is similar to that of Proposition \ref{pdm}.
\end{proof}
\begin{theorem}\label{THMSPAbstrac} Under 
condition \eqref{H4} we have that 
		\begin{enumerate}[label=(\roman*),ref=\roman*]
		\item \label{THMSPAbstrac-i} For any $c<c^*$ there exists a sequence $\{(\mu_{n,c}^-,u_{n,c})\}  \subset (0,+\infty)\times X\setminus\{0\}$ such that $\Phi_{\mu_{n,c}^-}(\pm u_{n,c})=c$ and $\Phi'_{\mu_{n,c}^-}(\pm u_{n,c})=0$, i.e., $\pm u_{n,c}$ are critical points of $\Phi_{\mu}$ with $\mu=\mu_{n,c}^-$, having energy $c$, for every $n$.  Moreover:
		\begin{enumerate}
			\item  $\mu_{n,c}^-$ is non-increasing,  $\displaystyle \lim_{n\to +\infty}\mu_{n,c}^-=0$ and $\|u_{n,c}\|\to+ \infty$ as $n\to +\infty$, so $(0,+\infty)$ is a bifurcation point.
			\item If $c<0$ and $\mu>\mu_{1,c}^-$ then $\Phi_\mu$ has no critical points having energy $c$.\\
		\end{enumerate}
		
		\item \label{THMSPAbstrac-ii} For any $c\in (0,c^*)$  there exists a sequence $\{(\mu_{n,c}^+,v_{n,c})\}  \subset \mathbb{R} \times X\setminus\{0\}$ such that $\Phi_{\mu_{n,c}^+}(\pm v_{n,c})=c$ and $\Phi'_{\mu_{n,c}^+}(\pm v_{n,c})=0$, i.e., $\pm v_{n,c}$ are critical points of $\Phi_\mu$ with $\mu=\mu_{n,c}^+$, having energy $c$, for every $n$. Moreover:
		\begin{enumerate}
			\item $\mu_{n,c}^+$ is non-increasing, $\displaystyle \lim_{n\to +\infty}\mu_{n,c}^+=-\infty$ and $v_{n,c} \rightharpoonup 0$ as $n\to +\infty$.
			\item $\mu_{n,c}^+<\mu_{n,c}^-$ for every $n$.
		\end{enumerate}
		\item \label{THMSPAbstrac-iii} The map $c \mapsto \mu_{n,c}^-$ is continuous and increasing in $(-\infty,c^*)$, and $\displaystyle \lim_{c\to -\infty}\mu_{n,c}^-=0$. 
	\item \label{THMSPAbstrac-iv} The map $c \mapsto \mu_{n,c}^+$ is continuous and increasing in $(0,c^*)$, and $\displaystyle \lim_{c\to 0^+}\mu_{n,c}^+=-\infty$. 
	\item \label{THMSPAbstrac-v} For every $\mu\in (0,\mu_{n,0}^-)$ the functional $\Phi_\mu$ has at least $n$ pairs of critical points with negative energy. 
	\item\label{THMSPAbstrac-vi}For every $\mu< \mu_{n,0}^+$ the functional $\Phi_\mu$ has at least $n$ pairs of critical points with positive energy. 
	\end{enumerate}
\end{theorem}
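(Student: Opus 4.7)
The plan is that Theorem \ref{THMSPAbstrac} is essentially a synthesis of the results developed in the current section: each lemma/proposition has been tailored to supply one or more of the six items, so the proof reduces to bookkeeping and quoting.

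For item \eqref{THMSPAbstrac-i}, I would first invoke Lemma \ref{lem:H1SP} to confirm that \eqref{H1} holds on $\mathcal{I}=(-\infty,c^*)$ with $t(c,u)=t^-(c,u)$. Lemma \ref{plSP} \eqref{pl-iSP} gives that $u\mapsto \Lambda^-(c,u)$ is bounded from above on $S$, and Proposition \ref{PSSP} provides the Palais-Smale condition for this functional at every positive level; together these constitute \eqref{H2} in the sup-inf form. Theorem \ref{THM1} \eqref{THM1-1} then yields a sequence $(\mu_{n,c}^-,\pm u_{n,c})$ solving \eqref{ef}, with $\mu_{n,c}^->0$ by Lemma \ref{plSP} \eqref{pl-0SP}. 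The asymptotics $\mu_{n,c}^-\to 0$ and $\|u_{n,c}\|\to+\infty$ come from Corollary \ref{ciiSP}. For the nonexistence claim, note that when $c<0$ we are in case \eqref{c-iv} of Lemma \ref{techn1}, so $t^-(c,u)$ is the \emph{only} positive critical point of $\psi_{c,u}$; hence Theorem \ref{THM1} \eqref{THM1-2} (applied in the ``bounded from above'' alternative) rules out solutions with $\mu>\mu_{1,c}^-=\sup_{u\in X\setminus\{0\}}\Lambda^-(c,u)$.

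Item \eqref{THMSPAbstrac-ii} follows the same pattern using $\Lambda^+$ on $\mathcal{I}=(0,c^*)$: Lemma \ref{lem:H1SP}, Lemma \ref{plSP} \eqref{pl-iSP} (boundedness from above), and Proposition \ref{PSSP} (Palais-Smale at every level) verify \eqref{H1} and \eqref{H2}, and Theorem \ref{THM1} \eqref{THM1-1} delivers the sequence $(\mu_{n,c}^+,\pm v_{n,c})$. The asymptotic behavior $\mu_{n,c}^+\to-\infty$ and $v_{n,c}\rightharpoonup 0$ is given by Corollary \ref{ciiSP}, while the strict inequality $\mu_{n,c}^+<\mu_{n,c}^-$ is exactly Proposition \ref{pdmSP}. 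Items \eqref{THMSPAbstrac-iii} and \eqref{THMSPAbstrac-iv}, concerning the continuity, monotonicity, and limits of $c\mapsto \mu_{n,c}^\pm$, are the content of Lemma \ref{lipSP} (which itself is a direct application of Theorem \ref{THM2} via the verification of \eqref{H3} and \eqref{PSG} done in Lemma \ref{lSPc} and Proposition \ref{PSSP}).

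Finally, items \eqref{THMSPAbstrac-v} and \eqref{THMSPAbstrac-vi} are Propositions \ref{ncriticaSP} and \ref{ncriticaSP1} respectively; their proofs use that, by Lemma \ref{lipSP}, each of the $n$ curves $c\mapsto \mu_{k,c}^-$ ($k=1,\dots,n$) is continuous and increasing with $\mu_{k,c}^-\to 0$ as $c\to-\infty$, so the horizontal line at height $\mu\in(0,\mu_{n,0}^-)$ must cross each of them at some negative $c_k$, producing $n$ distinct pairs of critical points with negative energy; the positive-energy case is analogous using $\Lambda^+$. The main organizational point, and the only place where care is needed, is matching the ``bounded above'' alternative of \eqref{H2} and the ``increasing'' alternative of \eqref{H3} to the right sign conventions throughout; all the computational content has already been absorbed into the preceding lemmas.
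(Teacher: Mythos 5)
Your proposal is correct and follows essentially the same route as the paper, which likewise proves this theorem by assembling Lemma \ref{lem:H1SP}, Lemma \ref{plSP}, Propositions \ref{PSSP} and \ref{pdmSP}, Corollary \ref{ciiSP}, Lemma \ref{lipSP}, and Propositions \ref{ncriticaSP}--\ref{ncriticaSP1} together with Theorems \ref{THM0} and \ref{THM1}. The only cosmetic difference is that the paper attributes the nonexistence claim in item \eqref{THMSPAbstrac-i} to Theorem \ref{THM0} while you use Theorem \ref{THM1} \eqref{THM1-2} via the uniqueness of the critical point from Lemma \ref{techn1} \eqref{c-iv}; both are valid.
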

\begin{proof}\eqref{THMSPAbstrac-i} and \eqref{THMSPAbstrac-ii} follow from Corollary \ref{ciiSP}, Lemma \ref{plSP}, and Propositions \ref{PSSP} and \ref{pdmSP}, combined with Theorems \ref{THM0} and \ref{THM1}, whereas 
\eqref{THMSPAbstrac-iii} and \eqref{THMSPAbstrac-iv} follow from Lemma \ref{lipSP}. Finally, 
\eqref{THMSPAbstrac-v} and \eqref{THMSPAbstrac-vi} are consequences of Propositions \ref{ncriticaSP} and \ref{ncriticaSP1}.
\end{proof}
\begin{remark}\label{r1} \rm
	The previous result also holds if instead of assuming that $A'$ is completely continuous, we assume that $A=N^{\sigma}$ for some $\sigma>0$. 
\end{remark}
\begin{remark}\label{conditon35} \rm
	We note here that condition \eqref{H4} is not a pure technical condition. In fact,  in \cite{Ru},
	where the case $a=0$ is treated (the so called Schr\"odinger-Poisson system), for $p=3$
	the functional satisfies all conditions except \eqref{H4}. Moreover $\inf_{u\in X} \Phi_{\mu}(u)=-\infty$ for small $\mu>0$, while  $\inf_{u\in X} \Phi_{\mu}(u)=0$ for large $\mu>0$. Also, in \cite[Theorem 1.6]{Ru} we have a functional $\Phi_\mu$ satisfying all conditions of this section, except for \eqref{2nd-3} and \eqref{H4} and similarly it is not bounded from below for small $\mu$, while its infimum is zero for large $\mu$. 
%
 
 Moreover, from Lions inequality (see \cite{L}) the functional $\Lambda(0,\cdot)$ is bounded from above when $p=3$ and consequently $\Lambda(c,\cdot)\le \Lambda(0,\cdot)$ is bounded for all $c\le 0$. However, it is an open question whether its supremum $\mu_{1,0}$ is achieved or not. In fact, we can obtain more from \cite{Ru}: since for $p=3$ and $\mu<\mu_{1,0}$, the functional $\Phi_\mu$ has no critical points with negative energy, it follows that $\widetilde{\Lambda}(c,\cdot)$ does not satisfies the Palais-Smale condition for  $c<0$. 
\end{remark}

\subsection{Proof of Theorems \ref{THMAP3} and \ref{EC3}}
The following lemma will be useful in proving the results.
\begin{lemma}Condition \eqref{H4} holds true.
\end{lemma}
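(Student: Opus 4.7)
The argument has two stages. The first, and most decisive, is an elementary monotonicity observation that reduces (H4) to coercivity of a single member of the family. Namely, for any fixed $u \in H_r^1(\mathbb{R}^3)$,
\[
\frac{d}{d\mu}\Phi_\mu(u)=\frac{1}{4}\int_{\mathbb{R}^3}\phi_u u^2 = \frac{1}{16\pi}\bigl(\|\nabla\phi_u\|_2^2 + a^2\|\Delta\phi_u\|_2^2\bigr)\geq 0,
\]
so $\mu \mapsto \Phi_\mu(u)$ is non-decreasing. Hence for $\mu \geq a$ one has $\Phi_a(u) \leq \Phi_\mu(u)$, which yields
\[
\bigcup_{\mu \geq a}\{u \in H_r^1 : \Phi_\mu(u)\leq c\} \subset \{u\in H_r^1 : \Phi_a(u)\leq c\}.
\]
It is therefore enough to prove that, for each fixed $a>0$, the functional $\Phi_a$ is coercive on $H_r^1(\mathbb{R}^3)$.

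The second stage is coercivity of $\Phi_a$, which I would establish by contradiction. Assume $t_n:=\|u_n\|\to+\infty$ with $\Phi_a(u_n)\leq c$, and normalize by $v_n = u_n/t_n \in S$. Dropping the non-negative Poisson term in $\Phi_a(u_n)\leq c$ and dividing by $t_n^p$ gives the lower bound $\|v_n\|_p^p\geq \frac{p}{2t_n^{p-2}}-o(1)$, which forces $\|v_n\|_p \to 0$ since $p>2$. The compact embedding $H_r^1\hookrightarrow L^q$ for $q\in(2,6)$ then provides a subsequence with $v_n\rightharpoonup 0$ in $H_r^1$ and $v_n\to 0$ in both $L^p$ and $L^{12/5}$; combining with the Hardy--Littlewood--Sobolev inequality (or its Bopp--Podolsky analogue) one gets $\int\phi_{v_n}v_n^2\leq C\|v_n\|_{12/5}^4\to 0$.

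The contradiction comes from the following rate comparison. By a Gagliardo--Nirenberg interpolation (between $L^2$ and $L^{12/5}$ when $p\in(2,12/5]$, or between $L^{12/5}$ and $L^6$ when $p\in(12/5,3)$) and the boundedness of $\|v_n\|_2$ and $\|v_n\|_6$ on $S$, one obtains
\[
\|v_n\|_p^p \leq C\|v_n\|_{12/5}^{\gamma},\qquad \gamma=\gamma(p)>0.
\]
Coupling this with the lower bound $\|v_n\|_p^p \geq \tfrac{p}{2}t_n^{2-p}(1+o(1))$ yields $\|v_n\|_{12/5}\geq c_0 t_n^{-\sigma}$ for some $\sigma=\sigma(p)>0$. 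Plugging the resulting rates back into the full energy inequality $\tfrac{t_n^2}{2}+\tfrac{a t_n^4}{4}\int\phi_{v_n}v_n^2 \leq c+\tfrac{t_n^p}{p}\|v_n\|_p^p$ and tracking the dominant powers of $t_n$, the hypothesis $p<3$ makes the subcritical right-hand side incapable of accommodating both the kinetic term $t_n^2/2$ and the non-negative Poisson term at once, contradicting $\Phi_a(u_n)\leq c$.

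The monotonicity reduction is the simple half; the hard part is the final bookkeeping in the coercivity argument, where the exponents coming from Gagliardo--Nirenberg and Hardy--Littlewood--Sobolev must be calibrated against each other. This is exactly where $p<3$ is indispensable: as Remark \ref{conditon35} recalls, at $p=3$ the chain of estimates breaks down and (H4) can genuinely fail.
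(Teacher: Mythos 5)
Your monotonicity reduction ($\frac{d}{d\mu}\Phi_\mu(u)=\frac14\int_{\mathbb{R}^3}\phi_u u^2\ge 0$, hence it suffices to show $\Phi_a$ is coercive) is correct and is a clean simplification the paper does not make explicit. But the coercivity argument itself has a genuine gap, in two places. First, from $\frac{t_n^2}{2}-\frac{t_n^p}{p}\|v_n\|_p^p\le c$ you get $\|v_n\|_p^p\ge \frac{p}{2}t_n^{2-p}-o(1)$, which is a \emph{lower} bound tending to zero; it does not force $\|v_n\|_p\to 0$, so the subsequent compactness step has no basis. Second, and more fundamentally, dropping the non-negative Poisson term cannot work: the truncated functional $\frac12\|u\|^2-\frac1p\|u\|_p^p$ is unbounded below (take $u_n=n\varphi$), so no contradiction can be extracted from the kinetic and $L^p$ terms alone. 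The nonlocal term must enter with a quantitative \emph{lower} bound, whereas your Hardy--Littlewood--Sobolev estimate $\int\phi_{v_n}v_n^2\le C\|v_n\|_{12/5}^4$ points in the wrong direction; there is no reverse inequality bounding $\int\phi_{v_n}v_n^2$ from below by a power of $\|v_n\|_{12/5}$, so the final ``rate comparison'' cannot be closed as described.

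The paper's proof supplies exactly the missing ingredient: a Ruiz-type interpolation (from \cite{SS,Ru}) giving, for radial $u$ and any small $\varepsilon>0$,
\[
\frac14\|\nabla u\|_2^2+\frac{\mu}{4}\int_{\mathbb{R}^3}\phi_u u^2\;\ge\; D_\mu\|\phi_u\|_{\mathcal D}^2+\frac{\pi\varepsilon^2}{4}\|u\|_3^3,\qquad D_\mu=\frac{\mu}{16\pi}-\varepsilon^4>0 \text{ for } \mu\ge a,
\]
so that $c\ge \frac14\|u\|^2+D_\mu\|\phi_u\|_{\mathcal D}^2+\int_{\mathbb{R}^3}h(u)$ with $h(t)=\frac{\omega}{4}t^2+\frac{\pi\varepsilon^2}{4}t^3-\frac1p t^p$. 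Since $p<3$, the cubic term dominates at infinity, so $h$ is bounded below and $\{h<0\}=(d_1,d_2)$ is a bounded interval; the set $A_n$ where $u_n\in(d_1,d_2)$ is then controlled via the radial decay and the $\|\phi_{u_n}\|_{\mathcal D}^2$ term, yielding the contradiction. If you want to salvage your outline, you must replace the ``drop the Poisson term'' step by this lower bound (or an equivalent estimate of $\|u\|_3^3$ by $\|u\|_{H^1}$ and the Coulomb energy); this is where $p<3$ is actually used, rather than in a Gagliardo--Nirenberg bookkeeping on the normalized sequence.
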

\begin{proof} Indeed, suppose that $\Phi_\mu(u)\le c$ with $\mu\ge a$, set $D_\mu:=\frac{\mu}{16\pi}-\varepsilon^4$ and note that
\begin{eqnarray}\label{M1}
	c\ge\Phi_\mu(u)&=&\frac{1}{4}\|\nabla u\|_2^2+\frac{1}{4}\|\nabla u\|_2^2+\frac{\omega}{2}\| u\|_2^2+\frac{\mu}{4}\int_{\mathbb{R}^3} \phi_u u^2-\frac{1}{p}\|u\|_p^p \nonumber\\
	&\ge& \frac{1}{4}\|\nabla u\|_2^2+D_\mu\|\phi_u\|_\mathcal{D}^2+\frac{\omega}{2}\| u\|_2^2+\frac{\pi\varepsilon^2}{4}\|u\|_3^3-\frac{1}{p}\|u\|_p^p\nonumber\\
	&=&\frac{1}{4}\|u\|^2+D_\mu\|\phi_{u}\|_{\mathcal D}^{2}+\int_{\mathbb{R}^3} h(u), 
\end{eqnarray}
where
\begin{equation*}\label{ft}
	h(t)=\frac{\omega}{4}t^2+\frac{\pi\varepsilon^2}{4}t^3-\frac{1}{p}t^p,\quad \forall\, t>0.
\end{equation*}
We can choose $\varepsilon>0$ such that $D_\mu>0$ for all $\mu \geq a$. Now, suppose on the contrary that there exists a sequence $(\mu_n,u_n)\in [a,+\infty)\times H_r^1(\mathbb{R}^3)$ such that $u_n\to \infty$ as $n\to \infty$. A simple analysis shows that $I:=\inf_{t>0}h(t)>-\infty$ and if $h(t)<0$ for some $t>0$, then $h^{-1}((-\infty,0))=(d_1,d_2)$, where $0<d_1<d_2<+\infty$. If $I:=\inf_{t>0}h(t)\ge 0$ we clearly have a contradiction with \eqref{M1}, thus we can assume that $I<0$.

Write $A_n=\{ x\in \mathbb R^{3}: u_n(x)\in (d_1,d_2)\}$ and $\rho_n=\sup\{|x|:\ x\in A_n\}$. Arguing as in the proof of \cite[Proposition 3.1]{SS}, we conclude that $\operatorname{meas}(A_n)\to+ \infty$ as $n\to \infty$,
\begin{equation}\label{H41}
	\frac{C_1\rho_n^2-c}{|I|}\le \operatorname{meas}(A_n),\quad \forall n\in \mathbb{N},
\end{equation}
and
\begin{equation}\label{H42}
\frac{|I|\operatorname{meas}(A_n)+c}{d_1^{4}}\ge \frac{1-e^{-\frac{2\rho_n}{a}}}{2\rho_n}\operatorname{meas}(A_n)^2, \quad\forall n\in \mathbb N.
\end{equation}
Since $\operatorname{meas}(A_n)\to +\infty$ we conclude from \eqref{H42} that $\rho_n\to+ \infty$. Moreover, by combining \eqref{H41} and \eqref{H42} we obtain that
\begin{equation*}
	\frac{|I|}{d_1^{4}}+\frac{c}{\operatorname{meas}(A_n)}\ge \frac{1-e^{-\frac{2\rho_n}{a}}}{2\rho_n}	\frac{C_1\rho_n^2-c}{|I|},\quad \forall n\in \mathbb N,
\end{equation*}
which is a contradiction, therefore \eqref{H4} is true.
\end{proof}

Finally we can prove Theorem \ref{THMAP3} and Theorem \ref{EC3}. Indeed, one may check that 
$N(u)=\int_{\mathbb{R}^3} \left(|\nabla u|^2+\omega |u|^2\right)$, $A(u)=\int_{\mathbb{R}^3} \phi_uu^2$, and $B(u)=\int_{\mathbb{R}^3} |u|^p$, defined on $X=H_r^1(\mathbb{R}^3) \setminus \{0\}$, satisfy the conditions \eqref{2nd-1}-\eqref{2nd-6} of this section with $\eta=2$, $\alpha=4$, and $\beta=p$.

\appendix
\section{}
The next result holds under \eqref{H1} and the definitions related to it.
\begin{lemma} 
\label{psg}	
	 Suppose that $(c_n,u_n)\in \mathcal{I}\times S$ and the functional $\Phi_{\mu}$ satisfy
	\begin{enumerate}[label=(\roman*),ref=\roman*]
		\item\label{PSS1} $c_n\to c\in \mathcal{I}$, $\widetilde{\Lambda}(c_n,u_n)\to \mu$  and $\frac{\partial \widetilde{\Lambda}}{\partial u_n}(c_n,u_n)\to 0$,
		\item\label{PSS2} $t(c_n,u_n) \to t>0$ and $u_n \rightharpoonup u \neq 0$,
		\item\label{PSS3} $I_2$ is bounded on bounded sets,
		\item\label{PSS4} if $\mu_k\to \mu$, $z_k\rightharpoonup z$ and $\Phi_{\mu_k}'(z_k)\to 0$, then $z_k\to z$.
	\end{enumerate}
Then  $u_n \to u$.
\end{lemma}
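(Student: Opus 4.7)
The plan is to reduce the convergence of $\{u_n\}$ to the Palais–Smale-type hypothesis \ref{PSS4} applied to the rescaled sequence $z_n := t(c_n,u_n)\,u_n$, which is the natural critical-point candidate associated to the reduced functional. Concretely, set $t_n := t(c_n,u_n)$, $\mu_n := \widetilde{\Lambda}(c_n,u_n)$, and $z_n := t_n u_n$; by hypotheses \ref{PSS1}–\ref{PSS2}, $t_n \to t>0$, $\mu_n \to \mu$, and $u_n \rightharpoonup u \neq 0$, so immediately $z_n \rightharpoonup tu =: z \neq 0$ (pair against any $\varphi \in X^*$ and use $t_n \to t$). The goal is then to verify that $\Phi'_{\mu_n}(z_n) \to 0$ in $X^*$, which will let us invoke \ref{PSS4} to upgrade the weak convergence $z_n \rightharpoonup z$ to strong convergence.

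The key identity is Lemma \ref{l0}\eqref{I0-ii}:
\begin{equation*}
\frac{\partial \Lambda}{\partial u}(c_n,u_n)v \;=\; \frac{t_n}{I_2(z_n)}\,\Phi'_{\mu_n}(z_n)\,v, \qquad \forall v\in X.
\end{equation*}
So I first need to pass from the assumed vanishing of $\tfrac{\partial \widetilde{\Lambda}}{\partial u}(c_n,u_n)$ on the tangent space $\mathcal{T}_{u_n}(S)$ to the vanishing of $\tfrac{\partial \Lambda}{\partial u}(c_n,u_n)$ on the whole of $X$. Using that $\tfrac{\partial \Lambda}{\partial u}(c_n,u_n)\,u_n=0$ (Lemma \ref{l0}\eqref{I0-ii}) and the splitting $X = \mathcal{T}_{u_n}(S)\oplus \mathbb{R}u_n$, any $w\in X$ decomposes as $w = v + s u_n$ with $s = i'(u_n)w/i'(u_n)u_n = i'(u_n)w$ (because $u_n\in S$), so $|s|\le \|i'(u_n)\|\,\|w\|\le \|w\|$ and hence $\|v\|\le 2\|w\|$. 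This yields the uniform bound
\begin{equation*}
\Bigl\|\tfrac{\partial \Lambda}{\partial u}(c_n,u_n)\Bigr\|_{X^*} \;\le\; 2\,\Bigl\|\tfrac{\partial \widetilde{\Lambda}}{\partial u}(c_n,u_n)\Bigr\| \;\longrightarrow\; 0.
\end{equation*}

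Combining this with the identity above gives $\frac{t_n}{I_2(z_n)}\,\Phi'_{\mu_n}(z_n) \to 0$ in $X^*$. Since $\{u_n\}\subset S$ is bounded and $t_n\to t$, the sequence $\{z_n\}$ is bounded, so \ref{PSS3} forces $\{I_2(z_n)\}$ to be bounded; coupled with $t_n\to t>0$ the scalar $\tfrac{t_n}{|I_2(z_n)|}$ stays bounded away from zero. Consequently $\|\Phi'_{\mu_n}(z_n)\|_{X^*}\to 0$. Applying \ref{PSS4} with $\mu_k=\mu_n$, $z_k=z_n$ (and the limit $z = tu$) now gives $z_n \to tu$ strongly in $X$, and finally $u_n = z_n/t_n \to u$ since $t_n\to t>0$.

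The main obstacle I anticipate is the passage from $\tfrac{\partial \widetilde{\Lambda}}{\partial u}(c_n,u_n)\to 0$ (a bound on the tangent space) to $\tfrac{\partial \Lambda}{\partial u}(c_n,u_n)\to 0$ in full $X^*$; this requires controlling the splitting $X = \mathcal{T}_{u_n}(S)\oplus \mathbb{R}u_n$ uniformly in $n$. Everything else (the identification of $\Phi'_{\mu_n}(z_n)$ through Lemma \ref{l0}\eqref{I0-ii}, the uniform boundedness of $I_2(z_n)$ via \ref{PSS3}, and the final rescaling $u_n = z_n/t_n$) is a routine chain of continuity arguments, and the Palais–Smale step \ref{PSS4} is applied as a black box to the sequence $\{z_n\}$ that we have just shown to be an almost-critical sequence for $\Phi_{\mu_n}$ with $\mu_n\to\mu$.
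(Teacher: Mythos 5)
Your proof is correct and follows essentially the same route as the paper: decompose $w=v+su_n$ along $X=\mathcal{T}_{u_n}(S)\oplus\mathbb{R}u_n$, use $\frac{\partial\Lambda}{\partial u}(c_n,u_n)u_n=0$ together with the identity \eqref{lp} to show $\Phi'_{\mu_n}(z_n)\to0$, and then invoke \eqref{PSS3} and \eqref{PSS4}. If anything your version is slightly sharper, since you extract a uniform bound $\|\frac{\partial\Lambda}{\partial u}(c_n,u_n)\|_{X^*}\le 2\|\frac{\partial\widetilde{\Lambda}}{\partial u}(c_n,u_n)\|$ where the paper only argues pointwise in $w$.
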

\begin{proof}  
Given $w\in X$ and  $n\in \mathbb N$,  let $(s_n,v_n)\in \mathbb{R}\times \mathcal{T}_{S}(u_n)$ 
be the unique pair such that $w=v_n+s_nu_n$. Hence $i'(u_n)w=s_ni'(u_n)u_n=s_n$, so $\{s_n\}$ is bounded, and consequently $\{v_n\}$ is bounded as well. Thus $\frac{\partial \widetilde{\Lambda}}{\partial u}(c_n,u_n)v_n\to 0$ and since $\frac{\partial \Lambda}{\partial u}(c_n,u_n)u_n=0$, we conclude that 

\begin{equation}\label{PSAL}
	\frac{\partial \Lambda}{\partial u}(c_n,u_n)w=
\frac{\partial \widetilde{\Lambda}}{\partial u}(c_n,u_n)v_n+s_n\frac{\partial \Lambda}
{\partial u}(c_n,u_n)u_n\to 0.
\end{equation}
Recall from \eqref{lp} that
	\begin{equation} \label{PSAL1}
\Phi_{\Lambda(c,u)}'(t(c,u)u)=\frac{I_2(t(c,u)u)}{t(c,u)}\frac{\partial \Lambda}{\partial u}(c,u), \qquad \forall (c,u) \in \mathcal{I} \times X\setminus\{0\}.
\end{equation}
It follows from \eqref{PSS1},  \eqref{PSS2}, \eqref{PSS3}, \eqref{PSAL} and \eqref{PSAL1} that $\Phi_{\Lambda(c_n,u_n)}'(t(c_n,u_n)u_n)\to 0$ and thus, by \eqref{PSS1}, \eqref{PSS2} and \eqref{PSS4}, $t(c_n,u_n)u_n \to tu$, which yields the conclusion.
\end{proof}

\begin{remark}\label{rps}
Standard arguments (relying in particular on the uniform convexity of $X$) show that condition \eqref{PSS4} in the previous Lemma is satisfied if $\Phi_\mu=N-I_{\mu}$, where $N,I_\mu \in C^1(X)$ are functionals satisfying the following conditions:
\begin{enumerate}[label=(\roman*),ref=\roman*]	
	\item\label{1st-AP} for every $\mu \in \mathbb{R}$ the functional $I_\mu'$ is completely continuous, i.e. $I_{\mu}'(u_n) \to I_\mu'(u)$ in $X^*$ if $u_n \rightharpoonup u$ in $X$. Moreover, for any $u \in X$ the map $\mu \mapsto I_\mu(u)$ is continuous.
	\item\label{2st-AP} $N$ is weakly lower semicontinuous and there exist $C,\eta>0$ such that 
	$$(N'(u)-N'(v))(u-v)\geq C(\|u\|^{\eta-1}-\|v\|^{\eta-1})(\|u\|-\|v\|)$$ for any $u,v \in X$.\\
\end{enumerate}
\end{remark}

\medskip

{\bf Acknowledgement.}
G. Siciliano was partially supported by Fapesp grant 2019/27491-0, CNPq grant 304660/2018-3
, FAPDF, and CAPES (Brazil) and INdAM (Italy).
K. Silva was partially supported by CNPq/Brazil under Grant 408604/2018-2.

\end{document}